\newcolumntype{d}[1]{D{.}{\cdot}{#1} }
\newtheorem{theorem}{Theorem}[subsection]
\newtheorem*{theorem*}{Theorem}
\newtheorem{lemma}[theorem]{Lemma}
\newtheorem*{lemma*}{Lemma}
\newtheorem{proposition}[theorem]{Proposition}
\newtheorem*{proposition*}{Proposition}
\newtheorem{corollary}[theorem]{Corollary}
\theoremstyle{definition}
\newtheorem{definition}[theorem]{Definition}
\theoremstyle{remark}
\newtheorem{remark}[theorem]{Remark}
\newtheorem{example}[theorem]{Example}
\newcommand{\Z}{\mathbf{Z}}
\newcommand{\Q}{\mathbf{Q}}
\newcommand{\R}{\mathbf{R}}
\newcommand{\F}{\mathbf{F}}
\newcommand{\A}{\mathbf{A}}
\newcommand{\Proj}{\mathbf{P}}
\newcommand{\Aut}{\textnormal{Aut}}
\newcommand{\im}{\textnormal{im}}
\newcommand{\Gal}{\textnormal{Gal}}
\newcommand{\Hom}{\textnormal{Hom}}
\newcommand{\Cl}{\textnormal{Cl}}
\newcommand{\Norm}{\textnormal{Norm}}
\newcommand{\res}{\textnormal{res}}
\newcommand{\Sym}{\textnormal{Sym}}
\newcommand{\GL}{\operatorname{GL}}
\newcommand{\cyclo}{\chi}
\newcommand{\ur}{\textnormal{ur}}
\newcommand{\bigrep}[2]{\Sym^{#1}V \otimes \F_p(#2)}
\newcommand{\rad}{\textnormal{rad}}
\begin{document}

\title[Class groups of Kummer extensions via cup products]{Class groups of Kummer extensions via cup products in Galois cohomology}

\author{Karl Schaefer}
\address{University of Chicago Department of Mathematics, Chicago, IL}
\email{karl@math.uchicago.edu}

\author{Eric Stubley}
\address{University of Chicago Department of Mathematics, Chicago, IL}
\email{stubley@uchicago.edu}
\thanks{The second author wishes to acknowledge the support of the Natural Sciences and Engineering Research Council of Canada (NSERC).}

\subjclass[2010]{Primary 11R29; Secondary 11R34}

\date{}

\begin{abstract}
We use Galois cohomology to study the $p$-rank of the class group of $\mathbf{Q}(N^{1/p})$, where $N \equiv 1 \bmod{p}$ is prime.
We prove a partial converse to a theorem of Calegari--Emerton, and provide a new explanation of the known counterexamples to the full converse of their result.
In the case $p = 5$, we prove a complete characterization of the $5$-rank of the class group of $\mathbf{Q}(N^{1/5})$ in terms of whether or not $\prod_{k=1}^{(N-1)/2} k^{k}$ and $\frac{\sqrt{5} - 1}{2}$ are $5$th powers mod $N$.
\end{abstract}

\maketitle

\setcounter{tocdepth}{2}
\tableofcontents

\section{Introduction}\label{sec_introduction}
Let $N$ and $p \geq 3$ be prime numbers with $p | (N-1)$. Fix an algebraic closure $\overline{\Q}$ of $\Q$ and a choice of $N^{1/p} \in \overline{\Q}$, and let $K$ denote the field $\Q(N^{1/p})$.
The goal of this article is to study the class group $\Cl_K$ of $K$, and in particular its $p$-rank $r_K = \dim_{\F_p}(\Cl_K \otimes \F_p)$.
We access $r_{K}$ by class field theory, as $r_{K}$ is equal to the maximal $r$ such that $K$ admits an unramified-everywhere $(\Z/p\Z)^{r}$-extension.
By genus theory, $r_{K} \geq 1$, since the degree-$p$ subfield of $K(\zeta_{N})/K$ is unramified everywhere; this is a corollary of Lemma \ref{zeta_N_plus_localized}.
Our starting point is the following theorem of Calegari--Emerton.

\begin{theorem*}[Calegari--Emerton, Theorem 1.3, (ii) of \cite{calegari_emerton_ramification}]\label{intro_calegari_emerton}
Suppose that $p \geq 5$, and let $C = \prod_{k=1}^{(N-1)/2} k^{k}$.
If $C$ is a $p$th power in $\F_{N}^{\times}$, then $r_{K} \geq 2$.
\end{theorem*}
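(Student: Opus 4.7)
The plan is to translate the existence of a second independent unramified $\Z/p$-extension of $K$ into the vanishing of an explicit cup product in $H^{2}(G_{\Q,S}, \F_p(1))$ with $S = \{p, N, \infty\}$, and to identify the obstruction with the image of $C$ in $\F_{N}^{\times}/(\F_{N}^{\times})^{p}$. I would work over the Galois closure $L = K(\zeta_{p})$: since $\Gal(L/K) \cong \F_{p}^{\times}$ has order coprime to $p$, inflation--restriction together with the isotypic decomposition under $\F_{p}^{\times}$ reduces $r_{K}$ to the dimension of specific $\omega$-eigenspaces in the $\F_p$-cohomology of $G_{\Q,S}$ with various Tate twists, cut out by local unramified (or nearly unramified) conditions at $p$ and $N$. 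In this dictionary, the genus-theory extension corresponds to a distinguished class coming from $\zeta_{N}$, so the goal becomes producing one more $\F_p$-linearly independent class.

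The Kummer class $[N] \in H^{1}(G_{\Q,S}, \F_p(1))$ cutting out $K/\Q$ is the central object. Cup product with $[N]$ gives a map from a suitable subspace of $H^{1}(G_{\Q,S}, \F_p)$ to $H^{2}(G_{\Q,S}, \F_p(1))$, and for a well-chosen class $\alpha$ in the source, vanishing of $\alpha \cup [N]$ allows $(\alpha, [N])$ to be assembled, via a Massey product (equivalently, a Heisenberg-type representation $G_{\Q} \to \GL_{3}(\F_p)$), into the desired extra class witnessing $r_{K} \geq 2$. By Poitou--Tate duality, the global obstruction $\alpha \cup [N]$ decomposes as a sum of local pairings; the contribution at $\infty$ vanishes for $p$ odd, and with $\alpha$ chosen carefully the contribution at $p$ will also vanish, concentrating the entire obstruction at the prime $N$.

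The local pairing at $N$ is the tame symbol $H^{1}(G_{\Q_{N}}, \F_p) \times H^{1}(G_{\Q_{N}}, \F_p(1)) \to \F_p$. The class $\alpha$ would be constructed so that its restriction at $N$ encodes a Bernoulli-like sum of the shape $\sum_{k=1}^{(N-1)/2} k \log(k)$; exponentiating, this equals $\log C$, so the local tame symbol evaluates to the image of $C$ in $\F_{N}^{\times}/(\F_{N}^{\times})^{p}$ up to an explicit nonzero scalar, and the hypothesis on $C$ forces the cup product to vanish. The main obstacle is two-fold: first, constructing $\alpha$ globally with exactly the right ramification at $p$ (so that both the local cup product at $p$ vanishes and the resulting Massey-product class remains unramified after restriction to $K$); and second, carrying out the tame-symbol computation at $N$ so that $C$ appears on the nose rather than merely up to an unknown unit. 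I expect the analysis at $p$ to be the more delicate half, since the local conditions there must simultaneously kill the obstruction and preserve the unramifiedness downstairs over $K$.
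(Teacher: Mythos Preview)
Your framework---cup-product vanishing, Poitou--Tate localization, obstruction at $N$ identified with $C$---is the route the paper follows (after Wake--Wang-Erickson), but your class $\alpha$ sits in the wrong Tate twist. You place $\alpha$ in $H^{1}(G_{\Q,S}, \F_p)$, yet this group is completely explicit: it is $2$-dimensional, spanned by the characters cutting out $\Q(\zeta_N^{(p)})$ and $\Q(\zeta_{p^2}^{(p)})$ (part~\ref{cohomology_theorem_kw} of Theorem~\ref{cohomology_theorem_unconditional}). No class here carries hidden dependence on $C$, so the tame symbol with $[N]$ cannot evaluate to $\log C$ as you hope. Worse, the Heisenberg extension you would assemble from such an $\alpha$ and $[N]$ has $\cyclo$ in the upper-left diagonal entry; upon restriction to $G_K$ the Massey piece becomes a cocycle valued in $\F_p(1)$, not a homomorphism $G_K \to \F_p$, so it does not cut out an unramified $\Z/p$-extension of $K$. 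And $\alpha|_{G_K}$ itself can only give back the genus field or something ramified at $p$.

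The correct twist is $\F_p(-1)$: the paper takes $\alpha$ to span the $1$-dimensional group $H^{1}_{N}(\F_p(-1))$, nonzero since $(p,-1)$ is always a regular pair. This class is not ``constructed to encode $C$''; it exists by a Selmer dimension count, and the question is whether it satisfies the finer condition $\Sigma$ (split at $p$, trivial on $G_{K_N}$). A Gauss-sum computation (Theorem~\ref{odd_invariant_theorem} and Section~\ref{sec_invariant_relationship}) shows this holds exactly when $C$ is a $p$th power mod $N$. When it does, the resulting $3$-dimensional representation has shape $\left(\begin{smallmatrix}1 & \cyclo^{-1}b & a_0 \\ 0 & \cyclo^{-1} & \alpha \\ 0 & 0 & 1\end{smallmatrix}\right)$; the trivial upper-left entry is what makes $a_0|_{G_K}$ an honest $\F_p$-valued character, yielding the second unramified extension (Proposition~\ref{wake_wang_erickson_theorem} via Proposition~\ref{Lambda_to_Sigma_is_exact}). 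One further correction: the local analysis at $p$ you flag as the main obstacle is in fact vacuous for this twist, since $H^1_{\ur}(G_{\Q_p},\F_p(-1))=0$ forces unramified to coincide with split; all the work is at $N$.
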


This theorem is proven using deformation theory of Galois representations.
Previous work of Merel \cite{merel} showed that whether or not the number $C$ is a $p$th power determines whether the $\Z_{p}$-rank of a certain Hecke algebra is at least $2$.
Calegari--Emerton identify this Hecke algebra with a deformation space of Galois representations, and construct an unramified $\F_{p}$-extension of $K$ in the case that the deformation space has $\Z_{p}$-rank at least 2.
More recently, this theorem was given another proof by Wake--Wang-Erickson (see Proposition 11.1.1 of \cite{wake_wang-erickson_mazur_eisenstein_ideal}, restated in this article as Proposition \ref{wake_wang_erickson_theorem}) using cup products in Galois cohomology.

Calegari--Emerton also raise the question of whether or not the converse to this theorem holds.
Numerical computations suggested that it was true when $p = 5$, but not in general. 
Indeed, Lecouturier noticed in \cite{lecouturier} that the converse fails in the case $p = 7$, $N = 337$.

\subsection{Results}\label{sec_results}

For odd $i$ satisfying $1 \leq i \leq p-4$, let 
\begin{equation*}
M_{i} = \prod_{k=1}^{N-1} \prod_{a=1}^{k-1} k^{a^{i}},
\end{equation*}
as first defined by Lecouturier in \cite{lecouturier}, and let 
$r_{\Q(\zeta_p)}$ be the $p$-rank of $\Cl_{\Q(\zeta_p)}$. Let $\cyclo$ be the mod-$p$ cyclotomic character and say that $(p, -i)$ is a regular pair if the $\cyclo^{-i}$-eigenspace of $\Cl_{\Q(\zeta_{p})}$ is trivial.

Lecouturier proves that
\begin{equation*}
r_{K} \leq r_{\Q(\zeta_{p})} + p - 2 - \mu,
\end{equation*}
where $\mu$ is the number of odd $i$ such that $1 \leq i \leq p-4$, $(p, -i)$ is a regular pair, and $M_{i}$ is not a $p$th power in $\F_{N}^{\times}$.

Using a new method, we improve the previous bound on $r_K$:
\begin{theorem}\label{intro_bound_theorem}
\begin{equation*}
r_{K} \leq r_{\Q(\zeta_{p})} + p - 2 - 2\mu.
\end{equation*}
\end{theorem}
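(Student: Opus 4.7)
The plan is to decompose $r_K$ as a sum of Galois cohomology dimensions over $\Q(\zeta_p)$, use the cup-product framework of Wake--Wang-Erickson (Proposition \ref{wake_wang_erickson_theorem}) to detect each $M_i$ as a cup-product value at the prime $N$, and then show that each non-$p$-th power $M_i$ kills cohomology in two places rather than one.

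First, I would express $r_K$ in terms of Galois cohomology. Since $[K(\zeta_p):K]$ is coprime to $p$, inflation--restriction gives $\Cl_K \otimes \F_p \hookrightarrow (\Cl_{K(\zeta_p)} \otimes \F_p)^{\Gal(K(\zeta_p)/K)}$; combining this with class field theory and Kummer theory, $r_K - r_{\Q(\zeta_p)}$ equals a sum of dimensions of Selmer groups of the form $H^1_{\mathcal{L}}(G_{\Q(\zeta_p), S}, \F_p(-i))$ for odd $i$, where $S = \{N, p, \infty\}$ and $\mathcal{L}$ is the local system that selects $\F_p$-extensions which are unramified everywhere over $K$. Under the regularity hypothesis on $(p,-i)$, the Greenberg--Wiles formula collapses to give a baseline bound totalling $r_K \leq r_{\Q(\zeta_p)} + (p-2)$.

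Next, for each odd $i$ such that $(p,-i)$ is regular and $M_i$ is not a $p$-th power mod $N$, I would interpret $M_i$ via a cup-product pairing. Specifically, I would construct a cohomology class related to the Kummer representative of $N^{1/p}$ whose cup-product against the relevant Selmer group, evaluated at the prime $N$ via the local invariant map, recovers $\log_N M_i \pmod{p}$. Non-vanishing of $M_i$ then forces this cup product to be non-zero, giving a codimension-one obstruction on the Selmer group. The factor-of-two improvement arises because this obstruction is detected symmetrically by Tate local duality at $N$: the non-vanishing at $N$ forces cancellations not only in the Selmer group associated to $\F_p(-i)$ but also in a companion Selmer group which itself contributes to $r_K$ (indexed by the character paired to $\cyclo^{-i}$ under the global reciprocity law). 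Summing these doubled reductions over the $\mu$ bad indices yields $r_K \leq r_{\Q(\zeta_p)} + (p-2) - 2\mu$.

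The main obstacle is the precise identification of the cup product with $M_i$, along with the matching of the two Selmer groups whose dimensions simultaneously drop. Concretely, one must fix Kummer representatives for all the relevant Selmer classes, compute the Tate pairing at $N$ as a Hilbert symbol, and compare with Lecouturier's product formula; the paired reduction then falls out of the symmetry in the Tate local pairing together with the global reciprocity relation forcing invariants to sum to zero.
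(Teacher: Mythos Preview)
Your proposal has a genuine structural gap in the first step. You claim that $r_K - r_{\Q(\zeta_p)}$ can be written as a sum of dimensions of Selmer groups $H^1_{\mathcal{L}}(G_{\Q(\zeta_p),S},\F_p(-i))$ indexed by \emph{odd} $i$, obtained via character decomposition and Kummer theory over $\Q(\zeta_p)$. This does not work: the group $\Gal(K(\zeta_p)/\Q) \cong \Z/p\Z \rtimes (\Z/p\Z)^\times$ has order divisible by $p$, so its $\F_p$-representation theory is not semisimple, and the $p$-part of $\Cl_{K(\zeta_p)}$ (or its quotient controlling $r_K$) does not break into a direct sum of character pieces. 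The paper's Theorem~\ref{intro_sigma_bound_theorem} instead identifies $r_K - 1$ with $h^1_\Sigma(\bigrep{p-4}{2})$ for a single indecomposable $(p-3)$-dimensional module, and then uses a \emph{filtration} on this Selmer group whose successive quotients embed into $H^1_\Sigma(\F_p(-i))$ for \emph{all} $i$ with $1 \le i \le p-3$, odd and even alike. Without the even-indexed terms in the sum, your baseline count does not add up to $p-2$, and more importantly there is nothing for the ``companion'' reduction to hit.

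Your intuition for the factor of $2$ is in the right direction but cannot be made precise in your setup. In the paper, once the upper bound $r_K \le 1 + \sum_{i=1}^{p-3} h^1_\Sigma(\F_p(-i))$ is in place, each bad odd $i$ forces $h^1_\Sigma(\F_p(-i)) = 0$ (Theorem~\ref{intro_odd_invariant_theorem}), and then the duality statement of Theorem~\ref{intro_cohomology_theorem} (coming from Greenberg--Wiles and the self-duality of the $\Sigma$-condition at $N$) forces the \emph{even}-indexed companion $h^1_\Sigma(\F_p(-(p-2-i))) = h^1_\Sigma(\F_p(1+i))$ to vanish as well. That is the second reduction. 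In your framework the sum runs only over odd $i$, so the companion term you describe as ``paired under the global reciprocity law'' is not present in your bound to be killed. The missing ingredient is precisely the passage through the high-dimensional representations $\bigrep{m}{-m}$, which is what produces both parities in the filtration.
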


This follows from the stronger inequality of Theorem \ref{intro_sigma_bound_theorem} combined with Theorems \ref{intro_odd_invariant_theorem} and \ref{intro_cohomology_theorem}.
An immediate corollary of Theorem \ref{intro_bound_theorem} in the case of regular $p$ is the following partial converse to the theorem of Calegari--Emerton:

\begin{theorem}\label{intro_converse_theorem}
Suppose that $p$ is regular, and that $r_{K} \geq 2$.
Then at least one of the $M_{i}$ is a $p$th power in $\F_{N}^{\times}$.
\end{theorem}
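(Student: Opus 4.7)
My plan is to deduce this as a direct contrapositive of Theorem \ref{intro_bound_theorem}. The idea is to arrange things so that the loss of $2\mu$ in the bound on $r_K$ is large enough to force $r_K \leq 1$ whenever the conclusion fails.

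First, I would use the hypothesis that $p$ is regular to simplify the bound. By definition, regularity of $p$ means $p \nmid \#\Cl_{\Q(\zeta_p)}$, so every eigenspace of $\Cl_{\Q(\zeta_p)}$ under the cyclotomic character vanishes. In particular, $r_{\Q(\zeta_p)} = 0$, and every pair $(p, -i)$ is a regular pair, so the regular pair condition in the definition of $\mu$ is automatic.

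Next, I would count the odd integers $i$ with $1 \leq i \leq p-4$: since $p \geq 5$ is odd and $p-4$ is odd, these are precisely $1, 3, \ldots, p-4$, giving $(p-3)/2$ of them. Arguing the contrapositive, suppose that none of the $M_i$ for odd $i$ in this range is a $p$th power in $\F_N^\times$. Then every such $i$ contributes to $\mu$, so $\mu = (p-3)/2$. Substituting into Theorem \ref{intro_bound_theorem} gives
\[ r_K \leq 0 + (p-2) - 2 \cdot \tfrac{p-3}{2} = 1, \]
contradicting $r_K \geq 2$.

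Since all of the substance is packaged inside Theorem \ref{intro_bound_theorem}, the only step here is the bookkeeping above; the main obstacle has already been surmounted upstream, in particular in the improved factor of $2\mu$ (as opposed to Lecouturier's $\mu$), which is precisely what makes the inequality sharp enough to eliminate all but the genus-theoretic contribution $r_K = 1$.
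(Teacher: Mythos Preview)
Your proposal is correct and takes essentially the same approach as the paper: both derive the result immediately from Theorem~\ref{intro_bound_theorem} by noting that regularity gives $r_{\Q(\zeta_p)}=0$ and that there are $(p-3)/2$ values of $i$, so if none of the $M_i$ were a $p$th power the bound would force $r_K\le 1$. The paper phrases this as the direct inequality $2\le p-2-2\mu \Rightarrow \mu<(p-3)/2$, while you phrase it as the contrapositive, but the content is identical.
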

\begin{proof}
If $r_{K} \geq 2$, then the inequality of Theorem \ref{intro_bound_theorem} shows that $2 \leq p - 2 - 2\mu$.
As there are $\frac{p-3}{2}$ many $M_{i}$, it must be the case that $\mu < \frac{p-3}{2}$, i.e. at least one of the $M_{i}$ \emph{is} a $p$th power in $\F_{N}^{\times}$.
\end{proof}

The quantity $M_{1}$ is a $p$th power in $\F_{N}^{\times}$ if and only if $C = \prod_{k=1}^{(N-1)/2} k^{k}$ is (see Section \ref{sec_invariant_relationship} for this comparison).

When $p = 5$, Theorem \ref{intro_converse_theorem} is the full converse to the theorem of Calegari--Emerton, as the only $M_{i}$ is $M_{1}$. Furthermore, we give in Section \ref{5_subsection} an effective method for completely determining $r_K$ in this case:

\begin{theorem}\label{intro_5_theorem}
Let $p = 5$. 
Then, $1 \leq r_K \leq 3$ according to the following conditions:
\begin{enumerate}
\item
$r_{K} \geq 2$ if and only if $M_{1}$ is a $5$th power in $\F_N^\times$.
\item
$r_K = 3$ if and only if both $M_{1}$ and $\frac{\sqrt{5}-1}{2}$ are $5$th powers in $\F_N^\times$.
\end{enumerate}
\end{theorem}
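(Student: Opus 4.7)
The plan is to combine the upper bound of Theorem \ref{intro_bound_theorem} with an explicit Kummer-theoretic computation in the remaining eigenspace of the Galois cohomology controlling $r_K$; this is feasible at $p = 5$ because each eigenspace for $\Delta := \Gal(\Q(\zeta_{5})/\Q)$ is one-dimensional.

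First I specialize Theorem \ref{intro_bound_theorem}. Since $5$ is regular, $r_{\Q(\zeta_{5})} = 0$; the only odd index $i$ in the range $1 \leq i \leq p - 4 = 1$ is $i = 1$, and $(5, -1)$ is automatically a regular pair. Hence the bound reads $r_{K} \leq 3 - 2\mu$, where $\mu \in \{0, 1\}$ equals $1$ iff $M_{1}$ is not a fifth power in $\F_{N}^{\times}$. If $M_{1}$ is not a fifth power, this together with $r_{K} \geq 1$ from genus theory gives $r_{K} = 1$. If $M_{1}$ is a fifth power then so is $C$ (by the comparison in Section \ref{sec_invariant_relationship}), so Calegari--Emerton yields $r_{K} \geq 2$, and we land in $r_K \in \{2, 3\}$. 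This proves part (1) and the overall range $1 \leq r_K \leq 3$.

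For part (2), assume $M_{1}$ is a fifth power and decompose the space of unramified $\F_{5}$-extensions of $K$ under $\Delta$ using the cup product machinery behind Theorems \ref{intro_sigma_bound_theorem}, \ref{intro_odd_invariant_theorem}, and \ref{intro_cohomology_theorem}. The genus-theoretic extension and the Calegari--Emerton extension account for the trivial and the $\cyclo^{-1}$-eigenspaces (the latter being the one governed by $M_{1}$); since $\Cl_{\Q(\zeta_{5})}$ is trivial, any additional extension must come from the remaining nontrivial even eigenspace $\cyclo^{2}$, whose global input is the $\cyclo^{2}$-part of $\Z[\zeta_{5}]^{\times} \otimes \F_{5}$. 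A direct computation shows $\sigma_{2}(\epsilon) = -\epsilon^{-1} \equiv \epsilon^{-1} \pmod{(\Q(\zeta_{5})^{\times})^{5}}$ for $\epsilon = \frac{\sqrt{5} - 1}{2}$ (the generator $\sigma_{2} \in \Delta$ acts on $\sqrt{5}$ as $-1$), so $\epsilon$ lies in the $\cyclo^{2}$-eigenspace; as a fundamental unit of $\Q(\sqrt{5})$ it generates that one-dimensional eigenspace.

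The remaining step is the local-global translation: the Kummer class of $\epsilon$ descends to an unramified $\F_{5}$-extension of $K$ precisely when its image in the local cohomology at the primes of $\Q(\zeta_{5})$ above $N$ is trivial. Since $N \equiv 1 \pmod{5}$, these primes split completely, so triviality amounts to $\epsilon \bmod N$ being a fifth power in $\F_{N}^{\times}$. The main obstacle I anticipate is justifying that no further obstruction survives in the $\cyclo^{2}$-eigenspace — i.e.\ that the cup product criterion reduces, for $p = 5$ in this eigenspace, to the single fifth-power condition on $\epsilon$ — and that the resulting extension is genuinely independent of the previous two. The independence falls out of the $\Delta$-eigenspace decomposition, and the absence of further obstructions should come from the one-dimensionality of the relevant eigenspaces at $p = 5$ combined with the earlier cohomological theorems, after which the two conditions of part (2) assemble cleanly.
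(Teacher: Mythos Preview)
Your treatment of part (1) via Theorem \ref{intro_bound_theorem} and Calegari--Emerton is correct.

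For part (2) there is a genuine gap, and the mechanism you describe is not right. The Kummer class of $\epsilon = \frac{\sqrt{5}-1}{2}$ does \emph{not} descend to an unramified extension of $K$: it spans $H^{1}_{p}(\F_{5}(3))$, and since $h^{1}_{\emptyset}(\F_{5}(3)) = 0$ by regularity this class is genuinely ramified above $5$; as $K/\Q$ is unramified at $5$, base-changing to $K(\zeta_{5})$ does not kill this ramification. So your ``local--global translation'' checks a condition at $N$ while ignoring the real obstruction at $5$. The correct role of $\epsilon$ is \emph{dual}: the statement ``$\epsilon$ is a $5$th power in $\F_{N}^{\times}$'' says that the class in $H^{1}_{p}(\F_{5}(3))$ lies in $H^{1}_{\Sigma^{\ast}}(\F_{5}(3))$, which together with $h^{1}_{\Sigma}(\F_{5}(-1)) = 1$ is equivalent (Proposition \ref{even_H1Sigma_iff_odd_and_p}, via Greenberg--Wiles duality) to $h^{1}_{\Sigma}(\F_{5}(-2)) = 1$.

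Even granting this, you have only recovered the upper bound $r_{K} \leq 3$ of Theorem \ref{intro_sigma_bound_theorem}. The remaining step --- the obstacle you correctly anticipate --- is to show that a nonzero class in $H^{1}_{\Sigma}(\F_{5}(-2))$ actually lifts to $H^{1}_{\Sigma}(V(-2))$, so that the inequality becomes an equality. One-dimensionality of the eigenspaces is not sufficient for this: the analogous lifting can fail for $p = 7$ (see Section \ref{7_subsection}). The paper's argument (Theorem \ref{5_theorem}) exploits the accident that $-2 \equiv \frac{p-1}{2} \bmod{p-1}$ at $p = 5$, so $\cyclo^{-2}$ is self-inverse; this is what drives Theorem \ref{general_middle_lift}, which produces a lift to $H^{1}_{\Sigma^{\ast}}(V(-2))$, after which Lemma \ref{fix_at_p_no_change_at_N} corrects the local behaviour at $p$.
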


The converse to Theorem \ref{intro_converse_theorem} is not true in general: in the case $p = 11$, $N = 353$ one has that both $r_K = 1$ and $M_3$ is an $11$th power in $\F_{353}^{\times}$. However, the converse to Theorem \ref{intro_converse_theorem} is true in the case $p = 7$, which we prove in Section \ref{7_subsection}:

\begin{theorem}\label{intro_7_theorem}
Let $p = 7$.
Then $r_{K} \geq 2$ if and only if one of $M_{1}$ or $M_{3}$ is a $7$th power in $\F_{N}^{\times}$.
\end{theorem}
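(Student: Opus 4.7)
The plan is to prove the two implications separately. The ``only if'' direction follows immediately from Theorem \ref{intro_bound_theorem} specialized to $p = 7$, while the ``if'' direction splits naturally into two cases, one of which is essentially Calegari--Emerton and the other of which requires the new cohomological machinery of the paper.

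For the ``only if'' direction, note that $7$ is regular, so $r_{\Q(\zeta_7)} = 0$, and the odd indices $i$ with $1 \leq i \leq p - 4 = 3$ are exactly $i = 1$ and $i = 3$. Theorem \ref{intro_bound_theorem} then reads $r_K \leq 5 - 2\mu$, where $\mu \in \{0,1,2\}$ counts the elements of $\{M_1, M_3\}$ that are \emph{not} $7$th powers in $\F_N^\times$. If neither $M_1$ nor $M_3$ is a $7$th power then $\mu = 2$, forcing $r_K \leq 1$, and combined with the genus-theoretic lower bound $r_K \geq 1$ this yields $r_K = 1$. Contrapositively, $r_K \geq 2$ forces one of $M_1, M_3$ to be a $7$th power.

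For the ``if'' direction, suppose first that $M_1$ is a $7$th power. Since $M_1$ is a $p$th power exactly when $C = \prod_{k=1}^{(N-1)/2} k^k$ is (as recalled in Section \ref{sec_results}), the Calegari--Emerton theorem at the start of the introduction gives $r_K \geq 2$ directly. The substantive remaining case is when $M_3$ is a $7$th power; here a new argument is needed, since the Calegari--Emerton / Wake--Wang-Erickson construction produces an unramified extension from a cup product in the $\cyclo^{-1}$-eigencomponent controlled by $M_1$, not $M_3$.

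To handle the $M_3$ case I would run the analogous construction in the $\cyclo^{-3}$-eigencomponent, invoking the finer content behind Theorem \ref{intro_bound_theorem}, namely Theorems \ref{intro_sigma_bound_theorem}, \ref{intro_odd_invariant_theorem}, and \ref{intro_cohomology_theorem}. Concretely, Theorem \ref{intro_odd_invariant_theorem} should translate ``$M_3$ is a $7$th power in $\F_N^\times$'' into the vanishing of a specific cup product with coefficients in the $\cyclo^{-3}$-component, and Theorem \ref{intro_cohomology_theorem} should then convert that vanishing into a new class in the Selmer group cutting out unramified $\F_7$-extensions of $K$. The resulting extension is automatically independent of the genus-theoretic one, because it lives in a different $\Gal(K/\Q)$-eigencomponent. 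The main obstacle I expect is the local analysis at $p$: one must verify that the class produced in the $\cyclo^{-3}$-component actually satisfies the local conditions at $p$ required for the corresponding extension of $K$ to be unramified above $p$, and not merely tamely ramified. Once this verification is in place the extension is unramified everywhere and contributes the required second $\F_7$-factor to $\Cl_K \otimes \F_7$.
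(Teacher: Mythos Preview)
Your ``only if'' direction and the $M_1$ case of the ``if'' direction are fine and match the paper's approach. The gap is in the $M_3$ case.

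You write that Theorem \ref{intro_cohomology_theorem} should ``convert that vanishing into a new class in the Selmer group cutting out unramified $\F_7$-extensions of $K$''. It does not: that theorem only gives dimension bounds. Likewise, Theorem \ref{intro_sigma_bound_theorem} gives the lower bound $r_K \geq 1 + h^1_\Sigma(\F_p(-1))$, which is useless here since we are precisely in the case $h^1_\Sigma(\F_p(-1)) = 0$. Knowing $h^1_\Sigma(\F_p(-3)) = 1$ (which is what $M_3$ being a $7$th power buys you via Theorem \ref{intro_odd_invariant_theorem}) does not by itself force $r_K \geq 2$; one must show that the class $a_3$ spanning $H^1_\Sigma(\F_p(-3))$ lifts through the filtration to $H^1_\Sigma(\Sym^2 V \otimes \F_p(-3)) \subseteq H^1_\Sigma(\Sym^{p-4}V \otimes \F_p(2))$, and then invoke the \emph{equality} $r_K = 1 + h^1_\Sigma(\Sym^{p-4}V \otimes \F_p(2))$.

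This lifting is the real content of the proof, and your proposal does not address it beyond anticipating trouble at $p$. In fact the more delicate obstruction is at $N$: one needs any lift of $a_3$ to $H^1_S(V(-3))$ to automatically satisfy the $\Sigma^*$-condition at $N$, and this is exactly Theorem \ref{general_middle_lift}, which exploits the coincidence $-3 \equiv \tfrac{p-1}{2} \bmod{p-1}$ for $p = 7$ (so $\cyclo^{-3}$ is self-inverse). The paper then splits into two subcases according to whether $h^1_\Sigma(\F_p(-2))$ is $0$ or $1$: in the $0010$ case one lifts $a_3$ two steps using Theorem \ref{general_middle_lift} followed by Lemmas \ref{fix_at_p_no_change_at_N} and \ref{fix_at_both}; in the $0110$ case one instead lifts the class in $H^1_\Sigma(\F_p(-2))$ one step via Lemma \ref{fix_at_both}. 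Without Theorem \ref{general_middle_lift} (or an equivalent argument), the $0010$ case cannot be completed, since the standard lifting lemmas of Section \ref{sec_climbing_the_ladder} require inequalities among Selmer dimensions that fail there.
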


This also explains the counterexample $p = 7$, $N = 337$ to the naive converse of the theorem of Calegari--Emerton: in that case, $r_K = 2$ and $M_1$ is not a $7$th power in $\F_{337}^{\times}$, but $M_3$ is.

\subsection{Strategy}\label{sec_strategy}
Put $S = \{p, N, \infty\}$ and let $G_{\Q,S}$ be the Galois group over $\Q$ of the maximal extension of $\Q$ unramified outside of $S$.

The methods used in this article are inspired by the strategy that Wake--Wang-Erickson use to prove the theorem of Calegari--Emerton.
They show that $M_{1}$ being a $p$th power in $\F_{N}^{\times}$ is equivalent to the vanishing of a certain cup product in Galois cohomology.
The vanishing of this cup product implies the existence of a reducible representation $G_{\Q, S} \to \GL_{3}(\F_{p})$, from which an unramified $\F_{p}$-extension of $K$ is constructed. 

Let $\F_p(i)$ denote the module $\F_p$ on which $G_{\Q,S}$ acts by $\cyclo^i$. Choose an isomorphism $\mu_p \to \F_p(1)$ and let $b: G_{\Q,S} \to \F_{p}(1)$ be the cocycle defined by $b(\sigma) = \sigma(N^{1/p})/N^{1/p}$. Let $V \cong \F_p^2$ be the vector space on which $G_{\Q,S}$ acts by the representation
\begin{align*}
G_{\Q, S}   & \to \GL_{2}(\F_{p}) \\*
\sigma      & \mapsto 
\begin{pmatrix}
\cyclo(\sigma) & b(\sigma) \\ 0 & 1
\end{pmatrix}.
\end{align*}

In an abuse of notation, we will also use $b$ to refer to the class of this cocycle in $H^1(G_{\Q,S}, \F_p(1))$, which is just the Kummer class of $N$.  Starting with an unramified $\F_{p}$-extension of $K$, we use the classification of indecomposable $\F_p$-representations of $\Gal(K(\zeta_{p})/\Q) \cong \Z/p\Z \rtimes (\Z/p\Z)^\times$ to show the existence of an upper-triangular Galois representation $G_{\Q, S} \to \GL_{m+2}(\F_{p})$ of the form
\begin{equation*}
\left(\begin{array}{c|c}
 & \\ \bigrep{m}{-m} & * \\ & \\ \hline  & 1 \\
\end{array}\right) = 
\left(\begin{array}{ccccc|c}
    1 & \cyclo^{-1}b & \cyclo^{-2}\frac{b^{2}}{2} & \cdots & \cyclo^{-m}\frac{b^{m}}{m!} & \ast \\
    & \cyclo^{-1} & \cyclo^{-2}b & \cdots & \cyclo^{-m}\frac{b^{m-1}}{(m-1)!} & \ast \\
    & & \cyclo^{-2} & & \vdots & \vdots \\
    & & & \ddots & \cyclo^{-m} b & \ast \\
    & & & & \cyclo^{-m} & \ast \\
    \hline
    & & & & & 1 \\
\end{array}\right).
\end{equation*}

Note that this symmetric power is written using a slightly non-standard basis, see Remark \ref{nonstandard_basis_for_sym} for an explanation as to why we use this basis.

The representations arising in this fashion give rise to classes in the $G_{\Q,S}$-cohomology of the high-dimensional Galois representations $\bigrep{j}{i}$.
We study the local properties of these cohomology classes and show that they satisfy a Selmer condition $\Sigma$, first considered by Wake--Wang-Erickson for the Galois module $\F_{p}(-1)$ (see Section \ref{sec_sigma} for the definition of $\Sigma$ in general).
This Selmer condition $\Sigma$ is chosen to detect exactly those classes whose cup product with $b$ is equal to $0$.
This leads to the following bound on $r_{K}$ in terms of the dimensions of the cohomology groups:

\begin{theorem}\label{intro_sigma_bound_theorem}
Let $h^{1}_{\Sigma}(\F_{p}(-i))$ denote the $\F_{p}$-dimension of  $H^{1}_{\Sigma}(\F_{p}(-i))$.
We have
\begin{equation*}
1 + h^{1}_{\Sigma}(\F_{p}(-1)) \leq r_{K} \leq 1 + \sum_{i=1}^{p-3} h^{1}_{\Sigma}(\F_{p}(-i)).
\end{equation*}
\end{theorem}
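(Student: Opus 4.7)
The strategy is to translate $r_K$ into a count of upper-triangular Galois representations $G_{\Q,S} \to \GL_n(\F_p)$ of the shape displayed in Section \ref{sec_strategy}, and then to match these with classes in the Selmer groups $H^1_\Sigma(\F_p(-i))$. The ``$+1$'' on each side of the inequality represents the contribution of the genus theory extension cut out by $\zeta_N$, which exists unconditionally by Lemma \ref{zeta_N_plus_localized}.

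\textbf{Lower bound.} Given a class $c \in H^1_\Sigma(G_{\Q,S}, \F_p(-1))$, the defining property of $\Sigma$ should be (by the Wake--Wang-Erickson formulation) the vanishing of an obstruction built from the cup product of $c$ with $b$, which guarantees the existence of a cochain $x_c$ such that
\[
\rho_c : \sigma \mapsto \begin{pmatrix} 1 & \cyclo^{-1}(\sigma) b(\sigma) & x_c(\sigma) \\ 0 & \cyclo^{-1}(\sigma) & c(\sigma) \\ 0 & 0 & 1 \end{pmatrix}
\]
is a continuous homomorphism $G_{\Q,S} \to \GL_3(\F_p)$. Restricting to $G_K$, the Kummer class $b$ becomes a coboundary (as $N^{1/p} \in K$), so after a conjugation the first column of $\rho_c|_{G_K}$ defines an $\F_p$-valued homomorphism on $G_K$. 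The local conditions at $p$, $N$, and $\infty$ built into $\Sigma$ are chosen so that this homomorphism is unramified, yielding an unramified $\F_p$-extension of $K$. As $c$ ranges over a basis of $H^1_\Sigma(\F_p(-1))$, one obtains $h^1_\Sigma(\F_p(-1))$ such extensions, linearly independent from each other and from the genus extension, establishing $r_K \geq 1 + h^1_\Sigma(\F_p(-1))$.

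\textbf{Upper bound.} Starting from an unramified elementary abelian $p$-extension $L/K$, one enlarges by composing with $F = K(\zeta_p)$ (note that $[F:K] = p-1$ is coprime to $p$, so $LF/F$ stays unramified with Galois group of the same $\F_p$-dimension as $\Gal(L/K)$) and then takes the Galois closure $\tilde L$ over $\Q$. The group $\Gal(\tilde L/F)$ is an $\F_p[\Gal(F/\Q)]$-module, and the classification of indecomposable $\F_p$-representations of $\Z/p\Z \rtimes (\Z/p\Z)^\times$ decomposes it into summands, each isomorphic to some $\bigrep{m}{i}$. Compatibility with the global representation (which must contain the two-dimensional representation $V$) should pin down the twist to $i = -m$ and restrict the relevant $m$ to $\{1, \ldots, p-3\}$. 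Each such summand is then the top-left block of a Galois representation $G_{\Q,S} \to \GL_{m+2}(\F_p)$ of the upper-triangular form in Section \ref{sec_strategy}, whose rightmost column defines a cohomology class in $H^1(G_{\Q,S}, \bigrep{m}{-m})$. Filtering $\bigrep{m}{-m}$ by its sub-modules with graded pieces $\F_p(-i)$ for $0 \leq i \leq m$, and applying the long exact sequence in cohomology, extracts classes in $H^1_\Sigma(\F_p(-i))$ for $1 \leq i \leq m$; summing contributions over all indecomposable summands yields the upper bound.

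\textbf{Main obstacle.} The chief technical difficulty is the bidirectional verification that the local constraints cut out by $\Sigma$ match the structural conditions on the upper-triangular representations: on the lower-bound side, showing that a class in $H^1_\Sigma(\F_p(-1))$ produces an $x_c$ with the correct behaviour at each place of $S$ so that the resulting $\F_p$-extension of $K$ is actually unramified; on the upper-bound side, verifying that the cohomology classes extracted from the long exact sequence land in $H^1_\Sigma$ and not a larger space. A secondary obstacle is the representation-theoretic step of pinning down the twist of each indecomposable summand to $-m$, which requires carefully tracking how the summands of $\Gal(\tilde L/F)$ fit inside a global representation that must extend $V$, and ensuring that only the range $1 \leq m \leq p-3$ (and not $m = p-2$) contributes nontrivially.
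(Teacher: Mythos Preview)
Your overall architecture is in the right spirit, but the upper-bound sketch has a genuine structural gap, and you are missing two technical ingredients that the paper relies on essentially.

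\textbf{The equality versus the inequality.} The paper does not prove the two inequalities separately by ad hoc constructions. Instead it proves an \emph{equality}
\[
r_K = 1 + h^1_\Sigma(\bigrep{p-4}{2}),
\]
and both bounds then fall out of the filtration on the single fixed module $\bigrep{p-4}{2}$: the lower bound from the inclusion $H^1_\Sigma(\F_p(-1)) \hookrightarrow H^1_\Sigma(\bigrep{p-4}{2})$, the upper bound from the associated graded (Proposition \ref{h1Sigma_filtration_bound}). Your plan to treat the two sides asymmetrically can be made to work for the lower bound (it is the Wake--Wang-Erickson argument, Proposition \ref{wake_wang_erickson_theorem}), but for the upper bound it leaves you with a counting problem you have not addressed.

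\textbf{The gap in your upper bound.} You propose to take a full unramified elementary abelian $p$-extension $L/K$, pass to the Galois closure $\tilde L/\Q$, decompose $\Gal(\tilde L/F)$ into indecomposables, and ``sum contributions.'' But you have not explained why the number of indecomposable summands is at least $\dim_{\F_p}\Gal(L/K)$, nor why distinct summands produce \emph{independent} classes in the various $H^1_\Sigma(\F_p(-i))$. The paper avoids this entirely: it works one $\F_p$-extension $E/K$ at a time and proves (Lemma \ref{fixed_part_of_A_is_1_dimensional}) that the resulting module $A = \Gal(M/K(\zeta_p))$ is already \emph{indecomposable}, with $A^{\Z/p\Z}$ one-dimensional and carrying the trivial $(\Z/p\Z)^\times$-action. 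This simultaneously pins down the twist (your ``secondary obstacle'') and sets up a clean bijection between lines in $\Cl_K \otimes \F_p$ and lines in a single Selmer group $H^1_\Lambda(\bigrep{p-3}{2})$ (Theorem \ref{rank_equals_h1Lambda}). The bounds $m \leq p-2$ (not $p-3$ as you wrote; your indexing is off by one relative to the paper) are established by explicit matrix arguments in Lemmas \ref{bohica_1} and \ref{bohica_2}.

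\textbf{The missing intermediate Selmer condition.} The passage from ``unramified over $K$'' to the condition $\Sigma$ is not direct. The paper introduces an auxiliary Selmer condition $\Lambda$ (unramified over $K(\zeta_p)$ at $p$, anything at $N$) which is the natural receptacle for the classes coming from $\Cl_K$, and then proves an exact sequence
\[
0 \to H^1_N(\F_p) \to H^1_\Lambda(\bigrep{k}{-k}) \to H^1_\Sigma(\bigrep{k-1}{-k}) \to 0
\]
(Proposition \ref{Lambda_to_Sigma_is_exact}) to pass from $\Lambda$ to $\Sigma$. The ``$+1$'' in the theorem is exactly $h^1_N(\F_p)$, the genus-field contribution, appearing as the kernel of this map. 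Your sketch jumps straight to $\Sigma$ without this intermediary, and your ``main obstacle'' paragraph correctly senses that matching local conditions is the crux, but does not propose the mechanism (Lemmas \ref{image_of_H1S_is_H1Sigmastar}--\ref{image_of_H1Lambda_is_H1Sigma}) that actually carries it out.
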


Section \ref{sec_big_general} is dedicated to the proof of this theorem.
Note that this theorem has as a corollary the statement that if $r_{K} \geq 2$, then at least one of the $H^{1}_{\Sigma}(\F_{p}(-i))$ is nonzero.
By a computation using Gauss sums, we relate the dimensions $h^{1}_{\Sigma}(\F_{p}(-i))$ to the quantities $M_{i}$ introduced earlier.

\begin{theorem}\label{intro_odd_invariant_theorem}
Assume that $i$ is odd and $(p,-i)$ is a regular pair.
Then we have $h^{1}_{\Sigma}(\F_{p}(-i)) = 1$ if and only if $M_{i}$ is a $p$th power in $\F_{N}^{\times}$, and $h^1_{\Sigma}(\F_p(-i)) = 0$ otherwise.
\end{theorem}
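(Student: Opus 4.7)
The plan is to compute $h^1_\Sigma(\F_p(-i))$ by first pinning down the ambient group $H^1(G_{\Q,S}, \F_p(-i))$ via the regular pair hypothesis, and then converting the Selmer condition $\Sigma$ into a cup product computation whose answer can be read off from $M_i$.

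First, I would establish that $H^1(G_{\Q,S}, \F_p(-i))$ is low-dimensional with an explicit basis. Using the global Euler--Poincar\'e formula and Poitou--Tate duality, together with the vanishing of $H^0(G_{\Q,S}, \F_p(-i))$ and $H^0(G_{\Q,S}, \F_p(1+i))$ (both of which hold since $1 \leq i \leq p-4$ makes neither $-i$ nor $1+i$ divisible by $p-1$), one reduces a dimension count for $H^1(G_{\Q,S}, \F_p(-i))$ to contributions from the local conditions at $S = \{p, N, \infty\}$ and the $\cyclo^{-i}$ and $\cyclo^{1+i}$ eigenspaces of $\Cl_{\Q(\zeta_p)} \otimes \F_p$. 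The regular pair assumption on $(p,-i)$ kills the first eigenspace directly, and reflection (in the form of Leopoldt's Spiegelungssatz) controls the relevant piece of the second since $i$ is odd. The outcome is that $H^1(G_{\Q,S}, \F_p(-i))$ is spanned by a class ramified at $p$ together with a class ramified at $N$, and the latter is itself controlled by whether $N$ splits appropriately in $\Q(\zeta_p)$.

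Next, I would analyze the Selmer condition $\Sigma$ (as set up in Section \ref{sec_sigma}) place by place. Since $\Sigma$ is designed to detect vanishing of the cup product with $b$, the class ramified at $p$ should automatically satisfy $\Sigma$, using the local behaviour of $b$ at $p$ (essentially that $b$ is the Kummer class of $N$, a $p$-adic unit, and so restricts trivially to the relevant local cohomology at $p$). The class ramified at $N$ satisfies $\Sigma$ if and only if the global cup product $c \cup b \in H^2(G_{\Q,S}, \F_p(1-i))$ vanishes; by the regular pair hypothesis this $H^2$ is one-dimensional and detected by its localization at $N$, so the question reduces to a tame local cup product at $N$.

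The main obstacle, and where the real work lies, is the explicit tame cup product computation that links $c \cup b|_N$ to the quantity $M_i = \prod_{k=1}^{N-1}\prod_{a=1}^{k-1} k^{a^i}$. The idea is to represent $c$ at $N$ via a Gauss sum $\tau(\cyclo^{-i})$ -- equivalently via the Kummer class of an explicit Jacobi-sum-like product in $\Q(\zeta_p)$ -- and then evaluate the tame local cup product with $b$ using Tate's formula, which expresses it as a power residue symbol on $\F_N^\times$. Expanding the Gauss sum character via the Teichm\"uller lift $\F_N^\times \to \Z_p^\times$ produces a Bernoulli-type sum $\sum_{a=1}^{k-1} a^i$ in the exponent of each $k$, which is exactly the inner product in $M_i$; the hypothesis that $i$ is odd ensures the sign conventions line up so that no eigenspace-preserving cancellation occurs. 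Tracking the normalization carefully, the conclusion is that this cup product vanishes precisely when $M_i$ is a $p$th power in $\F_N^\times$, yielding $h^1_\Sigma(\F_p(-i)) = 1$ in that case and $0$ otherwise.
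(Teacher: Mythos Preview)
Your second paragraph contains a genuine error that would make the dimension count come out wrong. The Selmer condition $\Sigma$ at $p$ is $L_p = 0$ (see Section~\ref{sec_sigma}), so a class ramified at $p$ \emph{never} satisfies $\Sigma$; it is immediately excluded, not ``automatically'' included. Your justification via $b|_{G_{\Q_p}}$ is also off: the Kummer class of $N$ at $p$ is trivial only when $N \equiv 1 \bmod p^2$, not in general, and in any case $\Sigma$ is not literally the condition ``cup product with $b$ vanishes'' but a specific set of local conditions. If the $p$-ramified class really did lie in $H^1_\Sigma$, you would be proving $h^1_\Sigma \in \{1,2\}$ rather than $\{0,1\}$.

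The correct logic, which the paper carries out, runs as follows. Under the regularity hypothesis one has $h^1_N(\F_p(-i)) = 1$ (Theorem~\ref{cohomology_theorem_conditional}), and since $H^1_{\ur}(G_{\Q_p}, \F_p(-i)) = 0$ for $i \not\equiv 0$, the generator $c$ of $H^1_N(\F_p(-i))$ is already split at $p$. Thus $H^1_\Sigma(\F_p(-i)) \subseteq H^1_N(\F_p(-i))$ with the inclusion governed entirely by the local condition at $N$: one must decide whether $c|_{G_{K_N}} = 0$. The paper does this not by a direct tame-symbol cup product but by class field theory on the idele side: it builds an explicit Gauss-sum element $\mathcal{G}_{-i} \in \Z[\zeta_p]$ in the $\cyclo^{-i}$-eigenspace, shows its image spans $\ker j$ in the dual picture (Lemma~\ref{odd_invariant_strategy} and Remark~\ref{idele_dual_to_cohomology}), and then computes $\iota(\mathcal{G}_{-i}) = N^{B_{1,\cyclo^i}} S_i^{-1}$ in $\Q_N^\times/\Q_N^{\times p}$ via an expansion of the Gauss sum in the uniformizer (Lemma~\ref{local_computation_gauss_sum}). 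Regularity makes $B_{1,\cyclo^i}$ a unit, so $N \in \ker j$ iff $S_i$ (equivalently $M_i$, by Lemma~\ref{our_invariant_is_lecouturiers}) is a $p$th power. Your third paragraph is in the right spirit --- Gauss sums are indeed the key input --- but the mechanism you sketch (Tate's tame formula producing the inner sum $\sum a^i$ directly) is not how the paper proceeds, and your outline does not make clear how the Bernoulli number $B_{1,\cyclo^i}$ enters or why its nonvanishing is exactly what regularity buys you.
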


The proof of this theorem can be found in Sections \ref{sec_odd_invariant} and \ref{sec_invariant_relationship}.

While not needed to establish Theorem \ref{intro_bound_theorem}, in order to prove Theorem \ref{intro_5_theorem} we need to find a computable criterion for determining when $h^1_\Sigma(\F_p(-i)) = 1$ for even $i$. This is done for $(p,1+i)$ a regular pair in Section \ref{sec_even_invariant}.

Finally, to establish Theorem \ref{intro_bound_theorem}, we need the following theorem, which comes from duality theorems in Galois cohomology.

\begin{theorem}\label{intro_cohomology_theorem}
For any $1 \leq i \leq p-3$
\begin{equation*}
    h^1_\Sigma(\F_p(-i)) \leq 1 + r_{\Q(\zeta_p)}^{\cyclo^{-i}},
\end{equation*}
where $r_{\Q(\zeta_p)}^{\cyclo^{-i}}$ is the $p$-rank of the $\cyclo^{-i}$-eigenspace of $\Cl_{\Q(\zeta_p)}$.

Furthermore, if $p$ is odd and $h^1_\Sigma(\F_p(-i)) = 0$, then $h^1_\Sigma(\F_p(-(p-2-i))) = h^1_\Sigma(\F_p(1+i)) = 0$ as well.
\end{theorem}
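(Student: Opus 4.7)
The plan has two parts: an upper bound obtained by combining inflation–restriction for $\Gal(\Q(\zeta_p)/\Q)$ with Kummer theory, and the vanishing implication obtained from Poitou–Tate duality.

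For the inequality I would first exploit that $[\Q(\zeta_p):\Q] = p-1$ is coprime to $p$, so that inflation–restriction yields
\begin{equation*}
H^1(G_{\Q,S},\F_p(-i)) \cong H^1(G_{\Q(\zeta_p),S},\F_p)^{\cyclo^{-i}},
\end{equation*}
with the Selmer group $H^1_\Sigma(\F_p(-i))$ corresponding to the $\cyclo^{-i}$-eigenspace of an analogous Selmer subgroup of $H^1(G_{\Q(\zeta_p),S},\F_p)$. Kummer theory then sits the latter inside the exact sequence
\begin{equation*}
0 \to \mathcal{O}_{\Q(\zeta_p),S}^{\times}/p \to H^1(G_{\Q(\zeta_p),S},\F_p) \to \Cl_{\Q(\zeta_p),S}[p] \to 0.
\end{equation*}
The classes that are unramified everywhere pull back to a subgroup of $\Hom(\Cl_{\Q(\zeta_p)},\F_p)$ whose $\cyclo^{-i}$-eigenspace has dimension exactly $r_{\Q(\zeta_p)}^{\cyclo^{-i}}$. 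The Selmer condition $\Sigma$ permits a controlled amount of ramification at the primes in $S$; by unwinding its definition I would show that the $\cyclo^{-i}$-part of the corresponding quotient contributes at most one further dimension (coming from the local-unit or tame-character degree of freedom at $p$, with the condition at $N$ forcing either unramified behavior or a fixed ramification type), producing the claimed bound.

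For the implication I would invoke the Greenberg–Wiles (Poitou–Tate) formula for the Tate-dual pair $M = \F_p(-i)$ and $M^{\ast} = \F_p(1+i)$, observing that $-(p-2-i) \equiv 1+i \pmod{p-1}$ so that the two modules named in the statement coincide. The formula reads
\begin{equation*}
\frac{h^1_\Sigma(M)}{h^1_{\Sigma^{\perp}}(M^{\ast})} = \frac{h^0(G_{\Q,S},M)}{h^0(G_{\Q,S},M^{\ast})} \prod_{v \in S} \frac{|\Sigma_v|}{|H^0(G_v,M)|}.
\end{equation*}
The essential point is to verify that $\Sigma$ is self-dual under local Tate duality, meaning $\Sigma_v^{\perp} \subseteq H^1(G_v,M^{\ast})$ is exactly the $\Sigma$-condition on $\F_p(1+i)$ at each $v \in S$. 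Granting this, the right-hand side becomes an explicit product of known local factors, and vanishing of $h^1_\Sigma(M)$ forces vanishing of $h^1_\Sigma(M^{\ast}) = h^1_\Sigma(\F_p(1+i))$, as desired.

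The main obstacle will be unwinding the definition of $\Sigma$ from Section \ref{sec_sigma} carefully enough to (a) pin down the local dimensions $|\Sigma_v|/|H^0(G_v,M)|$ at $p$ and $N$, and (b) verify that $\Sigma$ is self-dual under local Tate duality. Once these two local computations are in hand, both halves of the theorem reduce to bookkeeping: the inequality is a direct dimension count using the Kummer-theoretic sequence, and the implication is an immediate corollary of Greenberg–Wiles.
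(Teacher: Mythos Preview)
Your plan for the second half contains a real error: the Selmer condition $\Sigma$ is \emph{not} self-dual. At $N$ it is indeed self-dual (this is Proposition~\ref{condition_at_N_is_self_dual}), but at $p$ the condition is $L_p = 0$, whose annihilator under local Tate duality is all of $H^1(G_{\Q_p},\F_p(1+i))$, not $0$. So the ``essential point'' you propose to verify will fail, and Greenberg--Wiles will relate $h^1_\Sigma(\F_p(-i))$ not to $h^1_\Sigma(\F_p(1+i))$ but to $h^1_{\Sigma^\ast}(\F_p(1+i))$, where $\Sigma^\ast$ imposes no condition at $p$. The fix is short once you see it: since $\Sigma$ is stricter than $\Sigma^\ast$ at $p$, one has $H^1_\Sigma(\F_p(1+i)) \subseteq H^1_{\Sigma^\ast}(\F_p(1+i))$, and the vanishing follows. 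The paper packages this as Theorem~\ref{cohomology_theorem_sigma} (the Greenberg--Wiles identities between $\Sigma$ and $\Sigma^\ast$) together with the one-step comparison $h^1_{\Sigma^\ast} \leq 1 + h^1_\Sigma$ from Lemma~\ref{change_of_selmer_group}, then takes the contrapositive in Corollary~\ref{even_H1Sigma_implies_odd_H1Sigma}.

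For the inequality, your outline is workable but mislocates the extra dimension: since $L_p = 0$ in $\Sigma$, there is no ``degree of freedom at $p$''; the slack comes entirely from $N$. The paper's argument is cleaner than the Kummer-theoretic unwinding you sketch: simply chain the inclusions $H^1_\Sigma(\F_p(-i)) \subseteq H^1_N(\F_p(-i))$ and compare $H^1_N$ to $H^1_\emptyset$ via Lemma~\ref{change_of_selmer_group} (the local quotient at $N$ is one-dimensional), then invoke $h^1_\emptyset(\F_p(-i)) = r_{\Q(\zeta_p)}^{\cyclo^{-i}}$ from part~\ref{cohomology_theorem_empty} of Theorem~\ref{cohomology_theorem_unconditional}. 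This is Corollary~\ref{H1_N_bound}.
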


This theorem is a combination of Theorem \ref{H1_N_bound} and Corollary \ref{even_H1Sigma_implies_odd_H1Sigma}.

The outline of this article is as follows. 
In Section \ref{sec_cohomology}, we recall some facts about Selmer groups, define the Selmer condition $\Sigma$, and prove several lemmas about the relationship between the condition $\Sigma$ and the vanishing of cup products. 
In Section \ref{sec_big_general}, we relate the $p$-part of $\Cl_K$ to Selmer groups of higher-dimensional representations of $G_{\Q,S}$ and prove Theorem \ref{intro_sigma_bound_theorem}.
In Section \ref{sec_lifting_selmer_classes}, we prove results about when classes in $H^{1}_{\Sigma}(\F_{p}(-i))$ can be lifted to classes in the $\Sigma$-Selmer group of the higher-dimensional representations arising in Section \ref{sec_big_general}.
In Section \ref{sec_effective_criteria}, we demonstrate relationships between Selmer groups of characters and the quantities $M_i$ for odd $i$. For even $i$, the Selmer group is shown to be related to both $M_{1-i}$ and another quantity arising from the units of the cyclotomic field $\Q(\zeta_{p})$. 
Finally, in Section \ref{sec_specific_primes}, we analyze the cases $p = 5$ and $p = 7$ in more detail. 
Appendix \ref{sec_data} contains computer calculations of $r_K$ and the dimensions $h^1_\Sigma(\F_p(-i))$ for $p=5, N \leq 20{,}000{,}000$ and $p = 7, N \leq 100{,}000{,}000$.

One might ask if the techniques of this article can be applied to composite $N$.
The authors are currently considering this generalization.

\subsection{Acknowledgements}\label{sec_acknowledgements}

The authors would like to thank Frank Calegari and Matthew Emerton for many helpful discussions on this topic. We would in particular like to thank Frank Calegari for drawing our attention to this problem and for suggesting the techniques that led to the theorems in Section \ref{pfsetup}. The authors would also like to thank Emmanuel Lecouturier, Romyar Sharifi, Preston Wake, and Carl Wang-Erickson for their encouragement and interest in our results, and for feedback on an early draft of this article. We also thank the anonymous referee for their suggestions which greatly improved the clarity of the paper.

\section{Cohomology Computations}\label{sec_cohomology}

Throughout this article we will work with Selmer groups in the cohomology of various mod-$p$ representations of $G_{\Q} = \Gal(\overline{\Q}/\Q)$.
In fact all representations we consider will be unramified outside of $S = \{p, N, \infty\}$, so will be representations of $G_{\Q, S}$, the Galois group over $\Q$ of the maximal extension of $\Q$ unramified outside of $S$.

\subsection{Notation}\label{sec_notation}
We first establish some notation and conventions used throughout the article as well as recall some facts about group cohomology. Let $A$ be an $\F_p$-vector space with an action of $G_{\Q}$ via $\rho:G_{\Q} \to \GL_n(\F_p)$.
\begin{itemize}
\item Let $\F_{p}$ and $\F_{p}(1)$ be the $1$-dimensional $\F_{p}$-vector spaces on which $G_{\Q}$ acts trivially and by the mod-$p$ cyclotomic character $\cyclo$, respectively. 
Let $A(i) = A \otimes_{\F_{p}} \F_{p}(1)^{\otimes i}$.
Throughout, fix a primitive $p$th root of unity $\zeta_{p}$, which determines an isomorphism $\mu_{p} \cong \F_{p}(1)$.

\item Let $b: G_{\Q,S} \to \F_p(1)$ be the cocycle defined by $\sigma \mapsto \sigma(N^{1/p})/N^{1/p}$. By Kummer Theory,
\begin{equation*}
H^1(G_{\Q,S}, \F_p(1)) = \frac{\Z[1/pN]^\times}{\Z[1/pN]^{\times p}}.
\end{equation*}
The class of $b$ in $H^1(G_{\Q,S}, \F_p(1))$, which we also denote by $b$, is the class of $N$ under this isomorphism.

\item We denote by $A^\vee$ and $A^\ast$ the $G_\Q$-modules
\begin{equation*}
A^{\vee} = \Hom(A, \F_{p}) \quad\text{ and }\quad
A^{\ast} = A^{\vee}(1) = \Hom(A, \F_{p}(1)).
\end{equation*}

\item Given a class $a \in H^1(G_\Q, A)$ represented by a cocyle $a: G_\Q \to A \cong \F_p^n$, we can write
\begin{equation*}
    a(\sigma) = \begin{bmatrix}a_0(\sigma) \\ \vdots \\ a_{n-1}(\sigma)\end{bmatrix}
\end{equation*}
for $\sigma \in G_\Q$. This defines a new $(n+1)$-dimensional $G_{\Q}$-representation which is an extension of $\F_p$ by $A$ via the map
\begin{equation*}
\sigma \mapsto
\left(\begin{array}{ccc|c}
& & & a_0(\sigma) \\
& \rho(\sigma) & & \vdots \\
& & & a_{n-1}(\sigma) \\
\hline
& 0 & & 1
\end{array}\right) \in \GL_{n+1}(\F_p)
\end{equation*}
whose kernel cuts out a Galois extension of $\Q$. Conversely, given a $G_{\Q}$-representation which is an extension of $\F_p$ by $A$ of the above form, we get a cohomology class which we denote by
\begin{equation*}
a = \begin{bmatrix}a_0 \\ \vdots \\ a_{n-1}\end{bmatrix} \in H^1(G_\Q, A).
\end{equation*}

\item Given any characters $\chi, \chi': G_\Q \to \F_p^\times$, let $\F_p(\chi)$ and $\F_p(\chi')$ be the lines on which $G_\Q$ acts by $\chi$ and $\chi'$, respectively. Classes $a \in H^1(G_\Q, \F_p(\chi))$ and $a' \in H^1(G_\Q, \F_p(\chi'))$ correspond to $2$-dimensional $G_\Q$-representations of the forms
\begin{equation*}
\begin{pmatrix}\chi & a \\ 0 & 1\end{pmatrix} \quad\text{ and }\quad
\begin{pmatrix}\chi' & a' \\ 0 & 1\end{pmatrix},
\end{equation*}
respectively. These patch together to form a $3$-dimensional representation
\begin{equation*}
\begin{pmatrix}\chi\chi' & \chi'a & c \\ 0 & \chi' & a' \\ 0 & 0 & 1\end{pmatrix}
\end{equation*}
if and only if $a \cup a' = 0$ as cohomology classes, in which case the coboundary of $-c$ is the cochain $a \cup a'$.
\end{itemize}

For a $G_{\Q}$-module $A$, recall that a Selmer condition is a collection $\mathcal{L} = \{L_{v}\}$ of subspaces $L_{v} \subseteq H^{1}(G_{\Q_{v}}, A)$ where $v$ runs over all places of $\Q$, such that $L_{v}$ is the unramified subspace
\begin{equation*}
H^{1}_{\ur}(G_{\Q_{v}}, A) := H^{1}(G_{\F_{v}}, A^{I_{v}})
\end{equation*}
for almost all places $v$, where $I_v \subseteq G_{\Q_v} = \Gal(\overline{\Q_v}/\Q_v)$ is the inertia subgroup and $G_{\F_v} = G_{\Q_v}/I_v$ is the absolute Galois group of the residue field at $v$.
The Selmer group associated to a set of conditions $\mathcal{L}$ is then
\begin{equation*}
H^{1}_{\mathcal{L}}(G_{\Q}, A) = \ker{\left(H^{1}(G_{\Q}, A) \to \prod_{v} \frac{H^{1}(G_{\Q_{v}}, A)}{L_{v}}\right)}.
\end{equation*}

We will use the following conventions in describing Selmer groups.
\begin{itemize}
\item To simplify notation, we will denote a Selmer group $H^{1}_{\mathcal{L}}(G_{\Q}, A)$ by $H^{1}_{\mathcal{L}}(A)$.

\item As every module $A$ we will consider will be an $\F_{p}$-vector space, we will use the following notation for dimensions:
\begin{equation*}
h^{1}_{\mathcal{L}}(A) = \dim_{\F_{p}}(H^{1}_{\mathcal{L}}(A)).
\end{equation*}

\item All Selmer conditions we use have the unramified condition at places outside of $S$.
In particular, since $p$ is assumed to be odd, we will always have $H^{1}(G_{\R}, A) = 0$, removing the need to specify a local condition at the infinite place.

\item Given a subset $T \subset S = \{p, N, \infty \}$, we will use the notation $H^{1}_{T}(A)$ to denote the Selmer group with the unramified condition at all places outside of $T$, and any behavior allowed at the places of $T$.
\end{itemize}

\begin{remark}
If $A$ is a module for $G_{\Q, S}$ then the Selmer group $H^{1}_{S}(A)$ is equal to the $G_{\Q, S}$-cohomology $H^{1}(G_{\Q, S}, A)$.
Every $G_{\Q}$-module we consider will in fact be a $G_{\Q, S}$-module.
\end{remark}

Given a Selmer condition $\mathcal{L} = \{L_{v}\}$ for $A$, $\mathcal{L}^{\ast} := \{L_{v}^{\perp}\}$ is a Selmer condition for $A^{\ast}$, where the orthogonal complements are taken with respect to the Tate pairing on local cohomology groups.
Note that when $v$ does not divide $\#A$ and the action of $G_{\Q_{v}}$ on $A$ is unramified, we have that $H^{1}_{\ur}(G_{\Q_{v}}, A)^{\perp} = H^{1}_{\ur}(G_{\Q_{v}}, A^{\ast})$ (see Theorem 2.6 of \cite{milne_arithmetic_duality_theorems}).
A main tool that we will use is the following formula for sizes of Selmer groups, due to Greenberg and Wiles.

\begin{theorem}\label{greenberg_wiles}
Let $A$ be a finite $G_{\Q}$-module, and let $\mathcal{L} = \{L_{v}\}$ be a Selmer condition for $A$. Then $H^{1}_{\mathcal{L}}(A)$ and $H^{1}_{\mathcal{L}^{\ast}}(A^{\ast})$ are finite and
\begin{equation*}
\frac{\# H^{1}_{\mathcal{L}}(A)}{\# H^{1}_{\mathcal{L}^{\ast}}(A^{\ast})} = \frac{\# H^{0}(G_{\Q}, A)}{\# H^{0}(G_{\Q}, A^{\ast})} \prod_{v} \frac{\# L_{v}}{\# H^{0}(G_{\Q_{v}}, A)}
\end{equation*}
where the product is over all places $v$ of $\Q$.
\end{theorem}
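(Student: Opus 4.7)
The plan is to reduce the identity to Poitou--Tate duality combined with Tate's global and local Euler characteristic formulas.

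First, I would enlarge the set $\{p, N, \infty\}$ to a finite set $T$ of places of $\Q$ containing all archimedean places, all primes dividing $\#A$, and every place $v$ where $L_v$ differs from $H^1_{\ur}(G_{\Q_v}, A)$. Such a $T$ exists by definition of a Selmer condition. Then both $A$ and $A^*$ are finite $G_{\Q,T}$-modules, and the Selmer groups $H^1_{\mathcal{L}}(A)$ and $H^1_{\mathcal{L}^*}(A^*)$ can be described as kernels of restriction maps from $H^1(G_{\Q,T}, \cdot)$ to $\bigoplus_{v \in T} H^1(G_{\Q_v}, \cdot)/L_v$ (respectively $/L_v^\perp$).

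Next, I would invoke the nine-term Poitou--Tate exact sequence for $A$ over $G_{\Q,T}$ and splice it with these Selmer-defining sequences. Using local Tate duality, which identifies the quotient $H^1(G_{\Q_v}, A)/L_v$ with the $\F_p$-dual of $L_v^\perp$, this produces a single exact sequence in which $H^1_{\mathcal{L}}(A)$ appears as the leftmost nontrivial term and $H^1_{\mathcal{L}^*}(A^*)^\vee$ appears in the middle, flanked by the $H^0$'s and $H^2$'s of the global and local groups indexed by $T$.

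Then I would take the alternating product of cardinalities along this exact sequence; by exactness the product equals $1$. Substituting in Tate's global Euler characteristic formula for $A$ over $G_{\Q,T}$, local Tate duality at each place of $T$, and the local Euler characteristic formula, the factors collapse pairwise. Places $v \notin T$ contribute trivially to the stated identity, since for such $v$ the module $A$ is unramified and $\#H^1_{\ur}(G_{\Q_v}, A) = \#H^0(G_{\Q_v}, A)$, so the ratio $\#L_v/\#H^0(G_{\Q_v}, A)$ is $1$ at those places and the product over all $v$ collapses to a product over $v \in T$. What remains after simplification is exactly the claimed formula.

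The main obstacle is the bookkeeping required to track how local factors at the places of $T$ combine with global contributions from Tate's Euler characteristic formula. Conceptually the proof is a mechanical consequence of Poitou--Tate and the two Euler characteristic formulas; the detailed version of this classical computation appears, for instance, in Chapter I of \cite{milne_arithmetic_duality_theorems}.
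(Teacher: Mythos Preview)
Your outline is the standard derivation of the Greenberg--Wiles formula from Poitou--Tate duality together with the global and local Euler characteristic formulas, and it is correct. The paper does not supply its own proof of this theorem but simply refers the reader to \cite{washington_galois_cohomology}; the argument you sketch is essentially what one finds there (or in the reference you cite), so there is nothing to compare.
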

See \cite{washington_galois_cohomology} for a proof of this theorem.
For all $v$ that don't divide $\# A$ and for which $L_v$ is the subgroup of unramified classes, one has $\# L_v = \# H^0(G_{\Q_v}, A)$. Since every Selmer condition that we will use will have the unramified condition at places outside $S$ and since all of our modules will be $\F_p$-vector spaces, the only terms of the above product which will ever contribute in our applications are the $H^{0}$ term and the local terms at $N$, $p$, and $\infty$.

We will often want to compare sizes of Selmer groups when we change the Selmer conditions.
The following lemma gives a way to do such a comparison.

\begin{lemma}\label{change_of_selmer_group}
Suppose that $\mathcal{L} = \{L_{v}\}$ and $\mathcal{L}' = \{L_{v}'\}$ are two Selmer conditions for $A$ where $\mathcal{L} \subset \mathcal{L}'$ in the sense that $L_{v} \subseteq L_{v}'$ for all $v$.
Then we have
\begin{equation*}
\# H^{1}_{\mathcal{L}'}(A) \leq \# H^{1}_{\mathcal{L}}(A) \prod_{v} \frac{\# L_{v}'}{\# L_{v}}
\end{equation*}
where the product is over all places $v$ of $\Q$.
\end{lemma}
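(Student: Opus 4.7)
The plan is to argue directly by producing an injection of the quotient $H^1_{\mathcal{L}'}(A)/H^1_{\mathcal{L}}(A)$ into $\prod_v L_v'/L_v$, from which the inequality follows immediately by counting orders.

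First I would observe that $H^1_{\mathcal{L}}(A) \subseteq H^1_{\mathcal{L}'}(A)$, since the hypothesis $L_v \subseteq L_v'$ guarantees that any class satisfying the stricter condition at each place also satisfies the looser one. Next I would define a homomorphism
\begin{equation*}
\varphi \colon H^1_{\mathcal{L}'}(A) \longrightarrow \prod_v L_v'/L_v
\end{equation*}
by sending a class $c$ to the tuple whose $v$th component is the image of $\res_v(c) \in L_v'$ in the quotient $L_v'/L_v$. This is well-defined precisely because every class in $H^1_{\mathcal{L}'}(A)$ localizes into $L_v'$ at each place. For cofinitely many $v$, both $L_v$ and $L_v'$ equal the unramified subspace, so the product is a finite direct sum of finite groups.

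The kernel of $\varphi$ is exactly $H^1_{\mathcal{L}}(A)$: a class lies in $\ker\varphi$ if and only if its localization at every $v$ lands in $L_v$, which is the defining condition of $H^1_{\mathcal{L}}(A)$. Hence $\varphi$ induces an injection $H^1_{\mathcal{L}'}(A)/H^1_{\mathcal{L}}(A) \hookrightarrow \prod_v L_v'/L_v$, and comparing cardinalities yields
\begin{equation*}
\# H^1_{\mathcal{L}'}(A) \leq \# H^1_{\mathcal{L}}(A) \cdot \prod_v \frac{\# L_v'}{\# L_v},
\end{equation*}
as desired.

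There is no real obstacle here; the only minor subtlety is ensuring the product on the right is finite (equivalently, that $\varphi$ lands in a finite group), but this is automatic since $L_v = L_v'$ for almost all $v$ by the definition of a Selmer condition. Alternatively one could derive the same inequality from Theorem \ref{greenberg_wiles} applied to both $\mathcal{L}$ and $\mathcal{L}'$ and taking a ratio, but the direct argument above is cleaner and avoids having to discuss the dual Selmer conditions.
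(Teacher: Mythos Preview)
Your proof is correct and is essentially identical to the paper's argument: the paper writes down the exact sequence $0 \to H^{1}_{\mathcal{L}}(A) \to H^{1}_{\mathcal{L}'}(A) \to \bigoplus_{v} L_{v}'/L_{v}$ and concludes by comparing sizes, which is precisely your map $\varphi$ and the identification of its kernel.
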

\begin{proof}
By the definitions of the Selmer groups in question there is an exact sequence
\begin{equation*}
0 \to H^{1}_{\mathcal{L}}(A) \to H^{1}_{\mathcal{L}'}(A) \to \bigoplus_{v} \frac{L_{v}'}{L_{v}}.
\end{equation*}
The lemma follows by considering the sizes of the terms in this sequence.
\end{proof}

\subsection{The Selmer Condition \texorpdfstring{$\Sigma$}{Sigma}}\label{sec_sigma}

We define here the Selmer condition $\Sigma = \{L_{v}\}$.
The local conditions of $\Sigma$ are defined by
\begin{itemize}
\item $L_{p} = 0$.
\item $L_{N} = \ker\left(\res: H^{1}(G_{\Q_{N}}, A) \to H^{1}(G_{K_{N}}, A)\right)$
where $K_N = \Q_N(N^{1/p})$ is the completion of the field $K$ at the unique prime above $N$.
\item $L_{v}$ is the unramified condition at places outside $S$.
\end{itemize}
As usual, we define the dual Selmer condition $\Sigma^{\ast} = \{L_{v}^{\perp}\}$, where $L_{v}^{\perp}$ is the annihilator of $L_{v}$ under the local cup product pairing.
Applied to a $G_{\Q, S}$-module $A$, it is clear that $L_{p}^{\perp} = H^{1}(G_{\Q_{p}}, A)$, and $L_{v}^{\perp} = H^{1}_{\ur}(G_{\Q_{p}}, A)$ for places $v$ outside $S$.
See Proposition \ref{condition_at_N_is_self_dual} for the determination of the condition $L_{N}^{\perp}$.

We will only consider these Selmer conditions $\Sigma$ and $\Sigma^{\ast}$ for modules which are isomorphic as $G_{\Q_{N}}$-modules to $\Sym^{n}(V)$ for some $n \geq 0$, where $V$ is the $2$-dimensional $\F_{p}$-vector space on which $G_{\Q, S}$ acts in some basis by
\begin{equation*}
\begin{pmatrix}
\cyclo & b \\ 0 & 1
\end{pmatrix}.
\end{equation*}
Note that when viewed as a $G_{\Q_{N}}$-module the cyclotomic character $\cyclo$ is trivial, as $\mu_{p} \subset \Q_{N}^{\times}$.

We establish in the following lemma and propositions the statements about $\Sym^{n}V$ and its cohomology as a $G_{\Q_{N}}$-module which will be relevant for applying Theorem \ref{greenberg_wiles}.

\begin{lemma}\label{big_rep_dual}
For $n \leq p-1$, $(\Sym^{n}V)^{\vee} \cong \bigrep{n}{-n}$.
\end{lemma}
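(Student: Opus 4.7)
The plan is to deduce this from two standard facts about a two-dimensional representation $V$ and its symmetric powers, the second of which is where the hypothesis $n \leq p-1$ enters.

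First I would establish the identity $V^{\vee} \cong V \otimes \F_p(-1)$. This comes from the perfect, $G_{\Q,S}$-equivariant wedge pairing
\[
V \times V \longrightarrow \Lambda^{2} V \cong \F_{p}(1),
\]
where the target is $\F_p(1)$ because $\det\begin{pmatrix}\cyclo & b \\ 0 & 1\end{pmatrix} = \cyclo$. Tensoring with $\F_p(-1)$ converts this pairing into an isomorphism $V \otimes \F_p(-1) \xrightarrow{\sim} V^\vee$.

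Next I would argue that for $n \leq p-1$ there is a canonical isomorphism $\Sym^{n}(V^{\vee}) \cong (\Sym^{n} V)^{\vee}$. The natural map sends $\phi_{1} \cdots \phi_{n}$ to the functional
\[
v_{1} \cdots v_{n} \;\longmapsto\; \tfrac{1}{n!} \sum_{\sigma \in S_{n}} \phi_{1}(v_{\sigma(1)}) \cdots \phi_{n}(v_{\sigma(n)}),
\]
which is well-defined precisely because $n!$ is invertible in $\F_p$ under the hypothesis $n \leq p-1$; checking on a basis shows it is an isomorphism, and it is $G_{\Q,S}$-equivariant by construction. The hypothesis $n \leq p-1$ is essential here — for $n \geq p$ the divided-power subtleties break this step, which is the only genuine obstacle to a naive proof.

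Finally I would combine the two isomorphisms, using that the formation of $\Sym^n$ is compatible with tensoring by a line: for any one-dimensional representation $L$, $\Sym^{n}(V \otimes L) \cong \Sym^{n}(V) \otimes L^{\otimes n}$. Putting these together yields
\[
(\Sym^{n} V)^{\vee} \cong \Sym^{n}(V^{\vee}) \cong \Sym^{n}(V \otimes \F_{p}(-1)) \cong \Sym^{n}V \otimes \F_{p}(-n) = \bigrep{n}{-n},
\]
which is the claimed isomorphism of $G_{\Q,S}$-modules.
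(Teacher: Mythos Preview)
Your proof is correct and takes a genuinely different route from the paper's. You construct the isomorphism directly from three standard multilinear-algebra facts: the self-duality-up-to-twist of a two-dimensional representation via $\Lambda^2 V$, the compatibility $\Sym^n(V^\vee)\cong(\Sym^n V)^\vee$ when $n!$ is invertible, and the compatibility of $\Sym^n$ with tensoring by a line. The paper instead argues indirectly using the classification of indecomposable $\F_p$-representations of $G=\Gal(K(\zeta_p)/\Q)$ (its Theorem~\ref{classification_of_reps}): since duals of indecomposables are indecomposable of the same dimension, $(\Sym^n V)^\vee$ must be $\Sym^n V\otimes\F_p(m)$ for some $m$, and then the paper pins down $m=-n$ by restricting the evaluation pairing to the one-dimensional subrepresentation $\F_p(n)\subset\Sym^n V$ and observing that it pairs perfectly with the one-dimensional quotient $\F_p(m)$ of the dual. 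Your argument is more self-contained --- it does not forward-reference the classification theorem and makes the role of the hypothesis $n\le p-1$ completely transparent. The paper's argument, on the other hand, fits naturally into its representation-theoretic framework and avoids having to verify by hand that the explicit pairing on symmetric powers is perfect.
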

\begin{proof}
Note that the action of $G_{\Q}$ on $\Sym^{n}V$ factors through $G = \Gal(K(\zeta_{p})/\Q)$.
The range of $n$ considered are in fact those symmetric powers of $V$ which are indecomposable as $\F_{p}$-representations of $G$ (see Theorem \ref{classification_of_reps}).
The only indecomposable representation of $G$ of dimension $n$ are the twists by $\cyclo$ of $\Sym^{n}V$; since the dual of an indecomposable representation will certainly also be indecomposable and of the same dimension, we must have that $(\Sym^{n}V)^{\vee} \cong \bigrep{n}{m}$ for some $m$.
We consider the evaluation pairing
\begin{equation*}
    \Sym^{n}V \otimes (\Sym^{n}V)^{\vee} \to \F_{p}
\end{equation*}
restricted to the $1$-dimensional subrepresentation $\F_{p}(n)$ of $\Sym^{n}V$.
Since the above pairing is a perfect $G_{\Q}$-module pairing, the annihilator of $\F_{p}(n)$ must be an $n$-dimensional subrepresentation of $(\Sym^{n}V)^{\vee}$. Since $(\Sym^{n}V)^{\vee} \cong \bigrep{n}{m}$ has a unique $n$-dimensional subrepresentation, this means that the pairing descends to a perfect pairing between $\F_{p}(n)$ and the (unique) $1$-dimensional quotient of $(\Sym^{n}V)^{\vee}$.
As this $1$-dimensional quotient is $\F_{p}(m)$, we conclude that $m = -n$, as a perfect $G_{\Q}$-module pairing
\begin{equation*}
    \F_{p}(n) \otimes \F_{p}(m) \to \F_{p}
\end{equation*}
exists if and only if $m = -n$.
\end{proof}

\begin{proposition}\label{basis_of_local_H1}
For all $n$ in the range $0 \leq n \leq p-2$, $H^{1}(G_{\Q_{N}}, \Sym^{n}V)$ is $2$-dimensional, being spanned by
\begin{equation*}
\mathbf{a} = \begin{bmatrix} a \\ 0 \\ \vdots \\ 0 \\ \end{bmatrix}
\quad \text{and} \quad
\mathbf{b} = \begin{bmatrix} \frac{b^{n+1}}{(n+1)!} \\ \frac{b^{n}}{n!} \\ \vdots \\ b \\ \end{bmatrix},
\end{equation*}
where $a$ is a class spanning the $1$-dimensional unramified subspace of $H^{1}(G_{\Q_{N}}, \F_{p})$, and $\mathbf{b}$ is the class corresponding to $\Sym^{n+1}V$, which is an extension of $\F_p$ by $\Sym^{n}V$ as $G_{\Q_{N}}$-modules.

Further, the subgroup $L_{N} \subset H^{1}(G_{\Q_{N}}, \Sym^{n}V)$ is $1$-dimensional and spanned by $\mathbf{b}$.
\end{proposition}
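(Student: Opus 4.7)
The key simplification is that at $N$, the mod-$p$ cyclotomic character $\cyclo$ is trivial (since $p \mid N-1$), so the action on $\Sym^n V$ in the non-standard basis is the unipotent $(n+1) \times (n+1)$ matrix $\exp(b(\sigma) J)$, where $J$ is the nilpotent shift with $1$s on the superdiagonal. The plan is first to establish $h^1(G_{\Q_N}, \Sym^n V) = 2$ via the local Euler characteristic formula at $N$: since $N \neq p$, this gives $\#H^1 = \#H^0 \cdot \#H^2$, and by local Tate duality $\#H^2(G_{\Q_N}, \Sym^n V) = \#H^0(G_{\Q_N}, (\Sym^n V)^*)$. By Lemma \ref{big_rep_dual}, $(\Sym^n V)^* \cong \Sym^n V \otimes \F_p(1-n)$, which is isomorphic to $\Sym^n V$ as a $G_{\Q_N}$-module because $\cyclo$ is trivial at $N$. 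Direct inspection of the unipotent matrix---using that $b|_{G_{\Q_N}}$ is nonzero since $N$ is a uniformizer---shows $H^0(G_{\Q_N}, \Sym^n V)$ is the $1$-dimensional line spanned by the first basis vector $v_0$, yielding $h^1 = 2$.

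Next, I would verify that $\mathbf{a}$ and $\mathbf{b}$ are linearly independent cocycles. The class $\mathbf{a}$ is the image of $a$ under the injection $H^1(G_{\Q_N}, \F_p) \hookrightarrow H^1(G_{\Q_N}, \Sym^n V)$ induced by $\F_p \cong \langle v_0\rangle \hookrightarrow \Sym^n V$; injectivity holds because the relevant connecting map vanishes (since $v_0$ is already $G_{\Q_N}$-fixed). The class $\mathbf{b}$ is the extension class associated to $0 \to \Sym^n V \to \Sym^{n+1} V \to \F_p \to 0$, namely $\sigma \mapsto \sigma(v_{n+1}) - v_{n+1}$, which reads off the last column of the matrix for $\Sym^{n+1} V$ and gives exactly the formula stated. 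To show independence, I project $\Sym^n V$ onto the $1$-dimensional quotient $\Sym^n V / \langle v_0, \ldots, v_{n-1}\rangle$, on which $G_{\Q_N}$ acts trivially: the image of $\mathbf{a}$ is $0$, whereas the image of $\mathbf{b}$ is $b \in H^1(G_{\Q_N}, \F_p)$, which is nonzero because $b$ is the Kummer class of the uniformizer $N$. Combined with $\mathbf{a} \neq 0$ and the dimension count, this shows $\mathbf{a}$ and $\mathbf{b}$ form a basis.

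For the description of $L_N$: by definition of $K_N = \Q_N(N^{1/p})$, the cocycle $b$ vanishes on $G_{K_N}$, so $\mathbf{b}|_{G_{K_N}} = 0$ and $\mathbf{b} \in L_N$. Conversely, $K_N/\Q_N$ is totally ramified of degree $p$ (with uniformizer $N^{1/p}$), so the residue field is unchanged and the restriction map on unramified cohomology $H^1_{\ur}(G_{\Q_N}, \F_p) \to H^1_{\ur}(G_{K_N}, \F_p)$ is an isomorphism. In particular $a|_{G_{K_N}} \neq 0$; and since $G_{K_N}$ acts trivially on $\Sym^n V$, the $G_{K_N}$-equivariant projection to the coefficient of $v_0$ sends $\mathbf{a}|_{G_{K_N}}$ to $a|_{G_{K_N}} \neq 0$, so $\mathbf{a} \notin L_N$. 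Hence $L_N$ is precisely the $1$-dimensional span of $\mathbf{b}$. The main difficulty here is the bookkeeping of the non-standard basis and the careful distinction between ramified and unramified classes at $N$; once the local reduction is made, the analytic computations are routine.
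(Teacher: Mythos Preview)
Your approach is essentially the same as the paper's: local Euler characteristic for the dimension, the two short exact sequences to identify $\mathbf{a}$ and $\mathbf{b}$, and restriction to $G_{K_N}$ to separate them and pin down $L_N$.

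There is one genuine error. You claim that the map $H^1(G_{\Q_N}, \F_p) \to H^1(G_{\Q_N}, \Sym^n V)$ induced by $\F_p \cong \langle v_0\rangle \hookrightarrow \Sym^n V$ is injective, justified by ``the relevant connecting map vanishes (since $v_0$ is already $G_{\Q_N}$-fixed)''. This is false. The connecting map in question is $H^0(G_{\Q_N}, \Sym^{n-1}V) \to H^1(G_{\Q_N}, \F_p)$, and it does \emph{not} vanish: the fixed vector in the quotient lifts to $v_1$, and $\sigma v_1 - v_1 = b(\sigma)\,v_0$, so the connecting map sends the generator to $b$, which is nonzero. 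Thus the kernel of $H^1(G_{\Q_N},\F_p) \to H^1(G_{\Q_N},\Sym^n V)$ is the span of $b$, not zero. The paper makes exactly this computation and concludes only that the \emph{image} of this map is spanned by $\mathbf{a}$. Fortunately your proof does not actually need injectivity: your later argument that $\mathbf{a}|_{G_{K_N}} \neq 0$ already gives $\mathbf{a} \neq 0$, so if you reorder and invoke that fact where you currently write ``combined with $\mathbf{a} \neq 0$'', the proof goes through. Alternatively, observe directly that $a$ is unramified and hence not in the span of $b$, so it survives the map.
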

\begin{proof}
It follows from the Local Euler Characteristic Formula (Theorem 2.8 of \cite{milne_arithmetic_duality_theorems}) that $H^{1}(G_{\Q_{N}}, \Sym^{n}V)$ is $2$-dimensional.
Consider the short exact sequence
\begin{equation*}
0 \to \F_{p} \to \Sym^{n}V \to \Sym^{n-1}V \to 0
\end{equation*}
of $G_{\Q_{N}}$-modules.
The first terms of the associated long exact sequence in $G_{\Q_{N}}$-cohomology give us
\begin{equation*}
0 \to \F_{p} \to \F_{p} \to \F_{p} \overset{b \cup -}{\longrightarrow} H^{1}(G_{\Q_{N}}, \F_{p}) \to H^{1}(G_{\Q_{N}}, \Sym^{n}V).
\end{equation*}
Since $a$ and $b$ form a basis for $H^{1}(G_{\Q_{N}}, \F_{p})$, and the image of $\F_{p} \overset{b \cup -}{\longrightarrow} H^{1}(G_{\Q_{N}}, \F_{p})$ is the span of $b$, we conclude that the image of
\begin{equation*}
    H^{1}(G_{\Q_{N}}, \F_{p}) \to H^{1}(G_{\Q_{N}}, \Sym^{n}V)
\end{equation*}
is spanned by the image of $a$; this is the class $\mathbf{a}$ defined above.
To see that the class $\mathbf{b}$ is nonzero, consider the map
\begin{equation*}
H^{1}(G_{\Q_{N}}, \Sym^{n}V) \to H^{1}(G_{\Q_{N}}, \F_{p})
\end{equation*}
coming from the long exact sequence in $G_{\Q_{N}}$-cohomology associated to the short exact sequence
\begin{equation*}
0 \to \Sym^{n-1}V \to \Sym^{n}V \to \F_{p} \to 0.
\end{equation*}
The image of $\mathbf{b}$ under this map is the class $b \in H^{1}(G_{\Q_{N}}, \F_{p})$, which is nonzero, hence we conclude that $\mathbf{b}$ itself is nonzero.

Finally we see that $\mathbf{a}$ and $\mathbf{b}$ are linearly independent in $H^{1}(G_{\Q_{N}}, \Sym^{n}V)$ (and therefore constitute a basis), as $\mathbf{b}$ is trivial when restricted to $G_{K_{N}}$ (even as a cocycle) and $\mathbf{a}$ is not.
This also establishes that
\begin{equation*}
L_{N} = \ker(H^{1}(G_{\Q_{N}}, \Sym^{n}V) \to H^{1}(G_{K_{N}}, \Sym^{n}V))
\end{equation*}
is $1$-dimensional and is spanned by $\mathbf{b}$.
\end{proof}

\begin{proposition}\label{condition_at_N_is_self_dual}
Suppose that $A \cong \Sym^{n}V$ as a $G_{\Q_{N}}$-representation for some $n \leq p-2$.
Under the local Tate pairing
\begin{equation*}
H^{1}(G_{\Q_{N}}, A) \otimes H^{1}(G_{\Q_{N}}, A^{\ast}) \to H^{2}(G_{\Q_{N}}, \F_{p}(1))
\end{equation*}
the annihilator of $L_{N} \subseteq H^{1}(G_{\Q_{N}}, A)$ is
\begin{equation*}
L_{N}^{\perp} = \ker\left(\res: H^{1}(G_{\Q_{N}}, A^{\ast}) \to H^{1}(G_{K_{N}}, A^{\ast})\right).
\end{equation*}
That is, the dual condition $L_{N}^{\perp}$ is again the condition $L_{N}$ (applied to the module $A^{\ast}$).
\end{proposition}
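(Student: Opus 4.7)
My plan is to reduce the statement to the vanishing of a single cup product, and then verify that vanishing.

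The first step is a dimension count. Since $\mu_p \subset \Q_N$, the cyclotomic character is trivial on $G_{\Q_N}$, so by Lemma \ref{big_rep_dual} we have $A^* \cong A$ as $G_{\Q_N}$-modules, and Proposition \ref{basis_of_local_H1} applies equally well to $A^*$. In particular, $\ker(\res : H^1(G_{\Q_N}, A^*) \to H^1(G_{K_N}, A^*))$ is $1$-dimensional, spanned by a class $\mathbf{b}^*$ defined analogously to $\mathbf{b}$. By the perfectness of local Tate duality on the $2$-dimensional space $H^1(G_{\Q_N}, A^*)$, we have $\dim L_N^\perp = 1$. Thus it suffices to show $\mathbf{b}^* \in L_N^\perp$, i.e., that $\mathbf{b} \cup \mathbf{b}^* = 0$ in $H^2(G_{\Q_N}, \F_p(1))$.

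Next, I would reduce this to a cup product in finite group cohomology. Both $\mathbf{b}$ and $\mathbf{b}^*$ vanish as cocycles on $G_{K_N}$ (as noted in the proof of Proposition \ref{basis_of_local_H1}), so they arise via inflation from classes $\bar{\mathbf{b}} \in H^1(\Gamma, A)$ and $\bar{\mathbf{b}}^* \in H^1(\Gamma, A^*)$, where $\Gamma = \Gal(K_N/\Q_N) \cong \Z/p\Z$. Using $\F_p(1)^{G_{K_N}} = \F_p$ and naturality of cup products, we get $\mathbf{b} \cup \mathbf{b}^* = \inf(\bar{\mathbf{b}} \cup \bar{\mathbf{b}}^*)$, where the right-hand cup product lives in $H^2(\Gamma, \F_p)$. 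The restriction map $H^2(G_{\Q_N}, \F_p(1)) \to H^2(G_{K_N}, \F_p(1))$ is multiplication by $[K_N:\Q_N] = p$ on the invariant maps, hence zero in $\F_p$, so by inflation-restriction the inflation $H^2(\Gamma, \F_p) \to H^2(G_{\Q_N}, \F_p(1))$ is an isomorphism between two $1$-dimensional spaces. Therefore it suffices to show $\bar{\mathbf{b}} \cup \bar{\mathbf{b}}^* = 0$ in $H^2(\Gamma, \F_p)$.

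For this final step, I would interpret $\bar{\mathbf{b}}$ via Yoneda as the class of the $\Gamma$-extension $0 \to A \to \Sym^{n+1}V \to \F_p \to 0$, and similarly for $\bar{\mathbf{b}}^*$. The cup product $\bar{\mathbf{b}} \cup \bar{\mathbf{b}}^*$ then corresponds to the $2$-extension obtained by splicing these via the pairing $A \otimes A^* \to \F_p(1) = \F_p$, and its vanishing should reflect that the tower of symmetric powers $\Sym^k V$ supplies a compatible lift that splits this splice. Equivalently, one can perform a direct computation in the bar complex of $\Gamma$ using the explicit staircase form of the cocycles afforded by Proposition \ref{basis_of_local_H1}, together with the unipotent action $\sigma = I + D$ with $D^{n+1} = 0$ on $A$ and on $A^*$.

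The main obstacle is this last computation. Whichever route one takes, one must carefully track the unipotent action on the $(n+1)$-dimensional module along with the pairing between $A$ and $A^*$, and verify that the resulting $2$-cocycle on $\Gamma$ is a coboundary. The edge case $n = p - 2$ is especially delicate, since the ambient symmetric-power tower does not extend further within the indecomposable $\F_p$-representations of $\Gamma$; here a direct cocycle calculation is needed, ultimately reducing to the triviality of a tame Hilbert symbol such as $(N, N)_N = (-1)^{(N-1)/p}$, which vanishes because $N \equiv 1 \pmod{2p}$.
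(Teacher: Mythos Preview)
Your initial reduction is correct and matches the paper: both cohomology groups are $2$-dimensional, both $L_N$ subgroups are $1$-dimensional, so by perfectness of Tate duality it suffices to show the single cup product $\mathbf{b} \cup \mathbf{b}^* = 0$.

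However, you miss the key observation that makes this immediate, and instead propose an incomplete computation. The paper's argument is one line: choose a $G_{\Q_N}$-isomorphism $\phi : A \to A^*$ (which exists since $A$ is self-dual up to a power of $\chi$, and $\chi$ is trivial on $G_{\Q_N}$), so that $\mathbf{b}^*$ is a scalar multiple of $\phi(\mathbf{b})$. Now consider the cup product
\[
H^1(G_{\Q_N}, A) \otimes H^1(G_{\Q_N}, A) \longrightarrow H^2(G_{\Q_N}, A \otimes A).
\]
This is alternating because cup products are graded-commutative and we are in odd degree (with $p$ odd), so $\mathbf{b} \cup \mathbf{b} = 0$ in $H^2(G_{\Q_N}, A \otimes A)$. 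Applying $\phi$ in the second factor and then the evaluation map $A \otimes A^* \to \F_p(1)$ gives exactly the Tate pairing, and we conclude $\mathbf{b} \cup \mathbf{b}^* = 0$.

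Your detour through the cohomology of $\Gamma = \Gal(K_N/\Q_N)$ is not wrong, but it is unnecessary: the alternating property holds already at the level of $G_{\Q_N}$-cohomology, and would equally well dispatch your $\bar{\mathbf{b}} \cup \bar{\mathbf{b}}^*$ after transporting through $\phi$. Your proposed Yoneda splice or bar-complex computation, the ``main obstacle'' you flag, and the delicate edge case at $n = p-2$ with its Hilbert-symbol analysis are all avoided by this single observation. (Your justification that inflation $H^2(\Gamma,\F_p) \to H^2(G_{\Q_N},\F_p(1))$ is an isomorphism is also not quite complete: vanishing of restriction does not by itself give injectivity of inflation in degree $2$, as one must also control the transgression from $H^1(G_{K_N},\F_p)^\Gamma$.)
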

\begin{proof}
We first note that if suffices to prove this proposition only for $\Sym^{n}V$, as if $f: \Sym^{n}V \to A$ is an isomorphism of $G_{\Q_{N}}$-modules, we have that $f$ induces an isomorphism between first cohomology groups which restricts to an isomorphism between the $L_{N}$ subgroup on each side.
The same also holds for the $L_{N}$ subgroups in the first cohomology of $(\Sym^{n}V)^{\ast}$ and $A^{\ast}$.

Choose an isomorphism of $G_{\Q_{N}}$-modules $\phi: \Sym^{n}V \to (\Sym^{n}V)^{\ast}$ (this is possible as globally $\Sym^{n}V$ is self-dual up to a twist by some power of $\cyclo$ by Lemma \ref{big_rep_dual}, and $\cyclo$ is trivial as a character of $G_{\Q_{N}}$).
We have as before that the isomorphism on first cohomology induced by $\phi$ restricts to an isomorphism of $L_{N}$ subgroups; we write henceforth $\phi(L_{N})$ for the $L_{N}$ condition subgroup of $H^{1}(G_{\Q_{N}}, (\Sym^{n}V)^{\ast})$.
We know that the local Tate pairing in question is a perfect pairing, hence the annihilator $L_{N}^{\perp}$ of $L_{N}$ must also be $1$-dimensional.
Therefore it suffices to prove that $\phi(L_{N})$ is contained in $L_{N}^{\perp}$, i.e. $L_{N} \cup \phi(L_{N}) = 0$.

The cup product map
\begin{equation*}
H^{1}(G_{\Q_{N}}, \Sym^{n}V) \otimes H^{1}(G_{\Q_{N}}, \Sym^{n}V) \to H^{2}(G_{\Q_{N}}, (\Sym^{n}V)^{\otimes 2})
\end{equation*}
is alternating, as it is in an odd degree of cohomology.
In particular under this cup product map $L_{N} \cup L_{N} = 0$.
Applying the isomorphism $\phi$ to the second coordinate gives that under the cup product
\begin{equation*}
H^{1}(G_{\Q_{N}}, \Sym^{n}V) \otimes H^{1}(G_{\Q_{N}}, (\Sym^{n}V)^{\ast}) \to H^{2}(G_{\Q_{N}}, \Sym^{n}V \otimes (\Sym^{n}V)^{\ast})
\end{equation*}
we have that $L_{N} \cup \phi(L_{N}) = 0$.
The local Tate pairing is the composition of the above cup product map with the map
\begin{equation*}
H^{2}(G_{\Q_{N}}, \Sym^{n}V \otimes (\Sym^{n}V)^{\ast}) \to H^{2}(G_{\Q_{N}}, \F_{p}(1))
\end{equation*}
induced by the evaluation pairing $\Sym^{n} V \otimes (\Sym^{n}V)^{\ast} \to \F_{p}(1)$, so we conclude that $\phi(L_{N}) = L_{N}^{\perp}$.
\end{proof}

We finish this section with a lemma regarding the Selmer group $H^1_{\Sigma^{\ast}}(\F_p)$.

\begin{lemma}\label{zeta_N_plus_localized}
The completion of $\Q(\zeta_{N}^{(p)})$ at the prime above $N$ is $K_{N}$. That is, the class $c \in H^{1}_{S}(\F_{p})$ which represents $\Q(\zeta_{N}^{(p)})$ lies in the Selmer subgroup $H^1_{\Sigma^{\ast}}(\F_p)$.
\end{lemma}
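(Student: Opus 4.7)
The plan is to first reduce the Selmer-theoretic conclusion to the claim about completions, then establish that claim by a direct computation inside $\Q_N(\zeta_N)$.

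For the reduction, $c \in H^1_S(\F_p)$ corresponds under the Galois correspondence to the cyclic degree-$p$ extension $\Q(\zeta_N^{(p)})/\Q$, and its localization $c|_{G_{K_N}}$ vanishes in $H^1(G_{K_N},\F_p)$ precisely when the completion of $\Q(\zeta_N^{(p)})$ at the unique prime above $N$ is contained in $K_N$. The first sentence of the lemma therefore places the localization of $c$ at $N$ in $L_N$. By Proposition~\ref{condition_at_N_is_self_dual}, applied with $n = 0$ to $A = \F_p(1) \cong \Sym^0 V$ (as a $G_{\Q_N}$-module), the dual local condition at $N$ for the module $\F_p$ is again $\ker(\res \colon H^1(G_{\Q_N}, \F_p) \to H^1(G_{K_N}, \F_p))$. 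Since $L_p^{\perp} = H^1(G_{\Q_p}, \F_p)$ imposes no condition at $p$ and unramifiedness at places outside $S$ is automatic for classes in $H^1_S$, the Selmer membership $c \in H^1_{\Sigma^*}(\F_p)$ follows.

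To prove the completion claim, I establish the containment $K_N = \Q_N(N^{1/p}) \subseteq \Q_N(\zeta_N)$. The extension $\Q_N(\zeta_N)/\Q_N$ is totally ramified cyclic of degree $N-1$, and since $p \mid N-1$ it contains a unique degree-$p$ subextension of $\Q_N$, which must coincide with the completion $\Q_N(\zeta_N^{(p)})$; once containment is in hand, a degree count forces $K_N = \Q_N(\zeta_N^{(p)})$. The containment reduces to showing that $N$ is a $p$-th power in $\Q_N(\zeta_N)^{\times}$. For this I would factor $N = \prod_{a=1}^{N-1}(1 - \zeta_N^a) = \pi^{N-1}\eta$, where $\pi = 1 - \zeta_N$ is a uniformizer and $\eta = \prod_{a=1}^{N-1}(1 + \zeta_N + \cdots + \zeta_N^{a-1})$ is a unit of the local ring $\mathcal{O}$ of $\Q_N(\zeta_N)$. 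Reducing modulo $\pi$ and applying Wilson's theorem gives $\eta \equiv (N-1)! \equiv -1 \pmod{\pi}$. Since $p$ is odd, $-1 = (-1)^p$ is a $p$-th power in the residue field $\F_N$, and since the higher unit group $1 + \pi\mathcal{O}$ is pro-$N$ and thus uniquely $p$-divisible, a Hensel-type lift shows $\eta$ itself is a $p$-th power in $\mathcal{O}^{\times}$. Combined with $\pi^{N-1} = (\pi^{(N-1)/p})^p$, this exhibits $N$ as a $p$-th power in $\Q_N(\zeta_N)^{\times}$.

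The main obstacle is the $p$-th-power computation; everything else is formal. The argument hinges on Wilson's theorem producing $-1$ as the residue of $\eta$ together with the fact that $-1 = (-1)^p$ is automatically a $p$-th power for odd $p$—precisely what makes the hypothesis $p \geq 3$ suffice, with no further congruence condition on $N$ required.
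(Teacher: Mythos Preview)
Your proof is correct but takes a different route from the paper's. The paper argues via local class field theory: both $K_N$ and $\Q_N(\zeta_N^{(p)})$ are totally ramified degree-$p$ extensions of $\Q_N$, so each corresponds to an index-$p$ subgroup of $\Q_N^\times$ containing $(\Q_N^\times)^p$, and it suffices to show that both norm groups contain the single element $N$. This is immediate for $K_N$ since $N = \Norm_{\Q_N}^{K_N}(N^{1/p})$, and for $\Q_N(\zeta_N^{(p)})$ it follows from the transitivity of norms together with $\Norm_{\Q_N}^{\Q_N(\zeta_N)}(1-\zeta_N) = \prod_{j=1}^{N-1}(1-\zeta_N^j) = N$. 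Your argument instead stays inside $\Q_N(\zeta_N)$ and shows directly that $N$ is a $p$th power there, using the same factorization $N = \pi^{N-1}\eta$ but analyzing the unit $\eta$ via Wilson's theorem and the pro-$N$ structure of the principal units. Your approach avoids invoking local class field theory and instead exploits the cyclicity of $\Q_N(\zeta_N)/\Q_N$ to pin down the unique degree-$p$ subfield; the paper's approach is a touch shorter and more symmetric between the two extensions, but both are equally valid and use essentially the same arithmetic input.
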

\begin{proof}
The two extensions of $\Q_N$ in question are $\Q_N(\zeta_N^{(p)})$ and $\Q_N(N^{1/p})$, both of which are totally ramified $\F_p$-extensions of $\Q_N$. 

We can see their equality by computing the norm subgroup in $\Q_{N}^{\times}$ of both extensions and showing they are equal.
We know that the norm subgroups will contain $(\Q_{N}^{\times})^{p}$ as an index $p$ subgroup; since this is index $p^{2}$ in $\Q_{N}^{\times}$, it suffices to show that our two norm groups both contain the element $N$.
One one hand we have that
\begin{align*}
\Norm_{\Q_{N}}^{K_{N}}(N^{1/p})     & = \prod_{i = 0}^{p-1} \zeta_{p}^{i} N^{1/p} \\*
                                & = N
\end{align*}
but we also have
\begin{align*}
\Norm_{\Q_{N}}^{\Q_{N}(\zeta_{N}^{(p)})} (\Norm_{\Q_{N}(\zeta_{N}^{(p)})}^{\Q_{N}(\zeta_{N})}(1 - \zeta_{N}))   & = \Norm_{\Q_{N}}^{\Q_{N}(\zeta_{N})}(1 - \zeta_{N}) \\*
& = \prod_{j=1}^{N-1} (1 - \zeta_{N}^{j}) \\*
& = N.
\end{align*}
Therefore we conclude that $\Q_{N}(\zeta_{N}^{(p)}) = K_{N}$.
\end{proof}

\subsection{Selmer Groups in the Cohomology of the Cyclotomic Character}\label{sec_cohomology_groups_of_characters}

This section contains a collection of statements about the dimensions of various Selmer groups in the cohomology of $\F_p(i)$.

\begin{definition}
Let $p$ be an odd prime and $0 \leq i \leq p-2$. Let $r_{\Q(\zeta_{p})}^{\cyclo^{i}}$ denote the $p$-rank of the $\cyclo^{i}$-eigenspace of the class group of $\Q(\zeta_{p})$. We say that $(p, i)$ is a \emph{regular pair} if $r_{\Q(\zeta_{p})}^{\cyclo^{i}} = 0$.
\end{definition}

\begin{remark}\label{regular_pair_bernoulli_number}
It is always true that $(p, 0)$ and $(p, 1)$ are regular pairs. 
If $i$ is odd, the theorems of Herbrand and Ribet give the following characterization: $(p, i)$ is a regular pair if and only if the generalized Bernoulli number $B_{1, \cyclo^{-i}}$ (equivalently, the Bernoulli number $B_{p-i}$) is not divisible by $p$. See Section 6.3 of \cite{washington_book} for a more detailed discussion of these facts.
\end{remark}

\begin{theorem}\label{cohomology_theorem_unconditional}
Let $p$ be an odd prime.
The following statements are true.
\begin{enumerate}
\item\label{cohomology_theorem_kw} The group $H^{1}_{S}(\F_{p})$ is $2$-dimensional, spanned by the classes of the homomorphisms defining the degree $p$ subfields $\Q(\zeta_{N}^{(p)})$ and $\Q(\zeta_{p^{2}}^{(p)})$ of $\Q(\zeta_{N})$ and $\Q(\zeta_{p^{2}})$, respectively.

\item\label{cohomology_theorem_kummer} The group $H^{1}_{S}(\F_{p}(1))$ is $2$-dimensional, and spanned by the classes of $N$ and $p$ under the Kummer isomorphism
\begin{equation*}
H^{1}_{S}(\F_{p}(1)) = \frac{\Z[1/pN]^{\times}}{(\Z[1/pN]^{\times})^{p}}.
\end{equation*}

\item\label{cohomology_theorem_empty} For any $i$, we have that
\begin{equation*}
h^{1}_{\emptyset}(\F_{p}(i)) = r_{\Q(\zeta_{p})}^{\cyclo^{i}}.
\end{equation*}

\item\label{cohomology_theorem_reflection} For any odd $i \not\equiv 1 \bmod{p-1}$ we have that
\begin{equation*}
h^{1}_{\emptyset}(\F_{p}(1-i)) \leq h^{1}_{\emptyset}(\F_{p}(i)) \leq 1 + h^{1}_{\emptyset}(\F_{p}(1-i)).
\end{equation*}
This is equivalent to Theorem 10.9 of \cite{washington_book}.
\end{enumerate}
\end{theorem}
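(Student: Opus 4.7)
The plan is to prove each part using a separate classical method: Kronecker--Weber, Kummer theory, inflation--restriction, and Greenberg--Wiles duality.

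For (1), every elementary abelian $p$-extension of $\Q$ unramified outside $S$ embeds in $\Q(\zeta_{p^\infty}) \cdot \Q(\zeta_N)$ by Kronecker--Weber, and the maximal such quotient has $\F_p$-rank equal to the sum of the $p$-ranks of $(\Z/p^\infty)^\times$ and $(\Z/N)^\times$, which is $1 + 1 = 2$ (using $p \mid N - 1$ and $p$ odd). For (2), the Kummer isomorphism $H^1(G_\Q, \F_p(1)) \cong \Q^\times/(\Q^\times)^p$ restricts on $S$-ramified classes to $\Z[1/pN]^\times/(\Z[1/pN]^\times)^p$; since $p$ is odd, $\{\pm 1\}$ is contained in the $p$th powers, so this is $\F_p^2$ with basis $\{p, N\}$.

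For (3), I would use inflation--restriction along the tower $\Q \subset \Q(\zeta_p)$. With $G = \Gal(\Q(\zeta_p)/\Q)$ of order $p-1$ (invertible in $\F_p$), positive-degree $G$-cohomology vanishes, so
\[
H^1(G_\Q, \F_p(i)) \cong H^1(G_{\Q(\zeta_p)}, \F_p)^{\cyclo^{-i}},
\]
using that $\cyclo|_{G_{\Q(\zeta_p)}}$ is trivial. The unramified-everywhere condition transfers across this isomorphism because $G$ acts trivially on each local unramified quotient (trivially at places unramified in $\Q(\zeta_p)/\Q$, and at $p$ because $G$ lies in $I_{\Q_p}$ and so acts trivially on the residue Galois group). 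Class field theory identifies $H^1_\emptyset(G_{\Q(\zeta_p)}, \F_p)$ with $\Hom(\Cl_{\Q(\zeta_p)} \otimes \F_p, \F_p)$, and the $\cyclo^{-i}$-eigenspace of this Hom has $\F_p$-dimension equal to the $\cyclo^i$-eigenspace of $\Cl_{\Q(\zeta_p)} \otimes \F_p$, namely $r_{\Q(\zeta_p)}^{\cyclo^i}$.

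Part (4), the reflection theorem, follows by applying Theorem \ref{greenberg_wiles} to $A = \F_p(i)$ with the unramified-everywhere condition $\emptyset$ and comparing the resulting dual Selmer group to $H^1_\emptyset(\F_p(1-i))$. For $i$ odd and $i \not\equiv 1 \pmod{p-1}$, both global $H^0$ terms vanish, and each local factor in the Greenberg--Wiles product equals $1$: at $\infty$ since $p$ and $i$ are odd, at $N$ because $\cyclo|_{G_{\Q_N}}$ is trivial (giving $\#L_N = p = \#H^0$), and at $p$ because $\cyclo$ is surjective on $I_p$ (giving $\#L_p = 1 = \#H^0$). Consequently $h^1_\emptyset(\F_p(i)) = h^1_{\emptyset^*}(\F_p(1-i))$. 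The dual condition $\emptyset^*$ on $\F_p(1-i)$ agrees with the unramified condition at every place except $p$, where it is the full local cohomology $H^1(G_{\Q_p}, \F_p(1-i))$ of order $p$ (by the local Euler characteristic, using that the relevant $H^0$ and $H^2$ terms vanish). Lemma \ref{change_of_selmer_group} applied to the inclusion $\emptyset \subseteq \emptyset^*$ then yields
\[
h^1_\emptyset(\F_p(1-i)) \leq h^1_{\emptyset^*}(\F_p(1-i)) \leq h^1_\emptyset(\F_p(1-i)) + 1,
\]
which together with the Greenberg--Wiles equality produces the claim. The main technical obstacle is in (4): meticulously computing each local factor of the Greenberg--Wiles product and correctly identifying the dual Selmer condition at $p$, where the unramified condition and its dual diverge.
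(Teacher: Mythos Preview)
Your proof is correct and follows essentially the same approach as the paper: Kronecker--Weber for (1), Kummer theory for (2), inflation--restriction plus class field theory for (3), and Greenberg--Wiles duality combined with the change-of-Selmer-condition lemma for (4). The only minor difference is that you extract both inequalities in (4) from the single Greenberg--Wiles identity $h^1_\emptyset(\F_p(i)) = h^1_{\emptyset^*}(\F_p(1-i))$ together with the trivial inclusion $\emptyset \subseteq \emptyset^*$, whereas the paper derives the second inequality by rerunning the argument with the roles of $i$ and $1-i$ swapped; your route is slightly more economical but substantively the same.
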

\begin{proof}
Parts \ref{cohomology_theorem_kw} and \ref{cohomology_theorem_kummer} follow from the Kronecker-Weber theorem and Kummer theory, respectively.

For part \ref{cohomology_theorem_empty}, note that the restriction map \begin{equation*}
    H^{1}(G_{\Q, S}, \F_{p}(i)) \to H^{1}(G_{\Q(\zeta_{p}), S}, \F_{p}(i))^{\Gal(\Q(\zeta_{p})/\Q)}
\end{equation*}
is an isomorphism by the inflation-restriction sequence.
This latter group can be interpreted as the $\F_{p}$-extensions of $\Q(\zeta_{p})$ which are unramified away from $S$ and whose Galois group is $\F_{p}(i)$ as a $\Gal(\Q(\zeta_{p})/\Q)$-module through the equality
\begin{align*}
H^{1}(G_{\Q(\zeta_{p}), S}, \F_{p}(i))     & = \Hom(G_{\Q(\zeta_{p}), S}, \F_{p}(i)).
\end{align*}
The subgroup $H^{1}_{\emptyset}(\F_{p}(i))$ is those classes which are unramified everywhere.
Global class field theory gives that $r_{\Q(\zeta_{p})}^{\cyclo^{i}}$ is the number of independent $\F_{p}$-extensions of $\Q(\zeta_{p})$ which are unramified everywhere and whose Galois group is $\F_{p}(i)$ as a $\Gal(\Q(\zeta_{p})/\Q)$-module.
Thus we conclude that the dimension $h^{1}_{\emptyset}(\F_{p}(i))$ is equal to $r_{\Q(\zeta_{p})}^{\cyclo^{i}}$, as both count the same set of extensions.

The inequalities in part \ref{cohomology_theorem_reflection} both follow from applying Theorem \ref{greenberg_wiles} and estimating dimensions in a change of Selmer conditions as in Lemma \ref{change_of_selmer_group}.
For instance, by Theorem \ref{greenberg_wiles} applied to $H^{1}_{\emptyset}(\F_{p}(i))$ we have
\begin{align*}
& \frac{\# H^{1}_{\emptyset}(\F_{p}(i))}{\# H^{1}_{\emptyset^{\ast}}(\F_{p}(1-i))} \\*
& = \frac{\# H^{0}(\F_{p}(i))}{\# H^{0}(\F_{p}(1-i))} \prod_{v} \frac{\# L_{v}}{\# H^{0}(G_{\Q_{v}}, \F_{p}(i))} \\
& = \frac{\# H^{0}(\F_{p}(i))}{\# H^{0}(\F_{p}(1-i))} \cdot  \frac{\# H^{1}_{\ur}(G_{\Q_{N}}, \F_{p}(i))}{\# H^{0}(G_{\Q_{N}}, \F_{p}(i))} \cdot \frac{\# H^{1}_{\ur}(G_{\Q_{p}}, \F_{p}(i))}{\# H^{0}(G_{\Q_{p}}, \F_{p}(i))} \cdot \frac{\# H^{1}(G_{\R}, \F_{p}(i))}{\# H^{0}(G_{\R}, \F_{p}(i))} \\
& = \frac{1}{1} \cdot \frac{p}{p} \cdot \frac{1}{1} \cdot \frac{1}{1} \\*
& = 1
\end{align*}
where we know all of the local terms using the Local Euler Characteristic Formula and the parity of $i$.
Stated in terms of dimensions, this relation is
\begin{equation*}
h^{1}_{\emptyset}(\F_{p}(i)) = h^{1}_{\emptyset^{\ast}}(\F_{p}(1-i)).
\end{equation*}
Since we have that the Selmer condition $\emptyset^{\ast}$ contains the Selmer condition $\emptyset$, we may apply Lemma \ref{change_of_selmer_group} to get
\begin{align*}
\# H^{1}_{\emptyset^{\ast}}(\F_{p}(1-i))    & \leq \#H^{1}_{\emptyset}(\F_{p}(1-i)) \frac{\# H^{1}(G_{\Q_{p}}, \F_{p}(1-i))}{\# H^{1}_{\ur}(G_{\Q_{p}}, \F_{p}(1-i))} \\*
                                            & = \#H^{1}_{\emptyset}(\F_{p}(1-i)) \cdot p
\end{align*}
where we have again used the Local Euler Characteristic Formula to determine the local terms.
Stated in terms of dimensions, this relation is
\begin{equation*}
h^{1}_{\emptyset^{\ast}}(\F_{p}(1-i)) \leq h^{1}_{\emptyset}(\F_{p}(1-i)) + 1.
\end{equation*}
Thus we conclude that
\begin{equation*}
h^{1}_{\emptyset}(\F_{p}(i)) \leq h^{1}_{\emptyset}(\F_{p}(1-i)) + 1.
\end{equation*}
The other inequality of part \ref{cohomology_theorem_reflection} follows from a similar argument, starting with $H^{1}_{\emptyset}(\F_{p}(1-i))$.
\end{proof}

\begin{corollary}\label{H1_N_bound}
For any $i$, $h^1_\Sigma(\F_p(i)) \leq 1 + r_{\Q(\zeta_p)}^{\cyclo^{i}}$.
\end{corollary}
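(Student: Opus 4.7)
The plan is to bound $h^1_\Sigma(\F_p(i))$ by introducing an auxiliary Selmer condition, call it $\mathcal{L}_0$, given by the strict local conditions $L_p = L_N = 0$ together with the unramified condition at places outside of $S$. By construction, at every place the local subspace of $\mathcal{L}_0$ is contained in the corresponding local subspace of both $\Sigma$ and $\emptyset$; so the Selmer group $H^1_{\mathcal{L}_0}(\F_p(i))$ sits inside both $H^1_\Sigma(\F_p(i))$ and $H^1_\emptyset(\F_p(i))$. The strategy is then to climb from $\mathcal{L}_0$ up to $\Sigma$ (using Lemma \ref{change_of_selmer_group} to control the size increase) and, separately, to use the literal inclusion $H^1_{\mathcal{L}_0} \subseteq H^1_\emptyset$ together with part (3) of Theorem \ref{cohomology_theorem_unconditional}.

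For the first step, note that $\F_p(i)$ restricts to the trivial $G_{\Q_N}$-module because $\mu_p \subset \Q_N$, so as a $G_{\Q_N}$-representation it is $\Sym^0 V$ and Proposition \ref{basis_of_local_H1} applies. This gives $\dim_{\F_p} L_N^\Sigma = 1$, while $L_N^{\mathcal{L}_0} = 0$. The local conditions of $\Sigma$ and $\mathcal{L}_0$ agree at $p$ (both equal $0$) and at every place outside of $S$ (both equal the unramified subspace). Applying Lemma \ref{change_of_selmer_group} with $\mathcal{L} = \mathcal{L}_0$ and $\mathcal{L}' = \Sigma$ thus yields
\begin{equation*}
h^1_\Sigma(\F_p(i)) \leq h^1_{\mathcal{L}_0}(\F_p(i)) + 1.
\end{equation*}

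For the second step, because $\mathcal{L}_0$ is pointwise contained in $\emptyset$ at every place, the inclusion $H^1_{\mathcal{L}_0}(\F_p(i)) \subseteq H^1_\emptyset(\F_p(i))$ is immediate from the definition of Selmer groups, and part (3) of Theorem \ref{cohomology_theorem_unconditional} identifies $h^1_\emptyset(\F_p(i)) = r_{\Q(\zeta_p)}^{\cyclo^i}$. Combining the two inequalities produces the desired bound
\begin{equation*}
h^1_\Sigma(\F_p(i)) \leq 1 + r_{\Q(\zeta_p)}^{\cyclo^i}.
\end{equation*}

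I do not anticipate a real obstacle here: the argument is essentially a two-step application of Lemma \ref{change_of_selmer_group}, with the only nontrivial input being the dimension count $\dim L_N^\Sigma = 1$ from Proposition \ref{basis_of_local_H1}. The only point requiring care is the verification that $\F_p(i)$ satisfies the hypotheses of that proposition as a $G_{\Q_N}$-module, which is straightforward since the mod-$p$ cyclotomic character is trivial on $G_{\Q_N}$. Note also that no parity or regularity restrictions on $i$ are needed, which is why the corollary holds uniformly for all $i$.
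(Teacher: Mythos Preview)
Your proof is correct and uses essentially the same ingredients as the paper: Lemma \ref{change_of_selmer_group}, the fact that the $\Sigma$-condition at $N$ is $1$-dimensional, and part \ref{cohomology_theorem_empty} of Theorem \ref{cohomology_theorem_unconditional}. The only cosmetic difference is that the paper passes through the intermediate group $H^1_N(\F_p(i))$ (using the inclusion $H^1_\Sigma \subseteq H^1_N$ and then Lemma \ref{change_of_selmer_group} for $\emptyset \subset N$), whereas you pass through a smaller auxiliary group $H^1_{\mathcal{L}_0}(\F_p(i))$ sitting below both; the two routes are mirror images of each other.
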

\begin{proof}
This follows from the fact that $H^{1}_{\Sigma}(\F_{p}(i)) \subseteq H^{1}_{N}(\F_{p}(i))$ and Lemma \ref{change_of_selmer_group} applied to the Selmer conditions $\emptyset$ and $N$ along with part \ref{cohomology_theorem_empty} of the previous Theorem.
\end{proof}

\begin{theorem}\label{cohomology_theorem_conditional}
Let $p$ be an odd prime, let $i \not\equiv 1 \bmod{p-1}$ be odd, and assume that $(p, i)$ is a regular pair.
Then we have the following
\begin{enumerate}
\item\label{cohomology_theorem_empty_regular}
$h^{1}_{\emptyset}(\F_{p}(i)) = h^{1}_{\emptyset}(\F_{p}(1-i)) = 0$.
\item\label{cohomology_theorem_odd_regular}
$h^{1}_{S}(\F_{p}(i)) = 2$, $h^{1}_{p}(\F_{p}(i)) = 1$, and $h^{1}_{N}(\F_{p}(i)) = 1$.
\item\label{cohomology_theorem_even_regular}
$h^{1}_{S}(\F_{p}(1-i)) = 1$.
\item\label{cohomology_theorem_sigma_regular}
$h^{1}_{\Sigma}(\F_{p}(i))$ and $h^{1}_{\Sigma}(\F_{p}(1-i))$ are both at most $1$.
\end{enumerate}
\end{theorem}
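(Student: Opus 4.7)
The plan is to deduce the four statements in order, starting from part (1) (which follows directly from Theorem \ref{cohomology_theorem_unconditional}) and then using Theorem \ref{greenberg_wiles} together with Lemma \ref{change_of_selmer_group} to handle the rest.

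Part (1) is immediate: regularity of $(p,i)$ combined with part (\ref{cohomology_theorem_empty}) of Theorem \ref{cohomology_theorem_unconditional} gives $h^1_\emptyset(\F_p(i)) = r_{\Q(\zeta_p)}^{\cyclo^i} = 0$, and the reflection inequality of part (\ref{cohomology_theorem_reflection}) (which applies because $i$ is odd and $i \not\equiv 1 \bmod{p-1}$) then forces $h^1_\emptyset(\F_p(1-i)) = 0$ as well. In particular $(p, 1-i)$ is automatically a regular pair, a fact I will use in part (4).

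For parts (2) and (3), the main computation is $h^1_S(\F_p(i)) = 2$ and $h^1_S(\F_p(1-i)) = 1$ via Theorem \ref{greenberg_wiles}. The local input is routine: $\cyclo$ is trivial on $G_{\Q_N}$ (because $\mu_p \subset \Q_N$) but ramified on $I_p$, so the Local Euler Characteristic Formula gives $h^0(\Q_N, \F_p(j)) = 1$, $h^1(\Q_N, \F_p(j)) = 2$, $h^0(\Q_p, \F_p(j)) = 0$, and $h^1(\Q_p, \F_p(j)) = 1$ for $j \in \{i, 1-i\}$, with the infinite place contributing a factor determined by the parity of $j$. The dual Selmer group $H^1_{S^\ast}(A^\ast)$ appearing in Greenberg-Wiles is contained in $H^1_\emptyset(A^\ast)$, because at $p, N$ the dual condition $L_v^\perp$ is zero (no larger than the unramified condition); by part (1) this dual group vanishes, and the formula produces the claimed values of $h^1_S$. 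For the remaining statements of part (2), Lemma \ref{change_of_selmer_group} applied to the containments $\emptyset \subset p$ and $\emptyset \subset N$ yields the upper bounds $h^1_p(\F_p(i)), h^1_N(\F_p(i)) \leq 1$, while the defining exact sequence
\begin{equation*}
0 \to H^1_N(\F_p(i)) \to H^1_S(\F_p(i)) \to H^1(G_{\Q_p}, \F_p(i))/H^1_{\ur}
\end{equation*}
together with its analogue interchanging the roles of $p$ and $N$ forces the matching lower bounds $h^1_p, h^1_N \geq h^1_S - 1 = 1$, since each target quotient has $\F_p$-dimension $1$.

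Finally, part (4) is immediate from Corollary \ref{H1_N_bound}: regularity of $(p,i)$ gives $h^1_\Sigma(\F_p(i)) \leq 1 + r_{\Q(\zeta_p)}^{\cyclo^i} = 1$, and the vanishing of $r_{\Q(\zeta_p)}^{\cyclo^{1-i}}$ established in part (1) gives $h^1_\Sigma(\F_p(1-i)) \leq 1$. The only mildly subtle step in the whole argument is the inclusion $H^1_{S^\ast}(A^\ast) \subseteq H^1_\emptyset(A^\ast)$ used to kill the dual Selmer group, particularly at $N$ where $H^1_{\ur}$ is nonzero; the inclusion holds because at each $v \in S$ the dual of the ``anything-allowed'' condition is the zero condition.
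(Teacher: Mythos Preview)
Your proof is correct and uses essentially the same ingredients as the paper: part (1) via Theorem \ref{cohomology_theorem_unconditional}, parts (2)--(3) via Greenberg--Wiles plus change-of-Selmer-condition estimates, and part (4) via the inclusion $H^1_\Sigma \subseteq H^1_N$. The only organizational difference is that the paper applies Greenberg--Wiles directly to each of $H^1_N(\F_p(i))$, $H^1_p(\F_p(i))$, etc.\ and chains the resulting inequalities back to $h^1_\emptyset = 0$, whereas you first compute $h^1_S$ in one Greenberg--Wiles step (using the clean observation $H^1_{S^\ast}(A^\ast) \subseteq H^1_\emptyset(A^\ast) = 0$) and then sandwich $h^1_N$ and $h^1_p$ between $0$ and $h^1_S - 1$; your route is arguably a bit more streamlined but not materially different.
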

\begin{proof}
The first statement follows from parts \ref{cohomology_theorem_empty} and \ref{cohomology_theorem_reflection} of Theorem \ref{cohomology_theorem_unconditional} under the assumption that $(p, i)$ is a regular pair.

Parts \ref{cohomology_theorem_odd_regular} and \ref{cohomology_theorem_even_regular} each follow from applying Theorem \ref{greenberg_wiles} and then estimating changes in Selmer conditions.
For instance, Theorem \ref{greenberg_wiles} for $H^{1}_{N}(\F_{p}(i))$ yields
\begin{equation*}
h^{1}_{N}(\F_{p}(i)) = 1 + h^{1}_{N^{\ast}}(\F_{p}(1-i)).
\end{equation*}
We have that the Selmer condition $N^{\ast}$ means classes which are split at $N$ and have any behavior at $p$, hence $H^{1}_{N^{\ast}}(\F_{p}(1-i)) \subseteq H^{1}_{p}(\F_{p}(1-i))$.
Applying Theorem \ref{greenberg_wiles} to $H^{1}_{p}(\F_{p}(1-i))$ yields
\begin{equation*}
h^{1}_{p}(\F_{p}(1-i)) = h^{1}_{p^{\ast}}(\F_{p}(i)).
\end{equation*}
Since the Selmer condition $p^{\ast}$ is ``unramified at $N$ and split at $p$'', we have 
\begin{equation*}
H^{1}_{p^{\ast}}(\F_{p}(i)) \subseteq H^{1}_{\emptyset}(\F_{p}(i)).
\end{equation*}
The statement $h^{1}_{N}(\F_{p}(i)) = 1$ thus follows from the chain of inequalities
\begin{align*}
h^{1}_{N}(\F_{p}(i))    & = 1 + h^{1}_{N^{\ast}}(\F_{p}(1-i)) \\*
                        & \leq 1 + h^{1}_{p}(\F_{p}(1-i)) \\
                        & = 1 + h^{1}_{p^{\ast}}(\F_{p}(i)) \\
                        & \leq 1 + h^{1}_{\emptyset}(\F_{p}(i)) \\*
                        & = 1 + 0.
\end{align*}

Part \ref{cohomology_theorem_sigma_regular} of the theorem now follows from the inclusions $H^{1}_{\Sigma}(\F_{p}(i)) \subseteq H^{1}_{N}(\F_{p}(i))$ and $H^{1}_{\Sigma}(\F_{p}(1-i)) \subseteq H^{1}_{S}(\F_{p}(1-i))$; in both cases we know that the dimension of the larger group is 1.
\end{proof}

\begin{theorem}\label{cohomology_theorem_sigma}
Let $p$ be an odd prime. Then for odd $3 \leq i \leq p-2$ we have
\begin{align*}
h^{1}_{\Sigma}(\F_{p}(i))   & = h^{1}_{\Sigma^{\ast}}(\F_{p}(1-i)) \\*
h^{1}_{\Sigma^{\ast}}(\F_{p}(i))    & = h^{1}_{\Sigma}(\F_{p}(1-i)) + 1 \\*
h^{1}_{\Sigma^{\ast}}(\F_{p}(i))    & \leq 1 + h^{1}_{\Sigma}(\F_{p}(i)).
\end{align*}
\end{theorem}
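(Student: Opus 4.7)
The plan is to apply the Greenberg--Wiles formula (Theorem \ref{greenberg_wiles}) with the two modules $A = \F_p(i)$ and $A = \F_p(1-i)$ together with the Selmer condition $\Sigma$, and then to derive the third inequality directly from a comparison of the conditions $\Sigma$ and $\Sigma^{\ast}$ on $\F_p(i)$. Throughout I use $\F_p(i)^{\ast} = \F_p(1-i)$ and that by Proposition \ref{condition_at_N_is_self_dual} the local condition $\Sigma^{\ast}$ at $N$ coincides with $\Sigma$ at $N$; the three statements will distinguish themselves only through the behavior at $p$ and at $\infty$.

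The first step is to verify that the nonarchimedean factors in Greenberg--Wiles all collapse to $1$. Since neither $i$ nor $1-i$ is $\equiv 0 \pmod{p-1}$ in our range, the global $H^{0}$ factors are trivial, and the characters $\cyclo^i$ and $\cyclo^{1-i}$ are nontrivial on $G_{\Q_p}$; combined with $L_p = 0$ this makes the $p$-factor $1/1 = 1$. At $N$ the cyclotomic character is trivial because $\mu_p \subset \Q_N^{\times}$, giving $\#H^{0}(G_{\Q_N},-) = p$, matched by $\#L_N = p$ from Proposition \ref{basis_of_local_H1}. Outside $S$, unramified duality gives trivial contributions, and $H^{1}(G_\R,-) = 0$ because $p$ is odd.

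The archimedean factor is the decisive one. When $A = \F_p(i)$, the oddness of $i$ makes complex conjugation act by $-1$, so $H^{0}(G_\R, A) = 0$ and the $\infty$-contribution is $1$; Greenberg--Wiles then yields the first equation $h^{1}_{\Sigma}(\F_p(i)) = h^{1}_{\Sigma^{\ast}}(\F_p(1-i))$. When instead $A = \F_p(1-i)$, the exponent $1-i$ is even so complex conjugation acts trivially, giving $\#H^{0}(G_\R, A) = p$ while $L_\infty \subseteq H^{1}(G_\R, A) = 0$; the $\infty$-factor becomes $1/p$, and taking dimensions in Greenberg--Wiles yields the second equation $h^{1}_{\Sigma^{\ast}}(\F_p(i)) = h^{1}_{\Sigma}(\F_p(1-i)) + 1$.

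For the third inequality I would compare $\Sigma$ and $\Sigma^{\ast}$ directly as Selmer conditions on the single module $\F_p(i)$, rather than invoke duality once more. Proposition \ref{condition_at_N_is_self_dual} identifies them at $N$, at $p$ we have the trivial inclusion $L_p(\Sigma) = 0 \subseteq H^{1}(G_{\Q_p}, \F_p(i)) = L_p(\Sigma^{\ast})$, and the conditions agree outside $S$. Thus $\Sigma \subseteq \Sigma^{\ast}$ on $\F_p(i)$, and the only nontrivial ratio of local subspaces is at $p$, where the Local Euler Characteristic Formula (with $H^{0}$ and $H^{2}$ vanishing at $p$ in our range) gives $\dim H^{1}(G_{\Q_p}, \F_p(i)) = 1$. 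Lemma \ref{change_of_selmer_group} then yields exactly $h^{1}_{\Sigma^{\ast}}(\F_p(i)) \leq h^{1}_{\Sigma}(\F_p(i)) + 1$. The main obstacle is careful bookkeeping of the local dualities, especially the parity of the twist at $\infty$: it is the contrast between odd $i$ and even $1-i$ there that breaks the apparent symmetry between the first two equations and produces the asymmetric extra $+1$.
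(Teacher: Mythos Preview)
Your proof is correct and follows essentially the same approach as the paper: Greenberg--Wiles for the two equalities and Lemma \ref{change_of_selmer_group} for the inequality. The only cosmetic difference is that for the second equality the paper phrases it as applying Theorem \ref{greenberg_wiles} to $H^{1}_{\Sigma^{\ast}}(\F_{p}(i))$ rather than to $H^{1}_{\Sigma}(\F_{p}(1-i))$, but since $(\Sigma^{\ast})^{\ast} = \Sigma$ these are two ways of writing the same duality identity, and your local computations (in particular the archimedean bookkeeping distinguishing odd $i$ from even $1-i$) are exactly what is needed.
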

\begin{proof}
The first two statements are proved by applying Theorem \ref{greenberg_wiles} to $H^{1}_{\Sigma}(\F_{p}(i))$ and $H^{1}_{\Sigma^{\ast}}(\F_{p}(i))$. 
The final statement follows from Lemma \ref{change_of_selmer_group} applied to $\Sigma$ and $\Sigma^\ast$.
\end{proof}

\begin{corollary}\label{even_H1Sigma_implies_odd_H1Sigma}
Let $p$ be an odd prime.
Then for even $i \not\equiv 0 \bmod{p-1}$,
\begin{equation*}
h^{1}_{\Sigma}(\F_{p}(i)) \neq 0 \implies h^{1}_{\Sigma}(\F_{p}(1-i)) \neq 0.
\end{equation*}
\end{corollary}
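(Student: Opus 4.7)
The plan is to derive this corollary directly from Theorem \ref{cohomology_theorem_sigma}, using only the algebraic combination of its three stated relations together with a periodicity relabeling of the twist. The key observation is that the second and third bullets of that theorem can be combined into a single inequality that relates $h^{1}_{\Sigma}$ at $j$ and at $1-j$ for odd $j$, and this inequality, after swapping the roles of $i$ and $1-i$, becomes exactly the implication we seek.

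First I would choose the representative $j$ of $1 - i$ modulo $p-1$ lying in the range $[3, p-2]$. Since $i$ is even and $p$ is odd, $j$ is odd; since the hypothesis rules out $i \equiv 0 \pmod{p-1}$, we also have $j \not\equiv 1 \pmod{p-1}$, so $j$ genuinely falls in the range where Theorem \ref{cohomology_theorem_sigma} applies. This is the only place the parity and non-triviality hypotheses on $i$ are used.

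Next I would combine the second and third statements of Theorem \ref{cohomology_theorem_sigma} applied to this $j$ to produce
\begin{equation*}
h^{1}_{\Sigma}(\F_{p}(1-j)) + 1 \;=\; h^{1}_{\Sigma^{\ast}}(\F_{p}(j)) \;\leq\; 1 + h^{1}_{\Sigma}(\F_{p}(j)),
\end{equation*}
which simplifies to $h^{1}_{\Sigma}(\F_{p}(1-j)) \leq h^{1}_{\Sigma}(\F_{p}(j))$. Because $\F_p(k)$ depends only on $k$ modulo $p-1$, the congruences $1 - j \equiv i$ and $j \equiv 1 - i \pmod{p-1}$ give $\F_p(1-j) \cong \F_p(i)$ and $\F_p(j) \cong \F_p(1-i)$ as $G_{\Q,S}$-modules, so the inequality reads
\begin{equation*}
h^{1}_{\Sigma}(\F_{p}(i)) \;\leq\; h^{1}_{\Sigma}(\F_{p}(1-i)),
\end{equation*}
from which the claimed implication is immediate.

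There is no real obstacle here: the substantive input (the Greenberg--Wiles count, the self-duality of the local condition at $N$ established in Proposition \ref{condition_at_N_is_self_dual}, and the Selmer comparison of Lemma \ref{change_of_selmer_group}) has already been packaged into Theorem \ref{cohomology_theorem_sigma}. The only care required is the bookkeeping to ensure that $j = 1-i \bmod (p-1)$ lies in the odd range $[3, p-2]$ demanded by that theorem, which is exactly what the hypotheses on $i$ provide.
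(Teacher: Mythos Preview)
Your proof is correct and follows essentially the same approach as the paper: both arguments apply the second and third relations of Theorem \ref{cohomology_theorem_sigma} to the odd index $j \equiv 1-i \pmod{p-1}$ and combine them. You package the two steps into the single inequality $h^{1}_{\Sigma}(\F_{p}(i)) \leq h^{1}_{\Sigma}(\F_{p}(1-i))$, whereas the paper phrases it as $h^{1}_{\Sigma^{\ast}}(\F_{p}(1-i)) \geq 2$ and then bounds this above, but the content is identical.
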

\begin{proof}
If $h^{1}_{\Sigma}(\F_{p}(i)) \geq 1$, then by Theorem \ref{cohomology_theorem_sigma} we have $h^{1}_{\Sigma^{\ast}}(\F_{p}(1-i)) \geq 2$.
Comparing via
\begin{equation*}
h^{1}_{\Sigma^{\ast}}(\F_{p}(1-i)) \leq 1 + h^{1}_{\Sigma}(\F_{p}(1-i))
\end{equation*}
gives that $h^{1}_{\Sigma}(\F_{p}(1-i)) \geq 1$.
\end{proof}

\begin{remark}\label{H1Sigma_is_b_at_N}
Under the assumption that $(p,i)$ is a regular pair, we know that any nonzero class in $H^{1}_{\Sigma}(\F_{p}(i))$ (for $i \neq 0, 1$) will be a nonzero multiple of $b$ when restricted to $G_{\Q_{N}}$: being in the span of $b$ is the local condition at $N$ for these modules, and since this class is split at $p$ and unramified everywhere else, the regularity assumption on $p$ forces this class to be nonzero locally at $N$.
\end{remark}

\subsection{Cup Products and \texorpdfstring{$\Sigma$}{Sigma}}\label{sec_cup_products}

The purpose of the Selmer condition $\Sigma^{\ast}$ is to detect those classes whose cup product with $b$ is equal to $0$, according to the following propositions.

\begin{proposition}\label{check_cup_product_locally}
Let $p$ be an odd prime and $0 \leq i \leq p-2$. 
Assume either $i = 0$ or $1$, that $(p, i)$ is a regular pair if $i$ is odd, or that $(p, 1-i)$ is a regular pair if $i$ is even.
Let $A$ and $A'$ be $G_{\Q, S}$-modules with a pairing $A \otimes A' \to \F_{p}(i)$ for some $i$.
Given classes $a \in H^{1}_{S}(A)$ and $a' \in H^{1}_{S}(A')$, the global cup product $a \cup a' \in H^{2}_{S}(G_{\Q}, \F_{p}(i))$ induced by this pairing vanishes if and only if the local cup product $\res_{N}(a) \cup \res_{N}(a') \in H^{2}(G_{\Q_{N}}, \F_{p}(i))$ does.
\end{proposition}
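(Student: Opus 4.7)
The plan is to prove the nontrivial direction by setting $\xi = a \cup a' \in H^2_S(\F_p(i))$, showing that $\res_v(\xi) = 0$ at every place $v$ of $\Q$, and then invoking Poitou-Tate duality to conclude $\xi = 0$. The forward direction is immediate, so suppose $\res_N(\xi) = 0$.

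First I would handle the easy places. For any finite $v \notin S$, the classes $a$ and $a'$ are unramified at $v$, so $\res_v(\xi)$ lies in $H^2_{\ur}(G_{\Q_v}, \F_p(i))$, which vanishes since $\widehat{\Z}$ has cohomological dimension one. At $v = \infty$, the local $H^2$ vanishes because $p$ is odd. At $v = p$, I would use local Tate duality, which identifies $H^2(G_{\Q_p}, \F_p(i))$ with the $\F_p$-dual of $H^0(G_{\Q_p}, \F_p(1-i))$; since $\cyclo|_{G_{\Q_p}}$ surjects onto $\F_p^{\times}$, this $H^0$ vanishes for all $0 \leq i \leq p-2$ except $i = 1$. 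For the lone exceptional case $i = 1$, $H^2(G_{\Q_p}, \F_p(1)) \cong \F_p$ is the local Brauer group; here the global reciprocity law $\sum_v \textnormal{inv}_v(\res_v \xi) = 0$ forces $\textnormal{inv}_p(\res_p \xi) = 0$, since every other local invariant has already been shown to vanish.

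With $\xi$ now known to vanish at every place, the remaining task is to verify that the global-to-local map on $H^2$ is injective under the hypotheses. From the Poitou-Tate nine-term exact sequence I would extract
\begin{equation*}
\bigoplus_{v \in S} H^1(G_{\Q_v}, \F_p(i)) \xrightarrow{\delta} H^1_S(\F_p(1-i))^{\vee} \to H^2_S(\F_p(i)) \to \bigoplus_{v \in S} H^2(G_{\Q_v}, \F_p(i)).
\end{equation*}
Local Tate duality identifies $\delta$ with the transpose of the restriction $H^1_S(\F_p(1-i)) \to \bigoplus_{v \in S} H^1(G_{\Q_v}, \F_p(1-i))$, so a rank count shows that the kernel of the rightmost map has dimension equal to the subspace of $H^1_S(\F_p(1-i))$ consisting of classes that are locally trivial at every $v \in S$. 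Every such class is unramified everywhere, so this dimension is bounded above by $h^1_\emptyset(\F_p(1-i))$.

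The final step is a case analysis showing $h^1_\emptyset(\F_p(1-i)) = 0$ in each of the four allowed cases. For $i = 0$, $h^1_\emptyset(\F_p(1)) = r_{\Q(\zeta_p)}^{\cyclo} = 0$ since $(p, 1)$ is always a regular pair. For $i = 1$, $h^1_\emptyset(\F_p) = r_{\Q(\zeta_p)}^{\cyclo^0} = 0$ by Kronecker-Weber. For odd $i \neq 1$ with $(p, i)$ regular, part \ref{cohomology_theorem_empty} of Theorem \ref{cohomology_theorem_unconditional} gives $h^1_\emptyset(\F_p(i)) = 0$, and the reflection inequality in part \ref{cohomology_theorem_reflection} then forces $h^1_\emptyset(\F_p(1-i)) = 0$. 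For even $i \neq 0$ with $(p, 1-i)$ regular, the vanishing is immediate from part \ref{cohomology_theorem_empty}. I expect the main obstacle to be the careful bookkeeping needed to extract the correct kernel dimension from the Poitou-Tate sequence via local duality; the reciprocity argument at $p$ in the $i = 1$ case is the other point demanding particular care.
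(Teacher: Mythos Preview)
Your proof is correct and reaches the same conclusion, but by a different path than the paper's argument. The paper proves the stronger fact that the single restriction map $H^2_S(\F_p(i)) \to H^2(G_{\Q_N}, \F_p(i))$ is injective: it computes via the Global Euler Characteristic Formula and Theorems \ref{cohomology_theorem_unconditional}, \ref{cohomology_theorem_conditional} that both sides are $1$-dimensional, and then uses the \emph{tail} of the Poitou--Tate sequence
\[
H^{2}_{S}(\F_{p}(i)) \to H^{2}(G_{\Q_{p}}, \F_{p}(i)) \oplus H^{2}(G_{\Q_{N}}, \F_{p}(i)) \to H^{0}(G_{\Q, S}, \F_{p}(1-i))^{\vee} \to 0
\]
to deduce surjectivity (for $i=1$ one checks the image lands in the $N$-component). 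You instead show injectivity of the full localization $H^2_S(\F_p(i)) \to \bigoplus_{v \in S} H^2(G_{\Q_v}, \F_p(i))$ via the \emph{middle} of Poitou--Tate, bounding the kernel by $h^1_\emptyset(\F_p(1-i))$ and then killing this by the same regularity/reflection facts; you then need the extra step of showing $\res_p(\xi)=0$ separately (by local duality for $i\neq 1$, by reciprocity for $i=1$). The paper's route is shorter and yields the sharper injectivity-at-$N$ statement directly; your route avoids invoking the Global Euler Characteristic computation of $h^2_S(\F_p(i))$ and makes the dependence on $h^1_\emptyset(\F_p(1-i))=0$ more transparent. Note also that your discussion of places $v \notin S$ is unnecessary: the Poitou--Tate sequence you invoke only involves $v \in S$, so the vanishing at $N$, $p$, $\infty$ already suffices.
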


\begin{proof}
We first claim that the restriction map $H^2_S(\F_p(i)) \to H^2(G_{\Q_N}, \F_p(i))$ is injective. Under the regularity assumption on $(p, i)$, the Global Euler Characteristic Formula (Theorem 5.1 of \cite{milne_arithmetic_duality_theorems}) combined with Theorems \ref{cohomology_theorem_unconditional} and \ref{cohomology_theorem_conditional} gives us that $H^2_S(\F_p(i))$ is $1$-dimensional. Similarly, $H^2(G_{\Q_N}, \F_p(i))$ is $1$-dimensional by Local Tate Duality (Corollary 2.3 of \cite{milne_arithmetic_duality_theorems}). Thus, to prove injectivity it suffices to prove surjectivity.

The end of the Poitou-Tate exact sequence (Theorem 4.10 of \cite{milne_arithmetic_duality_theorems}) for $\F_{p}(i)$ is
\begin{equation*}
H^{2}_{S}(\F_{p}(i)) \to H^{2}(G_{\Q_{p}}, \F_{p}(i)) \oplus H^{2}(G_{\Q_{N}}, \F_{p}(i)) \to H^{0}(G_{\Q, S}, \F_{p}(1-i))^{\vee} \to 0.
\end{equation*}
If $i \neq 1$ the surjectivity is immediate, as the final term in this sequence is 0.
If $i = 1$, the definitions of the maps involved show that the image of $H^{2}_{S}(\F_{p}(i))$ lands in $H^{2}(G_{\Q_{N}}, \F_{p}(i))$.

Thus, the commutativity of the diagram
\begin{equation*}
\begin{tikzcd}
H^{1}_{S}(A) \otimes H^{1}_{S}(A') \arrow[r, "\cup"] \arrow[d] & H^{2}_{S}(G_{\Q}, \F_{p}(i)) \arrow[d, hook] \\
H^{1}(G_{\Q_{N}}, A) \otimes H^{1}(G_{\Q_{N}}, A') \arrow[r, "\cup"] & H^{2}(G_{\Q_{N}}, \F_{p}(i)) 
\end{tikzcd} 
\end{equation*}
shows that the non-vanishing of $a \cup a'$ can be detected locally, as desired.
\end{proof}

\begin{remark}\label{irregular_H2}
In the notation of the previous proposition, when $(p, i)$ is not a regular pair it is still (clearly) true that $a \cup a' = 0$ implies that $\res_N(a) \cup \res_N(a') = 0$. However, the converse need not hold in this setting as $h^2_S(\F_p(i))$ need not be equal to $1$, and thus the map $H^2_S(\F_p(i)) \to H^2(G_{\Q_N},\F_p(i))$ need not be injective.
\end{remark}

\begin{proposition}\label{classes_cup_iff_H1Sigmastar}
Let $p$ be any odd prime and $0 \leq i \leq p-2$. Let $A, A'$ be as in the previous proposition, but assume now that $A, A' \cong \Sym^nV$ as $G_{\Q_N}$-representations. If $a \in H^1_{\Sigma^*}(A)$ and $\res_N(a) \neq 0$, and if $a' \in H^1_S(A')$, then $\res_N(a) \cup \res_N(a') = 0$ if and only if $a' \in H^1_{\Sigma^*}(A')$. In particular, if $a \cup a' = 0$ then $a' \in H^1_{\Sigma^*}(A')$.

Furthermore, if either $i = 0$ or $1$, or if $(p, i)$ is a regular pair and $i$ is odd, or if $(p, 1-i)$ is a regular pair and $i$ is even, then $a \cup a' = 0$ if and only if $a' \in H^1_{\Sigma^*}(A')$.
\end{proposition}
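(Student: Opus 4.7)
The plan is to reduce the first biconditional to a local computation at $N$, using the two-dimensional basis of $H^1(G_{\Q_N}, A)$ from Proposition \ref{basis_of_local_H1}. Since $a \in H^1_{\Sigma^*}(A)$ and $\res_N(a) \neq 0$, Proposition \ref{condition_at_N_is_self_dual} forces $\res_N(a) = c\mathbf{b}$ for some $c \in \F_p^\times$. Writing $\res_N(a') = \alpha\mathbf{a} + \beta\mathbf{b}$ in the corresponding basis, the condition $a' \in H^1_{\Sigma^*}(A')$ is equivalent to $\alpha = 0$, and bilinearity gives
\begin{equation*}
\res_N(a) \cup \res_N(a') = c\alpha\,(\mathbf{b} \cup \mathbf{a}) + c\beta\,(\mathbf{b} \cup \mathbf{b})
\end{equation*}
in the one-dimensional group $H^2(G_{\Q_N}, \F_p(i))$.

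The proof then reduces to two local claims: (i) $\mathbf{b} \cup \mathbf{b} = 0$ and (ii) $\mathbf{b} \cup \mathbf{a} \neq 0$. For (i), I would reuse the graded-commutativity argument from the proof of Proposition \ref{condition_at_N_is_self_dual}: locally at $N$, $\cyclo$ is trivial and the pairing $A \otimes A' \to \F_p(i)$ is, up to scalar, the natural symmetric pairing on $\Sym^nV$, so cup product in cohomological degree one is alternating and kills any class of the form $\mathbf{b} \cup \mathbf{b}$. For (ii), I would invoke Local Tate Duality, which makes the local cup product pairing perfect on the two-dimensional spaces $H^1(G_{\Q_N}, A)$ and $H^1(G_{\Q_N}, A')$; the annihilator $\mathbf{b}^\perp$ is then one-dimensional, it contains $\mathbf{b}$ by (i), and hence cannot contain the linearly independent class $\mathbf{a}$.

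With (i) and (ii) in hand, the local cup product collapses to $c\alpha(\mathbf{b} \cup \mathbf{a})$, which vanishes if and only if $\alpha = 0$, if and only if $a' \in H^1_{\Sigma^*}(A')$, establishing the first biconditional. The ``in particular'' claim is then immediate from naturality of cup product under restriction: global vanishing of $a \cup a'$ forces the local vanishing $\res_N(a) \cup \res_N(a') = 0$, and the biconditional takes over. The ``furthermore'' assertion combines the local biconditional with Proposition \ref{check_cup_product_locally}, whose hypotheses on $i$ are precisely what is needed to promote local vanishing at $N$ to global vanishing of $a \cup a'$. The main obstacle I anticipate is justifying (i) at the stated level of generality, since Proposition \ref{condition_at_N_is_self_dual} was phrased for the canonical evaluation pairing $A \otimes A^\ast \to \F_p(1)$; the resolution is that locally at $N$, where $A \cong A' \cong \Sym^nV$ and $\cyclo$ acts trivially, any $G_{\Q_N}$-equivariant pairing $A \otimes A' \to \F_p(i)$ is a scalar multiple of a pairing for which the alternating argument already applies.
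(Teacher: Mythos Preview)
Your proposal is correct and matches the paper's approach: the paper packages your claims (i) and (ii) into a direct citation of Proposition~\ref{condition_at_N_is_self_dual} (which says $L_N^\perp = L_N$, i.e., the annihilator of $\mathbf{b}$ is exactly the span of $\mathbf{b}$) and then invokes Proposition~\ref{check_cup_product_locally} for the final assertion, just as you do. Your flagged concern about whether the argument applies to an arbitrary pairing $A \otimes A' \to \F_p(i)$ rather than the Tate pairing is legitimate, but the paper's proof makes the same tacit identification; in every application the pairing is perfect, so locally at $N$ it agrees (up to the chosen isomorphisms $A \cong A' \cong \Sym^n V$) with the Tate pairing and the citation of Proposition~\ref{condition_at_N_is_self_dual} goes through.
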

\begin{proof}
Since $\res_{N}(a)$ is nonzero, Proposition \ref{basis_of_local_H1} gives that $\res_{N}(a) = u\mathbf{b}$ for some nonzero $u \in \F_{p}$.
Furthermore, Proposition \ref{condition_at_N_is_self_dual} shows that the statement $u\mathbf{b} \cup \res_{N}(a') = 0$ implies that $\res_{N}(a')$ is also multiple of $\mathbf{b}$ (possibly $0$), which is the condition for $a'$ to be an element of the Selmer group $H^{1}_{\Sigma^{\ast}}(A')$.
The final statement of the proposition then follows from Proposition \ref{check_cup_product_locally}.
\end{proof}

\section{Selmer groups and \texorpdfstring{$\text{Cl}_K$}{CL\textunderscore K}}\label{sec_big_general}
The goal of this section is to relate the $p$-rank $r_K$ of the class group of $K$ to the rank of a certain Selmer subgroup of the Galois cohomology of a cyclotomic twist of $\Sym^{p-4}V$, which in turn is bounded by dimensions of Selmer subgroups in the Galois cohomology of characters.

The main theorem of this section is:
\begin{theorem}\label{rank_equals_h1Sigma}
Let $p$ be odd. Then
\begin{equation*}
r_{K} = 1 + h^{1}_{\Sigma}(\bigrep{p-4}{2}).
\end{equation*}
Additionally, there is a filtration of $\bigrep{p-4}{2}$ that induces the following lower and upper bounds on $r_{K}$:
\begin{equation*}
1 + h^{1}_{\Sigma}(\F_{p}(-1)) \leq r_{K} \leq 1 + \sum_{i=1}^{p-3} h^{1}_{\Sigma}(\F_{p}(-i)).
\end{equation*}
\end{theorem}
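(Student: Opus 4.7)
The plan is to prove the theorem in two steps: first establish the equality $r_K = 1 + h^1_\Sigma(\bigrep{p-4}{2})$ via a correspondence between unramified $\F_p$-extensions of $K$ and cohomology classes, then deduce the bounds from a filtration of $\bigrep{p-4}{2}$. For the equality, I would start from the Hilbert $p$-class field $L$ of $K$ and form the compositum $M = L \cdot K(\zeta_p)$, which is Galois over $\Q$. The quotient $\Gal(M/K(\zeta_p)) \otimes \F_p$ carries a conjugation action of $\Gal(K(\zeta_p)/\Q) \cong \Z/p\Z \rtimes (\Z/p\Z)^\times$, making it an $\F_p$-representation of this affine group. Using the forthcoming classification (Theorem \ref{classification_of_reps}) of indecomposable representations of this group as twists $\Sym^m V \otimes \F_p(j)$, I would identify which indecomposable pieces occur and assemble them, together with the natural $G_{\Q,S}$-action on $\bigrep{p-4}{2}$, into an upper-triangular representation $\rho : G_{\Q, S} \to \GL_{p-2}(\F_p)$ analogous to the one displayed in the introduction, whose top-left $(p-3) \times (p-3)$ block realizes $\bigrep{p-4}{2}$ and whose bottom-right corner is trivial. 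The rightmost column of $\rho$ defines a class in $H^1(G_{\Q, S}, \bigrep{p-4}{2})$, and I would verify that the local conditions defining $\Sigma$ — vanishing at $p$ and trivial restriction to $G_{K_N}$ at $N$ — precisely encode the unramifiedness of the corresponding extension of $K$. The converse direction reconstructs an unramified extension from any class in $H^1_\Sigma(\bigrep{p-4}{2})$. The additional $+1$ comes from the canonical degree-$p$ subfield of $K(\zeta_N)/K$, which is unramified over $K$ by Lemma \ref{zeta_N_plus_localized} but corresponds to the trivial $G_{\Q, S}$-module and so is not counted by $H^1_\Sigma(\bigrep{p-4}{2})$.

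For the bounds, $\bigrep{p-4}{2}$ inherits a filtration from the standard highest-weight filtration of $\Sym^{p-4}V$, with graded pieces read bottom to top being $\F_p(\cyclo^{p-2}), \F_p(\cyclo^{p-3}), \ldots, \F_p(\cyclo^2)$, which after reduction mod $p-1$ are $\F_p(-1), \F_p(-2), \ldots, \F_p(-(p-3))$; in particular, the bottom piece is $\F_p(-1)$. Since the conditions defining $\Sigma$ (the zero condition at $p$ and the kernel-of-restriction condition at $N$) are compatible with short exact sequences of $G_{\Q,S}$-modules, the long exact sequence of $\Sigma$-Selmer cohomology applies. For the lower bound, apply it to $0 \to \F_p(-1) \to \bigrep{p-4}{2} \to Q \to 0$: because each composition factor of $Q$ is a nontrivial character, $H^0(G_{\Q,S}, Q) = 0$, which yields the desired injection $H^1_\Sigma(\F_p(-1)) \hookrightarrow H^1_\Sigma(\bigrep{p-4}{2})$. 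For the upper bound, iterate the long exact sequence along the full filtration; at each step $\dim H^1_\Sigma$ can grow by at most $\dim H^1_\Sigma$ of the next graded piece, giving $h^1_\Sigma(\bigrep{p-4}{2}) \leq \sum_{i=1}^{p-3} h^1_\Sigma(\F_p(-i))$.

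The main obstacle is the equality in Part 1, namely verifying that the local $\Sigma$-conditions match precisely the unramifiedness of the extension of $K$, and carefully bookkeeping the glue between subrepresentations so that the rightmost column of $\rho$ (whose entries satisfy cocycle relations dictated by matrix multiplication against the top-left block) lies in $H^1_\Sigma$ rather than merely in $H^1$. Ensuring the $+1$ is accounted for exactly once, rather than being double-counted by both the $\zeta_N^{(p)}$-extension and a class in $H^1_\Sigma(\bigrep{p-4}{2})$, also requires care with the trivial subquotients. By contrast, once Part 1 is in place the filtration bounds in Part 2 are routine consequences of the compatibility of $\Sigma$ with short exact sequences.
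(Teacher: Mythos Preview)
Your treatment of the filtration bounds is correct and matches the paper's argument in Proposition~\ref{h1Sigma_filtration_bound}: the successive quotients of the filtration on $\bigrep{p-4}{2}$ are the characters $\F_p(-i)$ for $1 \le i \le p-3$, and the long exact sequence in cohomology gives both the injection from $H^1_\Sigma(\F_p(-1))$ and the upper bound by summing over graded pieces.

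The equality $r_K = 1 + h^1_\Sigma(\bigrep{p-4}{2})$, however, has a genuine gap. You propose to check that the $\Sigma$-conditions ``precisely encode the unramifiedness of the corresponding extension of $K$,'' but at $p$ this is false as stated. The $\Sigma$-condition at $p$ is $L_p = 0$, i.e.\ the class is \emph{split} at $p$. An unramified $\F_p$-extension $E/K$ gives a representation of $G_{\Q,S}$ which, at $p$, need only become unramified after restriction to $G_{K(\zeta_p)_p}$; it need not be split over $\Q_p$. Concretely, the genus field $K(\zeta_N^{(p)})$ yields the class $c \in H^1_S(\F_p)$ of $\Q(\zeta_N^{(p)})$, which is unramified at $p$ but not in general split there. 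So there is no direct bijection between unramified $\F_p$-extensions of $K$ and classes in $H^1_\Sigma(\bigrep{p-4}{2})$.

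The paper resolves this by introducing an auxiliary Selmer condition $\Lambda$ whose condition at $p$ is exactly ``unramified after restriction to $G_{K(\zeta_p)_p}$.'' One first proves $r_K = h^1_\Lambda(\bigrep{p-3}{2})$ (Theorem~\ref{rank_equals_h1Lambda}) by the kind of correspondence you sketch, using the one-dimension-larger module $\bigrep{p-3}{2}$. Then a separate argument (Proposition~\ref{Lambda_to_Sigma_is_exact}) establishes the exact sequence
\[
0 \to H^1_N(\F_p) \to H^1_\Lambda(\bigrep{p-3}{2}) \to H^1_\Sigma(\bigrep{p-4}{2}) \to 0,
\]
by showing (Lemma~\ref{image_of_H1Lambda_is_H1Sigma}) that for a $\Lambda$-class $[a_0,\ldots,a_k]^T$ all entries $a_i|_{G_{\Q_p}}$ with $i \ge 1$ actually vanish, while $a_0|_{G_{\Q_p}}$ is merely unramified. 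The $+1$ is then the dimension of the kernel $H^1_N(\F_p)$, spanned by the class of $\Q(\zeta_N^{(p)})$. Your account of the $+1$ as the genus field being ``not counted'' is morally right but structurally off: the genus field is the type-$1$ extension sitting at the bottom of the $\Lambda$-filtration, and it is precisely what gets quotiented out in passing from $\Lambda$ to $\Sigma$ and from $\bigrep{p-3}{2}$ to $\bigrep{p-4}{2}$.
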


This is essentially Theorem \ref{intro_sigma_bound_theorem}. The lower bound in this theorem was first established by Wake--Wang-Erickson; we recover this as Proposition \ref{wake_wang_erickson_theorem}. Throughout this section, $E$ will be an unramified $\F_p$-extension of $K$ and $M$ will be its Galois closure over $\Q$. The proof begins in Section \ref{pfsetup} with some preliminary lemmas on the structure of $\Gal(M/K(\zeta_p))$ as a $\Gal(K(\zeta_p)/\Q)$-representation.

In Section \ref{pfpart1}, we introduce an auxiliary Selmer condition $\Lambda$, which will encode the local conditions that cut out those Galois cohomology classes corresponding to unramified $\F_p$-extensions of $K$. We will also define a filtration on the $\F_p$-vector space $H^1_\Lambda(\bigrep{p-3}{2})$ related to the filtration defined by Iimura in \cite{iimura} on $\Cl_{K(\zeta_p)}$; see Remark \ref{filtration_on_Lambda}. 
This filtration of Iimura is also used by Lecouturier in \cite{lecouturier}.

The next step in the proof of Theorem \ref{rank_equals_h1Sigma} is to relate the Selmer condition $\Lambda$ to the Selmer condition $\Sigma$ defined in Section \ref{sec_sigma}. This is done in Section \ref{pfpart2}, which also contains some general lemmas that realize $\Sigma^*$ as the ``correct'' Selmer condition for discussing the lifting of representations to higher dimensions.

Finally, we descend the filtration on $H^1_\Lambda(\bigrep{p-3}{2})$ to a filtration on $H^1_\Sigma(\bigrep{p-4}{2})$. In Section \ref{pfpart3}, we use this filtration to bound the rank $h^1_\Sigma(\bigrep{p-4}{2})$ in terms of the ranks $h^1_\Sigma(\F_p(-i))$ of the $\Sigma$-Selmer groups of characters. This will complete the proof of Theorem \ref{rank_equals_h1Sigma}.

\subsection{Indecomposability of some \texorpdfstring{$\text{Gal}(K(\zeta_p)/\Q)$}{Gal(K(zeta\textunderscore p)/Q)}-modules arising from \texorpdfstring{$\text{Cl}_K$}{CL\textunderscore K}}\label{pfsetup}
Let $E/K$ be unramified and Galois of degree $p$ and let $M$ be the Galois closure of $E$ over $\Q$, as in the diagram (\ref{diagram_of_fields}) below.

\begin{figure}[ht]
\begin{equation*}\label{diagram_of_fields}
\begin{tikzcd}\tag{$*$}
& M \arrow[d, dash] \arrow[dd, dash, bend left=40, "A"] \\
& E(\zeta_p) \arrow[dl, dash] \arrow[d, dash] \\
E \arrow[d, dash] & K(\zeta_p) \arrow[dl, dash] \arrow[d, dash] \arrow[dd, dash, bend left = 40, "G"] \\
K \arrow[dr, dash] & \Q(\zeta_p) \arrow[d,dash] \\
& \Q
\end{tikzcd}
\end{equation*}
\end{figure}

$M$ is the compositum of the $G := \Gal(K(\zeta_p)/\Q)$-translates of $E(\zeta_p)/K(\zeta_p)$, which implies that $M$ is an unramified elementary abelian $p$-extension of $K(\zeta_p)$. Thus $A := \Gal(M/K(\zeta_p)) \cong (\Z/p\Z)^m$ for some $m \geq 1$. This prompts the following definition.

\begin{definition}\label{def:type}
In the above notation, we say that the unramified $\F_p$-extension $E/K$ is \emph{type $m$} where $m = \dim_{\F_p}(\Gal(M/K(\zeta_p)))$.
\end{definition}

Our goal in this subsection is to prove the following theorem.

\begin{theorem}\label{galois_closure_indecomposable}
$A = \Gal(M/K(\zeta_p))$ is an $\F_{p}$-vector space, and is isomorphic to $\bigrep{m-1}{1-m}$ as a $G = \Gal(K(\zeta_p)/\Q)$-representation where $m$ is the type of $E/K$. Furthermore, we have $1 \leq m \leq p-2$. In particular, $A$ is indecomposable as a representation of $G$.
\end{theorem}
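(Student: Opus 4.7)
Fix generators $\tau$ of $\Gal(K(\zeta_p)/\Q(\zeta_p)) \subseteq G$ and $\gamma$ of $\Gal(K(\zeta_p)/K) \subseteq G$, so $G = \langle\tau\rangle \rtimes \langle\gamma\rangle$ with $\gamma\tau\gamma^{-1} = \tau^g$ where $g = \cyclo(\gamma)$. The plan is to determine the $G$-module structure of $A$ through its $\F_p$-dual $A^{\vee}$. Let $A_0 = \Gal(M/E(\zeta_p)) \subseteq A$ be the codimension-one subspace and $\phi \in A^{\vee}$ the corresponding surjection $A \twoheadrightarrow A/A_0 \cong \F_p$. Two observations drive everything: first, $A^{\vee}$ is generated as an $\F_p[G]$-module by $\phi$, because $\bigcap_{\sigma \in G} \sigma A_0 = 0$ (the $G$-translates of $E(\zeta_p)$ have compositum $M$), which dualizes to $\{\sigma \cdot \phi\}_{\sigma \in G}$ spanning $A^\vee$; second, $\phi$ is $\gamma$-fixed, because $E/K$ and $K(\zeta_p)/K$ are linearly disjoint abelian extensions of $K$, so $E(\zeta_p)/K$ is abelian and $\gamma$ acts trivially by conjugation on $\Gal(E(\zeta_p)/K(\zeta_p)) = A/A_0$. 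Hence $A^{\vee}$ is already cyclic as an $\F_p[\tau]$-module generated by $\phi$.

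Writing $N = \tau - 1$ (nilpotent with $N^p = 0$), the dimension $m := \dim_{\F_p}A$ equals the smallest integer with $N^m\phi = 0$, so $m \leq p$. The identity $\gamma N \gamma^{-1} = \tau^g - 1 = gN + O(N^2)$ shows that $\gamma$ preserves the filtration $A^{\vee} \supseteq NA^{\vee} \supseteq N^2A^{\vee} \supseteq \cdots$ and acts on the graded piece $N^kA^{\vee}/N^{k+1}A^{\vee}$ by $g^k$ (using that $\gamma$ acts trivially on $A^{\vee}/NA^{\vee}$, since $\phi$ is $\gamma$-fixed). Hence the $\gamma$-eigenvalues of $A^{\vee}$ are $\cyclo^0, \cyclo^1, \ldots, \cyclo^{m-1}$, and dualizing gives those of $A$ as $\cyclo^0, \cyclo^{-1}, \ldots, \cyclo^{1-m}$.

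To bound $m$, I would examine the one-dimensional quotient $A/NA$: it carries trivial $\tau$-action and $\gamma$-eigenvalue $\cyclo^{1-m}$. Since $NA$ is $G$-stable, $M^{NA}$ is Galois over $\Q$ and $M^{NA}/K(\zeta_p)$ is unramified everywhere. Analyzing the abelian extension $\Gal(M^{NA}/\Q(\zeta_p))$ together with its inertia at $N$ produces an everywhere-unramified $\F_p$-extension of $\Q(\zeta_p)$ on which $\gamma$ acts by $\cyclo^{1-m}$, so class field theory forces $r_{\Q(\zeta_p)}^{\cyclo^{1-m}} \geq 1$. Since $(p,0)$ and $(p,1)$ are always regular pairs (Remark \ref{regular_pair_bernoulli_number}), the cases $1 - m \equiv 0, 1 \pmod{p-1}$ are ruled out, giving $m \leq p-2$; the lower bound $m \geq 1$ is immediate from $A \neq 0$.

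Finally, with $m \leq p-2 < p-1$ the $\gamma$-eigenvalues on $A^{\vee}$ are pairwise distinct, and in particular $\cyclo^0$ has multiplicity one. This forces indecomposability: a nontrivial decomposition $A = A_1 \oplus A_2$ would write $\phi = \phi_1 + \phi_2$ with each $A_i^{\vee}$ $G$-cyclic on the $\gamma$-fixed $\phi_i$, yielding two copies of the trivial $\gamma$-eigenvalue in $A^\vee$. The classification of indecomposable $\F_p[G]$-representations of dimension at most $p-1$ (Theorem \ref{classification_of_reps}) then gives $A \cong \Sym^{m-1}V \otimes \F_p(j)$ for some $j$, and matching $\gamma$-eigenvalues pins down $j = 1-m$, so $A \cong \bigrep{m-1}{1-m}$. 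The main obstacle will be the descent argument used to bound $m$: one must carefully treat the inertia at $N$ in $\Gal(M^{NA}/\Q(\zeta_p))$ and the possibility of a non-split extension of $\langle\tau\rangle$ by $A/NA$.
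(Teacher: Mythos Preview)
Your determination of the $G$-module structure of $A$ through its dual is correct and is essentially the same content as the paper's Lemma~\ref{fixed_part_of_A_is_1_dimensional}: that $A^\vee$ is $\F_p[\tau]$-cyclic on a $\gamma$-fixed generator is dual to $A^{\langle\tau\rangle}$ being one-dimensional with trivial $\gamma$-action, and your eigenvalue computation via the $(\tau-1)$-adic filtration is fine. (Indecomposability, incidentally, already follows from $\tau$-cyclicity and does not need the bound $m\le p-2$.)

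The genuine gap is the bound $m \le p-2$. Your descent cannot work as sketched, and the failure is already visible in the \emph{allowed} case $m=1$ (the genus field $E=K(\zeta_N^{(p)})$). There $M^{NA}=M$ and $\Gal(M/\Q(\zeta_p))\cong(\Z/p\Z)^2$, with $\gamma$ acting on the two coordinate lines by the distinct characters $\cyclo$ and $\cyclo^0$; by Lemma~\ref{zeta_N_plus_localized} the inertia group at $N$ is a diagonal line and is therefore \emph{not} $\gamma$-stable, so $(M^{NA})^{I_N}$ is not Galois over $\Q$ and carries no well-defined $\gamma$-action. If instead you use the unique $\gamma$-stable complement to $A/NA$ in $\Gal(M^{NA}/\Q(\zeta_p))$, you do obtain an $L/\Q(\zeta_p)$ Galois over $\Q$ with the correct $\gamma$-character and (via Lemma~\ref{local_fields_lemma}) unramified at $p$, but nothing forces $L$ to be unramified at $N$: for $m=1$ this $L$ is $\Q(\zeta_p,\zeta_N^{(p)})$, which is ramified at $N$. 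Since $\cyclo^{1-m}=\cyclo^0$ for both $m=1$ and $m=p$, your class-field test cannot separate them. The paper rules out $m=p$ and $m=p-1$ by entirely different arguments using the explicit matrix form (\ref{big_matrix}): for $m=p$ (Lemma~\ref{bohica_1}) a direct computation shows that the image of tame inertia at $N$ would have order exceeding $p$ unless the bottom entry $a_{p-1}$ is unramified at $N$, yielding an everywhere-unramified $\Z/p\Z$-extension of $\Q$; for $m=p-1$ (Lemma~\ref{bohica_2}) the existence of the bottom $3\times 3$ block forces $b\cup a_{p-2}=0$, and Proposition~\ref{classes_cup_iff_H1Sigmastar} then forces $a_{p-2}$ to be a multiple of $b$, hence zero.
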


Note that our fixed primitive $p$th root of unity $\zeta_p$ gives us a canonical generator of $\Gal(K(\zeta_p)/\Q(\zeta_p))$, namely the particular $\sigma$ with $\sigma(N^{1/p}) = \zeta_p N^{1/p}$. We use this to fix an isomorphism $G \cong \Z/p\Z \rtimes (\Z/p\Z)^\times$.

\begin{lemma}\label{galois_group_sequence_splits}
The following short exact sequence splits.
\begin{equation*}
1 \to A \to \Gal(M/\Q) \to G \to 1
\end{equation*}
\end{lemma}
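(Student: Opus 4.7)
The strategy is to reduce the splitting of the extension over $G$ to the splitting of its pullback to a Sylow $p$-subgroup, where a section can be produced explicitly from an inertia group. Since $A$ is an $\F_p$-vector space, the class of this extension in $H^2(G, A)$ is annihilated by $p$. The Sylow $p$-subgroup of $G \cong \Z/p\Z \rtimes (\Z/p\Z)^\times$ is $\langle \tau \rangle$, and the composition of corestriction with restriction on $H^2$ is multiplication by $[G:\langle \tau \rangle] = p-1$, which is invertible on the $p$-torsion. Thus the restriction
\[
H^2(G, A) \hookrightarrow H^2(\langle \tau \rangle, A)
\]
is injective, and it suffices to split the pullback extension $1 \to A \to P \to \langle \tau \rangle \to 1$, where $P \leq \Gal(M/\Q)$ is the preimage of $\langle \tau \rangle$.

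To split this pullback, I will use inertia at $N$. Let $\mathfrak{P}$ be a prime of $M$ above $N$ and $I_{\mathfrak{P}} \subseteq \Gal(M/\Q)$ its inertia subgroup. The hypothesis $N \equiv 1 \bmod p$ gives $\zeta_p \in \Q_N$, so $N$ splits completely in $\Q(\zeta_p)/\Q$, while $K(\zeta_p)/\Q(\zeta_p) = \Q(\zeta_p)(N^{1/p})/\Q(\zeta_p)$ is totally tamely ramified of degree $p$ at every prime above $N$ (with $N$ as uniformizer and $x^p - N$ Eisenstein). Hence inertia at $N$ in $G$ is exactly $\langle \tau \rangle$. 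On the other hand, $M$ is the Galois closure over $\Q$ of the unramified extension $E/K$, so it is the compositum over $K(\zeta_p)$ of the $G$-translates of $E(\zeta_p)$; each of these is unramified over $K(\zeta_p)$, so $M/K(\zeta_p)$ is unramified everywhere. This forces $I_{\mathfrak{P}} \cap A = 1$, while $I_{\mathfrak{P}}$ surjects onto the inertia $\langle \tau \rangle$ of $G$; an order count then makes $I_{\mathfrak{P}} \to \langle \tau \rangle$ an isomorphism. Writing $\tilde\tau$ for a generator, the subgroup $\langle \tilde\tau \rangle \leq P$ has trivial intersection with $A$ and $|\langle \tilde\tau \rangle| \cdot |A| = p \cdot p^m = |P|$, so $P = A \rtimes \langle \tilde\tau \rangle$, splitting the pullback.

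I expect no real obstacle here: the argument relies only on the fact that $M/K(\zeta_p)$ is unramified everywhere (which is built into the definition of $M$) and on the Sylow/cohomology reduction. A more hands-on alternative would be to combine a Schur--Zassenhaus complement of $A$ in $\Gal(M/K)$ (which exists because $|A|$ and $[K(\zeta_p):K] = p-1$ are coprime) with the inertia lift $\tilde\tau$, but this route requires an additional step to arrange that the complement normalizes $\langle \tilde\tau \rangle$, making it less clean than the cohomological reduction sketched above.
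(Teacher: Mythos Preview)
Your proof is correct and follows essentially the same approach as the paper: reduce the splitting to the Sylow $p$-subgroup $\Z/p\Z \subseteq G$ via a cohomological argument, then exhibit the inertia group at a prime above $N$ as an explicit section over $\Z/p\Z$. The only cosmetic difference is in the reduction step: the paper uses the Hochschild--Serre spectral sequence to show that restriction $H^2(G,A) \to H^2(\Z/p\Z,A)^{(\Z/p\Z)^\times}$ is an isomorphism, while you use the transfer argument $\mathrm{cor}\circ\mathrm{res} = p-1$ to show restriction is injective on $p$-torsion; both are standard and yield the same conclusion.
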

\begin{proof}
We argue by means of group cohomology; consider the Hochschild-Serre spectral sequence. Since $H^j(\Z/p\Z, A)$ is an $\F_p$-vector space, its order is coprime to the order of $(\Z/p\Z)^\times$ and thus
\begin{equation*}
H^i((\Z/p\Z)^\times, H^j(\Z/p\Z, A)) = 0
\end{equation*}
for all $i>0$. Hence the only nonzero column on the $E_2$ page is the $0$th one, which implies that the restriction map
\begin{equation*}
H^2(G,A) \to H^2(\Z/p\Z, A)^{(\Z/p\Z)^\times}
\end{equation*}
is an isomorphism.

We wish to show that the class $[\Gal(M/\Q)] \in H^2(G,A)$ is $0$. Its image in $H^2(\Z/p\Z, A)$ under the restriction map is the class of $[\Gal(M/\Q(\zeta_p))]$ coming from
\begin{equation*}
1 \to A \to \Gal(M/\Q(\zeta_p)) \to \Gal(K(\zeta_p)/\Q(\zeta_p)) \to 1.
\end{equation*}

We can explicitly construct a splitting of this sequence. Let $\mathfrak{N}$ be a prime of $M$ lying above $N$. The total ramification degree of $\mathfrak{N}$ in $M/\Q(\zeta_p)$ is $p$, since $N$ is totally ramified in $K(\zeta_p)/\Q(\zeta_p)$ and unramified in $M/K(\zeta_p)$, so the inertia group at $\mathfrak{N}$ is a copy of $\Z/p\Z$ in $\Gal(M/\Q(\zeta_p))$ that maps isomorphically onto $\Gal(K(\zeta_p)/\Q(\zeta_p))$. This inertia group is the image our desired splitting.
\end{proof}

Before continuing, we record the following general fact that we make use of throughout the section.

\begin{lemma}\label{local_fields_lemma}
Suppose that $F$ and $F'$ are extensions of $\Q_{p}(\zeta_{p})$, each of degree dividing $p$ and Galois over $\Q_{p}$, and that $\Gal(F/\Q_{p}(\zeta_{p}))$ and $\Gal(F'/\Q_{p}(\zeta_{p}))$ are not isomorphic as representations of $\Gal(\Q_{p}(\zeta_{p})/\Q_{p}) = (\Z/p\Z)^{\times}$, or that both extensions are trivial.
If $FF'/F$ is unramified, then $F'/\Q_{p}(\zeta_{p})$ is also unramified.
\end{lemma}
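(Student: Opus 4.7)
The plan is to realize the inertia subgroup of $FF'/L$ (where $L = \Q_p(\zeta_p)$) as a subrepresentation of a multiplicity-free semisimple $\F_p[(\Z/p\Z)^\times]$-module, enumerate its possibilities, and use the unramifiedness of $FF'/F$ to force the inertia of $F'/L$ to vanish.

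First, I would dispose of the trivial cases: if $F' = L$, then $F'/L$ is unramified, and if $F = L$, then $FF'/F = F'/L$ and the hypothesis gives the result directly. So I may reduce to the case that $F$ and $F'$ are both degree $p$ over $L$, with $\Gal(F/L) \cong \chi^i$ and $\Gal(F'/L) \cong \chi^j$ for distinct characters $\chi^i, \chi^j$ of $(\Z/p\Z)^\times = \Gal(L/\Q_p)$ (the action by conjugation through any lift is well-defined since $F$ and $F'$ are Galois over $\Q_p$). Distinctness of characters forces $F \neq F'$, so $[FF':L] = p^2$ and
\[
\Gal(FF'/L) \;=\; \Gal(F/L) \times \Gal(F'/L) \;\cong\; \chi^i \oplus \chi^j
\]
as a $(\Z/p\Z)^\times$-module.

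The key step is to observe that because $FF'/\Q_p$ is Galois, the inertia subgroup $I \subseteq \Gal(FF'/L)$ is normal in $\Gal(FF'/\Q_p)$ and hence stable under the $(\Z/p\Z)^\times$-action. Since $p \nmid |(\Z/p\Z)^\times|$, Maschke's theorem implies that the subrepresentations of $\chi^i \oplus \chi^j$ are exactly $0,\ \chi^i,\ \chi^j,\ \chi^i \oplus \chi^j$. The hypothesis that $FF'/F$ is unramified translates to $I \cap \Gal(FF'/F) = 0$, and since $\Gal(FF'/F) = \{1\} \times \Gal(F'/L) \cong \chi^j$, this rules out $I = \chi^j$ and $I = \chi^i \oplus \chi^j$. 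The surviving possibilities $I \in \{0,\ \chi^i\}$ both have trivial image under projection to $\Gal(F'/L)$, and this image is precisely the inertia subgroup of $F'/L$. Hence $F'/L$ is unramified.

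The main obstacle, I expect, is recognizing that viewing the inertia as a $(\Z/p\Z)^\times$-subrepresentation of a sum of distinct characters is the right framing; once that viewpoint is adopted, everything falls out of the classification of subrepresentations. A small but essential bookkeeping check is that the inertia subgroup genuinely is $(\Z/p\Z)^\times$-stable, which relies on $F, F'$ being Galois over $\Q_p$ so that $FF'/\Q_p$ is Galois and $I$ is normal in the whole Galois group.
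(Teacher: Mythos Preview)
Your proof is correct and follows essentially the same approach as the paper's: the paper's one-sentence sketch also hinges on the fact that $\F_p(i) \oplus \F_p(j)$ has only the obvious $(\Z/p\Z)^\times$-stable subspaces when $i \neq j$, and that the inertia subgroup must be one of them. Your write-up simply makes explicit the reduction to the nondegenerate case and the identification of inertia as a subrepresentation, which the paper leaves implicit.
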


This follows from the fact that any unramified extension of $\Q_p(\zeta_p)$ must be cyclic and Galois over $\Q_p$, and that $\F_p(i) \oplus \F_p(j)$ has exactly two $(\Z/p\Z)^\times$-fixed lines when $i \neq j$.

Our next goal is to show that $1 \leq m \leq p-2$ where $m$, as above, is the type of $E/K$. The lower inequality is immediate. However, we can say slightly more about this edge case.

\begin{proposition}\label{type_1_iff_genus_field}
$E/K$ is of type $1$ (i.e. $m = 1$) if and only if $E = K(\zeta_N^{(p)})$ is the genus field of $K$, where $\zeta_N^{(p)}$ is any generator of the degree-$p$ subfield of $\Q(\zeta_N)/\Q$.
\end{proposition}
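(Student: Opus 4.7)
The backward direction is a short computation: if $E = K(\zeta_N^{(p)})$, then $M = K(\zeta_p, \zeta_N^{(p)})$ is already Galois over $\Q$, and $\Q(\zeta_N^{(p)})$ is linearly disjoint from $K(\zeta_p)$ over $\Q$ (the intersection would be an abelian subextension of $K(\zeta_p)$ of $p$-power degree, hence contained in $\Q(\zeta_p) \cap \Q(\zeta_N^{(p)}) = \Q$). So $A \cong \Gal(\Q(\zeta_N^{(p)})/\Q) \cong \F_p$, giving $m = 1$.

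For the forward direction, assume $m = 1$, so $A \cong \F_p$ as a $G$-module. The normal subgroup $\Gal(K(\zeta_p)/\Q(\zeta_p)) \cong \Z/p\Z$ of $G$ acts on $A$ through $\F_p^\times$, which has order coprime to $p$, so this action is trivial; hence $A$ is a character $\cyclo^i$ of $(\Z/p\Z)^\times$ for some $i$. I would first argue that $i = 0$: since $E/K$ is Galois, $\Gal(M/E)$ is a normal index-$p$ subgroup of $\Gal(M/K) = A \rtimes_{\cyclo^i} (\Z/p\Z)^\times$, producing a surjection onto $\F_p$; but if $i \neq 0$, the commutator subgroup of this semidirect product already contains all of $A$, so its abelianization is $(\Z/p\Z)^\times$, which has order prime to $p$ and admits no such surjection. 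So $i = 0$, Lemma \ref{galois_group_sequence_splits} gives $\Gal(M/\Q) = A \times G$, and setting $F := M^G$ yields an $\F_p$-extension of $\Q$ unramified outside $\{p, N, \infty\}$ with $M = F \cdot K(\zeta_p)$ and $E = F \cdot K$.

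The crux is to show $F = \Q(\zeta_N^{(p)})$, which by part \ref{cohomology_theorem_kw} of Theorem \ref{cohomology_theorem_unconditional} reduces to proving $F/\Q$ is unramified at $p$. The plan is to apply Lemma \ref{local_fields_lemma} with $F_1$ and $F_2$ the completions of $K(\zeta_p)$ and $F \cdot \Q(\zeta_p)$ at primes above $p$: both are extensions of $\Q_p(\zeta_p)$ of degree dividing $p$ and Galois over $\Q_p$. The Kummer description $K(\zeta_p) = \Q(\zeta_p)(N^{1/p})$ identifies $\Gal(F_1/\Q_p(\zeta_p))$, when nontrivial, with the cyclotomic character of $(\Z/p\Z)^\times$, whereas $F \cdot \Q(\zeta_p)/\Q$ is abelian, forcing $(\Z/p\Z)^\times$ to act trivially on $\Gal(F_2/\Q_p(\zeta_p))$; these representations are non-isomorphic, so the hypotheses of Lemma \ref{local_fields_lemma} are satisfied. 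Since $M/K(\zeta_p)$ is unramified by assumption, $F_1 F_2/F_1$ is unramified, and the lemma concludes that $F_2/\Q_p(\zeta_p)$ is unramified. A tame-versus-wild comparison at $p$ --- ramification of $\Q(\zeta_p)/\Q$ at $p$ is tame of degree $p-1$, while any ramification of the $p$-extension $F/\Q$ at $p$ would be wild of degree $p$ --- then descends to give that $F/\Q$ itself is unramified at $p$, forcing $F = \Q(\zeta_N^{(p)})$.

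The main subtlety I expect is the bookkeeping in the local analysis at $p$: verifying the $(\Z/p\Z)^\times$-equivariance claims on both $\Gal(F_i/\Q_p(\zeta_p))$ cleanly, and correctly handling the degenerate cases in which $F_1$ or $F_2$ becomes locally trivial (so that the hypotheses of Lemma \ref{local_fields_lemma} still apply and the conclusion remains true for trivial reasons).
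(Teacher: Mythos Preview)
Your proof is correct and follows essentially the same route as the paper: show $G$ acts trivially on $A$ (the paper does this by observing that $(\Z/p\Z)^\times = \Gal(E(\zeta_p)/E)$ commutes with $A = \Gal(E(\zeta_p)/K(\zeta_p))$ in the compositum, while you use an equivalent commutator/abelianization argument), split $\Gal(M/\Q)$ as a direct product via Lemma \ref{galois_group_sequence_splits}, and apply Lemma \ref{local_fields_lemma} at $p$ to force the resulting $\F_p$-extension of $\Q$ to be $\Q(\zeta_N^{(p)})$. The only omission is in the backward direction, where you should also verify that $K(\zeta_N^{(p)})/K$ is unramified at $N$ (this is Lemma \ref{zeta_N_plus_localized}) so that $E$ actually fits the standing hypotheses of the section.
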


\begin{proof}
The backward direction is trivial: It is clearly unramified away from $N$, Lemma \ref{zeta_N_plus_localized} shows that $K(\zeta_N^{(p)})/K$ is unramified at $N$ as well, and the Galois closure of $K(\zeta_N^{(p)})/\Q$ is $K(\zeta_p, \zeta_N^{(p)})$.

If $m = 1$ then $E(\zeta_p) = M$ is Galois over $\Q$ and $A = \Z/p\Z$. Consider the action of $G$ on $A$ by conjugation and recall that $G = \Z/p\Z \rtimes_\cyclo (\Z/p\Z)^\times$. The order-$p$ subgroup of $G$ acts trivially on $A$ as there are no non-trivial $1$-dimensional $\F_p$-representations of $\Z/p\Z$. Referencing (\ref{diagram_of_fields}), we see that $(\Z/p\Z)^\times \subseteq G$ is the image of $\Gal(E(\zeta_p)/E) \subseteq \Gal(E(\zeta_p)/\Q)$ which acts trivially on $A = \Gal(E(\zeta_p)/K(\zeta_p))$, as $E(\zeta_p)$ is the compositum of the Galois extensions $E/K$ and $K(\zeta_p)/K$.

Thus we conclude that $G$ acts trivially on $A$ and hence that 
\begin{equation*}
  \Gal(E(\zeta_p)/\Q) = \Z/p\Z \times G  
\end{equation*}
by Lemma \ref{galois_group_sequence_splits}. Consider $L = E(\zeta_p)^G$, which is $\Z/p\Z$ extension of $\Q$. As $\Gal(E(\zeta_p)/E) = (\Z/p\Z)^\times \subseteq G$ we know that $L \subseteq E$. As $L \neq K$ this tells us that $E = LK$.

We claim that $L = \Q(\zeta_N^{(p)})$. To see this, it suffices to notice that $L$ is unramified away from $N$. By choice of $E$, it is automatically unramified away from $p$ and $N$. At $p$, it suffices to check that $L(\zeta_p)/\Q(\zeta_p)$ is unramified, as $[L:\Q]$ is coprime to $[\Q(\zeta_p):\Q]$. We have the following diagram of fields:
\begin{equation*}
\begin{tikzcd}[column sep = tiny, row sep = small]
& E(\zeta_p) \arrow[dl, dash] \arrow[dr, dash] & \\
L(\zeta_p) \arrow[d, dash] \arrow[dr, dash] & & K(\zeta_p) \arrow[dl, dash] \\
L \arrow[dr, dash] & \Q(\zeta_p) \arrow[d, dash] & \\
& \Q &
\end{tikzcd}
\end{equation*}

Consider the corresponding extensions of fields locally at $p$. Because the groups $\Gal(L(\zeta_p)/\Q(\zeta_p))$ and $\Gal(K(\zeta_p)/\Q(\zeta_p))$ are not isomorphic as modules over the group $\Gal(\Q(\zeta_p)/\Q) = \Gal(\Q_p(\zeta_p)/\Q_p)$, Lemma \ref{local_fields_lemma} gives us the desired conclusion.
\end{proof}

To prove Theorem \ref{galois_closure_indecomposable}, we need to view $A$ as a $G$-representation coming from the conjugation action of $G$ on $A$. Our first goal is to show that $A$ is indecomposable as a $G$-representation. We briefly recall the classification of indecomposable representations of groups of this kind:

\begin{theorem}\label{classification_of_reps}
Let $k \in \Z/(p-1)\Z$ and let $\Gamma_{k}$ be the group $\Z/p\Z \rtimes (\Z/p\Z)^{\times}$, where $u \in (\Z/p\Z)^{\times}$ acts on $\Z/p\Z$ by multiplication by $u^{k}$.

The indecomposable $\F_{p}$-representations of $\Gamma_{k}$ are exactly
\begin{equation*}
\Sym^{j}V_{k} \otimes \F_{p}(i)
\end{equation*}
for $0 \leq i \leq p-2$ and $0 \leq j \leq p-1$, where $\F_p(i)$ is the $1$-dimensional representation where $u \in (\Z/p\Z)^\times$ acts by $u^{i}$ and $V_k$ is the $2$-dimensional representation of $\Gamma_k$ over $\F_p$ given by the map
\begin{align*}
\Gamma_{k}      & \to \GL_{2}(\F_{p}) \\*
(b, u)          & \mapsto \begin{pmatrix} u^{k} & b \\ 0 & 1 \\ \end{pmatrix}.
\end{align*}
\end{theorem}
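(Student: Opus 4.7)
The approach is to analyze the restriction to the normal Sylow $p$-subgroup $P = \Z/p\Z \subset \Gamma_k$, generated by some $\tau$, whose indecomposable $\F_p$-representations are exactly the Jordan blocks $J_1, \ldots, J_p$. First I would verify that each $\Sym^j V_k \otimes \F_p(i)$ is indecomposable by showing its restriction to $P$ is the single Jordan block $J_{j+1}$: this holds because $V_k|_P = J_2$, and $\Sym^j J_2 = J_{j+1}$ for $j+1 \leq p$ (a standard identity in the modular representation theory of $\GL_2$), while twisting by a character of $H = (\Z/p\Z)^\times$ does not affect the $P$-restriction. Indecomposability then follows since any endomorphism of $\Sym^j V_k \otimes \F_p(i)$ commutes with the nilpotent action of $T := \tau - 1$ on a single Jordan block, forcing the endomorphism algebra to be local. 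Pairwise non-isomorphism is seen from two invariants: the dimension $j+1$, and the character $u \mapsto u^{jk+i}$ by which $H$ acts on the one-dimensional $P$-fixed line, which together distinguish all pairs $(i,j)$ in the claimed range.

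The heart of the argument is showing that every indecomposable $\F_p[\Gamma_k]$-module $W$ has this form. Since $|H| = p-1$ is coprime to $p$, Maschke's theorem applied to the $H$-action gives an $H$-stable decomposition $W = W' \oplus TW$. I would choose an $H$-eigenbasis $v_1, \ldots, v_n$ of $W'$, with $v_l$ transforming by the character $u \mapsto u^{s_l}$, and for each $l$ let $j_l \geq 0$ be minimal with $T^{j_l+1} v_l = 0$, and set $W_l = \operatorname{span}(v_l, Tv_l, \ldots, T^{j_l} v_l)$. The conjugation relation $u \tau u^{-1} = \tau^{u^k}$, unpacked via $\tau = 1 + T$ and the binomial theorem, yields an explicit formula
\begin{equation*}
u(T^m v_l) = u^{s_l + mk} T^m v_l + (\text{higher powers of } T \text{ applied to } v_l),
\end{equation*}
showing each $W_l$ is $\Gamma_k$-stable. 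Comparing this action against the $\Gamma_k$-action on $\Sym^{j_l} V_k \otimes \F_p(s_l)$ in the basis $\{e_0^m e_1^{j_l - m} \otimes f\}$ (where $e_0, e_1$ span $V_k$ and $f$ spans $\F_p(s_l)$), one identifies $W_l \cong \Sym^{j_l} V_k \otimes \F_p(s_l)$; e.g.\ by exhibiting a surjective $\Gamma_k$-map $\Sym^{j_l} V_k \otimes \F_p(s_l) \twoheadrightarrow W_l$ sending $e_1^{j_l} \otimes f \mapsto v_l$, which is an isomorphism by dimension count.

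To finish I would show $W = \bigoplus W_l$: the $v_l$ generate $W$ by Nakayama applied to the nilpotent operator $T$, so $\sum W_l = W$; and the identity $\dim W = \sum (j_l + 1)$ (the Jordan-type decomposition of $W|_P$, whose block count equals $\dim W/TW = n$) forces the sum to be direct. If $W$ is indecomposable, then $n = 1$ and $W \cong \Sym^{j_1} V_k \otimes \F_p(s_1)$, completing the classification.

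The main obstacle I anticipate is the bookkeeping in the middle step: identifying each $W_l$ precisely as $\Sym^{j_l} V_k \otimes \F_p(s_l)$, and in particular pinning down \emph{which} twist $s_l$ appears. This requires tracking how the $H$-eigencharacter shifts along the $T$-filtration, noting that the $H$-action on the basis $\{T^m v_l\}$ is only diagonal on the associated graded (not on $W_l$ itself, in general), and then carefully matching this with the explicit basis of $\Sym^{j_l} V_k \otimes \F_p(s_l)$. The Maschke-plus-Nakayama framework itself is standard, but the character bookkeeping is the substantive content.
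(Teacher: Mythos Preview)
Your approach is genuinely different from the paper's. The paper does not give a self-contained argument; it cites Alperin and sketches the route through general modular representation theory: the irreducibles of $\Gamma_k$ are the characters $\F_p(i)$, each has a $p$-dimensional uniserial projective cover (namely $\Sym^{p-1}V_k \otimes \F_p(i)$), and every indecomposable, having irreducible head, is a quotient of one of these. Your argument instead works directly with the semidirect-product structure $\Gamma_k = P \rtimes H$ and the explicit Jordan theory of $P$; it is more elementary and would make the classification self-contained.

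There is, however, a real gap in your directness step. You assert $\dim W = \sum_l (j_l+1)$ as a consequence of the Jordan decomposition of $W|_P$. The block \emph{count} is indeed $n = \dim W/TW$, but the block \emph{sizes} need not be the depths $j_l+1$: those depend on the particular eigenbasis chosen, and when an $H$-eigenvalue on $W'$ is repeated you can pick a bad basis. Concretely, take $W = V_k \oplus \F_p$ with basis $e_1,e_2$ for $V_k$ (so $Te_2=e_1$, $Te_1=0$) and $e_3$ spanning the trivial summand. Then $W' = \langle e_2,e_3\rangle$ carries the trivial $H$-action, so $v_1=e_2$, $v_2=e_2+e_3$ is a legitimate $H$-eigenbasis, yet both have depth $2$, giving $\sum(j_l+1)=4>3=\dim W$; the sum $W_1+W_2$ is not direct.

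The fix is to choose the $v_l$ with more care. One clean way: pick an $H$-eigenvector $v_1 \in W'$ of \emph{maximal} $T$-depth $m$ (this exists because $W'\cap\ker T^{m-1}$ is a proper $H$-stable subspace of $W'$). Then $W_1|_P \cong J_m$ is injective over $\F_p[T]/(T^m)$, hence $W_1$ is a $P$-direct summand of $W$; averaging a $P$-equivariant projection $W\to W_1$ over $H$ (which preserves $P$-equivariance because $W_1$ is already $\Gamma_k$-stable) yields a $\Gamma_k$-equivariant splitting, and one finishes by induction on $\dim W$. Alternatively, noting that each $\F_p[\Gamma_k]e_i$ restricts to $\F_p[P]\cong J_p$ and is therefore uniserial shows $\F_p[\Gamma_k]$ is a Nakayama algebra, whence every module is a direct sum of uniserials --- but that is essentially the paper's cited route.
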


\begin{proof}
See \cite{alperin} for a proof.
The cyclic case $\Gamma_{0}$ is treated in a discussion following Corollary 7 of Chapter 5, and the general case is treated in discussions following Lemma 8 of Chapter 5 and Corollary 5 of Chapter 6.
The structure of the proof is as follows:
\begin{itemize}
\item The irreducible $\F_{p}$-representations of $\Gamma_{k}$ are all $1$-dimensional, namely they are the $1$-dimensional representations $\F_{p}(i)$ of the quotient $(\Z/p\Z)^{\times}$ of $\Gamma_{k}$.

\item There is a bijection between irreducible $\F_{p}$-representations of $\Gamma_{k}$ and indecomposable projective $\F_{p}[\Gamma_{k}]$-modules, given by associating $P/\rad(P)$ to each indecomposable projective module $P$.
This is Theorem 3 of Chapter 5 of \cite{alperin}.

\item Every $\F_{p}[\Gamma_{k}]$-module $X$ with $X/\rad(X) \cong \F_{p}(i)$ is a homomorphic image of the indecomposable projective module associated to $\F_{p}(i)$.
This is Lemma 5 of Chapter 5 of \cite{alperin}.

\item Each indecomposable projective module has radical length exactly $p$. In particular it is $p$-dimensional as an $\F_{p}$-vector space, as all quotients in its radical series are irreducible.
This is the discussion after Lemma 8 of Chapter 5 of \cite{alperin}.

This is enough to show that the unique indecomposable projective module associated to $\F_{p}(i)$ is $\Sym^{p-1}V_{k} \otimes \F_{p}(i)$, as it is $p$-dimensional, indecomposable, and has $\F_{p}(i)$ as a quotient.

\item Any indecomposable $\F_{p}$-representation of $\Gamma_{k}$ has a unique radical series.
In particular if $X$ is an indecomposable $\F_{p}$-representation of $\Gamma_{k}$, $X/\rad(X)$ is irreducible.
This is the discussion after Corollary 5 of Chapter 6 of \cite{alperin}. 

This allows us to conclude that every such $X$ is surjected to by some $\Sym^{p-1}V_{k} \otimes \F_{p}(i)$; the quotient modules of $\Sym^{p-1}V_{k} \otimes \F_{p}(i)$ are just $\Sym^{j}V_{k} \otimes \F_{p}(i)$ for $0 \leq j \leq p-1$. \qedhere
\end{itemize}
\end{proof}

Writing our $A$ as a sum of indecomposable representations of $G = \Gamma_1$, we know that the number of indecomposable factors is equal to the dimension of $A^{\Z/p\Z}$. Indeed, each indecomposable factor when considered as a representation of $\Z/p\Z$ corresponds to a Jordan block with eigenvalue $1$. Thus we've reduced the indecomposability of $A$ to showing that $A^{\Z/p\Z}$ is $1$-dimensional.

\begin{lemma}\label{fixed_part_of_A_is_1_dimensional}
$A^{\Z/p\Z}$ is $1$-dimensional. Furthermore, it carries the trivial action of $(\Z/p\Z)^\times = \Gal(\Q(\zeta_p)/\Q)$.
\end{lemma}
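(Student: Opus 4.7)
The lemma has two claims — that $A^{\Z/p\Z}$ is one-dimensional, and that the conjugation action of $(\Z/p\Z)^\times$ on it is trivial. I will deduce the first from cyclicity of $A$ as an $\F_p[\Z/p\Z]$-module, and the second by identifying a $(\Z/p\Z)^\times$-invariant generator of $A^\vee$ and passing through duality.

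To establish cyclicity, I will exploit the surjection $A \twoheadrightarrow \Gal(\mathcal{E}/K(\zeta_p)) \cong \F_p$, where $\mathcal{E} := E(\zeta_p)$, with kernel $V_0 := \Gal(M/\mathcal{E})$. Setting $V_i := \sigma^i V_0 \sigma^{-i} = \Gal(M/\sigma^i \mathcal{E})$ for $\sigma$ generating $\Z/p\Z = \Gal(K(\zeta_p)/\Q(\zeta_p))$ and $0 \leq i \leq p-1$, the equality $\bigcap_i V_i = 1$ — which holds because $M$ is the compositum of the $\sigma^i \mathcal{E}$ — dualizes to the statement that the one-dimensional annihilators $V_i^\perp \subseteq A^\vee := \Hom_{\F_p}(A, \F_p)$ span $A^\vee$. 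Since $\Z/p\Z$ cyclically permutes the $V_i^\perp$, the module $A^\vee$ is generated as an $\F_p[\Z/p\Z]$-module by $V_0^\perp$. Cyclic $\F_p[\sigma]/(\sigma-1)^p$-modules are of the form $\F_p[\sigma]/(\sigma-1)^m$ and are self-dual, so $A$ is also cyclic, giving that $A^{\Z/p\Z}$ is one-dimensional.

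For the triviality of the $(\Z/p\Z)^\times$-action, it will suffice to show that $(\Z/p\Z)^\times$ acts trivially on $V_0^\perp$: its image in the cosocle of the cyclic module $A^\vee$ is then a generator, so the cosocle character is trivial, and dualizing gives that the socle $A^{\Z/p\Z} = (\mathrm{cosocle}(A^\vee))^\vee$ carries the trivial character as well. Dually, I need triviality on $A/V_0 \cong \Gal(\mathcal{E}/K(\zeta_p))$. Since $E/K$ is Galois by hypothesis and linearly disjoint from $K(\zeta_p)/K$ (coprime degrees $p$ and $p-1$), we have a direct product decomposition $\Gal(\mathcal{E}/K) = \Gal(E/K) \times \Gal(K(\zeta_p)/K)$. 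The element $(\mathrm{id}_E, \tau) \in \Gal(\mathcal{E}/K)$ extends, since $M/K$ is Galois, to some lift $\tilde\tau \in \Gal(M/E)$ of $\tau$; such a $\tilde\tau$ preserves $\mathcal{E}$, and in the direct product structure $\tilde\tau|_\mathcal{E}$ and $a|_\mathcal{E}$ commute for every $a \in A$, so conjugation by $\tilde\tau$ acts trivially on $A/V_0$. The main subtlety to watch out for — and the reason for singling out a lift fixing $E$ pointwise — is that a generic lift of $\tau$ in $\Gal(M/\Q)$ need not preserve $\mathcal{E}$, so only a lift from $\Gal(M/E)$ makes the restriction-to-$\mathcal{E}$ computation, and hence the direct-product argument, meaningful.
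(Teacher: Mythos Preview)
Your proof is correct, and it takes a genuinely different route from the paper's. Both arguments rest on the same two Galois-theoretic inputs: that $M$ is the compositum of the $G$-conjugates of $\mathcal{E}=E(\zeta_p)$, and that $(\Z/p\Z)^\times$ acts trivially on $A/V_0\cong\Gal(\mathcal{E}/K(\zeta_p))$. The paper exploits the first input by showing that $H=A^{\Z/p\Z}\cap V_0$ is normal in $\Gal(M/\Q)$ (it is $(\Z/p\Z)^\times$-stable because $V_0$ is, and pointwise $\Z/p\Z$-fixed by construction), hence trivial since $M$ is the Galois closure of $\mathcal{E}$; this immediately gives both the $1$-dimensionality of $A^{\Z/p\Z}$ and an isomorphism $A^{\Z/p\Z}\xrightarrow{\sim} A/V_0$, from which the second input yields the triviality of the action directly. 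You instead dualize: the first input becomes $\sum_i V_i^\perp=A^\vee$, forcing $A^\vee$ (and hence $A$, by self-duality of indecomposable $\F_p[\Z/p\Z]$-modules) to be cyclic, whence $A^{\Z/p\Z}$ is a line; the second input becomes triviality of the action on the generator $V_0^\perp$, which you push to the cosocle and then dualize back. Your approach is a bit more module-theoretic and involves one extra passage through duality, while the paper's normality argument is slightly more direct; on the other hand, your cyclicity statement is a cleaner structural conclusion about $A$ than the bare intersection statement $A^{\Z/p\Z}\cap V_0=0$.
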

\begin{proof}
The first part of the claim follows once we have shown that the subgroup $H = A^{\Z/p\Z} \cap \Gal(M/E(\zeta_p))$ is trivial, as $\Gal(M/E(\zeta_p))$ is codimension $1$ in $A$. We will demonstrate this by showing that $H$ is normal in $\Gal(M/\Q)$. Indeed, as $M$ is the Galois closure of $E(\zeta_p)/\Q$, any normal subgroup of $\Gal(M/\Q)$ contained in $\Gal(M/E(\zeta_p))$ is necessarily trivial.

Because $A$ is abelian, to show that $H$ is normal in $\Gal(M/\Q) = A \rtimes G$ it suffices to show that it is fixed by conjugation by $G$. Again applying the classification of indecomposable representations of $G$ we see that $A^{\Z/p\Z}$ is a product of characters and is thus a $G$-subrepresentation of $A$.

Referencing (\ref{diagram_of_fields}), notice now that the action of $(\Z/p\Z)^\times = \Gal(\Q(\zeta_p)/\Q)$ on $A$ is the same as the action of $\Gal(E(\zeta_p)/E)$ on $A$. But the action of $\Gal(E(\zeta_p)/E)$ on $A$ clearly stabilizes $\Gal(M/E(\zeta_p)) \subseteq A$.

This shows that $(\Z/p\Z)^\times \subseteq G$ stabilizes both $A^{\Z/p\Z}$ and $\Gal(M/E(\zeta_p))$ and thus it stabilizes their intersection $H$. As $H \subseteq A^{\Z/p\Z}$ is also fixed pointwise by $\Z/p\Z$, we conclude that $H$ is fixed by the action of $G$ and is thus normal in $\Gal(M/\Q)$.

To see the second part of the lemma, we first notice as above that $(\Z/p\Z)^\times$ acts on $A$ as $\Gal(K(\zeta_p)/K)$ and thus acts trivially on $\Gal(E(\zeta_p)/K(\zeta_p)) = \Gal(E/K)$.

The short exact sequence
\begin{equation*}
1 \to \Gal(M/E(\zeta_p)) \to A \to \Gal(E(\zeta_p)/K(\zeta_p)) \to 1
\end{equation*}
is $\Gal(K(\zeta_p)/K)$-equivariant. As $A^{\Z/p\Z}$ has trivial intersection with the above kernel, it maps isomorphically onto $\Gal(E(\zeta_p)/K(\zeta_p))$, which we just established carries the trivial $(\Z/p\Z)^\times$ action.
\end{proof}

The first part of Lemma \ref{fixed_part_of_A_is_1_dimensional} gives $A\cong \bigrep{j}{i}$ for some $0 \leq i \leq p-2$ and $0 \leq j \leq p-1$, and the second part establishes that $i = -j$. This also implies that $A$ is a faithful representation of $G$ whenever $m \geq 2$, i.e., whenever $j \geq 1$.

We now have that $A \cong \bigrep{m-1}{1-m}$ as $G$-representations, but to complete the proof of Theorem \ref{galois_closure_indecomposable} it remains to show that $m \leq p-2$. In what follows, it will be useful to write $\Gal(M/\Q)$ as an explicit matrix group that we can view as the image of a representation of $G_{\Q,S}$.

Suppose that $A$ is an $\F_p$-vector space and that $G \to \Aut(A) = \GL_m(\F_p)$ is an injective homomorphism. Then $A \rtimes G$ is isomorphic to the $(m+1) \times (m+1)$ block-matrix group
\begin{equation*}
\begin{pmatrix} G & A \\ 0 & 1 \\ \end{pmatrix}
\end{equation*}
where $G$ is identified with its image in $\GL_m(\F_p)$ and elements of $A$ are expressed as column vectors in the corresponding basis.

Assuming that $E$ is not the genus field of $K$, $A$ is a faithful $G$-representation so the previous paragraph establishes that in a suitable basis of $\bigrep{m-1}{1-m}$ (see Remark \ref{nonstandard_basis_for_sym}), $\Gal(M/\Q)$ is isomorphic to the group of matrices
\begin{equation*}\label{big_matrix}
\left(\begin{array}{ccccc|c}\tag{$**$}
    1 & \cyclo^{-1}b & \cyclo^{-2}\frac{b^{2}}{2} & \cdots & \cyclo^{-(m-1)}\frac{b^{m-1}}{(m-1)!} & a_{0} \\
    & \cyclo^{-1} & \cyclo^{-2}b & \cdots & \cyclo^{-(m-1)}\frac{b^{m-2}}{(m-2)!} & a_{1} \\
    & & \cyclo^{-2} & & \vdots & \vdots \\
    & & & \ddots & \cyclo^{-(m-1)} b & a_{m-2} \\
    & & & & \cyclo^{-(m-1)} & a_{m-1} \\
    \hline
    & & & & & 1 \\
\end{array}\right)
\end{equation*}
where the $i,j$-th entry in the top left block is $\cyclo^{-(j-1)} \frac{b^{j-i}}{(j-i)!}$. This also defines a representation 
\begin{equation*}
    G_{\Q,S} \to \Gal(M/\Q) \to \GL_{m+1}(\F_p)
\end{equation*}
of dimension $m+1$ that we will consider more carefully in Section \ref{pfpart1}.

\begin{remark}\label{nonstandard_basis_for_sym}
In light of the discussion after Lemma \ref{fixed_part_of_A_is_1_dimensional}, it will be useful for us to fix bases of the $\bigrep{j}{i}$ for $i,j$ in the range of Theorem \ref{classification_of_reps} so that we can view $\Gal(M/\Q) = A \rtimes G$ as an explicit matrix group. Furthermore, we would like these bases to be compatible with the quotient maps $\Sym^k V \to \Sym^{k-1}V$.

For the $2$-dimensional representation $V$ we are considering, let $\{e, f\}$ be the basis for $V$ as in the discussion at the start of Section \ref{sec_sigma}. The usual basis for $\Sym^kV$ is then $\{e^k, e^{k-1}f, \ldots, f^k\}$. In that basis, the $i,j$-th entry of the top left block is, ignoring powers of the cyclotomic character, $\binom{j-1}{i-1}b^{j-i}$.

We can rescale this basis so that the image of the representation $\Sym^k V$ is the matrix group
\begin{equation*}
\left(\begin{array}{ccccc}
    \cyclo^k & \cyclo^{k-1}b & \cyclo^{k-2}\frac{b^{2}}{2} & \cdots & \frac{b^{k}}{k!} \\
    & \cyclo^{k-1} & \cyclo^{k-2}b & \cdots & \frac{b^{k-1}}{(k-1)!} \\
    & & \cyclo^{k-2} & & \vdots \\
    & & & \ddots & b \\
    & & & & 1
\end{array}\right).
\end{equation*}

This map $G \to \GL_{k+1}(\F_p)$ factors through the group $U_{k+1}$ of upper-triangular matrices. Similarly, $\Sym^{k-1} V$ gives a map $G \to U_k$. There is also a projection $U_{k+1} \to U_k$ given by ``forget the first row and column'', and the bases above are chosen so that the triangle
\begin{equation*}
\begin{tikzcd}[row sep = small]
& U_{k+1} \arrow[dd] \\
G \arrow[ru] \arrow[rd] & \\
& U_k
\end{tikzcd}
\end{equation*}
commutes.
\end{remark}

If $E = \Q(N^{1/p}, \zeta_N^{(p)})$ is the genus field, we instead consider the representation $G_{\Q,S} \to \GL_2(\F_p)$ of the form
\begin{equation*}
\begin{pmatrix}
1 & c \\ 0 & 1
\end{pmatrix}
\end{equation*}
where $c \in \Hom(G_{\Q,S}, \F_p) = H^1_S(\F_p)$ is the class defining the extension $\Q(\zeta_{N}^{(p)})/\Q$.

\begin{remark}
As $M/K(\zeta_p)$ is unramified, we can view its Galois group $A$ as a quotient of the $p$-part of the class group $\Cl_{K(\zeta_p)}$. The results above can then be viewed through the lens of decomposing this class group into a sum of indecomposable $\Gal(K(\zeta_p)/\Q)$-representations, similar to classical results on decomposing the $p$ part of $\Cl_{\Q(\zeta_p)}$ into a sum of $\Gal(\Q(\zeta_p)/\Q)$-representations. In Section \ref{sec_effective_criteria} we will see that the numbers $M_i$ defined in Section \ref{sec_introduction} play a similar role to that of the Bernoulli numbers in the structure of $\Cl_{\Q(\zeta_p)}$.

The structure of $\Cl_{K(\zeta_p)}$ as a Galois module was also studied by Iimura in \cite{iimura}. The connection between Iimura's work and our current approach is discussed in slightly more detail in Remark \ref{filtration_on_Lambda}.
\end{remark}

With the above matrix representation in hand, we can now prove that $m \leq p-2$. Notice that we already have that $m \leq p$ since all indecomposable representations of $G$ have dimension no larger than $p$. We will show directly that $m \neq p, p-1$.

\begin{lemma}\label{bohica_1}
$m \neq p$.
\end{lemma}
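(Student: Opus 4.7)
Suppose for contradiction that $m = p$. Then $A \cong \bigrep{p-1}{1-p} \cong \Sym^{p-1}V$ as $G$-representations, since the twist $\F_p(1-p)$ is trivial (as $\cyclo^{p-1} = 1$). This is the projective cover of the trivial $G$-representation, and the distinguishing feature of $m = p$ compared to the range $2 \leq m \leq p-1$ is that the bottom-right diagonal entry $\cyclo^{-(m-1)}$ of the top-left block of (\ref{big_matrix}) becomes trivial. Consequently, $A$ admits a $1$-dimensional trivial $G$-module quotient $A/R$ in addition to the always-present trivial subrepresentation $A^G$. Since $R$ is $G$-stable and $A$ is abelian, $R$ is normal in $\Gal(M/\Q) = A \rtimes G$, with $\Gal(M^R/\Q) = (A/R) \rtimes G = \F_p \times G$ (the semidirect is direct because the $G$-action on $A/R$ is trivial). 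Therefore $M^R = L_1 \cdot K(\zeta_p)$ for some $\F_p$-extension $L_1/\Q$ with $L_1 \cap K(\zeta_p) = \Q$, and the fact that $M/K(\zeta_p)$ is unramified forces $L_1 \cdot K(\zeta_p)/K(\zeta_p)$ to be unramified everywhere. By part \ref{cohomology_theorem_kw} of Theorem \ref{cohomology_theorem_unconditional}, $L_1$ is a linear combination in $H^1_S(\F_p)$ of $\Q(\zeta_N^{(p)})$ and $\Q(\zeta_{p^2}^{(p)})$. A local analysis at $p$ using Lemma \ref{local_fields_lemma} shows that $\Q(\zeta_{p^2}^{(p)}) \cdot K(\zeta_p)/K(\zeta_p)$ is ramified at $p$, which together with Lemma \ref{zeta_N_plus_localized} at $N$ forces $L_1 = \Q(\zeta_N^{(p)})$ up to scalar. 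Hence $K(\zeta_p, \zeta_N^{(p)}) \subseteq M$.

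To derive the contradiction, I would exploit the second trivial composition factor of $A$, namely the socle $T = A^G$. Indecomposability of $A$ forces $T \subsetneq R$, producing a filtration $0 \subsetneq T \subsetneq R \subsetneq A$ of $G$-stable subgroups whose graded pieces include two independent trivial factors. The central position of $T$ inside $\Gal(M/\Q) = A \rtimes G$, together with the explicit upper-triangular structure of $\rho$ in (\ref{big_matrix}), should be used to extract a second $\F_p$-extension $L_2/\Q$ (coming from the sub-extension $M/M^T$) that also satisfies the unramifiedness condition $L_2 \cdot K(\zeta_p)/K(\zeta_p)$ unramified everywhere and is linearly independent from $L_1$ in $H^1_S(\F_p)$. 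Since the subspace of such extensions is $1$-dimensional (spanned by $\Q(\zeta_N^{(p)})$), this is impossible.

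The main obstacle will be rigorously producing the second extension $L_2$: this amounts to checking that the central exact sequence $1 \to T \to \Gal(M/\Q) \to (A/T) \rtimes G \to 1$ splits appropriately, or equivalently that the cochain entries $a_0, \ldots, a_{p-1}$ in the last column of (\ref{big_matrix}) satisfy compatibility conditions (via Proposition \ref{classes_cup_iff_H1Sigmastar}) forcing a second cohomology class to lie in the $1$-dimensional space already accounted for by $L_1$. I anticipate this requires a careful cohomological analysis using the self-duality of $\Sigma$ at $N$ (Proposition \ref{condition_at_N_is_self_dual}) and the cup-product machinery of Section \ref{sec_cup_products}.
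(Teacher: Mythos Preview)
Your first paragraph is fine and runs parallel to the paper: you correctly extract the $\F_p$-extension $L_1/\Q$ from the trivial cosocle of $A$, and the analysis at $p$ via Lemma~\ref{local_fields_lemma} (applied exactly as in the paper, with $F'=L_1(\zeta_p)_p$ carrying the trivial $(\Z/p\Z)^\times$-action and $F=K(\zeta_p)_p$ carrying $\chi$) shows $L_1/\Q$ is unramified at $p$, hence $L_1$ is a scalar multiple of the class $c$ of $\Q(\zeta_N^{(p)})$.  Since $a_{p-1}\neq 0$ (else the image of $G_{K(\zeta_p)}$ would miss the last coordinate of $A$, contradicting $m=p$), this pins down $L_1=\Q(\zeta_N^{(p)})$.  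So far so good, but as you note, this is not yet a contradiction.

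The genuine gap is your second paragraph.  The socle $T=A^G$ cannot produce an $\F_p$-extension of $\Q$ independent of $L_1$: any $\F_p$-extension of $\Q$ inside $M$ corresponds to a surjection $\Gal(M/\Q)\to\F_p$, i.e.\ to an $\F_p$-quotient of the abelianization.  But for the semidirect product $A\rtimes G$ one has $(A\rtimes G)^{\mathrm{ab}}=A_G\times G^{\mathrm{ab}}$, and here $A_G$ (the $G$-coinvariants of the indecomposable $A=\Sym^{p-1}V$) is $1$-dimensional, while $G^{\mathrm{ab}}=(\Z/p\Z)^\times$ has no $p$-part.  So there is exactly \emph{one} $\F_p$-quotient of $\Gal(M/\Q)$ up to scalar, and it is your $L_1$.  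No independent $L_2$ exists, regardless of whether the central sequence $1\to T\to\Gal(M/\Q)\to(A/T)\rtimes G\to 1$ splits; the socle is simply not visible in the abelianization.  The cohomological machinery you propose (Propositions~\ref{condition_at_N_is_self_dual} and~\ref{classes_cup_iff_H1Sigmastar}) cannot manufacture a second $\Z/p\Z$-extension of $\Q$ that isn't there.

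The paper's argument at $N$ is entirely different and much more direct: it shows that $L_1$ itself is \emph{unramified at $N$}, which immediately contradicts $L_1=\Q(\zeta_N^{(p)})$.  The mechanism is a one-line matrix computation exploiting the size of the representation.  If $\tau$ generates tame inertia at $N$, then $\chi(\tau)=1$, $b(\tau)\neq 0$, and the image of $\tau$ in $\GL_{p+1}(\F_p)$ is the unipotent matrix $I+N$ from (\ref{big_matrix}).  Over $\F_p$ one has $(I+N)^p=I+N^p$, and since $N$ is $(p+1)\times(p+1)$ strictly upper-triangular, $N^p$ has a single possibly nonzero entry: the $(1,p{+}1)$-entry, equal to the product of the superdiagonal entries $b(\tau)^{p-1}\cdot a_{p-1}(\tau)$.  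If $L_1$ were ramified at $N$ then $a_{p-1}(\tau)\neq 0$, so $\tau^p\neq 1$; but inertia at $N$ in $\Gal(M/\Q)$ has order exactly $p$ (ramification index $p$ from $K(\zeta_p)/\Q$, none from $M/K(\zeta_p)$), contradiction.  This is the idea you are missing, and it is precisely what distinguishes $m=p$ from smaller $m$: only for a $(p{+}1)\times(p{+}1)$ unipotent can $N^p$ be nonzero.
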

\begin{proof}
Suppose that $m = p$.
The lower $2 \times 2$ corner of the matrix (\ref{big_matrix}) will thus be
\begin{equation*}
\begin{pmatrix} 1 & a_{p-1} \\ 0 & 1 \\ \end{pmatrix}
\end{equation*}
which we think of as a quotient of $\Gal(M/\Q)$ (alternatively, as a new $G_{\Q,S}$-representation with $G_{M,S}$ in the kernel). This gives us a class $a_{p-1} \in H^{1}_{S}(\F_{p})$, and it cuts out a $\Z/p\Z$ extension $L$ of $\Q$ contained in $M$ and hence unramified outside of $S$.
We will show that this extension is necessarily unramified at $N$ and $p$ as well, contradicting its existence. We begin by considering the behavior at $p$. Consider the diagram of fields
\begin{equation*}
\begin{tikzcd}[column sep = tiny, row sep = small]
& LK(\zeta_{p}) \arrow[dl, dash] \arrow[dr, dash] & \\
K(\zeta_{p}) \arrow[dr, dash] & & L(\zeta_{p}) \arrow[dl, dash] \arrow[d, dash] \\
& \Q(\zeta_{p}) \arrow[d, dash] & L \arrow[dl, dash] \\
& \Q &
\end{tikzcd}
\end{equation*}
locally at $p$. As $L \subseteq M$ we know that $LK(\zeta_p)/K(\zeta_p)$ is unramified at $p$. Applying Lemma \ref{local_fields_lemma}, we conclude that $L(\zeta_p)/\Q(\zeta_p)$, and hence $L/\Q$, is also unramified at $p$.

Suppose independently that $L/\Q$ is (tamely) ramified at $N$. The inertia group(s) above $N$ in $\Gal(M/\Q)$ are cyclic of order $p$ as $M/K(\zeta_p)$ is unramified.
If $\tau$ is a generator of the tame inertia group of $\Q_{N}$ we know by the functoriality of inertia that $b(\tau)$ and $a_{p-1}(\tau)$ are both nonzero, as the extensions $K(\zeta_p)$ and $L$ defined by these classes are ramified at $N$.
Under the quotient map $G_{\Q, S} \to \Gal(M/\Q)$ we have
\begin{equation*}
\tau \mapsto
\begin{pmatrix}
    1 & b(\tau) & \frac{b(\tau)^{2}}{2} & \cdots & \frac{b(\tau)^{p-1}}{(p-1)!} & a_{0}(\tau) \\
    & 1 & b(\tau) & \cdots & \frac{b(\tau)^{p-2}}{(p-2)!} & a_{1}(\tau) \\
    & & 1 & & \vdots & \vdots \\
    & & & \ddots & b(\tau) & a_{p-2}(\tau) \\
    & & & & 1 & a_{p-1}(\tau) \\
    & & & & & 1 \\
\end{pmatrix}.
\end{equation*}

Raising this to the $p$th power, we see that the top-right entry of the image of $\tau^p$ is $b(\tau)^{p-1}a_{p-1}(\tau)$. But this is nonzero, contradicting the fact that the generator of the inertia group at $N$ has order $p$.
\end{proof}

\begin{lemma}\label{bohica_2}
$m \neq p-1$.
\end{lemma}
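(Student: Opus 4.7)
The plan is to mirror the proof of Lemma \ref{bohica_1}. Assume $m = p-1$, so that $A \cong \bigrep{p-2}{1}$ and the matrix (\ref{big_matrix}) is $p \times p$. The lower $2 \times 2$ corner is
\[
\begin{pmatrix} \cyclo & a_{p-2} \\ 0 & 1 \end{pmatrix},
\]
giving a class $a_{p-2} \in H^1_S(\F_p(1))$. By Kummer theory (part \ref{cohomology_theorem_kummer} of Theorem \ref{cohomology_theorem_unconditional}), $a_{p-2}$ is the class of some $\alpha = N^\lambda p^\mu \in \Z[1/pN]^\times/(\Z[1/pN]^\times)^p$ and cuts out an extension $L = \Q(\zeta_p, \alpha^{1/p}) \subseteq M$.

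First I would constrain $\alpha$ using the fact that $LK(\zeta_p)/K(\zeta_p)$ must be unramified, since $M/K(\zeta_p)$ is. Analyzing the Kummer condition locally at $p$ in $\Q_p(\zeta_p)$ rules out the $p$-component, forcing $\mu \equiv 0$, while Lemma \ref{zeta_N_plus_localized} identifies $K_N$ with the completion of $\Q(\zeta_N^{(p)})$, so the behavior at $N$ of any $\alpha = N^\lambda$ is automatic. Together these force $a_{p-2}$ to be a scalar multiple of $b$, say $a_{p-2} = c \cdot b$ for some $c \in \F_p$.

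Next I would run the analogue of the inertia calculation in Lemma \ref{bohica_1}: let $\tau$ be a generator of tame inertia at $N$ and write $\rho(\tau) = I + X$. Locally at $N$ the cyclotomic character is trivial, so $X$ is strictly upper triangular of size $p \times p$ with $X_{k, k+1} = b(\tau)$ for $1 \leq k \leq p-2$ and $X_{p-1, p} = a_{p-2}(\tau)$. Unlike the $m=p$ situation, $X^p = 0$ holds automatically (since $X$ is strictly upper triangular of size exactly $p$), so $\rho(\tau)^p = I$ does not directly contradict anything. However, the $(1,p)$ entry of $X^{p-1}$ is the single product $b(\tau)^{p-2} a_{p-2}(\tau) = c \cdot b(\tau)^{p-1} = c$ by Fermat, since $b(\tau) \neq 0$; this entry is nonzero precisely when $c \neq 0$, in which case $\rho(\tau)$ is regular unipotent in $\GL_p(\F_p)$.

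The main obstacle is in finishing the contradiction, since the inertia argument alone is inconclusive: both possible Jordan types of $\rho(\tau)$ are compatible with $\rho(\tau)$ having order $p$. The natural way forward is to combine this Jordan-type information with the cup-product machinery from Section \ref{sec_cohomology}. The existence of the entry $a_{p-3}$ in (\ref{big_matrix}) requires the cup product $b \cup a_{p-2}$ to vanish in $H^2(G_{\Q,S}, \F_p(2))$, which by Proposition \ref{classes_cup_iff_H1Sigmastar} places $a_{p-2}$ into $H^1_{\Sigma^*}(\F_p(1))$; propagating the analogous vanishing conditions up the matrix via the higher Massey products encoded by the remaining top-right entries $a_{p-4}, \ldots, a_0$ should ultimately force $c = 0$ and then trivialize the last column sufficiently to contradict the indecomposability of $A = \bigrep{p-2}{1}$ as the Galois group $\Gal(M/K(\zeta_p))$.
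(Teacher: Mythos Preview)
Your proposal has a genuine gap: it never finishes the argument. You correctly reach the point where $a_{p-2} = c \cdot b$ for some $c \in \F_p$ and then observe that the $3 \times 3$ quotient forces $b \cup a_{p-2} = 0$, but your plan to ``propagate vanishing conditions via higher Massey products'' to force $c = 0$ is left entirely unspecified, and it is not clear how any such propagation would pin down $c$.

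The paper's proof supplies exactly the missing idea, and it is much simpler than what you are reaching for. The point is that $a_{p-2}$ \emph{cannot} be a nonzero multiple of $b$: the extension $L/\Q(\zeta_p)$ cut out by $a_{p-2}$ is not $K(\zeta_p)$, because $A = \Gal(M/K(\zeta_p))$ is by definition the kernel of $\Gal(M/\Q) \to G$, so $A$ surjects onto $\Gal(L/\Q(\zeta_p))$ (visible from the last column of the matrix) but not onto $\Gal(K(\zeta_p)/\Q(\zeta_p))$. Combining this with $b \cup a_{p-2} = 0$ and Proposition~\ref{classes_cup_iff_H1Sigmastar} (which forces $a_{p-2}$ to be a multiple of $b$) yields $a_{p-2} = 0$ immediately, and then $\dim_{\F_p} A \leq p-2$, a contradiction. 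No inertia computation or Massey-product analysis is needed.

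A secondary issue: your step~3 (ruling out the $p$-component of $\alpha$ by a local argument at $p$) is more delicate than you suggest. Lemma~\ref{local_fields_lemma} does not apply here, since both $\Gal(K(\zeta_p)/\Q(\zeta_p))$ and $\Gal(L/\Q(\zeta_p))$ carry the $\F_p(1)$-action, so the usual ``distinct eigencharacter'' trick from Lemma~\ref{bohica_1} fails. A direct Kummer analysis would have to handle the case where $K(\zeta_p)_\mathfrak{p}/\Q_p(\zeta_p)$ is itself ramified of degree $p$, and there it is not immediate that adjoining $p^{1/p}$ remains ramified. The paper sidesteps this entirely by using the structural argument above.
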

\begin{proof}
The lower $2 \times 2$ corner of the matrix (\ref{big_matrix}) will thus be
\begin{equation*}
\begin{pmatrix}
    \cyclo & a_{p-2} \\
    0 & 1 \\
    \end{pmatrix}
\end{equation*}
where $a_{p-2} \in H^{1}_{S}(\F_{p}(1))$.
As in the previous lemma, we deduce the existence of an extension $L/\Q(\zeta_{p})$ contained in $M$ which is Galois over $\Q$, with Galois group isomorphic to $G$.
The extension $L$ is not equal to $K(\zeta_{p})$ however; we have that $A$ surjects onto $\Gal(L/\Q(\zeta_{p}))$ but not $\Gal(K(\zeta_{p})/\Q(\zeta_{p}))$ as $A$ is equal to the kernel of $\Gal(M/\Q) \to \Gal(K(\zeta_{p})/\Q)$.
Thus we must have that $a_{p-2}$ is not a nonzero multiple of the class $b \in H^{1}_{S}(\F_{p}(1))$.
However, we know that $a_{p-2} \cup b = 0$ since there is a 3-dimensional representation of $G_{\Q,S}$ coming from the lower $3 \times 3$ quotient
\begin{equation*}
\begin{pmatrix}
    \cyclo^2 & \cyclo b & a_{p-3} \\
    0 & \cyclo & a_{p-2} \\
    0 & 0 & 1 \\
\end{pmatrix}
\end{equation*}
of the matrix (\ref{big_matrix}), so $a_{p-2}$ must be a multiple of $b$ by Proposition \ref{classes_cup_iff_H1Sigmastar}.
This gives that $a_{p-2} = 0$, but then $\dim_{\F_p}(A) \leq p-2$ so $E/K$ is not in fact type $p-1$.
\end{proof}

\subsection{An Auxiliary Selmer Group}\label{pfpart1}

In the previous section, we obtained from an unramified $\F_p$-extension $E/K$ of type $m$ a representation $G_{\Q,S} \to \GL_{m+1}(\F_p)$ of the form (\ref{big_matrix}). As a representation, it is an extension of the trivial representation by $\bigrep{m-1}{1-m}$ considered as a $G_{\Q,S}$-representation via the quotient $G_{\Q,S} \to G$, so it gives a class
\begin{equation*}
a_E = \begin{bmatrix}
a_0 \\ \vdots \\ a_{m-1}
\end{bmatrix} \in H^1_S(\bigrep{m-1}{1-m})
\end{equation*}
as discussed in Section \ref{sec_notation}.

Let $\Lambda$ be the Selmer condition defined by
\begin{itemize}
    \item $L_\ell = H^1_\ur(G_{\Q_\ell}, A)$ for $\ell \neq N,p$
    \item $L_N = H^1(G_{\Q_N}, A)$
    \item $L_p = \res^{-1}(H^1_\ur(G_{K(\zeta_p)_p}, A))$ where $\res$ is the restriction map
    \begin{equation*}
    H^1(G_{\Q_p}, A) \to H^1(G_{K(\zeta_p)_p}, A).
    \end{equation*}
\end{itemize}

\begin{remark}\label{lambda_equals_N_Fp}
In the case $A = \F_p$, the containment $H^1_N(\F_p) \subseteq H^1_\Lambda(\F_p)$ is an equality. This is to say that any $\F_p$-extension $L/\Q$ unramified away from $S$ and unramified at $p$ after base change to $K(\zeta_p)$ was necessarily unramified at $p$ over $\Q$. This follows from Lemma \ref{local_fields_lemma} as in the end of the proof of Proposition \ref{type_1_iff_genus_field}.

Although we don't need the following fact, it is true that for all of the modules $A$ listed in Theorem \ref{galois_closure_indecomposable} which can arise as $\Gal(M/K(\zeta_p))$, one has $H^1_N(A) = H^1_\Lambda(A)$; this follows from Lemma \ref{image_of_H1Lambda_is_H1Sigma}. However, if one wants to use the methods of this section to study $\Cl_{K(\zeta_p)}$ or the case of composite $N$, it is necessary to use modules $A$ for which $H^1_N(A) \neq H^1_\Lambda(A)$.
\end{remark}

In this subsection we prove
\begin{theorem}\label{rank_equals_h1Lambda}
$r_K = h^1_\Lambda(\bigrep{p-3}{2})$.
\end{theorem}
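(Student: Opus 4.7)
The plan is to exhibit an $\F_p$-linear isomorphism
\begin{equation*}
\phi: H^1_\Lambda(\bigrep{p-3}{2}) \xrightarrow{\sim} \Hom^{\mathrm{unr}}(G_K, \F_p),
\end{equation*}
whose target has $\F_p$-dimension $r_K$ by class field theory. The map $\phi$ sends a class $a = (a_0, \ldots, a_{p-3})^T$, represented via the Galois representation $\rho_a: G_{\Q,S} \to \GL_{p-1}(\F_p)$ of the form (\ref{big_matrix}), to the restriction $a_0|_{G_K}$ of its top component. The key identity is $b|_{G_K} = 0$ (because $N^{1/p} \in K$), which kills the cross-terms in the cocycle condition for $a_0$ on $G_K$; this makes $a_0|_{G_K}$ a well-defined class in $\Hom(G_K, \F_p)$, independent of the choice of cocycle representative (since any coboundary's top component on $G_K$ is also identically zero).

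First I would verify that $\phi$ lands in $\Hom^{\mathrm{unr}}(G_K, \F_p)$. Unramifiedness at primes of $K$ outside $S$ is immediate from the unramified conditions baked into $\Lambda$. At the prime of $K$ above $p$, the local condition $L_p$ gives unramifiedness after base change to $K(\zeta_p)_p$, and Lemma~\ref{local_fields_lemma} then descends this to unramifiedness over $K_p$, arguing as in the proof of Proposition~\ref{type_1_iff_genus_field} (using that the extension defined by $a_0|_{G_K}$ and the extension $K(\zeta_p)/K$ at $p$ cut out non-isomorphic representations of $\Gal(\Q(\zeta_p)/\Q)$). Unramifiedness at the prime above $N$ is automatic: because $K/\Q$ is totally tamely ramified of degree $p$ at $N$ and $p \mid N-1$, the $p$-part of inertia of $G_K$ at $N$ is trivial, so any $\F_p$-valued homomorphism of $G_K$ is unramified there.

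For surjectivity, I would use the construction of Section~\ref{pfsetup}: every unramified $\F_p$-extension $E/K$ of type $m$ (with $1 \leq m \leq p-2$ by Theorem~\ref{galois_closure_indecomposable}) yields a class in $H^1_S(\bigrep{m-1}{1-m})$. This class then pushes forward along the $G_{\Q,S}$-equivariant inclusion $\bigrep{m-1}{1-m} \hookrightarrow \bigrep{p-3}{2}$ (the top-left block embedding, compatibly with Remark~\ref{nonstandard_basis_for_sym}) into $H^1_\Lambda(\bigrep{p-3}{2})$. Inspecting the matrix shows that $\phi$ applied to this pushed-forward class returns exactly the homomorphism $G_K \to \F_p$ cutting out $E/K$.

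The main obstacle is injectivity. If $\phi(a) = 0$, then $a_0$ vanishes identically on $G_K$ as a function, and the goal is to show that the entire class $a$ is trivial in $H^1_\Lambda$. I would proceed by modifying the cocycle representative by a coboundary $d\eta$ with $\eta_0 = 0$, using the $b$-dependence of $(d\eta)_0$ on $G_{\Q,S} \setminus G_K$ to absorb the remaining values of $a_0$ outside of $G_K$. This reduces $a$ to a class supported in a proper submodule of $\bigrep{p-3}{2}$, and iterating this descent along the filtration $0 \subset \F_p \subset \bigrep{1}{-1} \subset \cdots \subset \bigrep{p-3}{2}$ by subrepresentations, combined with the vanishing results from Section~\ref{sec_cohomology_groups_of_characters}, eventually forces $a = 0$. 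The technical heart of this step is verifying that each coboundary adjustment preserves the local Selmer conditions defining $\Lambda$, and that the obstructions appearing at each stage (living in appropriate $H^2$'s, which can be controlled via Theorem~\ref{galois_closure_indecomposable}) vanish.
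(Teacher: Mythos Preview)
Your overall strategy matches the paper's: set up a bijection between $\Proj H^1_\Lambda(\bigrep{p-3}{2})$ and unramified $\F_p$-extensions $E/K$ via $a \mapsto a_0|_{G_K}$, with surjectivity coming from the construction of Section~\ref{pfsetup}. However, two steps have genuine gaps.

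First, your argument for unramifiedness at $N$ is incorrect. The $p$-part of the inertia group of $G_{K_N}$ is not trivial: the wild inertia is pro-$N$, but the tame quotient of $I_{K_N}$ is abstractly $\prod_{\ell \neq N}\Z_\ell$, whose $\Z_p$-factor is nonzero. Concretely, $K_N(N^{1/p^2})/K_N$ is a ramified $\F_p$-extension. The paper instead invokes Proposition~\ref{basis_of_local_H1}: $H^1(G_{\Q_N}, \bigrep{k}{-k})$ is spanned by an unramified class $\mathbf{a}$ and the class $\mathbf{b}$, and since $\mathbf{b}|_{G_{K_N}} = 0$ as a cocycle, every class restricts to an unramified class over $K_N$. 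So $a_0|_{G_{K_N}}$ is indeed unramified, but for a reason specific to the structure of $\bigrep{k}{-k}$, not for the general reason you give.

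Second, your injectivity argument is misdirected. You propose to force $a_0 \equiv 0$ by a coboundary adjustment and then say this puts $a$ into a proper submodule. But the proper submodules of $\bigrep{p-3}{2}$ are the spans of the \emph{first} $k$ basis vectors; membership in a proper submodule requires the \emph{bottom} coordinates $a_{p-3}, a_{p-4}, \ldots$ to vanish, not $a_0$. The subspace $\{v : v_0 = 0\}$ is not a subrepresentation, so arranging $a_0 \equiv 0$ does not place the cocycle anywhere useful in the filtration, and your inductive descent does not get started. The paper's argument (Lemma~\ref{a_0_determines_E}) avoids this entirely: inflation--restriction together with $H^1(\Z/p\Z, \bigrep{p-3}{2})^{(\Z/p\Z)^\times} = 0$ shows that a nonzero $a$ restricts to a nonzero $G$-equivariant homomorphism $G_{K(\zeta_p),S} \to \bigrep{p-3}{2}$, whose image is then a nonzero $G$-subrepresentation and therefore contains a vector with nonzero first coordinate. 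This gives $a_0|_{G_{K(\zeta_p)}} \neq 0$ in one stroke.
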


The main step in the proof of this theorem is to show that the class $a_E$ lies in the $\Lambda$-Selmer subgroup $H^1_\Lambda(\bigrep{m-1}{1-m})$ and conversely that any such Selmer class arises from an unramified $\F_{p}$-extension $E/K$. The forward direction is trivial: the only thing to check is that it satisfies the correct condition at $p$, which follows from the fact that $M/K(\zeta_p)$ is unramified above $p$.

Note that there is some ambiguity in the choice of $a_E$ as any constant multiple of it defines the same field extension. The proof of Theorem \ref{rank_equals_h1Lambda} comes down to establishing a bijection between the projectivized space $\Proj H^1_\Lambda(\bigrep{p-3}{2})$ and the set of unramified $\F_{p}$-extensions $E/K$, which can itself be thought of as the projectivization of the $p$-part of $\Cl_K$.

In order to promote $a_E$ to a class in $H^1_\Lambda(\bigrep{p-3}{2})$, consider the natural filtration on the module $\bigrep{p-3}{2} = \bigrep{p-3}{3-p}$ given by
\begin{align*}
0 \subseteq \F_p &= \bigrep{0}{0} \\*
    &\subseteq \bigrep{1}{-1} \\
    &\subseteq \bigrep{2}{-2} \\*
    &\subseteq \cdots \\*
    &\subseteq \bigrep{p-3}{3-p}.
\end{align*}
where the $k$th subspace is the span of the first $k$ basis vectors in the basis used above in the matrix (\ref{big_matrix}). The successive quotients are
\begin{equation*}
\frac{\bigrep{k}{-k}}{\bigrep{k-1}{1-k}} \cong \F_p(-k).
\end{equation*}
Since these have no $G_{\Q,S}$-fixed points, as $1 \leq k \leq p-3$, we get a corresponding filtration in cohomology
\begin{align*}
0 \subseteq H^1_S(\F_p) &\subseteq H^1_S(\bigrep{1}{-1}) \\*
    &\subseteq H^1_S(\bigrep{2}{-2}) \\*
    &\subseteq\cdots \\*
    &\subseteq H^1_S(\bigrep{p-3}{3-p})
\end{align*}
where each inclusion can be realized concretely via
\begin{equation*}
\begin{bmatrix}a_0 \\ \vdots \\ a_{k-1} \end{bmatrix} \mapsto
\begin{bmatrix}a_0 \\ \vdots \\ a_{k-1} \\ 0\end{bmatrix}.
\end{equation*}

This filtration restricts to a filtration on the Selmer subgroups $H^1_\Lambda(-)$. Thus, given our $E/K$ of type $m$, we get an element in $H^1_\Lambda(\bigrep{p-3}{2})$, defined up to a scalar, as desired.

Conversely, given a nonzero class $a \in H^1_\Lambda(\bigrep{p-3}{2})$, we can restrict it to a class in $G_{K,S}$-cohomology to get a representation of $G_{K,S}$ of the form
\begin{equation*}
\begin{pmatrix}
    1 & 0 & 0 & \cdots & 0 & a_0 \\
    & \cyclo^{-1} & 0 & \cdots & 0 & a_1 \\
    & & \cyclo^{-2} & & \vdots & \vdots \\
    & & & \ddots & 0 & a_{p-4} \\
    & & & & \cyclo^{2} & a_{p-3} \\
    & & & & & 1 \\
\end{pmatrix}.
\end{equation*}
From this we see that $a_0|_{G_{K,S}}$ is a homomorphism $G_{K,S} \to \F_p$. Note that some of the $a_i$ might be $0$ if $a$ comes from some smaller piece of the filtration above, but $a_0|_{G_{K,S}} \neq 0$ by the following lemma. Thus, the fixed field of $\ker(a_0|_{G_{K,S}})$, denoted $E_a$, is an $\F_p$-extension of $K$.

\begin{lemma}\label{a_0_determines_E}
If $a \in H^1_\Lambda(\bigrep{p-3}{2})$ is nonzero then $a_0|_{G_{K,S}}: G_{K,S} \to \F_p$ is nonzero as well.
\end{lemma}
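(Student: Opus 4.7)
The strategy is to prove the stronger statement that $a_{0}|_{G_{K(\zeta_{p}), S}} \neq 0$, from which the lemma follows immediately since this map is just the restriction of $a_{0}|_{G_{K,S}}$ to a subgroup. The argument has three steps: the vanishing of a cohomology group, the identification of the image of $a|_{G_{K(\zeta_{p}), S}}$ as a $G$-submodule of $\bigrep{p-3}{2}$, and the extraction of non-triviality of the top component from the uniserial structure of $\bigrep{p-3}{2}$.

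The first step is to show that $H^{1}(G, \bigrep{p-3}{2}) = 0$, where $G = \Gal(K(\zeta_{p})/\Q) \cong \Z/p\Z \rtimes (\Z/p\Z)^{\times}$. By the Hochschild-Serre spectral sequence for the normal subgroup $\Z/p\Z \subseteq G$ (whose higher rows collapse since $(\Z/p\Z)^{\times}$ has order prime to $p$), this equals $H^{1}(\Z/p\Z, \bigrep{p-3}{2})^{(\Z/p\Z)^{\times}}$. As a $\Z/p\Z$-module, $\bigrep{p-3}{2}$ is a single Jordan block of size $p-2$; a direct calculation, using that the norm element $N = 1 + \sigma + \cdots + \sigma^{p-1}$ vanishes on this module because the relevant binomial coefficients are divisible by $p$, shows that $H^{1}(\Z/p\Z, \bigrep{p-3}{2})$ is $1$-dimensional and carries the $(\Z/p\Z)^{\times}$-character $\cyclo^{-(p-3)}$. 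For $p \geq 5$ this character is nontrivial, so its invariants vanish. Applying this to the inflation-restriction sequence for $1 \to G_{K(\zeta_{p}), S} \to G_{\Q, S} \to G \to 1$ shows that $a \in H^{1}(G_{\Q, S}, \bigrep{p-3}{2})$ nonzero implies $a|_{G_{K(\zeta_{p}), S}} \neq 0$.

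For the second step, let $M_{a} \subseteq \overline{\Q}$ be the field cut out by $\ker(\rho_{a})$, where $\rho_{a}$ is the $(p-1)$-dimensional representation of $G_{\Q, S}$ of the form (\ref{big_matrix}) associated to $a$. Since $\bigrep{p-3}{2}$ restricted to $G_{K(\zeta_{p}), S}$ is trivial, the cocycle $a|_{G_{K(\zeta_{p}), S}}$ is a homomorphism, and its image identifies $\Gal(M_{a}/K(\zeta_{p}))$ with an $\F_{p}$-subspace of $\bigrep{p-3}{2}$. Normality of $\Gal(M_{a}/K(\zeta_{p}))$ in $\Gal(M_{a}/\Q)$ equips this subgroup with a conjugation action of $G$, and a direct matrix computation (expanding $\rho_{a}(\tilde{g}) \rho_{a}(\sigma) \rho_{a}(\tilde{g})^{-1}$ for $\tilde{g} \in G_{\Q, S}$ lifting $g \in G$ and $\sigma \in G_{K(\zeta_{p}), S}$) shows that this action coincides with the standard $G$-action on $\bigrep{p-3}{2}$. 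Thus the image is a nonzero $G$-submodule.

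By Theorem \ref{classification_of_reps}, $\bigrep{p-3}{2}$ is uniserial as a $G$-module with composition series $0 \subset \F_{p} \subset \bigrep{1}{-1} \subset \cdots \subset \bigrep{p-3}{2}$, so its unique minimal nonzero $G$-submodule is $\F_{p} \cdot e_{0}$, corresponding to the top row in the matrix (\ref{big_matrix}). Hence $\Gal(M_{a}/K(\zeta_{p}))$ contains $\F_{p} \cdot e_{0}$, so there exists $\sigma \in G_{K(\zeta_{p}), S}$ with $a(\sigma) = (c, 0, \ldots, 0)$ for some nonzero $c \in \F_{p}$. This forces $a_{0}(\sigma) = c \neq 0$, giving $a_{0}|_{G_{K(\zeta_{p}), S}} \neq 0$ and therefore $a_{0}|_{G_{K, S}} \neq 0$. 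The main technical obstacle is the cohomological vanishing in the first step, which requires identifying the Jordan block structure of $\bigrep{p-3}{2}$ as a $\Z/p\Z$-module and tracking the $(\Z/p\Z)^{\times}$-character on $H^{1}$.
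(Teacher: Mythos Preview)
Your proof is correct and follows essentially the same approach as the paper's: both reduce to showing $a_0|_{G_{K(\zeta_p),S}} \neq 0$, establish $H^1(G,\bigrep{p-3}{2}) = 0$ via inflation--restriction by computing $H^1(\Z/p\Z,\bigrep{p-3}{2})$ as the one-dimensional $(\Z/p\Z)^\times$-module $\F_p(2)$ (your $\cyclo^{-(p-3)} = \cyclo^2$), and then use that the image of the restricted cocycle is a nonzero $G$-submodule of the uniserial module $\bigrep{p-3}{2}$, hence contains the top line. Your write-up is somewhat more explicit about why the image is a $G$-submodule (via the conjugation computation), whereas the paper simply invokes $G$-invariance of the restricted homomorphism; this is elaboration rather than a different argument.
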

\begin{proof}
We will show the equivalent statement that $a_0|_{G_{K(\zeta_p),S}}$ is nonzero. Let $A$ be $\bigrep{p-3}{2}$ and consider the inflation-restriction sequence
\begin{equation*}
0 \to H^1(G, A) \to H^1(G_{\Q,S}, A) \to H^1(G_{K(\zeta_p),S}, A)^G.
\end{equation*}
We claim that $H^1(G,A) = 0$. Using inflation-restriction again, we get that 
\begin{equation*}
H^1(G,A) \cong H^1(\Z/p\Z, A)^{(\Z/p\Z)^\times}.
\end{equation*}
It can be explicitly seen that $H^1(\Z/p\Z, A) = \F_p(2)$ as a $(\Z/p\Z)^\times$-module, implying that \begin{equation*}
H^1(\Z/p\Z, A)^{(\Z/p\Z)^\times} = 0.
\end{equation*}

Therefore, a nonzero $a \in H^1(G_{\Q,S}, A)$ restricts to a nonzero homomorphism $G_{K(\zeta_p),S} \to A = \F_p^{p-2}$ that is invariant under $G$. In particular, its image is fixed by the action of $G$ on $A$ so its image is a nonzero $G$-subrepresentation. However, the only nontrivial $G$-subrepresentations of $A$ are the spans of the first $k \geq 1$ basis vectors, all of which contain some element whose first coordinate is nonzero.
\end{proof}

The Selmer condition $\Lambda$ guarantees that this extension $E_a/K$ is unramified everywhere. This is obvious for all $\ell \neq N, p$.

At $N$, Proposition \ref{basis_of_local_H1} shows that $H^1(G_{\Q_N}, \bigrep{k}{-k})$ is $2$-dimensional, spanned by an unramified class and the class corresponding to $K_N$, so the image of any class here in $H^1(G_{K_N}, \bigrep{k}{-k})$ is unramified.
At $p$, it suffices to remark that $[E_a:K]$ is prime to $[K(\zeta_p):K]$, and thus $E_a/K$ is unramified exactly when $E_a(\zeta_p)/K(\zeta_p)$ is.

Finally, to finish the proof of Theorem \ref{rank_equals_h1Lambda}, we remark that the assignments $E \mapsto a_E$ and $a \mapsto E_a$ are mutually inverse. Indeed, given an unramified $E/K$, Theorem \ref{galois_closure_indecomposable} along with the above discussion implies that $E_{a_{E}}$ is the unique $\F_p$-subextension of $M/K$ such that $\Gal(K(\zeta_p)/K) = (\Z/p\Z)^\times$ acts trivially on $\Gal(E_{a_E}(\zeta_p)/K(\zeta_p))$. But $E$ satisfies this last property as well, and thus $E = E_{a_E}$.

Conversely, take any two cohomology classes $a, a' \in H^1_\Lambda(\bigrep{p-3}{2})$ and assume $E_a = E_{a'}$, which implies that $a_0|_{G_{K,S}}$ is a constant multiple of $a'_0|_{G_{K,S}}$. Scaling $a'$ so that these are equal and applying Lemma \ref{a_0_determines_E} to $a - a'$, we conclude that $a - a' = 0$ and hence $a = a'$.

\begin{remark}\label{filtration_on_Lambda}
We can now think of the filtration on $H^1_\Lambda(\bigrep{p-3}{2})$ from the perspective of the types $m$ of the extensions $E/K$. Under the correspondence used to prove Theorem \ref{rank_equals_h1Lambda}, the subspace $H^1_\Lambda(\bigrep{k}{-k})$ contains the $E$ of type $m \leq k+1$, and the quotient
\begin{equation*}
\frac{H^1_\Lambda(\bigrep{k}{-k})}{H^1_\Lambda(\bigrep{k-1}{1-k})}
\end{equation*}
is nonzero exactly when there is an $E/K$ of type $k+1$.

In \cite{iimura}, Iimura defines a descending filtration on the $p$-part of $A = \Cl_{K(\zeta_p)}$ by considering it as a $\F_p[G]$-module. Let $\sigma \in G$ be order $p$. The $i$th piece $J_{i}$ of the filtration is the image of $(\sigma-1)^i A$.
Comparing his construction with the one given in this section, one sees that quotients of the $(\Z/p\Z)^\times$-coinvariants of $J_0 / J_k$ give extensions $E/K$ of type $m \leq k$, and that quotients of the $(\Z/p\Z)^\times$-coinvariants of $J_{m-1}/J_m$ give extensions $E/K$ of type exactly $m$. This realizes Iimura's filtration as the ``dual'' to our filtration on $H^1_\Lambda(\bigrep{p-3}{2})$.
\end{remark}

\begin{remark}\label{sharifi_connection}
Recall that if $c_j \in H^1(G, \F_p(i_j))$ for $1 \leq j \leq k$, then the $k$-fold Massey product $\langle c_1, \ldots, c_k \rangle$ is a subset of $H^2(G, \F_p(\sum_{j=1}^k i_j))$ that contains $0$ if and only if there is an upper-triangular $\F_p$-representation of $G$ whose image has powers of $\cyclo$ on the diagonal and the cocycles $c_i$ on the upper-diagonal. For example, the matrix (\ref{big_matrix}) witnesses the vanishing of the Massey product $\langle b, \ldots, b, a_{m-1} \rangle$.

In \cite{sharifi_massey_products}, Sharifi works in an Iwasawa-theoretic situation and relates the inverse limit of class groups to the inverse limits of Massey products. In broad terms, his Theorem A estabishes an isomorphism between the $k$th graded piece of an Iimura-like filtration and the quotient of another group by inverse limits of $(k+1)$-fold Massey products of the form $\langle b, \ldots, b, a \rangle$. That is, ``if more Massey products vanish, then the $k$th piece of Iimura's filtration is larger'', which is consistent with the themes of this section.
\end{remark}

\subsection{An Exact Sequence of Selmer Groups}\label{pfpart2}

The goal of this subsection is to provide some motivation for the definitions of the Selmer conditions $\Sigma$ and $\Sigma^*$ and to prove the following proposition:

\begin{proposition}\label{Lambda_to_Sigma_is_exact}
Let $p$ be an odd prime. Let $1 \leq k \leq p-3$. There is an exact sequence
\begin{equation*}
0 \to H^1_N(\F_p) \to H^1_\Lambda(\bigrep{k}{-k}) \to H^1_\Sigma(\bigrep{k-1}{-k}) \to 0.
\end{equation*}
In particular,
\begin{equation*}
h^1_\Lambda(\bigrep{p-3}{2}) = 1 + h^1_\Sigma(\bigrep{p-4}{2}).
\end{equation*}
\end{proposition}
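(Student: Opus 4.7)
The plan is to apply the short exact sequence of $G_{\Q,S}$-modules
\begin{equation*}
0 \to \F_p \to \bigrep{k}{-k} \to \bigrep{k-1}{-k} \to 0,
\end{equation*}
in which $\F_p$ is identified with the trivial subrepresentation spanned by the first basis vector of $\bigrep{k}{-k}$. Since $1 \leq k \leq p-3$, the quotient $\bigrep{k-1}{-k}$ has diagonal characters $\cyclo^{-1}, \ldots, \cyclo^{-k}$, all nontrivial, so $H^0(G_{\Q,S}, \bigrep{k-1}{-k}) = 0$ and the associated long exact sequence begins
\begin{equation*}
0 \to H^1_S(\F_p) \xrightarrow{\alpha} H^1_S(\bigrep{k}{-k}) \xrightarrow{\beta} H^1_S(\bigrep{k-1}{-k}) \xrightarrow{\delta} H^2_S(\F_p) \to \cdots.
\end{equation*}
By Remark \ref{lambda_equals_N_Fp}, $H^1_\Lambda(\bigrep{k}{-k}) = H^1_N(\bigrep{k}{-k})$, so the desired sequence has $H^1_N(\bigrep{k}{-k})$ in the middle slot.

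The first step is to verify that $\beta$ restricts to a map $H^1_N(\bigrep{k}{-k}) \to H^1_\Sigma(\bigrep{k-1}{-k})$ by checking the local conditions at $N$ and $p$. At $p$, the diagonal characters $\cyclo^{-1}, \ldots, \cyclo^{-k}$ all have nontrivial restriction to $I_{\Q_p}$ (since $\cyclo|_{I_{\Q_p}}$ has order $p-1$) and $b$ is unramified at $p$, so $\bigrep{k-1}{-k}^{I_{\Q_p}} = 0$, which forces $H^1_{\ur}(G_{\Q_p}, \bigrep{k-1}{-k}) = 0 = L_p^\Sigma$. At $N$, the basis $\mathbf{a}^{(k)}, \mathbf{b}^{(k)}$ of Proposition \ref{basis_of_local_H1} satisfies $\mathbf{a}^{(k)} \mapsto 0$ and $\mathbf{b}^{(k)} \mapsto \mathbf{b}^{(k-1)} \in L_N^\Sigma$ by direct inspection of the column vectors. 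The identification $\ker(\beta|_{H^1_N(\bigrep{k}{-k})}) = \alpha(H^1_N(\F_p))$ then follows by restricting the global LES and observing that $\alpha$ induces an isomorphism on the 1-dimensional unramified subspaces at $p$, since $\bigrep{k}{-k}^{I_{\Q_p}} = \F_p$.

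For surjectivity, I would show that $\delta$ vanishes on $H^1_\Sigma(\bigrep{k-1}{-k})$. By Poitou--Tate together with $H^0(G_{\Q,S}, \F_p(1)) = \mu_p(\Q) = 0$, the map $H^2_S(\F_p) \to H^2(G_{\Q_N}, \F_p)$ is an isomorphism of 1-dimensional spaces, so it suffices to check that $\delta_N$ annihilates $L_N^\Sigma = \langle \mathbf{b}^{(k-1)} \rangle$. This is immediate from local exactness, because $\mathbf{b}^{(k-1)}$ is the image of $\mathbf{b}^{(k)}$ under $\beta_N$. Hence every $\bar a \in H^1_\Sigma(\bigrep{k-1}{-k})$ lifts to some $a \in H^1_S(\bigrep{k}{-k})$, and modifying $a$ by $\alpha(c)$ for a suitable $c \in H^1_S(\F_p)$ places it in $H^1_N(\bigrep{k}{-k})$: the obstruction to unramifiedness at $p$ lies in the 1-dimensional quotient $H^1(G_{\Q_p}, \F_p)/H^1_{\ur}(G_{\Q_p}, \F_p)$, and the class of $\Q(\zeta_{p^2}^{(p)}) \in H^1_S(\F_p)$ from Theorem \ref{cohomology_theorem_unconditional}(\ref{cohomology_theorem_kw}) restricts nontrivially to this quotient, since $\Q(\zeta_{p^2}^{(p)})/\Q$ is ramified at $p$.

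The main obstacle is this surjectivity step, which couples the Poitou--Tate injectivity of $H^2_S(\F_p)$ at $N$ with the explicit adjustment at $p$ afforded by $\Q(\zeta_{p^2}^{(p)})$. The final dimensional identity $h^1_\Lambda(\bigrep{p-3}{2}) = 1 + h^1_\Sigma(\bigrep{p-4}{2})$ then follows on setting $k = p-3$, using that $h^1_N(\F_p) = 1$ with generator the class of $\Q(\zeta_N^{(p)})$.
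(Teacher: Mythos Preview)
Your overall architecture matches the paper's: both use the short exact sequence $0 \to \F_p \to \bigrep{k}{-k} \to \bigrep{k-1}{-k} \to 0$ and break the proof into (i) the image of $\beta$ lands in $H^1_\Sigma$, (ii) the kernel is $H^1_N(\F_p)$, and (iii) surjectivity. Your treatment of (ii) and (iii) is correct and is essentially a repackaging of the paper's Lemmas~\ref{image_of_H1S_is_H1Sigmastar} (in the case $i=j=k$) and~\ref{H1Sigma_lifts_to_H1Lambda}: you replace the cup-product interpretation by a direct appeal to the injectivity of $H^2_S(\F_p)\to H^2(G_{\Q_N},\F_p)$, which is exactly Proposition~\ref{check_cup_product_locally} for $i=0$, and you do the same adjustment at $p$ via the $\zeta_{p^2}$ class.

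The gap is in step (i). You reduce to $H^1_N(\bigrep{k}{-k})$ by invoking Remark~\ref{lambda_equals_N_Fp} for the equality $H^1_\Lambda(\bigrep{k}{-k}) = H^1_N(\bigrep{k}{-k})$, but that remark only \emph{proves} this for $A=\F_p$; its second paragraph explicitly says the general case ``follows from Lemma~\ref{image_of_H1Lambda_is_H1Sigma}.'' That lemma---the inductive argument at $p$ using Lemma~\ref{local_fields_lemma} to peel off one $a_i$ at a time---is precisely the substantive content you have skipped, and in the paper's logical structure it is part of the proof of this proposition, not a prior fact you may cite. Without it you have only shown exactness with $H^1_N$ in the middle, and the inclusion $H^1_N \subseteq H^1_\Lambda$ runs the wrong way to conclude. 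Separately, your justification that $(\bigrep{k-1}{-k})^{I_{\Q_p}}=0$ via ``$b$ is unramified at $p$'' is not correct in general (this holds iff $N\equiv 1 \bmod p^2$); the conclusion is still true, but it follows from the upper-triangular shape and the fact that $\cyclo|_{I_{\Q_p}}$ surjects onto $(\Z/p\Z)^\times$, independent of $b$.
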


The last equality follows from the $k = p-3$ case of the first part of the proposition combined with part \ref{cohomology_theorem_kw} of Theorem \ref{cohomology_theorem_unconditional}, which gives that $h^1_N(\F_p) = 1$.
 
Let $1 \leq k \leq p-3$ and consider the short exact sequence of $G_{\Q,S}$-representations
\begin{equation*}
0 \to \F_p \to \bigrep{k}{-k} \to \bigrep{k-1}{-k} \to 0.
\end{equation*}

$\bigrep{k-1}{-k}$ has no $G_{\Q,S}$-fixed points, so taking $G_{\Q,S}$-cohomology gives that the top row of the following commutative diagram is exact.
\begin{equation*}
\begin{tikzcd}[column sep = small]
0 \arrow[r] & H^1_S(\F_p) \arrow[r] & H^1_S(\bigrep{k}{-k}) \arrow[r] & H^1_S(\bigrep{k-1}{-k}) \\
0 \arrow[r] & H^1_N(\F_p) \arrow[hookrightarrow]{u} \arrow[r] & H^1_\Lambda(\bigrep{k}{-k}) \arrow[hookrightarrow]{u} \arrow[r, dashed] & H^1_\Sigma(\bigrep{k-1}{-k}) \arrow[hookrightarrow]{u} \arrow[r] & 0
\end{tikzcd}
\end{equation*}

To prove Proposition \ref{Lambda_to_Sigma_is_exact}, we need to show:
\begin{enumerate}
    \item The image of $H^1_\Lambda(\bigrep{k}{-k})$ in $H^1_S(\bigrep{k-1}{-k})$ is contained in the Selmer subgroup $H^1_\Sigma(\bigrep{k-1}{-k})$.
    \item The induced map $H^1_\Lambda(\bigrep{k}{-k}) \to H^1_\Sigma(\bigrep{k-1}{-k})$ is surjective.
    \item The kernel of this induced map is precisely $H^1_N(\F_p) \subseteq H^1_S(\F_p)$.
\end{enumerate}

The third item is the easiest; we just need that the intersection of the images of $H^1_\Lambda(\bigrep{k-1}{-k})$ and $H^1_S(\F_p)$ in $H^1_S(\bigrep{k}{-k})$ is the image of $H^1_N(\F_p)$, which follows from Remark \ref{lambda_equals_N_Fp} that $H^1_N(\F_p) = H^1_\Lambda(\F_p)$.

The proof of the remainder of the proposition is broken up into two parts. Lemma \ref{image_of_H1S_is_H1Sigmastar} establishes Parts $1$ and $2$ above with $\Lambda$ replaced by $S$ and $\Sigma$ replaced by $\Sigma^*$ by considering the local condition at $N$. To get the corresponding statements for $\Lambda$ and $\Sigma$, we need to consider the local conditions at $p$, which is done in Lemmas \ref{H1Sigma_lifts_to_H1Lambda} and \ref{image_of_H1Lambda_is_H1Sigma}.

Lemma \ref{image_of_H1S_is_H1Sigmastar} is stated in slightly more generality than we presently need. To establish Proposition \ref{Lambda_to_Sigma_is_exact}, we only need the case $i = j$. The full strength of this lemma is used in Section \ref{sec_lifting_selmer_classes} when we discuss issues of extending Galois representations of this kind.

\begin{lemma}\label{image_of_H1S_is_H1Sigmastar}
For any $1 \leq i \leq p-3$, and $0 \leq j \leq i$, the image of
\begin{equation*}
    H^1_S(\bigrep{j}{-i}) \to H^1_S(\bigrep{j-1}{-i})
\end{equation*}
is contained in $H^1_{\Sigma^*}(\bigrep{j-1}{-i})$. If in addition we assume that $p$ is regular or that $i = j$, then the image is precisely $H^1_{\Sigma^*}(\bigrep{j-1}{-i})$.
\end{lemma}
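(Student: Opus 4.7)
The plan is to analyze the short exact sequence of $G_{\Q,S}$-modules
\begin{equation*}
0 \to \F_p(j-i) \to \bigrep{j}{-i} \to \bigrep{j-1}{-i} \to 0,
\end{equation*}
obtained by realizing $\F_p(j-i)$ as the highest-weight line (top-left corner) of $\bigrep{j}{-i}$ in the basis of Remark \ref{nonstandard_basis_for_sym}, so that the quotient corresponds to striking the first row and column. Taking $G_{\Q,S}$-cohomology gives a long exact sequence, and on cocycles the map $H^1_S(\bigrep{j}{-i}) \to H^1_S(\bigrep{j-1}{-i})$ is simply $(a_0, a_1, \ldots, a_j)^{T} \mapsto (a_1, \ldots, a_j)^{T}$.

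For the containment of the image in $H^1_{\Sigma^{\ast}}(\bigrep{j-1}{-i})$, I would restrict everything to $G_{\Q_N}$. There $\cyclo$ is trivial, so the SES becomes $0 \to \F_p \to \Sym^{j}V \to \Sym^{j-1}V \to 0$ and Proposition \ref{basis_of_local_H1} applies verbatim. Direct inspection shows that the projection sends $\mathbf{a} = (a, 0, \ldots, 0)^{T}$ to $0$ and sends $\mathbf{b} = \bigl(b^{j+1}/(j+1)!,\, b^{j}/j!,\, \ldots,\, b\bigr)^{T}$ to the class $\bigl(b^{j}/j!,\, \ldots,\, b\bigr)^{T}$ of $H^1(G_{\Q_N}, \Sym^{j-1}V)$, which spans $L_N$. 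Hence the image lands in $L_N = L_N^{\perp}$ at $N$, where the equality is Proposition \ref{condition_at_N_is_self_dual}. Since $\Sigma^{\ast}$ imposes no condition at $p$ and unramifiedness outside $S$ is automatic, this gives the first assertion.

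For the surjectivity under the hypothesis that $p$ is regular or $i = j$, I would show that the connecting map $\delta : H^1_S(\bigrep{j-1}{-i}) \to H^2_S(\F_p(j-i))$ vanishes on $H^1_{\Sigma^{\ast}}(\bigrep{j-1}{-i})$. By functoriality $\delta$ commutes with restriction to $G_{\Q_N}$, and the local connecting map kills $L_N$ by the exactness used in the previous paragraph. So for $\bar a \in H^1_{\Sigma^{\ast}}$ the class $\delta(\bar a)$ restricts to zero in $H^2(G_{\Q_N}, \F_p(j-i))$. When $i = j$ the twist is $\F_p$, which is the ``$i = 0$'' case of Proposition \ref{check_cup_product_locally}, while if $p$ is regular every pair $(p, k)$ is regular; in both cases Proposition \ref{check_cup_product_locally} gives that $H^2_S(\F_p(j-i)) \to H^2(G_{\Q_N}, \F_p(j-i))$ is injective, forcing $\delta(\bar a) = 0$ and producing the desired lift.

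The main obstacle I anticipate is the bookkeeping in Step 1: verifying that under the nonstandard basis the quotient $\bigrep{j}{-i} \to \bigrep{j-1}{-i}$ really does induce projection of cocycles by discarding the top entry, and that the explicit classes $\mathbf{a}, \mathbf{b}$ of Proposition \ref{basis_of_local_H1} transport through the $G_{\Q_N}$-isomorphism $\bigrep{j}{-i} \cong \Sym^{j}V$ as described. The actual cohomological input—the injectivity of $H^2_S \to H^2(G_{\Q_N})$ under the regularity hypothesis—is already provided by Proposition \ref{check_cup_product_locally} through Global Euler Characteristic and the tail of Poitou--Tate, so no new machinery should be needed.
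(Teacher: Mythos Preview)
Your proposal is correct and follows essentially the same approach as the paper: both use the short exact sequence $0 \to \F_p(j-i) \to \bigrep{j}{-i} \to \bigrep{j-1}{-i} \to 0$, analyze the situation locally at $N$ via Proposition \ref{basis_of_local_H1}, and invoke the injectivity of $H^2_S(\F_p(j-i)) \to H^2(G_{\Q_N}, \F_p(j-i))$ under the regularity hypothesis (Proposition \ref{check_cup_product_locally}) for the surjectivity.

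The only packaging difference is that the paper identifies the connecting map as cup product with the extension class $\tilde{\mathbf{b}}$ (whose restriction at $N$ is the class $\mathbf{b}$ of Proposition \ref{basis_of_local_H1}) and then applies Proposition \ref{classes_cup_iff_H1Sigmastar} once to handle both the containment and the equality simultaneously, whereas you treat the containment by a direct computation of the local image and the surjectivity by analyzing the connecting map separately. Both routes amount to the same argument; the paper's formulation is slightly more streamlined since Proposition \ref{classes_cup_iff_H1Sigmastar} already bundles the local computation at $N$ together with the local-to-global step for $H^2$.
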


\begin{remark}\label{restate_image_of_H1S_is_H1Sigmastar}
The second statement in the proposition is equivalent to the following statement: A $G_{\Q,S}$-representation of dimension $j+1$, coming from an element $a \in H^1_S(\bigrep{j-1}{-i})$, of the form
\begin{equation*}
\begin{pmatrix}
    \cyclo^{j-1-i} & \cyclo^{j-2-i}b & \cdots & \cyclo^{-i}\frac{b^{j-1}}{(j-1)!} & a_{i-(j-1)} \\
    & \cyclo^{j-2-i} & \cdots & \cyclo^{-i}\frac{b^{j-2}}{(j-2)!} & a_{i-(j-2)} \\
    & \hspace{2em}\ddots & & \vdots & \vdots \\
    & & \ddots & \cyclo^{-i} b & a_{i-1} \\
    & & & \cyclo^{-i} & a_{i} \\
    & & & & 1 \\
\end{pmatrix}
\end{equation*}
extends to a $G_{\Q,S}$-representation of dimension $j+2$ of the form
\begin{equation*}
\begin{pmatrix}
    \cyclo^{j-i} & \cyclo^{j-1-i}b & \cdots & \cyclo^{-i}\frac{b^{j}}{j!} & \ast \\
    & \cyclo^{j-1-i} & \cdots & \cyclo^{-i}\frac{b^{j-1}}{(j-1)!} & a_{i-(j-1)} \\
    & \hspace{2em}\ddots & & \vdots & \vdots \\
    & & \ddots & \cyclo^{-i} b & a_{i-1} \\
    & & & \cyclo^{-i} & a_{i} \\
    & & & & 1 \\
\end{pmatrix}
\end{equation*}
if and only if $a \in H^1_{\Sigma^*}(\bigrep{j-1}{-i})$.
\end{remark}

\begin{proof}[Proof of Lemma \ref{image_of_H1S_is_H1Sigmastar}]
The exact sequence
\begin{equation*}
0 \to \F_p(j-i) \to \bigrep{j}{-i} \to \bigrep{j-1}{-i} \to 0
\end{equation*}
induces the commutative diagram
\begin{equation*}
\begin{tikzcd}
H^1_S(\bigrep{j}{-i}) \arrow[r]\arrow[d] & H^1_S(\bigrep{j-1}{-i}) \arrow[r]\arrow[d] & H^2_S(\F_p(j-i))\arrow[d] \\
H^1(G_{\Q_N}, \Sym^jV) \arrow[r] & H^1(G_{\Q_N}, \Sym^{j-1}V) \arrow[r]& H^2(G_{\Q_N}, \F_p).
\end{tikzcd}
\end{equation*}

We are concerned with the image of the first map in the top row, which is the kernel of the second map in that row. This boundary map is given by taking the cup product with the class $\tilde{\mathbf{b}}$ in
\begin{equation*}
H^1_S(\F_p(j-i) \otimes (\bigrep{j-1}{-i})^\vee)
\end{equation*}
that realizes $\bigrep{j}{-i}$ as an extension of $\bigrep{j-1}{-i}$ by $\F_p(j-i)$. Locally at $N$, all of the present modules are self-dual by Lemma \ref{big_rep_dual} and thus we might as well think of $\Sym^jV$ as an extension of $\F_p$ by $\Sym^{j-1}V$. The corresponding class in $H^1(G_{\Q_N}, \Sym^{j-1}V)$ giving this extension is the column vector $\res_N(\tilde{\mathbf{b}}) = \mathbf{b} = [\frac{b^{j}}{j!}, \cdots, \frac{b^{2}}{2}, b]^T$ in the notation of Proposition \ref{basis_of_local_H1}.

That is,
\begin{equation*}
\im(H^1_S(\bigrep{j}{-i}) \to H^1_S(\bigrep{j-1}{-i}))
\end{equation*}
is equal to
\begin{equation*}
    \{a \in H^1_S(\bigrep{j-1}{-i}) ~|~ a \cup \tilde{\mathbf{b}}=0 \}.
\end{equation*}

Noting that $\tilde{\mathbf{b}}$ satisfies the $\Sigma^*$-Selmer condition and that $\res_N(\tilde{\mathbf{b}}) \neq 0$, Proposition \ref{classes_cup_iff_H1Sigmastar} then gives that the latter set is contained in $H^1_{\Sigma^*}(\bigrep{j-1}{-i})$, and that this containment is an equality if $p$ is regular or if $j - i = 0$.
\end{proof}

\begin{lemma}\label{H1Sigma_lifts_to_H1Lambda}
Let $1 \leq k \leq p-3$. Let $a \in H^1_\Sigma(\bigrep{k-1}{-k})$ and assume that $a$ has a lift to $H^1_S(\bigrep{k}{-k})$. Then $a$ has a lift to $H^1_\Lambda(\bigrep{k}{-k})$.
\end{lemma}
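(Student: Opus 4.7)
The only non-trivial $\Lambda$-condition that can fail for the given lift $\tilde{a} \in H^1_S(\bigrep{k}{-k})$ is the one at $p$: at primes outside $S$ unramifiedness is automatic from $\tilde{a} \in H^1_S$, and $\Lambda$ imposes no condition at $N$. The plan is therefore to correct $\tilde{a}$ by an element of $\iota_* H^1_S(\F_p) \subset H^1_S(\bigrep{k}{-k})$, where $\iota$ is the inclusion of the sub $\F_p \hookrightarrow \bigrep{k}{-k}$, so as to kill the restriction at $p$ altogether; such a correction does not disturb the image in $H^1_S(\bigrep{k-1}{-k})$, which remains $a$.

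First, since $a \in H^1_\Sigma$ has $\res_p(a) = 0$, the class $\res_p(\tilde{a})$ lies in the kernel of $H^1(G_{\Q_p}, \bigrep{k}{-k}) \to H^1(G_{\Q_p}, \bigrep{k-1}{-k})$. By the local long exact sequence this kernel equals the image of $\iota_*: H^1(G_{\Q_p}, \F_p) \to H^1(G_{\Q_p}, \bigrep{k}{-k})$, and $\iota_*$ is injective because $H^0(G_{\Q_p}, \bigrep{k-1}{-k}) = 0$: the diagonal $G_{\Q_p}$-characters of $\bigrep{k-1}{-k}$ are $\cyclo^{-1}, \dots, \cyclo^{-k}$, none of which is trivial in the range $1 \leq k \leq p-3$. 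This produces a unique $\alpha \in H^1(G_{\Q_p}, \F_p)$ with $\iota_*\alpha = \res_p(\tilde{a})$.

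Next, I would lift $\alpha$ to a global class. Both $H^1_S(\F_p)$ and $H^1(G_{\Q_p}, \F_p)$ are $2$-dimensional, the former by part \ref{cohomology_theorem_kw} of Theorem \ref{cohomology_theorem_unconditional} and the latter by the Local Euler Characteristic Formula. By that same part \ref{cohomology_theorem_kw}, $H^1_S(\F_p)$ is spanned by the classes defining $\Q(\zeta_N^{(p)})$ and $\Q(\zeta_{p^2}^{(p)})$, which are respectively unramified and totally ramified at $p$; their restrictions to $G_{\Q_p}$ are therefore linearly independent, so the restriction map $H^1_S(\F_p) \to H^1(G_{\Q_p}, \F_p)$ is surjective. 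I would then choose $c \in H^1_S(\F_p)$ with $\res_p(c) = \alpha$.

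Setting $\tilde{a}' := \tilde{a} - \iota_*(c) \in H^1_S(\bigrep{k}{-k})$, we see that $\tilde{a}'$ still projects to $a$, and $\res_p(\tilde{a}') = 0$, so in particular $\tilde{a}'$ satisfies the $\Lambda$-condition at $p$. Combined with the other $\Lambda$-conditions, which are automatic as noted above, this gives $\tilde{a}' \in H^1_\Lambda(\bigrep{k}{-k})$ as required. The one non-formal input is the surjectivity $H^1_S(\F_p) \twoheadrightarrow H^1(G_{\Q_p}, \F_p)$, which relies on the Kronecker--Weber description of $H^1_S(\F_p)$; without this, the local obstruction $\alpha$ could only be killed by a local class, not by a global modification.
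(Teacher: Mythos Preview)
Your approach is the same as the paper's: correct the lift by a global class coming from $H^1_S(\F_p)$ so as to fix the local condition at $p$. However, your surjectivity claim for $H^1_S(\F_p) \to H^1(G_{\Q_p}, \F_p)$ has a gap. You deduce linear independence of the two restrictions from the fact that one class is unramified and the other ramified at $p$, but ``unramified'' does not imply ``nonzero restriction'': the class of $\Q(\zeta_N^{(p)})$ is split at $p$ exactly when $p$ is a $p$th power modulo $N$ (this is used elsewhere in the paper, e.g.\ in the proof of Lemma \ref{H1Sigmastar_equals_H1S}), and in that case its restriction to $G_{\Q_p}$ vanishes, so the restriction map has only $1$-dimensional image.

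The repair is immediate and is precisely what the paper does: you do not need to kill all of $\alpha$, only its ramified part. Decompose $\alpha = \alpha_{\ur} + \alpha_{\mathrm{ram}}$ with $\alpha_{\ur}$ unramified and $\alpha_{\mathrm{ram}}$ a multiple of the local class of $\Q_p(\zeta_{p^2}^{(p)})$; then $\alpha_{\mathrm{ram}}$ globalizes via the corresponding multiple of the global $\zeta_{p^2}^{(p)}$-class. Subtracting this, the modified lift has $\res_p(\tilde{a}') = \iota_*(\alpha_{\ur})$, which is unramified over $\Q_p$ and hence certainly unramified after restriction to $K(\zeta_p)_p$, i.e.\ satisfies the $\Lambda$-condition. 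The paper aims only for unramifiedness at $p$ from the outset, thereby sidestepping the issue.
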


\begin{proof}
Write $a = [a_1, \ldots, a_k]^T$. Choose any lift of $a$ to $H^1_S(\bigrep{k}{-k})$, and write it as $[a_0, a_1, \ldots, a_k]^T$. By assumption, $a_i|_{G_{\Q_p}} = 0$ for all $1 \leq i \leq k$. We need to show that $a_0$ can be modified so that it is unramified when restricted to $K(\zeta_p)_p.$

It can in fact be chosen to be unramified over $\Q_p$. The fact that the $a_i$ for $i \geq 1$ vanish when restricted to $G_{\Q_p}$ gives that $a_0|_{G_{\Q_p}}$ is a class in $H^1(G_{\Q_p}, \F_p)$. This is a $2$-dimensional $\F_p$-vector space, spanned by an unramified class and the class corresponding to $\Q_p(\zeta_{p^2}^{(p)})$. But this class is in the image of the global classes, so by adding an appropriate multiple of this class to $a_0$ we get the desired conclusion.
\end{proof}

\begin{lemma}\label{image_of_H1Lambda_is_H1Sigma}
Let $1 \leq k \leq p-3$. Let $a$ be any class
\begin{equation*}
a = \begin{bmatrix}
a_{0} \\
a_{1} \\
\vdots \\
a_{k}
\end{bmatrix} \in H^1_\Lambda(\bigrep{k}{-k}).
\end{equation*}
Then $a_{i}|_{G_{\Q_{p}}} = 0$ for all $i \neq 0$. Furthermore, $a_0$ restricts to an unramified homomorphism $G_{\Q_p} \to \F_p$.
\end{lemma}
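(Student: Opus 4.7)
My plan is to prove the equivalent cohomological statement that $a|_{G_{\Q_p}}$ lies in the image of the natural map $H^1_{\ur}(G_{\Q_p}, \F_p) \hookrightarrow H^1(G_{\Q_p}, \bigrep{k}{-k})$ induced by the inclusion of $\F_p$ as the $G_{\Q,S}$-fixed socle of $\bigrep{k}{-k}$ (the first basis vector). Both conclusions of the lemma follow from this upon choosing an appropriate cocycle representative: after modification by a coboundary, $a_0|_{G_{\Q_p}}$ becomes an unramified homomorphism while $a_i$ vanishes on $G_{\Q_p}$ for each $i \geq 1$.

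I would establish this as a purely local claim by induction on $k$. The base case $k = 0$ is the content of Remark \ref{lambda_equals_N_Fp}, whose justification via Lemma \ref{local_fields_lemma} applies locally. For the inductive step, I would use the short exact sequence
\begin{equation*}
0 \to \bigrep{k-1}{-(k-1)} \to \bigrep{k}{-k} \to \F_p(-k) \to 0
\end{equation*}
coming from the sub-span of the first $k$ basis vectors. Since $1 \leq k \leq p-3$ forces $H^0(G_{\Q_p}, \F_p(-k)) = 0$, the long exact sequence makes $H^1(G_{\Q_p}, \bigrep{k-1}{-(k-1)})$ inject into $H^1(G_{\Q_p}, \bigrep{k}{-k})$ with image the kernel of projection to $H^1(G_{\Q_p}, \F_p(-k))$. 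Granting the main local vanishing claim below, $a|_{G_{\Q_p}}$ then lifts to a class $c' \in H^1(G_{\Q_p}, \bigrep{k-1}{-(k-1)})$ whose restriction to $H := G_{K(\zeta_p)_p}$ is still unramified, by functoriality of $H^1_{\ur}$ under the inclusion of $H$-trivial modules; the inductive hypothesis then exhibits $c'$ as coming from $H^1_{\ur}(G_{\Q_p}, \F_p)$, and composing with the inclusion into $\bigrep{k}{-k}$ finishes the step.

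The hard part is the local vanishing claim: any class $c \in H^1(G_{\Q_p}, \F_p(-k))$ whose restriction to $H$ is unramified must be zero, for $1 \leq k \leq p-3$. I would prove this via the inflation-restriction sequence for $H \trianglelefteq G_{\Q_p}$ with quotient $\Delta_p := \Gal(K(\zeta_p)_p/\Q_p)$. The crucial structural input is that $K(\zeta_p)_p/\Q_p$ is totally ramified: $\Q_p(\zeta_p)/\Q_p$ is totally ramified of degree $p-1$, and when nontrivial the further extension $\Q_p(\zeta_p,N^{1/p})/\Q_p(\zeta_p)$ is totally ramified of degree $p$ because $N \equiv 1 \pmod{p}$ forces $N^{1/p}$ to reduce to $1$ modulo the maximal ideal, and these degrees are coprime. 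Consequently $\Delta_p$ acts trivially on $H/I_H \cong \widehat{\Z}$. Using the Hochschild--Serre spectral sequence for the normal subgroup $\Gal(K(\zeta_p)_p/\Q_p(\zeta_p)) \trianglelefteq \Delta_p$ of order $1$ or $p$, together with the fact that $(\Z/p\Z)^{\times}$ acts on every surviving $E_2$-term by a nontrivial power of $\cyclo$ (this is where $1 \leq k \leq p-3$ is used), one verifies both $H^1(\Delta_p, \F_p(-k)) = 0$ (so the restriction map is injective) and $\Hom_{\Delta_p}(H/I_H, \F_p(-k)) = 0$ (so the unramified part of the target vanishes), forcing $c = 0$.
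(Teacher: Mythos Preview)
Your proof is correct and follows the same overall strategy as the paper's: both arguments reduce inductively to the one-dimensional local vanishing statement that a class in $H^1(G_{\Q_p},\F_p(-k))$ which becomes unramified over $K(\zeta_p)_p$ must be zero for $1 \le k \le p-3$. The differences are in packaging. The paper runs a downward induction on $i$ within a fixed $k$, peeling off $a_k, a_{k-1},\ldots$ via successive lower-right corners of the matrix $(\ast\ast)$, and proves the key local vanishing by invoking Lemma~\ref{local_fields_lemma} (comparing the $(\Z/p\Z)^\times$-characters on $\Gal(L_k/\Q_p(\zeta_p))$ and $\Gal(K(\zeta_p)_p/\Q_p(\zeta_p))$) together with $H^1_{\ur}(G_{\Q_p},\F_p(-k))=0$. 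You instead induct on $k$ via the subrepresentation $\bigrep{k-1}{-(k-1)}\subset\bigrep{k}{-k}$ and prove the same local vanishing purely cohomologically, using inflation--restriction for $H\trianglelefteq G_{\Q_p}$: total ramification of $K(\zeta_p)_p/\Q_p$ gives $\Delta_p$ trivial action on $H/I_H$, whence $\Hom_{\Delta_p}(H/I_H,\F_p(-k))=0$, and a short Hochschild--Serre computation gives $H^1(\Delta_p,\F_p(-k))=0$ since the relevant $(\Z/p\Z)^\times$-eigenspace has weight $-1-k\not\equiv 0$. Your cohomological reformulation at the outset (that $a|_{G_{\Q_p}}$ lies in the image of $H^1_{\ur}(G_{\Q_p},\F_p)$) is a clean way to state what both proofs actually establish; the paper's phrasing ``$a_i|_{G_{\Q_p}}=0$'' should likewise be read up to a choice of cocycle representative, as you note.
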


\begin{proof}
The proof is by strong induction on $i$, starting with $a_{k}$. Let $M$ be the Galois extension of $\Q$ defined by the kernel of the representation associated to $a$. We begin by examining the $G_{\Q,S}$-representation associated to the image of $a$ in $H^1_S(\bigrep{k-1}{-k})$:
\begin{equation*}
\begin{pmatrix}
    \cyclo^{-1} & \cyclo^{-2}b & \cyclo^{-3}\frac{b^2}{2} & \cdots & \cyclo^{-k}\frac{b^{k-1}}{(k-1)!} & a_{1} \\
    & \cyclo^{-2} & \cyclo^{-3}b & \cdots & \cyclo^{-k}\frac{b^{k-2}}{(k-2)!} & a_2 \\
    & & \ddots & & \vdots & \vdots \\
    & & & \cyclo^{1-k} & \cyclo^{-k}b & a_{k-1} \\
    & & & & \cyclo^{-k} & a_{k} \\
    & & & & & 1 \\
\end{pmatrix}
\end{equation*}
Restrict this representation to $G_{\Q_p}$. Looking at the bottom $2 \times 2$ quotient, we notice that $a_{k}|_{G_{\Q_p}}$ gives an extension $L_{k}/\Q_p(\zeta_p)$ contained in $M_p$. If it is nontrivial, its Galois group is $\F_p(-k)$ as a $\Gal(\Q_p(\zeta_p)/\Q_p)$-module. Since $a$ satisfies the Selmer condition $\Lambda$, $L_{k}K(\zeta_p)_p/K(\zeta_p)_p$ is unramified. As $-k \neq 1 \bmod{p-1}$, Lemma \ref{local_fields_lemma} then applies to conclude that $L_{k}/\Q_p(\zeta_p)$ is unramified. Equivalently, $a_{k}|_{G_{\Q_p}}$ lies in $H^1_\ur(G_{\Q_p}, \F_p(-k))$ which is trivial as $k \neq 0 \bmod{p-1}$. (Indeed, the unique unramified $\F_p$-extension of $\Q_p(\zeta_p)$ is abelian over $\Q_p$, and thus does not correspond to a class in $H^1_\ur(G_{\Q_p}, \F_p(-k))$.)

Still restricting to $G_{\Q_p}$, we now have that the bottom $3 \times 3$ quotient of the representation given by the matrix above is
\begin{equation*}
\begin{pmatrix}
\cyclo^{1-k} & \cyclo^{-k} b & a_{k-1} \\
& \cyclo^{-k} & 0 \\
& & 1
\end{pmatrix}.
\end{equation*}

Thus $a_{k-1} \in H^1(G_{\Q_p}, \F_p(1-k))$ and so defines an extension $L_{k-1}/\Q_p(\zeta_p)$. If it is non trivial, it has an action of $\Gal(\Q_p(\zeta_p)/\Q_p)$ by $\cyclo^{1-k}$. As above, the extension $L_{k-1}K(\zeta_p)_p/K(\zeta_p)_p$ is unramified, so we conclude that
\begin{equation*}
a_{k-1} \in H^1_\ur(G_{\Q_p}, \F_p(1-k)) = 0.
\end{equation*}

We can continue inductively in the same manner to show that $a_{k-i} = 0$ for $0 \leq i \leq k-1$. The two facts we need are that $\cyclo^{i-k} \neq \cyclo$ so that Lemma \ref{local_fields_lemma} applies, and that $\cyclo^{i-k}$ is nontrivial so that $H^1_\ur(G_{\Q_p}, \F_p(i)) = 0$.

To get the final claim about $a_0$, carry out one more step of the induction. Lemma \ref{local_fields_lemma} applies in this case, but the second fact above does not.
\end{proof}

\subsection{\texorpdfstring{$\text{Cl}_K$}{CL\textunderscore K} and Selmer Groups of Characters}\label{pfpart3}

Recall the filtration by type on $H^1_\Lambda(\bigrep{p-3}{2})$ considered in Remark \ref{filtration_on_Lambda}. As a corollary to Proposition \ref{Lambda_to_Sigma_is_exact}, we conclude that this filtration descends to a filtration
\begin{align*}
0 &\subseteq H^1_\Sigma(\F_p(-1)) \\*
    &\subseteq H^1_\Sigma(\bigrep{1}{-2}) \\
    &\subseteq H^1_\Sigma(\bigrep{2}{-3}) \\*
    &\subseteq\cdots \\*
    &\subseteq H^1_\Sigma(\bigrep{p-4}{2}).
\end{align*}

In the spirit of Remark \ref{filtration_on_Lambda}, we think of the $k$th piece $H^1_\Sigma(\bigrep{k-1}{-k})$ in the above filtration as corresponding to those $E/K$ of type $2 \leq m \leq k+1$, and the quotient
\begin{equation*}
\frac{H^1_\Sigma(\bigrep{k-1}{-k})}{H^1_\Sigma(\bigrep{k-2}{1-k})}
\end{equation*}
as corresponding to the extensions of type exactly $k+1$, in the sense that its dimension is the number of inequivalent extensions $E/K$ of type $k+1$, where two such extensions are equivalent if they become the same after taking the compositum with an extension of strictly smaller type.

With this in mind, we offer the following proposition.

\begin{proposition}\label{h1Sigma_filtration_bound}
The following are true:
\begin{enumerate}
\item
$h^1_\Sigma(\F_p(-1)) \leq h^1_\Sigma(\bigrep{p-4}{2})$.

\item
If there is an $E/K$ is of type $m \geq 2$, then $H^1_\Sigma(\F_p(1-m))$ is nontrivial.

\item
$h^1_\Sigma(\bigrep{p-4}{2}) \leq \sum_{i=1}^{p-3} h^1_\Sigma(\F_p(-i))$.
\end{enumerate}
\end{proposition}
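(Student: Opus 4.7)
The plan is to exploit the natural filtration on $\bigrep{p-4}{2}$ already introduced before the statement of the proposition: for $1 \leq k \leq p-3$ set $F_k = \bigrep{k-1}{-k}$, so $F_1 = \F_p(-1)$ and $F_{p-3} = \bigrep{p-4}{2}$, with short exact sequences $0 \to F_{k-1} \to F_k \to \F_p(-k) \to 0$ (where we declare $F_0 = 0$). Each $\F_p(-k)$ with $1 \leq k \leq p-3$ has no $G_{\Q,S}$-invariants, so the long exact sequence in cohomology gives injections $H^1_S(F_{k-1}) \hookrightarrow H^1_S(F_k)$ whose cokernels embed into $H^1_S(\F_p(-k))$. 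A functoriality check confirms that the $\Sigma$ condition --- literal triviality at $p$ and vanishing upon restriction to $G_{K_N}$ at $N$ --- is preserved by both the pushforward along $F_{k-1} \hookrightarrow F_k$ and the pullback along $F_k \twoheadrightarrow \F_p(-k)$. Consequently we obtain the filtration displayed before the proposition, along with injections $H^1_\Sigma(F_k)/H^1_\Sigma(F_{k-1}) \hookrightarrow H^1_\Sigma(\F_p(-k))$ for each $k$.

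Parts 1 and 3 then fall out immediately. For part 1, $H^1_\Sigma(\F_p(-1)) = H^1_\Sigma(F_1) \hookrightarrow H^1_\Sigma(\bigrep{p-4}{2})$ by the filtration. For part 3, telescoping gives
\begin{equation*}
h^1_\Sigma(\bigrep{p-4}{2}) = \sum_{k=1}^{p-3} \bigl( h^1_\Sigma(F_k) - h^1_\Sigma(F_{k-1}) \bigr) \leq \sum_{k=1}^{p-3} h^1_\Sigma(\F_p(-k)).
\end{equation*}

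For part 2, I would invoke the correspondence between extensions and cohomology classes constructed in Sections \ref{pfsetup} and \ref{pfpart1}, together with Proposition \ref{Lambda_to_Sigma_is_exact}. An unramified $E/K$ of type exactly $m \geq 2$ produces a class $a_E \in H^1_\Lambda(\bigrep{m-1}{1-m})$ which, by the interpretation of the filtration given in Remark \ref{filtration_on_Lambda}, does not lie in $H^1_\Lambda(\bigrep{m-2}{2-m})$. Applying Proposition \ref{Lambda_to_Sigma_is_exact} with $k = m-1$ identifies $H^1_\Lambda(\bigrep{m-1}{1-m})/H^1_N(\F_p)$ with $H^1_\Sigma(F_{m-1})$, and because $H^1_N(\F_p) \subseteq H^1_\Lambda(\bigrep{m-2}{2-m})$, the image $\bar{a}_E$ of $a_E$ is a nonzero element of $H^1_\Sigma(F_{m-1})$ not belonging to $H^1_\Sigma(F_{m-2})$ (interpreting $F_0 = 0$ when $m = 2$). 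Feeding this into the injection from the first paragraph yields a nonzero class in $H^1_\Sigma(\F_p(-(m-1))) = H^1_\Sigma(\F_p(1-m))$, as required. The main subtle point is verifying that the exact sequence of Proposition \ref{Lambda_to_Sigma_is_exact} is compatible with both filtrations simultaneously; this should follow from the naturality of the construction in Section \ref{pfpart2}, though it demands careful bookkeeping of the index shift between the $\Lambda$- and $\Sigma$-filtrations.
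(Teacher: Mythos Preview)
Your proposal is correct and follows essentially the same approach as the paper: both exploit the filtration $F_k = \bigrep{k-1}{-k}$ and the resulting exact sequences $0 \to H^1_\Sigma(F_{k-1}) \to H^1_\Sigma(F_k) \to H^1_\Sigma(\F_p(-k))$ to deduce parts 1 and 3 immediately, and both derive part 2 from the type-$m$ interpretation of the $\Sigma$-filtration established via Proposition \ref{Lambda_to_Sigma_is_exact}. Your treatment of part 2 is slightly more explicit about the compatibility between the $\Lambda$- and $\Sigma$-filtrations than the paper's, but the underlying argument is identical.
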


\begin{proof}
The first part of the proposition follows from the fact that the smallest piece in the above filtration is
\begin{equation*}
H^1_\Sigma(\F_p(-1)) \subseteq H^1_\Sigma(\bigrep{p-4}{2}).
\end{equation*}

Now, take the exact sequence
\begin{equation*}
0 \to \bigrep{k-2}{1-k} \to \bigrep{k-1}{-k} \to \F_p(-k) \to 0
\end{equation*}
and look at the $\Sigma$-Selmer subgroups of the long exact sequence in $G_{\Q,S}$-cohomology to get the exact sequence
\begin{equation*}
0 \to H^1_\Sigma(\bigrep{k-2}{1-k}) \to H^1_\Sigma(\bigrep{k-1}{-k}) \to H^1_\Sigma(\F_p(-k)).
\end{equation*}

Thus
\begin{equation*}
\frac{H^1_\Sigma(\bigrep{k-1}{-k})}{H^1_\Sigma(\bigrep{k-2}{1-k})} \subseteq H^1_\Sigma(\F_p(-k))
\end{equation*}

which establishes the second part of the proposition: if there is an $E/K$ is of type $m$ then $H^1_\Sigma(\F_p(1-m))$ is nonzero, and furthermore that the size of this group is related to the number of inequivalent extensions of type $m$ as discussed above.

The associated graded space of $H^1_\Sigma(\bigrep{p-4}{2})$ equipped with this filtration is
\begin{align*}
\text{gr}(H^1_\Sigma(\bigrep{p-4}{2})) &= \bigoplus_{k-1}^{p-3}\frac{H^1_\Sigma(\bigrep{k-1}{-k})}{H^1_\Sigma(\bigrep{k-2}{1-k})}\\*
&\subseteq \bigoplus_{k=1}^{p-3} H^1_\Sigma(\F_p(-k))
\end{align*}
which proves the final part of the proposition.
\end{proof}

\section{Lifting Selmer Classes}\label{sec_lifting_selmer_classes}

One might ask if the inequality of Theorem \ref{rank_equals_h1Sigma} is ever an equality:
\begin{equation*}
r_K \overset{?}{=} 1 + \sum_{i=1}^{p-3} h^1_\Sigma(\F_p(-i)).
\end{equation*}

In Section \ref{5_subsection}, we show that this is true when $p=5$. However, it is not true in general. In particular, see Section \ref{7_subsection} for a detailed analysis of the possible cases when $p=7$.

We have seen in Section \ref{sec_big_general} that given an unramified $\F_{p}$-extension $E/K$ of type $m > 1$, we get a $G_{\Q, S}$-representation of dimension $m+1$ whose image is isomorphic to the Galois group $\Gal(M/\Q)$ where $M$ is the Galois closure of $E/\Q$.
This gives a class in $H^{1}_{\Sigma}(\bigrep{m-2}{1-m})$ whose image in the quotient $H^{1}_{\Sigma}(\F_{p}(1-m))$ is nonzero.

This section will tackle the converse to this construction, namely by providing criteria for when a nonzero class $a_{i}$ in $H^{1}_{\Sigma}(\F_{p}(-i))$ may be lifted to an element in $H^{1}_{\Sigma}(\bigrep{i-1}{-i})$, as such a lift gives a representation of the form (\ref{big_matrix}) by Proposition \ref{Lambda_to_Sigma_is_exact}, which corresponds to an extension $E/K$ of type $i+1$. We consider two separate methods, one in each of Sections \ref{sec_climbing_the_ladder} and \ref{sec_matrix_exchange}.

It is worth remarking that in the case $i=1$, there are no obstructions to worry about: The class $a_1 \in H^1_\Sigma(\F_p(-1))$ lifts directly to a class in $H^1_\Lambda(\bigrep{1}{-1})$ which gives an extension $E/K$ of type $2$. (This is the $k=1$ case of Proposition \ref{Lambda_to_Sigma_is_exact}, or equivalently the first part of Proposition \ref{h1Sigma_filtration_bound}.) This is the method that Wake--Wang-Erickson use to prove the lower bound in Theorem \ref{rank_equals_h1Sigma}, which they state as the following proposition.

\begin{proposition}[\cite{wake_wang-erickson_mazur_eisenstein_ideal}, Proposition 11.1.1]\label{wake_wang_erickson_theorem}
If $h^1_\Sigma(\F_p(-1)) \neq 0$ then $r_K \geq 2$.
\end{proposition}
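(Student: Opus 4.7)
The plan is to upgrade a nonzero class $a_1 \in H^1_\Sigma(\F_p(-1))$ into an unramified $\F_p$-extension of $K$ that is independent of the genus field $K(\zeta_N^{(p)})$, thereby exhibiting an unramified $(\Z/p\Z)^2$-extension of $K$ and so $r_K \geq 2$. The construction follows the template of Section \ref{pfpart1}, applied to $\bigrep{1}{-1}$ rather than $\bigrep{p-3}{2}$.

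First I would produce a global lift of $a_1$ to $H^1_S(\bigrep{1}{-1})$. Observe that $\Sigma \subseteq \Sigma^*$: the local conditions at $N$ agree by Proposition \ref{condition_at_N_is_self_dual}, while at $p$ one has $L_p = 0 \subseteq H^1(G_{\Q_p}, \F_p(-1)) = L_p^{\perp}$. Hence $a_1$ lies in $H^1_{\Sigma^*}(\F_p(-1))$, and the $i = j = 1$ case of Lemma \ref{image_of_H1S_is_H1Sigmastar} says exactly that $a_1$ lifts to some class in $H^1_S(\bigrep{1}{-1})$.

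Next I would apply Lemma \ref{H1Sigma_lifts_to_H1Lambda} with $k = 1$ to adjust the lift at $p$ so that it lies in $H^1_\Lambda(\bigrep{1}{-1})$. Packaged together via Proposition \ref{Lambda_to_Sigma_is_exact} with $k = 1$, these two lifting results give the exact sequence
\begin{equation*}
0 \to H^1_N(\F_p) \to H^1_\Lambda(\bigrep{1}{-1}) \to H^1_\Sigma(\F_p(-1)) \to 0.
\end{equation*}
By Theorem \ref{cohomology_theorem_unconditional} together with Remark \ref{lambda_equals_N_Fp}, $h^1_N(\F_p) = 1$ (generated by the genus field class), so the hypothesis $h^1_\Sigma(\F_p(-1)) \geq 1$ forces $h^1_\Lambda(\bigrep{1}{-1}) \geq 2$. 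Pushing forward along the inclusion $\bigrep{1}{-1} \hookrightarrow \bigrep{p-3}{2}$ from the filtration of Section \ref{pfpart1} (which induces an injection on $H^1_\Lambda$ since the successive quotients $\F_p(-k)$ for $2 \leq k \leq p-3$ have no $G_{\Q,S}$-fixed points) yields $h^1_\Lambda(\bigrep{p-3}{2}) \geq 2$, and Theorem \ref{rank_equals_h1Lambda} then concludes $r_K \geq 2$.

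The main technical input is the $i = j = 1$ case of Lemma \ref{image_of_H1S_is_H1Sigmastar}, which reduces to the vanishing of the obstruction $\tilde{a} \cup \tilde{\mathbf{b}} \in H^2_S(\F_p)$ where $\tilde{\mathbf{b}}$ is the extension class cutting out $\bigrep{1}{-1}$ from $\F_p(-1)$. This is precisely what is detected by the $\Sigma^*$ condition via Proposition \ref{classes_cup_iff_H1Sigmastar}, and thus the Selmer condition $\Sigma$ (sitting inside $\Sigma^*$) appears naturally in the hypothesis. Once this lemma is in hand the argument is pure assembly of the machinery already developed in Sections \ref{pfpart1} and \ref{pfpart2}.
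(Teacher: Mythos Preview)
Your proposal is correct and follows essentially the same route as the paper. The paper's argument is simply the $k=1$ case of Proposition \ref{Lambda_to_Sigma_is_exact} (equivalently, the first part of Proposition \ref{h1Sigma_filtration_bound}) combined with Theorem \ref{rank_equals_h1Lambda}; you have unpacked that proposition into its constituent Lemmas \ref{image_of_H1S_is_H1Sigmastar} and \ref{H1Sigma_lifts_to_H1Lambda} and then passed through the filtration to reach $H^1_\Lambda(\bigrep{p-3}{2})$, which is exactly the machinery the paper assembles.
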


\begin{remark}
The question of lifting representations is related to the vanishing of higher Massey products $\langle b, \ldots, b, a_i \rangle$ in $G_{\Q,S}$-cohomology. In \cite{sharifi_massey_products}, Sharifi has shown that certain higher Massey products of this type vanish in $G_\Q$-cohomology.

For example, one way of interpreting the results of Section \ref{sec_matrix_exchange} is in terms of the vanishing of certain triple Massey products in $G_{\Q,S}$-cohomology.
Theorem \ref{general_middle_lift} could be restated as saying that the triple $G_{\Q, S}$-Massey product $\langle b, b, a \rangle$ vanishes, where $a$ is a class that spans $H^{1}_{\Sigma}(\F_{p}(\frac{p-1}{2}))$.
\end{remark}

\subsection{Climbing the Ladder}\label{sec_climbing_the_ladder}

We approach the problem of lifting the classes in $H^{1}_{\Sigma}(\F_p(-i))$ one dimension at a time.
Namely, we will give a sequence of lemmas which provide criteria for when a class in $H^{1}_{\Sigma}(\bigrep{j-1}{-i})$ may be lifted ``one rung up the ladder'' to a class in $H^{1}_{\Sigma}(\bigrep{j}{-i})$, for $1 \leq 1 \leq p-2$ and $1 \leq j \leq i-1$. Lemma \ref{image_of_H1S_is_H1Sigmastar} shows that one obstruction to this lifting is the irregularity of $p$. Therefore we assume for simplicity for the remainder of this section that $p$ is regular.
Given this assumption, Lemma \ref{image_of_H1S_is_H1Sigmastar} tells us that every class in $H^{1}_{\Sigma}(\bigrep{j-1}{i})$ in the range of $j$ and $i$ we consider has a lift to $H^{1}_{S}(\bigrep{j}{-i})$, so we are tasked with showing that there are lifts which satisfy the local conditions of the Selmer condition $\Sigma$.

Our strategy is as follows. Recall the short exact sequence of $G_{\Q, S}$-modules
\begin{equation*}
0 \to \F_{p}(j-i) \to \bigrep{j}{-i} \to \bigrep{j-1}{-i} \to 0
\end{equation*}
which induces the following piece of the long exact sequence in $G_{\Q, S}$-cohomology
\begin{equation*}
0 \to H^{1}_{S}(\F_{p}(j-i)) \to H^{1}_{S}(\bigrep{j}{-i}) \to H^{1}_{S}(\bigrep{j-1}{-i})
\end{equation*}
as $H^{0}(G_{\Q, S}, \bigrep{j-1}{-i}) = 0$ for the $i$ and $j$ considered.

Thus, if $a$ is a fixed class in $H^1_{\Sigma^*}(\bigrep{j-1}{-i})$ which has a lift $\tilde{a}$ to $H^1_S(\bigrep{j}{-i})$, we may modify $\tilde{a}$ by adding classes in $H^{1}_{S}(\F_{p}(j-i))$ in an attempt to produce others lifts of $a$ which satisfy the local conditions of $\Sigma$. The following lemmas give conditions for when such modification is possible.

\begin{lemma}\label{fix_at_N}
Let $p$ be regular. Suppose that $a \in H^{1}_{\Sigma^{\ast}}(\bigrep{j-1}{-i})$, and that
\begin{equation*}
h^{1}_{\Sigma^{\ast}}(\F_{p}(j-i)) < h^{1}_{S}(\F_{p}(j-i))
\end{equation*}
Then there is a lift of $a$ to $H^{1}_{\Sigma^{\ast}}(\bigrep{j}{-i})$.
\end{lemma}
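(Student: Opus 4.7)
The plan is to exploit the long exact sequence
\begin{equation*}
0 \to H^{1}_{S}(\F_{p}(j-i)) \xrightarrow{\iota} H^{1}_{S}(\bigrep{j}{-i}) \xrightarrow{\pi} H^{1}_{S}(\bigrep{j-1}{-i})
\end{equation*}
recalled just before the statement. Since $p$ is regular, Lemma \ref{image_of_H1S_is_H1Sigmastar} guarantees that $a$ already has \emph{some} lift $\tilde{a} \in H^{1}_{S}(\bigrep{j}{-i})$, so the task reduces to modifying $\tilde{a}$ by an element of $\iota(H^{1}_{S}(\F_{p}(j-i)))$ until it satisfies the local conditions defining $\Sigma^{\ast}$. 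Of those conditions, the ones at places outside $\{N,p\}$ are automatic for classes in $H^{1}_{S}$, and the condition at $p$ is vacuous because the $\Sigma$ condition $L_{p} = 0$ dualizes to $L_{p}^{\perp} = H^{1}(G_{\Q_{p}}, \bigrep{j}{-i})$. Only the local condition at $N$ must be arranged.

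At $N$ the cyclotomic character is trivial, and Proposition \ref{basis_of_local_H1} supplies a basis $\{\mathbf{a}, \mathbf{b}\}$ of $H^{1}(G_{\Q_{N}}, \bigrep{j}{-i})$ in which $L_{N}$ is the line spanned by $\mathbf{b}$. The proof of that proposition also shows that the localization of $\iota$ at $N$ sends the unramified class $a \in H^{1}(G_{\Q_{N}}, \F_{p})$ to $\mathbf{a}$ and the Kummer class $b$ to $0$, so the induced map
\begin{equation*}
H^{1}(G_{\Q_{N}}, \F_{p}(j-i))/\langle b \rangle \longrightarrow H^{1}(G_{\Q_{N}}, \bigrep{j}{-i})/L_{N}
\end{equation*}
is an isomorphism of one-dimensional $\F_{p}$-vector spaces. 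Writing $\res_{N}(\tilde{a}) = \alpha \mathbf{a} + \beta \mathbf{b}$, the goal is then to cancel $\alpha$ by a suitable correction.

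This is precisely what the hypothesis delivers. Unwinding the definitions, $H^{1}_{\Sigma^{\ast}}(\F_{p}(j-i))$ is the kernel of the natural map $H^{1}_{S}(\F_{p}(j-i)) \to H^{1}(G_{\Q_{N}}, \F_{p})/L_{N}$ (the $\Sigma^{\ast}$ condition at $p$ is empty, and unramifiedness elsewhere is automatic in $H^{1}_{S}$), so the strict inequality $h^{1}_{\Sigma^{\ast}}(\F_{p}(j-i)) < h^{1}_{S}(\F_{p}(j-i))$ forces this map to be surjective onto its one-dimensional target. Taking any $c \in H^{1}_{S}(\F_{p}(j-i))$ whose image is nonzero, the previous paragraph gives that $\res_{N}(\iota(c))$ has nonzero $\mathbf{a}$-coefficient, so a suitable scalar multiple of $\iota(c)$ added to $\tilde{a}$ kills $\alpha$. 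The adjusted class still lifts $a$ since $\pi \circ \iota = 0$, and it now satisfies $\Sigma^{\ast}$. I do not foresee a serious obstacle: the whole argument is a clean transport of the local surjectivity encoded in the hypothesis across the connecting map $\iota$, with the only substantive inputs being Lemma \ref{image_of_H1S_is_H1Sigmastar} (to produce the initial lift) and Proposition \ref{basis_of_local_H1} (to identify the images locally at $N$).
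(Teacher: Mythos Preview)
Your proof is correct and follows essentially the same approach as the paper's: both use Lemma \ref{image_of_H1S_is_H1Sigmastar} to obtain an initial lift, identify the local picture at $N$ via Proposition \ref{basis_of_local_H1} (in particular the isomorphism $H^{1}(G_{\Q_{N}}, \F_{p}(j-i))/\langle b\rangle \cong H^{1}(G_{\Q_{N}}, \bigrep{j}{-i})/\langle\mathbf{b}\rangle$), and then use the hypothesis to produce a correction term from $H^{1}_{S}(\F_{p}(j-i))$ that kills the obstruction at $N$. The paper packages this as a diagram chase, while you write it out linearly, but the content is identical.
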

\begin{proof}
The proof is essentially a diagram chase. Consider the following commutative diagram. For space considerations, we abbreviate $\bigrep{a}{b}$ as $V^a(b)$.

\begin{equation*}
\begin{tikzcd}[column sep = tiny]
& 0 \arrow[d] & 0 \arrow[d] & & \\
& H^1_{\Sigma*}(\F_p(j-i)) \arrow[d] & H^1_{\Sigma^*}(V^j(-i)) \arrow[d] \arrow[dr, dashed] & & \\
0 \arrow[r] & H^1_S(\F_p(j-i)) \arrow[r] \arrow[d] \arrow [dr, dashed] & H^1_S(V^j(-i)) \arrow[r] \arrow[d] & H^1_{\Sigma^*}(V^{j-1}(-i)) \arrow[r] & 0 \\
& H^1(G_{\Q_N}, \F_p(j-i)) / \langle b \rangle \arrow[r, "\sim"] & H^1(G_{\Q_N}, V^j(-i))/\langle \mathbf{b} \rangle & &
\end{tikzcd}
\end{equation*}


The middle row is exact by Lemma \ref{image_of_H1S_is_H1Sigmastar}, and Proposition \ref{basis_of_local_H1} gives that the bottom arrow is an isomorphism and that the two groups are both $1$-dimensional, as well as the fact that the two columns are exact.

The lemma is equivalent to the statement that the top-right diagonal arrow
\begin{equation*}
    H^1_{\Sigma^*}(\bigrep{j}{-i}) \to H^1_{\Sigma^*}(\bigrep{j-1}{-i})
\end{equation*}
is surjective. We first claim that this is implied by the surjectivity of the bottom-left diagonal arrow
\begin{equation*}
    H^1_S(\F_p(j-i)) \to 
    H^1(G_{\Q_N}, \bigrep{j}{-i}) / \langle \mathbf{b} \rangle.
\end{equation*}

Indeed, suppose that diagonal map is surjective and let $\tilde{a}$ be any lift of $a$ to $H^1_S(\bigrep{j}{-i})$. Let $c$ be any class in $H^1_S(\F_p(j-i))$ whose image in $H^1(G_{\Q_N}, \bigrep{j}{-i})/\langle \mathbf{b} \rangle$ is equal to the image of $\tilde{a}$. Then $\tilde{a}-c$ is a lift of $a$ that lies in $H^1_{\Sigma^*}(\bigrep{j}{-i})$.

We are reduced to showing that the bottom-left diagonal arrow is surjective. Because the bottom horizontal arrow is an isomorphism, it suffices to show that the vertical map 
\begin{equation*}
    H^1_S(\F_p(j-i)) \to H^1(G_{\Q_N}, \F_p)/\langle b \rangle
\end{equation*}
is surjective. As the latter group is $1$-dimensional, we just need to show that this map is nonzero, which follows from the assumption
\begin{equation*}
h^{1}_{\Sigma^{\ast}}(\F_{p}(j-i)) < h^{1}_{S}(\F_{p}(j-i)). \qedhere
\end{equation*}
\end{proof}

\begin{lemma}\label{fix_at_p}
Let $p$ be regular. Suppose that $a \in H^{1}_{\Sigma}(\bigrep{j-1}{-i})$ where $j-i \neq 0,1 \bmod{p-1}$, and that
\begin{equation*}
h^{1}_{N}(\F_{p}(j-i)) < h^{1}_{S}(\F_{p}(j-i)).
\end{equation*}
Then there is a lift of $a$ to $H^{1}_{S}(\bigrep{j}{-i})$ which is trivial when restricted to $G_{\Q_p}$.
\end{lemma}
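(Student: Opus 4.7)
The plan is to parallel the proof of Lemma \ref{fix_at_N}, now working with the local condition at $p$ rather than at $N$. Since $L_p = 0$ in $\Sigma$ and since the condition at $N$ is self-dual by Proposition \ref{condition_at_N_is_self_dual}, one has $\Sigma \subseteq \Sigma^*$, so $a \in H^1_\Sigma(\bigrep{j-1}{-i})$ lies in particular in $H^1_{\Sigma^*}(\bigrep{j-1}{-i})$. Lemma \ref{image_of_H1S_is_H1Sigmastar} (using regularity of $p$) then yields some lift $\tilde a \in H^1_S(\bigrep{j}{-i})$. The task is to modify $\tilde a$ by a class pulled back from $H^1_S(\F_p(j-i))$ under the inclusion $\F_p(j-i) \hookrightarrow \bigrep{j}{-i}$ so that the corrected lift vanishes upon restriction to $G_{\Q_p}$.

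Next, I will observe that $\tilde a|_{G_{\Q_p}}$ lies in the image of $H^1(G_{\Q_p}, \F_p(j-i)) \to H^1(G_{\Q_p}, \bigrep{j}{-i})$, since $a$ is split at $p$. A short computation with the filtration on $\bigrep{j-1}{-i}$ shows that, in the allowed range $1 \leq j \leq i-1 \leq p-3$, each diagonal character $\cyclo^{\ell - i}$ for $0 \leq \ell \leq j-1$ is nontrivial, so $H^0(G_{\Q_p}, \bigrep{j-1}{-i}) = 0$ and the above map is injective. Hence there is a unique local class $\alpha \in H^1(G_{\Q_p}, \F_p(j-i))$ restricting to $\tilde a|_{G_{\Q_p}}$.

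The final step is to produce $c \in H^1_S(\F_p(j-i))$ with $c|_{G_{\Q_p}} = \alpha$, for then $\tilde a - c$ is the desired lift. Here the hypothesis $j - i \neq 0, 1 \bmod{p-1}$ enters twice: via the Local Euler Characteristic Formula it gives $h^1(G_{\Q_p}, \F_p(j-i)) = 1$, and it gives $H^1_\ur(G_{\Q_p}, \F_p(j-i)) = 0$ since inertia acts via a nontrivial power of $\cyclo$. Consequently the hypothesis $h^1_N(\F_p(j-i)) < h^1_S(\F_p(j-i))$ says exactly that the restriction map $H^1_S(\F_p(j-i)) \to H^1(G_{\Q_p}, \F_p(j-i))$ has nonzero image, and hence is surjective onto the one-dimensional target, so the required $c$ exists. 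The main bookkeeping obstacle is organizing the commutative diagram relating global cohomology, local cohomology at $p$, and the unramified subgroup, and verifying the two vanishings above in the prescribed range; once that is done, the diagram chase is routine.
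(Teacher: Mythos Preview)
Your argument is correct and follows the same route as the paper: lift $a$ to some $\tilde a \in H^1_S(\bigrep{j}{-i})$ via Lemma~\ref{image_of_H1S_is_H1Sigmastar}, observe that $\tilde a|_{G_{\Q_p}}$ lies in the image of $H^1(G_{\Q_p},\F_p(j-i))$ because $a$ is split at $p$, and then use the hypothesis (together with $H^1_{\ur}(G_{\Q_p},\F_p(j-i))=0$ and the one-dimensionality of $H^1(G_{\Q_p},\F_p(j-i))$) to see that the restriction $H^1_S(\F_p(j-i)) \to H^1(G_{\Q_p},\F_p(j-i))$ is surjective, allowing the correction. The paper organizes this as a diagram chase, but the content is identical. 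One small remark: your injectivity claim for $H^1(G_{\Q_p},\F_p(j-i)) \to H^1(G_{\Q_p},\bigrep{j}{-i})$ is correct in the intended range but unnecessary---you only need that $\tilde a|_{G_{\Q_p}}$ has \emph{some} preimage $\alpha$, not a unique one, since any global $c$ hitting any such $\alpha$ will satisfy $(\tilde a - c)|_{G_{\Q_p}} = 0$.
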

\begin{proof}
The argument is similar to the previous one. Let $H$ be the preimage of $H^1_\Sigma(\bigrep{j-1}{-i})$ under the map
\begin{equation*}
    H^1_S(\bigrep{j}{-i}) \to H^1_{\Sigma^*}(\bigrep{j-1}{-i}).
\end{equation*}

We will reference the following diagram, where the local condition ``split at $p$'' is abbreviated ``spl $p$''. Because $j-i \neq 0 \bmod {p-1}$, we have $H^1_{\ur}(G_{\Q_p}, \F_p(j-i)) = 0$, and thus $H^1_N(\F_p(j-i)) = H^1_{\text{spl }p}(\F_p(j-i))$. As above, we abbreviate $\bigrep{a}{b}$ as $V^a(b)$.

\begin{equation*}
\begin{tikzcd}[column sep = small]
& 0 \arrow[d] & 0 \arrow[d] & & \\
& H^1_{N}(\F_p(j-i)) \arrow[d] & H^1_{\text{spl } p}(V^j(-i)) \arrow[d] \arrow[dr, dashed] & & \\
0 \arrow[r] & H^1_S(\F_p(j-i)) \arrow[r] \arrow[d] \arrow[dr, dashed] & H \arrow[r] \arrow[d] & H^1_{\Sigma}(V^{j-1}(-i)) \arrow[r] \arrow[d, "0"] & 0 \\
& H^1(G_{\Q_p}, \F_p(j-i)) \arrow[r] & H^1(G_{\Q_p}, V^j(-i)) \arrow[r, "\phi"] & H^1(G_{\Q_p}, V^{j-1}(-i)) &
\end{tikzcd}
\end{equation*}


We first remark that this diagram makes sense: Each of $H^1_{\text{spl } p}(\bigrep{j}{-i})$ and $H^1_S(\F_p(j-i))$ lands in $H$ under its respective map to $H^1_S(\bigrep{j}{-i})$. Note that the middle row is exact by Lemma \ref{image_of_H1S_is_H1Sigmastar} and the two columns are exact by definition. Similarly, the vertical map in the final column is $0$.

As in the proof of Lemma \ref{fix_at_N}, we want to show that the top-right diagonal map is surjective. Note that the image of the bottom-left diagonal arrow is contained in the kernel of $\phi$. We first argue that if this map surjects onto $\ker \phi$, then the top-right diagonal map is surjective as well.

Indeed, suppose that the diagonal map
\begin{equation*}
    H^1_S(\F_p(j-i)) \to \ker \phi
\end{equation*}
is surjective and let $a \in H^1_\Sigma(\bigrep{j-1}{-i})$. Choose any lift $\tilde{a}$ of $a$ to $H$ and let $\overline{a}$ be the image of $\tilde{a}$ in $H^1(G_{\Q_p}, \bigrep{j}{-i})$. Since the right-hand vertical map is $0$, we know that $\overline{a} \in \ker \phi$. If $c \in H^1_S(\F_p(j-i))$ is a class whose image in $\ker \phi$ under the diagonal map is $\overline{a}$, then $\tilde{a}-c$ is a lift of $a$ that lies in $H^1_{\text{spl } p}(\bigrep{j}{-i})$.

Now, because 
\begin{equation*}
    \ker \phi = \im(H^1(G_{\Q_p}, \F_p(j-i)) \to H^1(G_{\Q_p}, \bigrep{j}{-i})),
\end{equation*}
we are reduced to showing that the vertical map
\begin{equation*}
    H^1_S(\F_p(j-i)) \to H^1(G_{\Q_p}, \F_p(j-i))
\end{equation*}
is surjective.

Since $j-i \neq 0,1 \bmod{p-1}$, we have that the latter group is $1$-dimensional by the Local Euler Characteristic Formula, so the surjectivity of this map is equivalent to the map being nonzero. As the kernel of this map is $H^1_N(\F_p(j-i))$, this final statement follows from the assumption
\begin{equation*}
    h^{1}_{N}(\F_{p}(j-i)) < h^{1}_{S}(\F_{p}(j-i)). \qedhere
\end{equation*}
\end{proof}

\begin{lemma}\label{fix_at_p_no_change_at_N}
Let $p$ be regular. Suppose that $a \in H^{1}_{\Sigma}(\bigrep{j-1}{-i})$ where $j-i \neq 0,1 \bmod{p-1}$, that there is a lift of $a$ to $\in H^{1}_{\Sigma^{\ast}}(\bigrep{j}{-i})$, and that
\begin{equation*}
h^{1}_{\Sigma}(\F_{p}(j-i)) < h^{1}_{\Sigma^{\ast}}(\F_{p}(j-i)).
\end{equation*}
Then there is a lift of $a$ to $\in H^{1}_{\Sigma}(\bigrep{j}{-i})$.
\end{lemma}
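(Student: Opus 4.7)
The plan is to follow the same diagram-chasing strategy used in the proofs of Lemmas \ref{fix_at_N} and \ref{fix_at_p}, now with the roles of $p$ and $N$ switched: the given lift $\tilde{a} \in H^{1}_{\Sigma^{\ast}}(\bigrep{j}{-i})$ already satisfies the correct local condition at $N$ (which is common to $\Sigma$ and $\Sigma^{\ast}$), so the task is to modify it by an element $y \in H^{1}_{\Sigma^{\ast}}(\F_{p}(j-i))$ coming from the exact sequence
\begin{equation*}
0 \to \F_{p}(j-i) \to \bigrep{j}{-i} \to \bigrep{j-1}{-i} \to 0
\end{equation*}
so that $\tilde{a}-y$ is trivial at $p$ while remaining in $L_{N}$ at $N$. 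In particular, the only difference between $\Sigma$ and $\Sigma^{\ast}$ is the condition at $p$, so only the local behavior at $p$ needs to be adjusted.

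First, because $a \in H^{1}_{\Sigma}(\bigrep{j-1}{-i})$ forces $a|_{p} = 0$, the image $\tilde{a}|_{p} \in H^{1}(G_{\Q_{p}}, \bigrep{j}{-i})$ lies in the image of $H^{1}(G_{\Q_{p}}, \F_{p}(j-i))$ via the inclusion from the local long exact sequence. The assumption $j - i \not\equiv 0, 1 \bmod{p-1}$ together with the Local Euler Characteristic Formula gives $\dim_{\F_{p}} H^{1}(G_{\Q_{p}}, \F_{p}(j-i)) = 1$. Moreover, the kernel of the localization map $H^{1}_{\Sigma^{\ast}}(\F_{p}(j-i)) \to H^{1}(G_{\Q_{p}}, \F_{p}(j-i))$ is precisely $H^{1}_{\Sigma}(\F_{p}(j-i))$, since the only difference between the two Selmer conditions for $\F_{p}(j-i)$ is the triviality requirement at $p$. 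The hypothesis $h^{1}_{\Sigma}(\F_{p}(j-i)) < h^{1}_{\Sigma^{\ast}}(\F_{p}(j-i))$ therefore forces this localization to have nonzero image, and since the target is $1$-dimensional, the map must be surjective. I would then pick $y \in H^{1}_{\Sigma^{\ast}}(\F_{p}(j-i))$ whose image at $p$ equals $\tilde{a}|_{p}$, so that $(\tilde{a} - y)|_{p} = 0$ by construction.

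The last step is to verify that subtracting $y$ does not disturb the condition at $N$. Since $y \in H^{1}_{\Sigma^{\ast}}(\F_{p}(j-i))$, its restriction $y|_{N}$ lies in $L_{N} \subseteq H^{1}(G_{\Q_{N}}, \F_{p}(j-i))$, which (viewing $\F_{p}(j-i) = \F_{p}$ as a $G_{\Q_{N}}$-module) is the span of the Kummer class $b$. By the computation in the proof of Proposition \ref{basis_of_local_H1}, this span is precisely the kernel of the inclusion $H^{1}(G_{\Q_{N}}, \F_{p}) \to H^{1}(G_{\Q_{N}}, \bigrep{j}{-i})$, so $y$ has trivial image at $N$ in the larger module. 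Hence $(\tilde{a} - y)|_{N} = \tilde{a}|_{N} \in L_{N}$, and $\tilde{a} - y$ is the desired lift in $H^{1}_{\Sigma}(\bigrep{j}{-i})$. I expect the main subtlety to be articulating this last compatibility between the $L_{N}$ conditions for $\F_{p}(j-i)$ and $\bigrep{j}{-i}$ cleanly, since it is exactly what makes it possible to correct at $p$ without disturbing the condition at $N$.
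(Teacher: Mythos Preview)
Your proof is correct and follows essentially the same approach as the paper: modify the given $\Sigma^{\ast}$-lift by an element of $H^{1}_{\Sigma^{\ast}}(\F_{p}(j-i))$ chosen to kill the restriction at $p$, using that the localization-at-$p$ map on $H^{1}_{\Sigma^{\ast}}(\F_{p}(j-i))$ has kernel $H^{1}_{\Sigma}(\F_{p}(j-i))$ and hence is surjective onto the $1$-dimensional target. The paper packages this as a diagram chase parallel to Lemma~\ref{fix_at_p} (with $H^{1}_{S}$ replaced by $H^{1}_{\Sigma^{\ast}}$ and $H^{1}_{N}$ by $H^{1}_{\Sigma}$), whereas you spell out directly why the correction $y$ does not disturb the condition at $N$---namely that $y|_{N}$ lies in the span of $b$, which is exactly the kernel of $H^{1}(G_{\Q_{N}},\F_{p}) \to H^{1}(G_{\Q_{N}},\Sym^{j}V)$ by Proposition~\ref{basis_of_local_H1}; this is the same fact the paper uses implicitly in asserting that $H^{1}_{\Sigma^{\ast}}(\F_{p}(j-i))$ sits inside $H' \subseteq H^{1}_{\Sigma^{\ast}}(\bigrep{j}{-i})$.
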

\begin{proof}
The argument is nearly identical to the one above. Let $H'$ be the preimage of $H^1_\Sigma(\bigrep{j-1}{-i})$ under the map
\begin{equation*}
    H^1_{\Sigma^*}(\bigrep{j}{-i}) \to H^1_{\Sigma^*}(\bigrep{j-1}{-i}).
\end{equation*}
Now, repeat the argument given in Lemma \ref{fix_at_p} in reference to the diagram

\begin{equation*}
\begin{tikzcd}[column sep = small]
& 0 \arrow[d] & 0 \arrow[d] & & \\
& H^1_{\Sigma}(\F_p(j-i)) \arrow[d] & H^1_{\Sigma}(V^j(-i)) \arrow[d] \arrow[dr, dashed] & & \\
0 \arrow[r] & H^1_{\Sigma^*}(\F_p(j-i)) \arrow[r] \arrow[d] \arrow[dr, dashed] & H' \arrow[r] \arrow[d] & H^1_{\Sigma}(V^{j-1}(-i)) \arrow[r] \arrow[d, "0"] & 0 \\
& H^1(G_{\Q_p}, \F_p(j-i)) \arrow[r] & H^1(G_{\Q_p}, V^j(-i)) \arrow[r] & H^1(G_{\Q_p}, V^{j-1}(-i)) &
\end{tikzcd}
\end{equation*}


where, as before, $\bigrep{a}{b}$ is abbreviated to $V^a(b)$.
\end{proof}

The final lemma of this section is just a concatenation of Lemmas \ref{fix_at_N} and \ref{fix_at_p_no_change_at_N}. 
We state it as its own lemma for easier reference later.

\begin{lemma}\label{fix_at_both}
Let $p$ be regular. Suppose that $a \in H^{1}_{\Sigma}(\bigrep{j-1}{-i})$, and that
\begin{equation*}
h^{1}_{\Sigma}(\F_{p}(j-i)) < h^{1}_{\Sigma^{\ast}}(\F_{p}(j-i)) < h^{1}_{S}(\F_{p}(j-i)).
\end{equation*}
Then there is a lift of $a$ to $H^{1}_{\Sigma}(\bigrep{j}{-i})$.
\end{lemma}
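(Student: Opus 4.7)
The plan is to prove this by a direct concatenation of the two preceding lemmas, since the two inequalities are precisely the hypotheses of Lemmas \ref{fix_at_N} and \ref{fix_at_p_no_change_at_N}, respectively. The key observation enabling the chain is that the Selmer condition $\Sigma$ is contained in $\Sigma^{\ast}$: comparing the local conditions, at $p$ we have $L_p = 0 \subseteq H^1(G_{\Q_p}, A) = L_p^\perp$, and at $N$ the condition is self-dual by Proposition \ref{condition_at_N_is_self_dual}, so $L_N = L_N^\perp$. Hence any $a \in H^1_\Sigma(\bigrep{j-1}{-i})$ may also be regarded as a class in $H^1_{\Sigma^\ast}(\bigrep{j-1}{-i})$.

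First I would apply Lemma \ref{fix_at_N} to $a$, viewed as an element of $H^1_{\Sigma^\ast}(\bigrep{j-1}{-i})$. The hypothesis needed is precisely the second strict inequality
\begin{equation*}
h^{1}_{\Sigma^{\ast}}(\F_{p}(j-i)) < h^{1}_{S}(\F_{p}(j-i)),
\end{equation*}
which produces a lift $\tilde a \in H^{1}_{\Sigma^{\ast}}(\bigrep{j}{-i})$.

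Next I would feed this $\tilde a$ into Lemma \ref{fix_at_p_no_change_at_N}, using the remaining inequality
\begin{equation*}
h^{1}_{\Sigma}(\F_{p}(j-i)) < h^{1}_{\Sigma^{\ast}}(\F_{p}(j-i)),
\end{equation*}
to modify $\tilde a$ by a class in $H^{1}_{S}(\F_p(j-i))$ and obtain a lift of $a$ lying in $H^{1}_{\Sigma}(\bigrep{j}{-i})$, as desired.

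The only subtlety I expect is verifying that Lemma \ref{fix_at_p_no_change_at_N} is actually applicable, i.e.\ that the side condition $j - i \not\equiv 0, 1 \bmod{p-1}$ is satisfied (or can be arranged from the strict inequalities between the three Selmer dimensions). When $j - i \equiv 0$ the module $\F_p(j-i)$ is trivial and one would have $h^{1}_{S}(\F_p) = 2$ by Theorem \ref{cohomology_theorem_unconditional}; when $j-i \equiv 1$ the class $b$ itself contributes to the failure of the local-at-$p$ argument. In the regime of $i, j$ that this lemma will actually be applied to (namely $1 \leq j \leq i-1$ with $1 \leq i \leq p-3$), both congruences are avoided, so the concatenation goes through without issue. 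This is the only place where one must be careful; the rest is a purely formal stringing-together of the two previous lemmas.
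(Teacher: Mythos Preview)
Your proposal is correct and follows exactly the paper's approach: the paper's proof is a one-line concatenation of Lemmas \ref{fix_at_N} and \ref{fix_at_p_no_change_at_N}, just as you describe. Your discussion of the side condition $j-i \not\equiv 0,1 \bmod{p-1}$ is a useful addition; the paper handles this point in the remark immediately following the lemma, observing that for regular $p$ the strict inequalities force $j-i$ to be odd with $h^{1}_{\Sigma}(\F_p(j-i)) = 0$ and $h^{1}_{\Sigma^{\ast}}(\F_p(j-i)) = 1$.
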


As $p$ is regular, the condition in Lemma \ref{fix_at_both} can only occur when $j-i$ is odd, $h^1_{\Sigma}(\F_p(j-i)) = 0$, and $h^1_{\Sigma^*}(\F_p(j-i)) = 1$; see Theorem \ref{cohomology_theorem_conditional}.

\begin{remark}
The assumption that $p$ is regular in each of the lemmas in this section could be replaced with the more general assumption that there exists a lift of $a \in H^1_\Sigma(\bigrep{j-1}{-i})$ to $H^1_S(\bigrep{j}{-i})$. We will not need that generality.
\end{remark}

\subsection{Lifting Classes in \texorpdfstring{$H^{1}_{\Sigma}(\mathbf{F}_{p}(\frac{p-1}{2}))$}{H1\textunderscore Sigma(F\textunderscore p((p-1)/2))}}\label{sec_matrix_exchange}

In this section we will prove that in a special case, some classes in a $\Sigma$-Selmer group of a character always lift to the $\Sigma^{\ast}$-Selmer group of a 2-dimensional representation.
In particular we will be able to apply this result in situations where it is not possible to use Lemma \ref{fix_at_N} to show that a class lifts into a $\Sigma^{\ast}$-Selmer group.
Our standing assumptions for this section will be that $p$ is regular and $H^{1}_{\Sigma}(\F_{p}(\frac{p-1}{2})) \neq 0$.
In addition to ensuring that classes in $H^{1}_{\Sigma}(\F_{p}(\frac{p-1}{2}))$ always lift to $H^{1}_{S}(V(\frac{p-1}{2}))$ by Lemma \ref{image_of_H1S_is_H1Sigmastar}, the regularity assumption provides access to the full strength of the results of Sections \ref{sec_cohomology_groups_of_characters} and \ref{sec_cup_products}.
Note that the character $\cyclo^{\frac{p-1}{2}}$ is its own inverse.

\begin{theorem}\label{general_middle_lift}
Assume that $p$ is regular, and that $H^{1}_{\Sigma}(\F_{p}(\frac{p-1}{2})) \neq 0$.
If a class $a \in H^{1}_{\Sigma}(\F_{p}(\frac{p-1}{2}))$ is nonzero, and if $\begin{bmatrix} a' \\ a \\ \end{bmatrix}$ is any lift of $a$ to $H^{1}_{S}(V(\frac{p-1}{2}))$, then $\begin{bmatrix} a' \\ a \\ \end{bmatrix} \in H^{1}_{\Sigma^{\ast}}(V(\frac{p-1}{2}))$.
\end{theorem}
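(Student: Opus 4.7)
The plan is to exploit the fact that $\wedge^2 V(\tfrac{p-1}{2}) \cong \F_p(1)$: the middle twist $\tfrac{p-1}{2}$ is precisely the one making the wedge pairing on $V(\tfrac{p-1}{2})$ land in $\mu_p$, so that global reciprocity for the Brauer group is available. Explicitly, $\det V = \F_p(1)$, and twisting by $\cyclo^{p-1} = 1$ gives $\wedge^2 V(\tfrac{p-1}{2}) = \F_p(1)$. The resulting $G_{\Q,S}$-equivariant antisymmetric pairing $V(\tfrac{p-1}{2}) \otimes V(\tfrac{p-1}{2}) \to \F_p(1)$, combined with the graded commutativity of cup product in degree $(1,1)$, induces a \emph{symmetric} bilinear form
\begin{equation*}
\cup_\wedge \colon H^1_S(V(\tfrac{p-1}{2})) \otimes H^1_S(V(\tfrac{p-1}{2})) \to H^2_S(\F_p(1)),
\end{equation*}
so $\xi \cup_\wedge \xi$ is a well-defined global class for $\xi = \begin{bmatrix} a' \\ a \end{bmatrix}$.

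The main step is to show $\xi \cup_\wedge \xi = 0$ in $H^2_S(\F_p(1))$ via global reciprocity, using that $\sum_v \mathrm{inv}_v$ vanishes on $H^2_S(\F_p(1)) \hookrightarrow \mathrm{Br}(\Q)[p]$. At places $v \notin S$, the local class vanishes as $\xi$ is unramified and $H^1_{\ur} \cup_\wedge H^1_{\ur}$ lands in $H^2_\ur(G_{\Q_v}, \F_p(1)) = 0$; at $v = \infty$, the cohomology is zero for odd $p$. At $v = p$, the Selmer condition $a \in H^1_\Sigma(\F_p(\tfrac{p-1}{2}))$ forces $a|_{G_{\Q_p}} = 0$, so $\xi|_p$ lies in the image of $\iota_\ast \colon H^1(G_{\Q_p}, \F_p(\tfrac{p+1}{2})) \to H^1(G_{\Q_p}, V(\tfrac{p-1}{2}))$, where $\iota$ is the subrepresentation inclusion. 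The cup product $\iota_\ast(\psi) \cup_\wedge \iota_\ast(\psi)$ is induced by the map on coefficients $\wedge^2 \iota$, which factors through $\wedge^2 \F_p(\tfrac{p+1}{2}) = 0$; hence $\mathrm{inv}_p = 0$ and reciprocity forces $\mathrm{inv}_N(\xi \cup_\wedge \xi) = 0$, i.e., $\xi|_N \cup_\wedge \xi|_N = 0$ in $H^2(G_{\Q_N}, \F_p(1))$.

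Finally I would analyze the local picture at $N$. Since $\cyclo$ is trivial on $G_{\Q_N}$, Proposition \ref{basis_of_local_H1} gives a basis $\{\mathbf{a}, \mathbf{b}\}$ of $H^1(G_{\Q_N}, V(\tfrac{p-1}{2}))$ with $L_N = \langle \mathbf{b} \rangle$, and we write $\xi|_N = \alpha \mathbf{a} + \beta \mathbf{b}$. Remark \ref{H1Sigma_is_b_at_N} and regularity of $p$ ensure that $\res_N(a)$ is a nonzero multiple of $b$, so $\beta \neq 0$. Expanding by symmetry of $\cup_\wedge$,
\begin{equation*}
\xi|_N \cup_\wedge \xi|_N = \alpha^2 (\mathbf{a} \cup_\wedge \mathbf{a}) + 2\alpha\beta (\mathbf{a} \cup_\wedge \mathbf{b}) + \beta^2 (\mathbf{b} \cup_\wedge \mathbf{b}).
\end{equation*}
The term $\mathbf{a} \cup_\wedge \mathbf{a}$ vanishes directly at the cocycle level, as the bottom coordinate of $\mathbf{a}$ is zero; $\mathbf{b} \cup_\wedge \mathbf{b}$ vanishes by Proposition \ref{condition_at_N_is_self_dual}, since $\mathbf{b} \in L_N = L_N^\perp$; and $\mathbf{a} \cup_\wedge \mathbf{b} \neq 0$ by non-degeneracy of the local Tate pairing, as $\mathbf{a} \notin L_N^\perp$. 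Combined with $\xi|_N \cup_\wedge \xi|_N = 0$, this forces $2\alpha\beta = 0$; since $p$ is odd and $\beta \neq 0$, we conclude $\alpha = 0$ and $\xi|_N \in L_N$, proving $\xi \in H^1_{\Sigma^\ast}(V(\tfrac{p-1}{2}))$.

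The main obstacle is checking that the wedge pairing agrees, up to a nonzero scalar, with the Tate pairing of Proposition \ref{condition_at_N_is_self_dual}, so that the vanishing of $\mathbf{b} \cup_\wedge \mathbf{b}$ and non-vanishing of $\mathbf{a} \cup_\wedge \mathbf{b}$ transfer directly from the self-duality statement there; this reduces to the observation that up to a nonzero scalar there is a unique $G_{\Q_N}$-equivariant pairing $V \otimes V \to \F_p(1)$, which is necessarily antisymmetric.
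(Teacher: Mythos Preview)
Your argument is correct and genuinely different from the paper's. The paper introduces an auxiliary two-dimensional representation $W$ (built from $a$ rather than $b$), takes the contragredient of the three-dimensional representation to produce a companion class in $H^1_S(W(1))$, and then computes $H^1_{\Sigma^*}(W(1)) = H^1_S(W(1))$ by a Greenberg--Wiles calculation together with a case analysis depending on whether $p$ is a $p$th power modulo $N$. Your route via the wedge self-pairing and the reciprocity law for $H^2_S(\F_p(1))$ is considerably more direct: it avoids $W$ entirely, replaces the Greenberg--Wiles dimension count with a single local computation at $p$ (which is immediate from $a|_{G_{\Q_p}} = 0$), and makes transparent exactly why the twist $\tfrac{p-1}{2}$ is special. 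The paper's approach, by contrast, makes the role of the self-inverse character visible through the duality $W(1)^* \cong W(\tfrac{p-1}{2})$ and fits into the paper's broader framework of Selmer-group dimension comparisons.

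One correction is needed in your final paragraph. The claim that there is a unique $G_{\Q_N}$-equivariant pairing $V \otimes V \to \F_p(1)$ up to scalar is false: the space of such pairings is two-dimensional (any invariant bilinear form has matrix $\begin{psmallmatrix} 0 & b \\ -b & d \end{psmallmatrix}$ in the standard basis). What you actually need is much less. The wedge pairing is \emph{nondegenerate}, hence corresponds to some isomorphism $\phi_\wedge \colon V \to V^*$ of $G_{\Q_N}$-modules, and Proposition~\ref{condition_at_N_is_self_dual} applies to any such $\phi$; this already gives $\mathbf{b} \cup_\wedge \mathbf{b} = 0$ and, by perfectness of the induced local pairing, $\mathbf{a} \cup_\wedge \mathbf{b} \neq 0$. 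Alternatively, both facts follow from a direct cocycle computation: one checks that $(\mathbf{b} \cup_\wedge \mathbf{b})(g,h) = -\tfrac{1}{2}\,b(g)b(h)(b(g)+b(h))$ is the coboundary of $\tfrac{1}{6}b^3$ (valid for $p \geq 5$), and that $(\mathbf{a} \cup_\wedge \mathbf{b})(g,h) = a(g)b(h)$ represents the nonzero local cup product of the unramified and ramified generators of $H^1(G_{\Q_N}, \F_p)$. With either fix your proof goes through unchanged.
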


The idea behind the proof of this theorem is to work with a related representation $W$ which allows us to exploit the self-inverse property of $\cyclo^{\frac{p-1}{2}}$ to determine the $\Sigma$-Selmer group of a twist of $W$ explicitly.
Taken together with Theorem \ref{greenberg_wiles}, we will be able to use this explicit determination of a Selmer group to get positive information about the local properties of the class $\begin{bmatrix} a' \\ a \\ \end{bmatrix}$ (namely, that it is always in the $\Sigma^{\ast}$-Selmer group).
We define the representation $W$ that will be used, and then prove the theorem over the course of several lemmas.

We let $W$ be the $2$-dimensional $\F_{p}$-vector space on which $G_{\Q, S}$ acts by
\begin{equation*}
\begin{pmatrix}
\cyclo^{\frac{p-1}{2}} & a \\ 0 & 1 \\
\end{pmatrix}
\end{equation*}
where $a$ is a nonzero class in $H^{1}_{\Sigma}(\F_{p}(\frac{p-1}{2}))$.
By Proposition \ref{classes_cup_iff_H1Sigmastar} we know that $b \cup a = 0$, hence $a$ lifts to a class $\begin{bmatrix} a' \\ a \\ \end{bmatrix} \in H^{1}_{S}(V(\frac{p-1}{2}))$.
In other words there is a $3$-dimensional representation of $G_{\Q, S}$ (which is an extension of $\F_p$ by $V(\frac{p-1}{2})$) defined by
\begin{equation*}\label{3_dimn_rep}
\begin{pmatrix}\tag{$\dagger$} \cyclo^{\frac{p+1}{2}} & \cyclo^{\frac{p-1}{2}} b & a' \\ 0 & \cyclo^{\frac{p-1}{2}} & a \\ 0 & 0 & 1 \\ \end{pmatrix}.
\end{equation*}
Note that the representation (\ref{3_dimn_rep}) is also an extension of $W$ by $\cyclo^{\frac{p+1}{2}}$.
Taking the contragredient of the representation (\ref{3_dimn_rep}) and twisting by $\cyclo^{-\frac{p+1}{2}}$ yields another $3$-dimensional representation of $G_{\Q, S}$ defined by
\begin{equation*}\label{3_dimn_rep_flip}
\begin{pmatrix}\tag{$\ddagger$} \cyclo^{\frac{p+1}{2}} & \cyclo a & ab-a' \\ 0 & \cyclo & -b \\ 0 & 0 & 1 \\ \end{pmatrix}.
\end{equation*}
Note that this representation is an extension of $\F_p$ by $W(1)$, which is to say that $\begin{bmatrix} ab - a' \\ -b \\ \end{bmatrix} \in H^{1}_{S}(W(1))$.

Both $3$-dimensional representations share the same kernel; the operation of taking contragredient and twisting by $\cyclo^{-\frac{p+1}{2}}$ doesn't change the kernel.
Let $L/\Q$ be the fixed field of this kernel.
We have a diagram of fields
\begin{equation*}
\begin{tikzcd}[column sep = tiny, row sep = small]
& L \arrow[dl, dash] \arrow[dr, dash] & \\
K(\zeta_{p}) \arrow[dr, dash] & & L_{a} \arrow[dl, dash] \\
& \Q(\zeta_{p}) \arrow[d, dash] & \\
& \Q &
\end{tikzcd}
\end{equation*}
where $L_{a}$ is the fixed field of the kernel of the representation $W$, which is Galois over $\Q$ with Galois group isomorphic to the semi-direct product $\Z/p\Z \rtimes (\Z/p\Z)^{\times}$ where $(\Z/p\Z)^{\times}$ acts by $\cyclo^{\frac{p-1}{2}}$.
(This is the group $\Gamma_{\frac{p-1}{2}}$ in the notation of Theorem \ref{classification_of_reps}.)
The representation (\ref{3_dimn_rep}) is a realization of $\Gal(K(\zeta_{p})/\Q)$ acting on $\Gal(L/K(\zeta_{p}))$, whereas the representation (\ref{3_dimn_rep_flip}) is a realization of $\Gal(L_{a}/\Q)$ acting on $\Gal(L/L_{a})$.

This commonality between the representations (\ref{3_dimn_rep}) and (\ref{3_dimn_rep_flip}) and their associated cohomology classes $\begin{bmatrix} a' \\ a \\ \end{bmatrix}$ and $\begin{bmatrix} ab - a' \\ -b \\ \end{bmatrix}$ allows us to relate the local behavior of these classes.

\begin{lemma}\label{matrix_flip_equivalence}
The class $\begin{bmatrix} a' \\ a \\ \end{bmatrix}$ is in $H^{1}_{\Sigma^{\ast}}(V(\frac{p-1}{2}))$ if and only if the class $\begin{bmatrix} ab - a' \\ -b \\ \end{bmatrix}$ is in $H^{1}_{\Sigma^{\ast}}(W(1))$.
\end{lemma}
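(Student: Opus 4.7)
The plan is to reduce both $H^{1}_{\Sigma^{\ast}}$-membership conditions to the same concrete local identity at $N$, namely the vanishing of $a'|_{G_{K_{N}}}$.

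First I would note that both classes lie in $H^{1}_{S}$, so they are automatically unramified outside $S$, and the $\Sigma^{\ast}$ local condition at $p$ is trivial ($L_{p}^{\perp} = H^{1}(G_{\Q_{p}}, \cdot)$). Thus the only nontrivial check is at $N$. By Proposition \ref{condition_at_N_is_self_dual}, the $\Sigma^{\ast}$ condition at $N$ is precisely $L_{N}$, the kernel of restriction to $G_{K_{N}}$. To apply that proposition I need $V(\frac{p-1}{2})$ and $W(1)$ to each be isomorphic as $G_{\Q_{N}}$-modules to some $\Sym^{n}V$. For $V(\frac{p-1}{2})$ this is immediate since $\cyclo$ is trivial on $G_{\Q_{N}}$ (as $\zeta_{p} \in \Q_{N}$), giving $V(\frac{p-1}{2})|_{G_{\Q_{N}}} \cong V|_{G_{\Q_{N}}} = \Sym^{1}V$. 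For $W(1)$ I would invoke Remark \ref{H1Sigma_is_b_at_N}: since $p$ is regular and $a \in H^{1}_{\Sigma}(\F_{p}(\frac{p-1}{2}))$ is nonzero, $a|_{G_{\Q_{N}}}$ is a nonzero multiple of $b|_{G_{\Q_{N}}}$, which forces $W(1)|_{G_{\Q_{N}}} \cong V|_{G_{\Q_{N}}} \cong \Sym^{1}V$.

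Next I would restrict the two cocycles to $G_{K_{N}}$. By Kummer theory $b|_{G_{K_{N}}} = 0$, and the previous paragraph then forces $a|_{G_{K_{N}}} = 0$ as well. Substituting gives
\begin{equation*}
\begin{bmatrix} a' \\ a \end{bmatrix}\bigg|_{G_{K_{N}}} = \begin{bmatrix} a'|_{G_{K_{N}}} \\ 0 \end{bmatrix}, \qquad \begin{bmatrix} ab - a' \\ -b \end{bmatrix}\bigg|_{G_{K_{N}}} = \begin{bmatrix} -a'|_{G_{K_{N}}} \\ 0 \end{bmatrix}.
\end{equation*}
Moreover, both $V(\frac{p-1}{2})|_{G_{K_{N}}}$ and $W(1)|_{G_{K_{N}}}$ are trivial $G_{K_{N}}$-modules, so $H^{1}(G_{K_{N}}, -)$ splits as a direct sum of the cohomologies of each coordinate, and a class of the form $\begin{bmatrix} \ast \\ 0 \end{bmatrix}$ vanishes if and only if its top entry does. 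Hence both $\Sigma^{\ast}$-conditions are equivalent to the single statement $a'|_{G_{K_{N}}} = 0$, which yields the desired equivalence.

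The only step with genuine content is identifying $W(1)$ locally at $N$ as a form of $\Sym^{1}V$; this is where regularity of $p$ enters, via Remark \ref{H1Sigma_is_b_at_N}. After that step, everything is a mechanical substitution using the vanishing of $a$ and $b$ on $G_{K_{N}}$.
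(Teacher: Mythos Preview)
Your proposal is correct and follows essentially the same strategy as the paper: both arguments reduce to the local condition at $N$, use Remark \ref{H1Sigma_is_b_at_N} to identify $W(1)$ with $\Sym^{1}V$ as $G_{\Q_{N}}$-modules, and then verify that the $\Sigma^{\ast}$-condition at $N$ (vanishing on restriction to $G_{K_{N}}$) holds for one class if and only if it holds for the other.

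The only difference is in how the final step is phrased. You carry out an explicit cocycle computation, showing that both column vectors restrict to $\begin{bmatrix} \pm a'|_{G_{K_{N}}} \\ 0 \end{bmatrix}$ on $G_{K_{N}}$, so both conditions collapse to $a'|_{G_{K_{N}}} = 0$. The paper instead observes that the two $3$-dimensional representations $(\dagger)$ and $(\ddagger)$ share the same kernel (one being the contragredient-twist of the other), hence cut out the same extension $L/\Q$; the $\Sigma^{\ast}$-condition for either class then becomes the single field-theoretic statement that $N$ splits in $L/L_{a}K(\zeta_{p})$. Your cocycle-level argument is more elementary and self-contained; the paper's field-theoretic phrasing is more conceptual and explains \emph{why} the two conditions coincide without unpacking the matrix entries. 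Both yield the same result with the same essential input.
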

\begin{proof}
Noting that $a$ is necessarily a nonzero multiple of $b$ locally at $N$ by Remark \ref{H1Sigma_is_b_at_N}, we see that $W(1)$ and $V(\frac{p-1}{2})$ are isomorphic representations locally at $N$.
In particular the results of Section \ref{sec_sigma} still apply to the twists of $W$.

In the case of both $V(\frac{p-1}{2})$ and $W(1)$, the $\Sigma^{\ast}$ condition is just that classes vanish when restricted to $K_{N}$.
Interpreting this in terms of the Galois extension $L/\Q$ cut out by both classes, we see that either class satisfies the $\Sigma^{\ast}$ condition if and only if $N$ is split in $L/L_{a}K(\zeta_{p})$, as we know that locally at $N$ the extension $L_{a}K(\zeta_{p})$ is $K_{N}$.
\end{proof}

We will use the fact that $\cyclo^{\frac{p-1}{2}}$ is self-inverse to show that we have an equality $H^{1}_{\Sigma^{\ast}}(W(1)) = H^{1}_{S}(W(1))$, hence the equivalent statements of the previous lemma will always hold.
Since we will be applying Theorem \ref{greenberg_wiles} to compute $h^{1}_{\Sigma^{\ast}}(W(1))$, we will need the fact that
\begin{equation*}
W(1)^{\ast} \cong W(\textstyle{\frac{p-1}{2}}).
\end{equation*}
We start by determining the dimensions of $H^{1}_{S}(W(1))$ and $H^{1}_{S}(W(\frac{p-1}{2}))$.
As this result will depend on whether $p \equiv 1$ or $3 \bmod{4}$ we will use the notation
\begin{equation*}
s_{p} = \begin{cases} 1 & p \equiv 1 \bmod{4} \\ 0 & p \equiv 3 \bmod{4}. \\ \end{cases}
\end{equation*}

\begin{lemma}\label{H1S_flip_dimensions}
The classes generating $H^{1}_{S}(W(1))$ and $H^{1}_{S}(W(\frac{p-1}{2}))$ are as follows.
\begin{enumerate}
\item\label{H1S_flip_dimensions_1} We have that $2 + s_{p} \leq h^{1}_{S}(W(1)) \leq 3 + s_{p}$.
The classes
\begin{equation*}
\begin{bmatrix} x \\ 0 \\ \end{bmatrix}, \begin{bmatrix} \ast \\ b \\ \end{bmatrix}
\end{equation*}
for $x \in H^{1}_{S}(\F_{p}(\frac{p+1}{2}))$ always span a $(2 + s_{p})$-dimensional subspace.
Let $b'$ be the class of $p$ in $H^{1}_{S}(\F_{p}(1))$.
The dimension $h^{1}_{S}(W(1))$ is equal to $3 + s_{p}$ if and only if $p$ is a $p$th power modulo $N$, in which case the final dimension is spanned by some lift of $b'$,
\begin{equation*}
\begin{bmatrix} \ast \\ b' \\ \end{bmatrix}.
\end{equation*}
\item\label{H1S_flip_dimensions_2} We have that $3 \leq h^{1}_{S}(W(\frac{p-1}{2})) \leq 4$.
The classes
\begin{equation*}
\begin{bmatrix} y \\ 0 \\ \end{bmatrix}, \begin{bmatrix} a^{2}/2 \\ a \\ \end{bmatrix}
\end{equation*}
for $y \in H^{1}_{S}(\F_{p})$ span a $3$-dimensional subspace, and $h^{1}_{S}(W(\frac{p-1}{2})) = 4$ if and only if $p \equiv 3 \bmod{4}$ and $H^{1}_{\Sigma^{\ast}}(\F_{p}(\frac{p-1}{2})) = H^{1}_{S}(\F_{p}(\frac{p-1}{2}))$.
In this case, if $z$ is a class spanning $H^{1}_{p}(\F_{p}(\frac{p-1}{2}))$ then the final dimension is spanned by some lift of $z$,
\begin{equation*}
\begin{bmatrix} \ast \\ z \\ \end{bmatrix}.
\end{equation*}

\end{enumerate}
\end{lemma}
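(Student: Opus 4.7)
The strategy for both parts is the same: analyze the long exact sequence in $G_{\Q,S}$-cohomology attached to the canonical filtration on the 2-dimensional representation, compute the dimensions of the outer terms using Theorems \ref{cohomology_theorem_unconditional} and \ref{cohomology_theorem_conditional}, and compute the kernel of the connecting map using Proposition \ref{classes_cup_iff_H1Sigmastar}. For part (1), the sequence $0 \to \F_p((p+1)/2) \to W(1) \to \F_p(1) \to 0$ gives
\begin{equation*}
0 \to H^1_S(\F_p((p+1)/2)) \to H^1_S(W(1)) \to H^1_S(\F_p(1)) \xrightarrow{\delta_1} H^2_S(\F_p((p+1)/2));
\end{equation*}
using that $\cyclo^{(p-1)/2}$ is self-inverse, $\delta_1$ is cup product with $a$ under the pairing $\F_p(1) \otimes \F_p((p-1)/2) \to \F_p((p+1)/2)$. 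I would compute $h^1_S(\F_p((p+1)/2)) = 1 + s_p$ from Theorem \ref{cohomology_theorem_conditional} (the parity of $(p+1)/2$ controls whether Theorem \ref{cohomology_theorem_conditional}(2) or (3) applies, which is exactly what $s_p$ records), and recall that $H^1_S(\F_p(1)) = \langle b, b'\rangle$. Since $a \in H^1_\Sigma \subseteq H^1_{\Sigma^*}$ has nontrivial image at $N$ (Remark \ref{H1Sigma_is_b_at_N}), Proposition \ref{classes_cup_iff_H1Sigmastar} says $c \cup a = 0$ iff $c \in H^1_{\Sigma^*}(\F_p(1))$. A short local inspection at $N$ using Proposition \ref{condition_at_N_is_self_dual} (identifying $L_N \subset H^1(G_{\Q_N}, \F_p(1))$ with the span of the Kummer class of $N$) shows $b \in H^1_{\Sigma^*}$ unconditionally, while $b'$ lies there iff $p$ is a $p$th power mod $N$. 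Assembling these gives the stated dimensions; the generators $[x,0]^T$, $[*,b]^T$ and the conditional $[*,b']^T$ come directly from the exact sequence.

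Part (2) proceeds analogously via $0 \to \F_p \to W((p-1)/2) \to \F_p((p-1)/2) \to 0$, yielding
\begin{equation*}
0 \to H^1_S(\F_p) \to H^1_S(W((p-1)/2)) \to H^1_S(\F_p((p-1)/2)) \xrightarrow{\delta_2} H^2_S(\F_p).
\end{equation*}
Here $\delta_2 = \cup a$ under the pairing $\F_p((p-1)/2)\otimes\F_p((p-1)/2) \to \F_p$, so again by Proposition \ref{classes_cup_iff_H1Sigmastar} one has $\ker \delta_2 = H^1_{\Sigma^*}(\F_p((p-1)/2))$ and $h^1_S(W((p-1)/2)) = 2 + h^1_{\Sigma^*}(\F_p((p-1)/2))$. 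The classes $[y,0]^T$ span the injected copy of $H^1_S(\F_p)$ (which is $2$-dimensional by Theorem \ref{cohomology_theorem_unconditional}(1)), and I would verify directly that $[a^2/2, a]^T$ is a cocycle by the same $\Sym^2$-shaped matrix computation that validates (\ref{3_dimn_rep}); its image in $H^1_S(\F_p((p-1)/2))$ is the nonzero class $a$, giving a third independent dimension.

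To finish I would bound $h^1_{\Sigma^*}(\F_p((p-1)/2))$ by splitting on $p \bmod 4$. When $p \equiv 1 \pmod 4$, $(p-1)/2$ is even and Theorem \ref{cohomology_theorem_conditional}(3) gives $h^1_S(\F_p((p-1)/2)) = 1$; the assumption $H^1_\Sigma \neq 0$ pins $h^1_{\Sigma^*} = 1$, so $h^1_S(W((p-1)/2)) = 3$. When $p \equiv 3 \pmod 4$, $(p-1)/2$ is odd so $h^1_S(\F_p((p-1)/2)) = 2$, and Theorem \ref{cohomology_theorem_sigma} together with a short Greenberg--Wiles computation with the even character $\cyclo^{(p+1)/2}$ gives $h^1_{\Sigma^*}(\F_p((p-1)/2)) \in \{1,2\}$, attaining $2$ precisely when $H^1_{\Sigma^*}(\F_p((p-1)/2)) = H^1_S(\F_p((p-1)/2))$. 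In that maximal case, the decomposition $H^1_S(\F_p((p-1)/2)) = H^1_\Sigma \oplus H^1_p$ (both $1$-dimensional, with trivial intersection since any nonzero multiple of $b|_N$ is ramified at $N$) shows that a generator $z$ of $H^1_p$ lifts to a class $[*,z]^T$, providing the fourth independent dimension. The main obstacle is the bookkeeping in this last case: one must carefully match the reflection identities of Theorem \ref{cohomology_theorem_sigma} against the specific self-inverse character $\cyclo^{(p-1)/2}$ to extract the precise dichotomy, and verify compatibility with the $\Sigma \oplus p$ decomposition that identifies the ``extra'' cohomology class geometrically.
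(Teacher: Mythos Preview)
Your proposal is correct and follows essentially the same approach as the paper: both parts use the long exact sequence for the two-step filtration on the twist of $W$, identify the connecting homomorphism as cup product with $a$, and invoke Proposition~\ref{classes_cup_iff_H1Sigmastar} to determine exactly which classes lift. The only difference is cosmetic: in part~(2) for $p\equiv 3\bmod 4$, the paper simply picks the basis $\{a,z\}$ of $H^1_S(\F_p(\tfrac{p-1}{2}))$ (with $z$ spanning $H^1_p$) and checks directly via Proposition~\ref{classes_cup_iff_H1Sigmastar} that $z$ lifts iff $z\in H^1_{\Sigma^\ast}$, whereas you first compute $h^1_{\Sigma^\ast}(\F_p(\tfrac{p-1}{2}))$ abstractly through the reflection identities and then separately argue the direct-sum decomposition $H^1_S = H^1_\Sigma \oplus H^1_p$ to name the extra class; the paper's route is a little shorter but the content is the same.
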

\begin{proof}
For the first part of this lemma, consider the following piece of the long exact sequence in $G_{\Q, S}$-cohomology:
\begin{equation*}
0 = H^{0}_{S}(\F_{p}(1)) \to H^{1}_{S}(\F_{p}(\textstyle{\frac{p+1}{2}})) \to H^{1}_{S}(W(1)) \to H^{1}_{S}(\F_{p}(1)) \overset{a \cup -}{\longrightarrow} H^{2}_{S}(\F_{p}(\textstyle{\frac{p+1}{2}})).
\end{equation*}
The $1 + s_{p}$ dimensions of $H^{1}_{S}(\F_{p}(\frac{p+1}{2}))$ give classes in $H^{1}_{S}(W(1))$ immediately.
The classes $b, b'$, which span $H^{1}_{S}(\F_{p}(1))$ lift to $H^{1}_{S}(W(1))$ if and only if their cup product with $a$ vanishes.

For the class $b$, we know that $a \cup b = 0$ by Proposition \ref{classes_cup_iff_H1Sigmastar}, as $a \in H^{1}_{\Sigma}(\F_{p}(\textstyle{\frac{p-1}{2}}))$.
Since $a$ is a nonzero multiple of $b$ when viewed as a class for $G_{\Q_{N}}$, we have that $a \cup b' = 0$ if and only if $b'$ is a multiple of $b$ locally at $N$, again by Proposition \ref{classes_cup_iff_H1Sigmastar}.
As $b'$ is unramified at $N$, the only way for it to be a multiple of $b$ locally at $N$ is if $N$ is split in the extension defined by $b'$, which is $\Q(\zeta_{p}, p^{1/p})$.
$N$ splits in this extension if and only if $p$ is a $p$th power in $\Q_{N}^{\times}$, which happens if and only if $p$ is a $p$th power in $\F_{N}^{\times}$.
Thus the class $b'$ lifts to $H^{1}_{S}(W(1))$ if and only if $p$ is a $p$th power modulo $N$.

The proof for the second part of the lemma is similar, using the long exact sequence for $W(\frac{p-1}{2})$:
\begin{equation*}
0 = H^{0}_{S}(\F_{p}(\textstyle{\frac{p-1}{2}})) \to H^{1}_{S}(\F_{p}) \to H^{1}_{S}(W(\textstyle{\frac{p-1}{2}})) \to H^{1}_{S}(\F_{p}(\textstyle{\frac{p-1}{2}})) \overset{a \cup -}{\longrightarrow} H^{2}_{S}(\F_{p}).
\end{equation*}
The $2$ dimensions of $H^{1}_{S}(\F_{p})$ give classes in $H^{1}_{S}(W(\frac{p-1}{2}))$ immediately.
The class $a$ always lifts to  $H^{1}_{S}(W(\frac{p-1}{2}))$, as we certainly have $a \cup a = 0$ as $a \in H^{1}_{\Sigma}(\F_{p}(\frac{p-1}{2}))$.
If $p \equiv 1 \bmod{4}$, $a$ spans $H^{1}_{S}(\F_{p}(\frac{p-1}{2}))$ and we conclude that $h^{1}_{S}(W(\frac{p-1}{2})) = 3$.
If $p \equiv 3 \bmod{4}$, let $z$ be a class spanning $H^{1}_{p}(\F_{p}(\frac{p-1}{2}))$ (so $a, z$ together necessarily span $H^{1}_{S}(\F_{p}(\frac{p-1}{2}))$ which is $2$-dimensional, see part \ref{cohomology_theorem_odd_regular} of Theorem \ref{cohomology_theorem_conditional}).
We have by Proposition \ref{classes_cup_iff_H1Sigmastar} that $a \cup z = 0$ if and only if $z \in H^{1}_{\Sigma^{\ast}}(\F_{p}(\frac{p-1}{2}))$, hence we conclude that $z$ lifts to $H^{1}_{S}(W(\frac{p-1}{2}))$ if and only if $H^{1}_{\Sigma^{\ast}}(\F_{p}(\frac{p-1}{2})) = H^{1}_{S}(\F_{p}(\frac{p-1}{2}))$.
\end{proof}

\begin{lemma}\label{H1Sigmastar_equals_H1S}
$H^{1}_{\Sigma^{\ast}}(W(1)) = H^{1}_{S}(W(1))$.
\end{lemma}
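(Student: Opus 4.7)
The plan is to show $h^1_{\Sigma^*}(W(1)) \geq h^1_S(W(1))$, since the reverse containment $H^1_{\Sigma^*}(W(1)) \subseteq H^1_S(W(1))$ is automatic. By Lemma \ref{H1S_flip_dimensions}\ref{H1S_flip_dimensions_1}, we know $h^1_S(W(1)) \leq 3 + s_p$, so it suffices to establish $h^1_{\Sigma^*}(W(1)) \geq 3 + s_p$.

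For this lower bound, I will apply Theorem \ref{greenberg_wiles} to the Selmer condition $\Sigma^*$ on $W(1)$, whose dual is $\Sigma$ on $W(1)^{\ast} \cong W(\frac{p-1}{2})$. The global $H^0$ terms both vanish (the diagonal characters of $W(1)$ and $W(\frac{p-1}{2})$ are all nontrivial on $G_{\Q}$), and places outside $S$ contribute factors of $1$. At $N$, using that $a|_{G_{\Q_N}}$ is a nonzero multiple of $b|_{G_{\Q_N}}$ by Remark \ref{H1Sigma_is_b_at_N}, one has $W|_{G_{\Q_N}} \cong V|_{G_{\Q_N}}$; so by Proposition \ref{basis_of_local_H1} both $L_N^\perp$ and $H^0(G_{\Q_N}, W(1))$ have order $p$ and cancel. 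At $p$, the Local Euler Characteristic Formula combined with $\#H^0(G_{\Q_p}, W(\frac{p-1}{2})) = p$ yields $\#H^1(G_{\Q_p}, W(1)) = p^3$. At $\infty$, computing the action of complex conjugation on $W(1)$ gives $\#H^0(G_\R, W(1)) = 1$ or $p$ according to whether $p \equiv 1$ or $3 \pmod{4}$. The combined formula reads
\begin{equation*}
h^1_{\Sigma^*}(W(1)) = h^1_\Sigma(W(\textstyle{\frac{p-1}{2}})) + 2 + s_p.
\end{equation*}

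It therefore remains to exhibit a nonzero class in $H^1_\Sigma(W(\frac{p-1}{2}))$. The representation $\Sym^2 W$ is an extension of $\F_p$ by $W(\frac{p-1}{2})$ and produces the class $[a^2/2,\, a]^T \in H^1_S(W(\frac{p-1}{2}))$, which is nonzero because its image in $H^1_S(\F_p(\frac{p-1}{2}))$ is the nonzero class $a$. To verify that this class satisfies $\Sigma$: at $p$, since $a|_{G_{\Q_p}} = 0$, the representation $W|_{G_{\Q_p}}$ splits as $\F_p(\frac{p-1}{2}) \oplus \F_p$, so $\Sym^2 W|_{G_{\Q_p}}$ also splits and the extension class vanishes. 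At $N$, the isomorphism $W|_{G_{\Q_N}} \cong V|_{G_{\Q_N}}$ extends functorially to $\Sym^2 W|_{G_{\Q_N}} \cong \Sym^2 V|_{G_{\Q_N}}$, under which $[a^2/2, a]^T|_{G_{\Q_N}}$ corresponds to the class $\mathbf{b}$ of Proposition \ref{basis_of_local_H1} (for $n=1$), which spans $L_N$. Hence $h^1_\Sigma(W(\frac{p-1}{2})) \geq 1$, so $h^1_{\Sigma^*}(W(1)) \geq 3 + s_p$ as desired.

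The main subtlety is verifying the local condition at $N$: one must check that the isomorphism $W|_{G_{\Q_N}} \cong V|_{G_{\Q_N}}$, which scales the off-diagonal entry $a|_N = \lambda b|_N$ to $b|_N$ via a diagonal change of basis, induces an isomorphism $\Sym^2 W|_{G_{\Q_N}} \cong \Sym^2 V|_{G_{\Q_N}}$ that intertwines the extension class of $\Sym^2 W$ with $\mathbf{b}$. An explicit diagonal rescaling by $\mathrm{diag}(1/\lambda^2, 1/\lambda, 1)$ makes this precise. As a bonus consequence, since equality now forces $h^1_S(W(1)) = 3 + s_p$, Lemma \ref{H1S_flip_dimensions}\ref{H1S_flip_dimensions_1} implies that $p$ must be a $p$th power modulo $N$ under our standing assumptions.
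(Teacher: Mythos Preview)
Your Greenberg--Wiles computation contains an error: the global $H^{0}$ term for $W(\frac{p-1}{2})$ does \emph{not} vanish. The diagonal characters of $W(\frac{p-1}{2})$ are $\cyclo^{\frac{p-1}{2}+\frac{p-1}{2}} = \cyclo^{p-1} = 1$ and $\cyclo^{\frac{p-1}{2}}$, so $W(\frac{p-1}{2})$ contains the trivial representation as a subrepresentation and $\#H^{0}(G_{\Q}, W(\frac{p-1}{2})) = p$. (You in fact used this same trivial sub when computing $\#H^{0}(G_{\Q_{p}}, W(\frac{p-1}{2})) = p$ for the local Euler characteristic at $p$.) The correct formula is
\[
h^{1}_{\Sigma^{\ast}}(W(1)) = 1 + s_{p} + h^{1}_{\Sigma}(W(\textstyle{\frac{p-1}{2}})),
\]
which is what the paper obtains. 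With this in hand, your exhibition of the single class $[a^{2}/2,\, a]^{T}$ only yields $h^{1}_{\Sigma^{\ast}}(W(1)) \geq 2 + s_{p}$, which is not enough to force equality with $h^{1}_{S}(W(1))$ in the case where $p$ is a $p$th power modulo $N$ (when $h^{1}_{S}(W(1)) = 3 + s_{p}$).

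Your ``bonus consequence'' that $p$ is always a $p$th power modulo $N$ should have been a red flag: this is certainly false in general. The paper closes the gap precisely by a case split on whether $p$ is a $p$th power modulo $N$. When it is, the class $[c,\,0]^{T}$ (with $c$ cutting out $\Q(\zeta_{N}^{(p)})$) is split at $p$ and hence lies in $H^{1}_{\Sigma}(W(\frac{p-1}{2}))$, giving $h^{1}_{\Sigma}(W(\frac{p-1}{2})) = 2$; when it is not, $h^{1}_{\Sigma}(W(\frac{p-1}{2})) = 1$ but also $h^{1}_{S}(W(1)) = 2 + s_{p}$, and the equality goes through either way.
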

\begin{proof}
Applying Theorem \ref{greenberg_wiles} to $H^{1}_{\Sigma^{\ast}}(W(1))$ produces the relation:
\begin{equation*}
h^{1}_{\Sigma^{\ast}}(W(1)) = 1 + s_{p} + h^{1}_{\Sigma}(W(\textstyle{\frac{p-1}{2}})),
\end{equation*}
where we have used that $W \cong V$ as $G_{\Q_{N}}$-representations so Proposition \ref{condition_at_N_is_self_dual} applies to the twists of $W$.
We determine $h^{1}_{\Sigma}(W(\frac{p-1}{2}))$ explicitly based on our knowledge of the classes spanning it.
Let $c, c'$ be the classes spanning $H^{1}_{S}(\F_{p})$, which correspond respectively to $\Q(\zeta_{N}^{(p)})$ and $\Q(\zeta_{p^{2}}^{(p)})$.
\begin{itemize}
\item the class $\begin{bmatrix} a^{2}/2 \\ a \\ \end{bmatrix}$ is always in $H^{1}_{\Sigma}(W(\frac{p-1}{2}))$, since $a$ itself is in $H^{1}_{\Sigma}(\F_{p}(\frac{p-1}{2}))$.
\item the class $\begin{bmatrix} c' \\ 0 \\ \end{bmatrix}$ is never in $H^{1}_{\Sigma}(W(\frac{p-1}{2}))$ as it is ramified at $p$.
\item the class $\begin{bmatrix} c \\ 0 \\ \end{bmatrix}$ is in $H^{1}_{\Sigma^{\ast}}(W(\frac{p-1}{2}))$ by Lemma \ref{zeta_N_plus_localized}, and is in $H^{1}_{\Sigma}(W(\frac{p-1}{2}))$ if and only if $p$ is split in $\Q(\zeta_{N}^{(p)})$, which happens if and only if $p$ is a $p$th power mod $N$, since $\Gal(\Q(\zeta_{N}^{(p)})/\Q)$ is canonically $(\Z/N\Z)^{\times}/(\Z/N\Z)^{\times p}$.
\item if $p \equiv 3 \bmod{4}$, there is a class $z \in H^{1}_{p}(\F_{p}(\frac{p-1}{2}))$ which may or may not lift to $H^{1}_{S}(W(\frac{p-1}{2}))$; this class will never lift to $H^{1}_{\Sigma}(W(\frac{p-1}{2}))$ as it is ramified at $p$.
\end{itemize}
Putting this description together with the Lemma \ref{H1S_flip_dimensions} we have that:
\begin{align*}
\text{$p$ is a $p$th power mod $N$ }  & \implies h^{1}_{\Sigma}(W(\textstyle{\frac{p-1}{2}})) = 2 \text{ and } h^{1}_{S}(W(1)) = 3 + s_{p} \\*
& \implies h^{1}_{\Sigma^{\ast}}(W(1)) = 1 + s_{p} + 2 = 3 + s_{p} = h^{1}_{S}(W(1)) \\
\text{ $p$ is not a $p$th power mod $N$ } & \implies h^{1}_{\Sigma}(W(\textstyle{\frac{p-1}{2}})) = 1 \text{ and } h^{1}_{S}(W(1)) = 2 + s_{p}\\*
 & \implies h^{1}_{\Sigma^{\ast}}(W(1)) = 1 + s_{p} + 1 = 2 + s_{p} = h^{1}_{S}(W(1))
\end{align*}
Thus in all cases we have $h^{1}_{\Sigma^{\ast}}(W(1)) = h^{1}_{S}(W(1))$; since $H^{1}_{\Sigma^{\ast}}(W(1)) \subseteq H^{1}_{S}(W(1))$ we conclude that these groups are equal.
\end{proof}

\begin{proof}[Proof of Theorem \ref{general_middle_lift}]
By Lemma \ref{matrix_flip_equivalence}, to show that $\begin{bmatrix} a' \\ a \\ \end{bmatrix} \in H^{1}_{\Sigma^{\ast}}(V(\frac{p-1}{2}))$ it suffices to show that $\begin{bmatrix} ab - a' \\ -b \\ \end{bmatrix}$ (which a priori is just an element of $H^{1}_{S}(W(1))$) is an element of $H^{1}_{\Sigma^{\ast}}(W(1))$.
Lemma \ref{H1Sigmastar_equals_H1S} shows that $H^{1}_{\Sigma^{\ast}}(W(1)) = H^{1}_{S}(W(1))$, so this latter condition is immediate.
\end{proof}

\begin{remark}
The property that $\cyclo^{\frac{p-1}{2}}$ is self-inverse is crucial to this argument, and similar results are not true for other powers of $\cyclo$.
See Section \ref{7_subsection} for examples where the automatic lifting of classes in $H^{1}_{\Sigma}(\F_{p}(i))$ to $H^{1}_{\Sigma^{\ast}}(V(i))$ fails.
\end{remark}

\section{Effective Criteria for \texorpdfstring{$H^{1}_{\Sigma}(\mathbf{F}_{p}(-i)) \neq 0$}{H1\textunderscore Sigma(F\textunderscore p(-i)) /= 0}}\label{sec_effective_criteria}

Our goal in this section is to find an effective method for determining whether the various $H^{1}_{\Sigma}(\F_{p}(-i))$, $1 \leq i \leq p-3$ are zero or not. The cases $i$ even and $i$ odd are treated separately.
For each $i$, under a regularity assumption, we relate the question of whether or not $H^1_{\Sigma}(\F_p(-i)) = 1$ to whether or not a certain quantity in $\F_{N}^{\times}$ is a $p$th power.

\subsection{A Criterion for \texorpdfstring{$H^{1}_{\Sigma}(\mathbf{F}_{p}(-i)) \neq 0$}{H1\textunderscore Sigma(F\textunderscore p(-i)) =/= 0}, \texorpdfstring{$i$}{i} Odd}\label{sec_odd_invariant}

Let $M = \frac{N-1}{p}$, and for any positive integer $i$ define
\begin{equation*}
S_{i} = \prod_{k=1}^{p-1} ((Mk)!)^{k^{i}}.
\end{equation*}

Our goal in this section is to prove the following theorem.

\begin{theorem}\label{odd_invariant_theorem}
Let $p$ be an odd prime, $1 \leq i \leq p-4$ be odd, and assume that $(p,-i)$ is a regular pair. Then $S_{i}$ is a $p$th power in $\F_N^\times$ if and only if $H^1_\Sigma(\F_p(-i)) \neq 0$
\end{theorem}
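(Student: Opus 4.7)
The plan is to construct an explicit generator of $H^1_N(\F_p(-i))$ using Stickelberger and Gauss sums, then directly check when this candidate satisfies the local condition at $N$ defining $H^1_\Sigma$. Under the regularity hypothesis, Theorem \ref{cohomology_theorem_conditional} gives $h^1_N(\F_p(-i)) = 1$ and $h^1_\Sigma(\F_p(-i)) \leq 1$, so it suffices to test a single candidate. Moreover, since $-i \not\equiv 0 \pmod{p-1}$ the inertia-fixed part $\F_p(-i)^{I_p}$ vanishes and hence $H^1_{\ur}(G_{\Q_p}, \F_p(-i)) = 0$, so any unramified-at-$p$ class is automatically split at $p$. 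The only remaining Selmer condition to verify is that the restriction to $G_{\Q_N}$ lies in the line $L_N = \langle b \rangle$, which by Kummer theory is precisely the locus of classes becoming trivial in $K_N$.

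By inflation-restriction and Kummer theory, $H^1(G_{\Q,S}, \F_p(-i))$ identifies with the $\cyclo^{-i-1}$-eigenspace of $\Z[\zeta_p, 1/(pN)]^\times / p$. Under regularity, the unit contribution to this eigenspace vanishes, so a nonzero generator $\alpha_i$ of $H^1_N(\F_p(-i))$ may be represented by an element of $\Z[\zeta_p, 1/N]^\times$ whose ideal is supported on the $p-1$ primes above $N$. Stickelberger's theorem supplies such an $\alpha_i$ explicitly: fixing a prime $\mathfrak{N}$ of $\Q(\zeta_p)$ above $N$ and letting $\theta_i = \sum_{k=1}^{p-1} k^i \sigma_k^{-1}$ be the Stickelberger component in the $\cyclo^{-i}$-eigenspace of $\F_p[\Gal(\Q(\zeta_p)/\Q)]$, the ideal $\theta_i \cdot \mathfrak{N}$ is principal, and $\alpha_i$ can be taken as the corresponding projection of $g(\chi)^p$, where $g(\chi)$ is the Gauss sum for an order-$p$ character $\chi$ of $\F_N^\times$.

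The condition that $\alpha_i \in H^1_\Sigma$ is that $\alpha_i$ becomes a $p$-th power in the completion $K_N$. Since $K_N/\Q_N$ is totally tamely ramified of degree $p$ and $N$ itself becomes a $p$-th power in $K_N$, we may strip off the valuation part of $\alpha_i$ (a power of $N$) and then ask whether the resulting $\mathfrak{N}$-adic unit residue is a $p$-th power in $\F_N^\times$. This turns a global cohomological condition into a concrete congruence condition mod $N$.

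The final step is a direct Gauss-sum expansion: reducing $g(\chi)$ modulo $\mathfrak{N}$, identifying the residue field of $\mathfrak{N}$ with $\F_N$ (and $\chi$ with the order-$p$ character $a \mapsto a^M$), and then evaluating the $\theta_i$-projection, one obtains that the $\mathfrak{N}$-adic residue of $\alpha_i$ is congruent, up to sign and $p$-th powers, to $S_i = \prod_{k=1}^{p-1} ((Mk)!)^{k^i}$. The factorials $(Mk)!$ emerge as partial products over the cosets of the unique index-$p$ subgroup of $\F_N^\times$, and the exponents $k^i$ come directly from the coefficients of $\theta_i$. The main obstacle will be this last calculation, namely verifying that the Gauss-sum expansion yields exactly $S_i$ with the correct exponent $k^i$ on each factorial $(Mk)!$, together with the sign and normalization needed to produce a clean if-and-only-if statement.
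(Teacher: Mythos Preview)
Your approach has a genuine gap that causes it to fail. The Kummer element you construct, namely the projection of $g(\chi)^p$ to the relevant eigenspace, is identically zero in $\Q(\zeta_p)^\times/(\Q(\zeta_p)^\times)^p$, so it cannot serve as a generator of $H^1_N(\F_p(-i))$.

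Here is why. First, the eigenspace for $H^1_S(\F_p(-i))$ under the Kummer identification is the $\cyclo^{1+i}$-eigenspace (not $\cyclo^{-i-1}$; compare the computation in Section~\ref{sec_even_invariant}). More seriously, the element $g(\chi)^p$ lies entirely in the $\cyclo$-eigenspace of $\Q(\zeta_p)^\times/p$. Indeed, the ratios $g(\chi)^a/g(\chi^a)$ are products of Jacobi sums and hence already lie in $\Q(\zeta_p)^\times$, so
\[
[a]\,g(\chi)^p \;=\; g(\chi^a)^p \;\equiv\; (g(\chi)^p)^{a} \pmod{(\Q(\zeta_p)^\times)^p}.
\]
Equivalently, the Kummer extension $\Q(\zeta_p, g(\chi)) = \Q(\zeta_p,\zeta_N^{(p)})$ is abelian over $\Q$, so its class lives in $H^1_S(\F_p)$, not in $H^1_S(\F_p(-i))$ for $i\neq 0$. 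Applying any Stickelberger-type projector $\theta_i$ for $i\neq 0$ therefore annihilates $g(\chi)^p$ modulo $p$-th powers, and your $\alpha_i$ is zero. (Your claim that ``the unit contribution to this eigenspace vanishes'' is also incorrect: since $1+i$ is even, the $\cyclo^{1+i}$-eigenspace contains a cyclotomic unit, which in fact generates $H^1_p(\F_p(-i))$.)

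The paper sidesteps this by working on the \emph{dual} side. Rather than constructing the cohomology class via Kummer theory, it realizes $H^1_N(\F_p(-i))^\vee$ as an idele-class quotient $C_{-i}$ and studies the dual $j\colon \Q_N^\times/p \to C_{-i}$ of the restriction map at $N$. The Gauss-sum element $\mathcal{G}_{-i}=\prod_a ([a]g_\eta)^{\tau(a^i)}$ (note: built from $g_\eta$, not $g_\eta^p$) is shown to lie in $\ker j$ by a direct idelic computation. Its nonvanishing in $\Q_N^\times/p$ comes from the valuation term $N^{B_{1,\cyclo^i}}$, which is nontrivial exactly because $(p,-i)$ is regular. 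The factorial product $S_i$ then appears as the unit part of $\iota(\mathcal{G}_{-i})$, and the criterion ``$N\in\ker j$'' becomes ``$S_i\in\F_N^{\times p}$''. The local Gauss-sum expansion you anticipated (Lemma~\ref{local_computation_gauss_sum}) is indeed the computational heart, but it feeds into this dual picture rather than a direct Kummer construction.
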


The general strategy is as follows: Recall from part \ref{cohomology_theorem_odd_regular} of Theorem \ref{cohomology_theorem_conditional} that \begin{equation*}
h^1_\Sigma(\F_p(-i)) \leq h^1_N(\F_p(-i)) = 1.
\end{equation*}
We will show that the vanishing of $S_{i}$ in $\F_N^\times / \F_N^{\times p}$ is equivalent to the statement that the nontrivial class in $H^1_N(\F_p(-i))$ satisfies the Selmer condition $\Sigma$. To do this, we will produce an element $\mathcal{G}_{-i} \in \Q(\zeta_p)^\times$ whose local properties will control the local properties of the nontrivial class in $H^1_N(\F_p(-i))$.

\begin{remark}
The existence of such an element in a slightly different formulation is shown by Lecouturier in \cite{lecouturier}.
Lecouturier computes the image of this element in $\Q_{N}^{\times}/\Q_{N}^{\times p}$ using the Gross-Koblitz formula and $N$-adic Gamma function, and the quantity $M_{i} = \prod_{k=1}^{N-1} \prod_{a=1}^{k-1} k^{a^{i}}$ arises as the image of this element in the factor $\Z_{N}^{\times}/\Z_{N}^{\times p}$ of $\Q_{N}^{\times}/\Q_{N}^{\times p}$.
His results are not stated in terms of the Selmer groups $H^{1}_{\Sigma}(\F_{p}(-i))$; instead he relates the vanishing of $M_{i}$ directly to Iimura's filtration on the class group of $K(\zeta_{p})$ (see Remark \ref{filtration_on_Lambda}) in order to deduce bounds on the rank of the class group of $K$.

We include a proof of Theorem \ref{odd_invariant_theorem} that is better suited to our formulation using Selmer groups.
The quantities $M_{i}$ of Lecouturier play the same role as the $S_{i}$ in our statement of Theorem \ref{odd_invariant_theorem}; we show in Lemma \ref{our_invariant_is_lecouturiers} that $M_{i} = S_{i}^{-1}$ as elements of $\F_{N}^{\times}/\F_{N}^{\times p}$. 
\end{remark}

\begin{remark}\label{bernoulli_and_even_invariants}
One can compare the role of $S_{i}$ in Theorem \ref{odd_invariant_theorem} to the role of classical Bernoulli numbers in the theorems of Herbrand and Ribet on class groups of cyclotomic fields.
The question of Bernoulli numbers being divisible by $p$ is replaced by the question of whether or not the invariants $S_{i}$ are $p$th powers.
Recall that when $i$ is odd, $B_i = 0$. Similarly, the invariant $S_{i}$ for even $i$ is always a $p$th power, as the following computation in $\F_N^\times / \F_N^{\times p}$ shows.
If $i = 2j$ is even, then
\begin{align*}
S_{2j}^{2}   & = \prod_{k=1}^{p-1} ((Mk)!)^{k^{2j}} ((M(p-k))!)^{(p-k)^{2j}} \\*
             & = \prod_{k=1}^{p-1} ((Mk)! (M(p-k))!)^{k^{2j}} \\* 
             & = 1
\end{align*}
where the last step follows from the fact that $a! (N - 1 - a)! \equiv \pm 1 \in \F_{N}^{\times}$ for any $a \not \equiv 0$.
Since $p$ is odd, the fact that $S_{i}^{2}$ is a $p$th power means that $S_{i}$ itself must be a $p$th power.
\end{remark}

While Theorem \ref{odd_invariant_theorem} requires a regularity assumption, the setup does not. Until the beginning of the proof of Theorem \ref{odd_invariant_theorem}, we make no regularity assumptions.

For any prime $\mathfrak{n} | N$ of $\Q(\zeta_{p})$, define
\begin{equation*}
\iota_{\mathfrak{n}}: \Q(\zeta_{p}) \to \Q(\zeta_{p})_{\mathfrak{n}} = \Q_{N}.
\end{equation*}
Note that if $\mathfrak{n}' = [a]\mathfrak{n}$ for $a \in (\Z/p\Z)^{\times}$, then
\begin{equation*}
\iota_{\mathfrak{n}'} = \iota_{[a]\mathfrak{n}} = \iota_{\mathfrak{n}} \circ [a^{-1}].
\end{equation*}
Now, fix a prime $\mathfrak{n}|N$, and set $\iota = \iota_{\mathfrak{n}}$, and $\iota_{a} = \iota_{[a]\mathfrak{n}}$ for $a \in (\Z/p\Z)^{\times}$.

Let $c \neq 0$ be a class in $H^{1}_{N}(\F_{p}(-i))$.
This class $c$ defines an extension $L/\Q(\zeta_{p})$ which is Galois over $\Q$ with Galois group $\Gamma_{-i} = \Z/p\Z \rtimes_{\cyclo^{-i}} (\Z/p\Z)^{\times}$, and $c$ lies in $H^{1}_{\Sigma}(\F_{p}(-i))$ if and only if $L$ localized at a prime above $\mathfrak{n}$ is either trivial or isomorphic to $K_{N}$.

The extension $L/\Q(\zeta_{p})$ corresponds, by global class field theory, to a homomorphism
\begin{equation*}
\psi_c: \A_{\Q(\zeta_{p})}^{\times} \to \F_{p}
\end{equation*}
which factors through the $\cyclo^{-i}$-eigenspace of the $p$-coinvariants of the double quotient
\begin{equation*}
\Q(\zeta_{p})^{\times} \backslash \A_{\Q(\zeta_{p})}^{\times} / U
\end{equation*}
where $U$ is the subgroup
\begin{equation*}
U = \prod_{\mathfrak{n}' | N} (1 + \mathfrak{n}' \mathcal{O}_{\Q(\zeta_{p})_{\mathfrak{n}'}}) \times \prod_{\mathfrak{q} \nmid N}  \mathcal{O}_{\Q(\zeta_{p})_{\mathfrak{q}}}^{\times} \times (\Q(\zeta_{p}) \otimes \R)^{\times}.
\end{equation*}
We call this eigenspace $C_{-i}$.

Identifying $\Q(\zeta_{p})_{\mathfrak{n}}$ with $\Q_{N}$, the extension of $\Q_{N}$ given by localizing $L$ at a prime above $\mathfrak{n}$ is, by local class field theory, determined by a map $\psi_{c,N}: \Q_N^\times \to \F_p$. This map is the composition of the inclusion $j: \Q(\zeta_{p})_{\mathfrak{n}}^\times \to \A_{\Q(\zeta_p)}^\times$ and the map $\psi_c$. This is summarized in the following commutative diagram:
\begin{equation*}
\begin{tikzcd}
\Q_N^\times/\Q_N^{\times p} \arrow[d, "j"] \arrow[dr, "\psi_{c,N}"] & \\
C_{-i} = (\Q(\zeta_{p})^{\times} \backslash \A_{\Q(\zeta_{p})}^{\times} / U)^{\chi^{-i}}_{p} \arrow[r, "\psi_c"] & \F_p
\end{tikzcd}
\end{equation*}

The kernel of $\psi_{c,N}$ is the norm subgroup of the extension of $\Q_N$ coming from $L$. As $\Q_N^\times/\Q_N^{\times p}$ is $2$-dimensional, this extension is either trivial or isomorphic to $K_N$ (i.e., $c \in H^1_\Sigma(\F_p(-i))$) if and only if $N$ is in the kernel of $\psi_{c,N}$.

\begin{remark}\label{idele_dual_to_cohomology}
The above construction realizes the idele group $C_{-i}$ as the dual of the cohomology group $H^1_N(\F_p(-i))$. Indeed, by class field theory as above, every element of the cohomology group corresponds to a map $\psi_c: C_{-i} \to \F_p$, and conversely every such homomorphism gives an $\F_p$-extension of $\Q(\zeta_p)$ that is Galois over $\Q$ with Galois group $\Gamma_{-i}$ and that satisfies the local conditions to lie in $H^1_N(\F_p(-i))$.

Similarly, one identifies $\Q_N^\times/\Q_N^{\times p}$ with the dual of $H^1(G_{\Q_N}, \F_p)$. Then the map $j$ defined above is nothing more than the dual to the restriction map
\begin{equation*}
    \res_N: H^1_N(\F_p(-i)) \to H^1(G_{\Q_N}, \F_p).
\end{equation*}
\end{remark}

We turn now to the map $j$. Under certain conditions, we will prove that the kernel of $j$ is $1$-dimensional, spanned by an element $\mathcal{G}_{-i}$ that will be related to $S_i$.

\begin{lemma}\label{odd_invariant_strategy}
Let $i \not\equiv -1 \bmod{p-1}$ be odd.
Suppose that there exists an element $\mathcal{G}_{-i} \in \Z[\zeta_{p}]$ which satisfies the following properties:
\begin{itemize}
\item[(a)] $\mathcal{G}_{-i}$ lies in the $\cyclo^{-i}$-eigenspace of $\Q(\zeta_{p})^{\times} /\Q(\zeta_{p})^{\times p}$.
\item[(b)] The ideal $(\mathcal{G}_{-i})$ of $\Z[\zeta_{p}]$ is divisible only by prime ideals dividing $N$.
\end{itemize}
Then $\iota(\mathcal{G}_{-i})$ is in the kernel of $j$.
\end{lemma}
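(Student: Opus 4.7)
The plan is to exhibit $j(\iota(\mathcal{G}_{-i}))$ as a nonzero scalar multiple of the image in $C_{-i}$ of a principal idele, and hence conclude it vanishes.

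First I would consider the diagonal idele $\widehat{\mathcal{G}}_{-i} = (\iota_v(\mathcal{G}_{-i}))_v \in \A_{\Q(\zeta_p)}^{\times}$. As a principal idele, its class in the double quotient $\Q(\zeta_p)^{\times} \backslash \A^{\times}/U$ is trivial, and so its image in $C_{-i}$ is zero. Using condition (b), the component $\iota_v(\mathcal{G}_{-i})$ is a unit at every finite place $v \nmid N$ and hence trivial modulo $U$, while the archimedean factors are also killed by $U$. At each prime $[a]\mathfrak{n}$ above $N$, the identity $\iota_{[a]\mathfrak{n}} = \iota \circ [a^{-1}]$ combined with condition (a) (which gives $[a^{-1}]\mathcal{G}_{-i} \equiv \mathcal{G}_{-i}^{a^i}$ modulo $p$th powers, since $\cyclo^{-i}(a^{-1}) = a^i$ in $\F_p$) yields
\begin{equation*}
\iota_{[a]\mathfrak{n}}(\mathcal{G}_{-i}) \equiv \iota(\mathcal{G}_{-i})^{a^i} \pmod{\Q_N^{\times p}}.
\end{equation*}

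Next I would compute $j(\iota(\mathcal{G}_{-i}))$ directly by applying the $\cyclo^{-i}$-eigenspace projector $\pi_{-i} = \frac{1}{p-1}\sum_{b} b^i [b]$ to the idele $\gamma$ defined by $\gamma_{\mathfrak{n}} = \iota(\mathcal{G}_{-i})$ and $\gamma_v = 1$ elsewhere. Functoriality of completion gives $[b] \circ \iota_{\mathfrak{n}} = \iota_{[b]\mathfrak{n}} \circ [b]$, and combined with $\iota_{[b]\mathfrak{n}} \circ [b] = \iota \circ [b^{-1}] \circ [b] = \iota$, this shows $[b](\iota(\mathcal{G}_{-i})) = \iota(\mathcal{G}_{-i})$ as elements of $\Q(\zeta_p)_{[b]\mathfrak{n}} = \Q_N$. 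Hence $[b]\gamma$ is supported at $[b]\mathfrak{n}$ with value $\iota(\mathcal{G}_{-i})$, and writing the group law additively modulo $p$th powers, $\pi_{-i}(\gamma)$ has component $\frac{a^i}{p-1}\iota(\mathcal{G}_{-i})$ at $[a]\mathfrak{n}$ and is zero elsewhere.

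Comparing the two computations modulo $U$ and $p$th powers gives $\widehat{\mathcal{G}}_{-i} = (p-1)\pi_{-i}(\gamma)$ in the $\cyclo^{-i}$-eigenspace of the double quotient. Since $\widehat{\mathcal{G}}_{-i}$ vanishes in $C_{-i}$ and $p-1$ is invertible in $\F_p$, this forces $\pi_{-i}(\gamma) = 0$ in $C_{-i}$, which is precisely the desired statement $j(\iota(\mathcal{G}_{-i})) = 0$. The only real obstacle is the bookkeeping needed to track how the Galois action on $\Q(\zeta_p)$ interacts with the various embeddings $\iota_{\mathfrak{n}'}$ identifying the completions with $\Q_N$; once condition (a) is used to rewrite each prime-above-$N$ component as a power of $\iota(\mathcal{G}_{-i})$, the argument reduces to a short diagram chase.
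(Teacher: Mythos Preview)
Your argument is correct and follows essentially the same approach as the paper's proof: both show that the eigenspace projection of the idele $\gamma$ supported at $\mathfrak{n}$ with value $\iota(\mathcal{G}_{-i})$ coincides (up to the invertible scalar $p-1$, coming from your normalization of the projector) with the image of the diagonal principal idele, which is trivial in $C_{-i}$. Your explicit verification that the induced map $[b]:\Q_N\to\Q_N$ is the identity under the chosen identifications is a nice touch that the paper leaves implicit.
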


\begin{proof}
We will show that $j(\iota(\mathcal{G}_{-i})) = 0$ in the idelic quotient $C_{-i}$ by showing that $j(\iota(\mathcal{G}_{-i}))$ is equal to the diagonal embedding of the global element $\mathcal{G}_{-i}$ in the $\cyclo^{-i}$-eigenspace of the $p$-coinvariants of $\A_{\Q(\zeta_{p})}^{\times}/U$, which we denote by $C'_{-i}$.

Note that since $\mathcal{G}_{-i}$ is a unit at all primes not dividing $N$ by property (b), it will suffice to work only in the coordinates of the ideles above $N$, as the quotient by $U$ kills all units at primes not dividing $N$ and all information at the infinite places.
We index the primes above $N$ relative to our fixed choice $\mathfrak{n}|N$ and the Galois action on primes; namely the set of primes above $N$ is
\begin{equation*}
\{[a]\mathfrak{n} ~|~ a \in (\Z/p\Z)^{\times}\}.
\end{equation*}
Note that an element $a' \in (\Z/p\Z)^{\times}$ permutes the coordinates above $N$, sending the $[a]\mathfrak{n}$-coordinate to the $[a'a]\mathfrak{n}$-coordinate.
The projection operator
\begin{equation*}
P_{\cyclo^{-i}}: \left(\A_{\Q(\zeta_{p})}^{\times}/U\right)_{p} \to C'_{-i}
\end{equation*}
is given by the formula
\begin{equation*}
P_{\cyclo^{-i}} = \sum_{a \in (\Z/p\Z)^{\times}} \cyclo^{-i}(a^{-1}) [a]
\end{equation*}
where we have used additive notation for the group ring $\F_{p}[(\Z/p\Z)^{\times}]$, despite it acting on the multiplicative groups of ideles.
With this notation set up, we are trying to show that
\begin{equation*}
P_{\cyclo^{-i}}(j(\iota(\mathcal{G}_{-i}))) = P_{\cyclo^{-i}}((\iota(\mathcal{G}_{-i}), 1, \ldots, 1))
\end{equation*}

is equal the diagonal embedding of $\mathcal{G}_{-i}$:
\begin{equation*}
(\iota_{a}(\mathcal{G}_{-i}))_{a \in (\Z/p\Z)^{\times}}.
\end{equation*}
We compute directly with the formula for $P_{\cyclo^{-i}}$ that in $C'_{-i}$ we have
\begin{align*}
P_{\cyclo^{-i}}(j(\iota(\mathcal{G}_{-i})))      & = (\cyclo^{-i}(a^{-1}) \iota(\mathcal{G}_{-i}))_{a \in (\Z/p\Z)^{\times}} \\*
                                                        & = (\iota(\mathcal{G}_{-i}^{\cyclo^{-i}(a^{-1})}))_{a \in (\Z/p\Z)^{\times}}.
\end{align*}
Now, by property (a), we know that $\mathcal{G}_{-i}^{\cyclo^{-i}(a^{-1})} = [a^{-1}] \mathcal{G}_{-i}$, hence
\begin{align*}
P_{\cyclo^{-i}}(j(\iota(\mathcal{G}_{-i})))      & = (\iota([a^{-1}] \mathcal{G}_{-i}))_{a \in (\Z/p\Z)^{\times}} \\*
                                                        & = (\iota_{a}(\mathcal{G}_{-i}))_{a \in (\Z/p\Z)^{\times}}
\end{align*}
where we have used that $\iota_{a} = \iota \circ [a^{-1}]$.
\end{proof}

Now we turn our attention to constructing such a $\mathcal{G}_{-i}$ and relating it to the invariant $S_i$.

Let $A = \Q(\zeta_{p}, \zeta_{N}^{(p)})$ and let $B = \Q(\zeta_{p}, \zeta_{N})$.
For any character $\eta$
\begin{equation*}
\eta: \Gal(B/\Q(\zeta_{p})) \cong (\Z/N\Z)^{\times} \to \mu_{p}
\end{equation*}
of order $p$, define the Gauss sum
\begin{equation*}
g_{\eta} = \sum_{k=1}^{N-1} \eta(k) \zeta_{N}^{k}.
\end{equation*}
Let $\mathfrak{N}$ be the prime above $\mathfrak{n}$ in $B$ (so we have $\mathfrak{N}^{N-1} = \mathfrak{n}$).
The Gauss sums $g_{\eta}$ satisfy the following properties
\begin{itemize}
\item $g_{\eta}$ is an element of the ring of integers of $A$, and is divisible only by primes above $N$.
\item Since $\Gal(\Q(\zeta_{p})/\Q) = (\Z/p\Z)^{\times}$ acts on $\mathcal{O}_{A}$, we have that for $a \in (\Z/p\Z)^{\times}$
\begin{equation*}
[a] g_{\eta} = g_{\eta^{a}}. 
\end{equation*}
\item If $[b] \in \Gal(B/\Q(\zeta_{p})) = (\Z/N\Z)^{\times}$, then
\begin{equation*}
[b] g_{\eta} = \eta(b^{-1}) g_{\eta}.
\end{equation*}
\item $g_{\eta}^{p} \in \Q(\zeta_{p})$.
\end{itemize}

Fix the choice of $\eta$ so that the composite map
\begin{equation*}
(\Z/N\Z)^{\times} \overset{\eta}{\to} \mu_{p} \hookrightarrow (\Z[\mu_{p}]/\mathfrak{n})^{\times} = (\Z/N\Z)^{\times}
\end{equation*}
is the map $k \mapsto k^{-\frac{N-1}{p}}$, and let $\tau: (\Z/p\Z)^{\times} \to \Z \setminus \{0\}$ be a set map which satisfies that the composite
\begin{equation*}
(\Z/p\Z)^{\times} \overset{\tau}{\to} \Z \setminus \{0\} \to (\Z/p^{2}\Z)^{\times}
\end{equation*}
is the map $x \mapsto x^{p}$.
In particular, $\tau(xy) \equiv \tau(x)\tau(y) \bmod{p}^{2}$.
Define
\begin{equation*}
\mathcal{G}_{-i} = \prod_{a \in (\Z/p\Z)^{\times}} ([a]g_{\eta})^{\tau(a^{i})} = \prod_{a \in (\Z/p\Z)^{\times}} (g_{\eta^{a}})^{\tau(a^{i})}.
\end{equation*}

To establish the desired properties of the element of $\mathcal{G}_{-i}$, we will need to examine the expansion of $\iota(\mathcal{G}_{-i})$ in terms of the uniformizer of $\Q(\zeta_{p})_{\mathfrak{n}} = \Q_{N}$, and to do this we will need the expansion of a Gauss sum in terms of a uniformizer.
This latter expansion is computed in the following lemma.

\begin{lemma}\label{local_computation_gauss_sum}
Let $1 \leq r < p$, $M = (N-1)/p$, and $m = rM$.
Let
\begin{equation*}
I: B \to B_{\mathfrak{N}} = \Q_{N}(\zeta_{N})
\end{equation*}
be an embedding extending $\iota$.
Note that $\pi = 1 - \zeta_{N}$ is a uniformizer in $\Q_{N}(\zeta_{N})$.
Then we have that
\begin{equation*}
I(g_{\eta^{r}}) = (-1)^{m+1}\frac{\pi^{m}}{m!} + O(\pi^{m+1}).
\end{equation*}
\end{lemma}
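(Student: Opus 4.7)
The plan is to expand $\zeta_N^k = (1-\pi)^k$ by the binomial theorem, interchange summation to obtain a $\pi$-adic expansion for $I(g_{\eta^r})$, and then compute each coefficient modulo $\mathfrak{N}$ using the classical power-sum identity in $\F_N$.

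Concretely, I would first write
\begin{equation*}
I(g_{\eta^r}) \;=\; \sum_{k=1}^{N-1} I(\eta^r(k))(1-\pi)^k \;=\; \sum_{j=0}^{N-1}(-\pi)^j c_j, \qquad c_j := \sum_{k=j}^{N-1} I(\eta^r(k))\binom{k}{j} \in \Z_N,
\end{equation*}
where $c_j$ lies in $\Z_N$ because $I(\eta^r(k)) \in \mu_p \subset \Z_N$. By the defining property of $\eta$, $I(\eta^r(k)) \equiv k^{-m} \pmod{N}$ in $\Z_N$, so the image of $c_j$ in $\F_N$ is
\begin{equation*}
\bar c_j \;=\; \sum_{k=1}^{N-1} k^{-m}\binom{k}{j} \in \F_N.
\end{equation*}

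The key step is to show that $\bar c_j = 0$ for $j < m$ and $\bar c_m = -1/m!$. Writing $\binom{k}{j} = \tfrac{1}{j!}\sum_{i=0}^{j} s(j,i)\,k^i$ with signed Stirling numbers of the first kind, one reduces to evaluating sums $\sum_{k=1}^{N-1} k^{i-m}$ in $\F_N$. The standard identity $\sum_{k=1}^{N-1} k^e \equiv -1 \pmod{N}$ when $(N-1)\mid e$ and $0$ otherwise, combined with $k^{-m} = k^{N-1-m}$, shows that only exponents with $(N-1)\mid (i-m)$ survive. Since $m = rM \leq (p-1)M = N-1-M < N-1$, we have $|i-m| < N-1$ whenever $0 \leq i \leq j \leq m$, so only $i=m$ contributes; this forces $\bar c_j = 0$ for $j<m$ and gives $\bar c_m = \frac{s(m,m)}{m!}\cdot(-1) = -1/m!$ (since $s(m,m)=1$).

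Finally, I would translate these congruences into the claimed $\pi$-adic estimate using $v_\mathfrak{N}(N) = N-1$ in $B_{\mathfrak{N}} = \Q_N(\zeta_N)$. For $j<m$, $v_\mathfrak{N}(c_j) \geq N-1$ gives $v_\mathfrak{N}((-\pi)^j c_j) \geq j+N-1 \geq m+1$, using $m \leq N-2$. The $j=m$ term contributes $(-\pi)^m(-1/m!) = (-1)^{m+1}\pi^m/m!$ plus an error of $\pi$-valuation at least $m+N-1 \geq m+1$. Terms with $j>m$ have $v_\mathfrak{N} \geq j \geq m+1$ automatically. Summing yields $I(g_{\eta^r}) = (-1)^{m+1}\pi^m/m! + O(\pi^{m+1})$.

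The main obstacle is the combinatorial computation of $\bar c_j$ for $j \leq m$; once that identity is established, the rest is routine valuation bookkeeping. The essential inputs are the Stirling expansion of $\binom{k}{j}$ and the vanishing/normalization of power sums modulo $N$, which together isolate the leading nonzero term of the $\pi$-expansion.
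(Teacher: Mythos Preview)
Your proposal is correct and follows essentially the same approach as the paper's proof: expand $(1-\pi)^k$ binomially, reduce each coefficient modulo $N$ using $I(\eta^r(k))\equiv k^{-m}$, and apply the power-sum identity $\sum_{k=1}^{N-1}k^e\equiv -[\,(N-1)\mid e\,]\pmod N$ to isolate the first nonvanishing coefficient at $j=m$. The only cosmetic difference is that you name the Stirling numbers explicitly where the paper simply says ``expand the binomial coefficients as polynomials in $k$'', and you spell out the $\pi$-adic valuation bookkeeping a bit more carefully; neither changes the substance of the argument.
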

\begin{proof}
By definition, we have
\begin{align*}
I(g_{\eta^{r}})     & = \sum_{k=1}^{N-1} \eta(k)^{r} (1 - \pi)^{k} \\*
                    & = \sum_{k=1}^{N-1} \eta(k)^{r} - \pi \sum_{k=1}^{N-1} \binom{k}{1} \eta(k)^{r} + \pi^{2} \sum_{k=2}^{N-1} \binom{k}{2} \eta(k)^{r} - \ldots + \pi^{N-1} \\*
                    & = \sum_{j=0}^{N-1} (-1)^{j} \pi^{j} \sum_{k=1}^{N-1} \binom{k}{j} \eta(k)^{r}
\end{align*}
where we take $\binom{k}{j} = 0$ when $k < j$.
If we expand the binomial coefficients as polynomials in $k$, each term in this last sum will be of the form
\begin{equation*}
(-1)^{j} \pi^{j} \frac{a}{j!}  \sum_{k=1}^{N-1} k^{l} \eta(k)^{r}
\end{equation*}
for some $l < j$ and integer $a$.
Note that
\begin{equation*}
\sum_{k=1}^{N-1} k^{l} \eta(k)^{r} = \begin{cases} O(\pi^{N-1}) & j \neq m \\ -1 + O(\pi^{N-1}) & j = m \\ \end{cases}
\end{equation*}
since $\mathfrak{n} = \mathfrak{N}^{N-1}$ and we have that
\begin{equation*}
\sum_{k=1}^{N-1} k^{l} \eta(k)^{r} \equiv \sum_{k=1}^{N-1} k^{l-m} \bmod{\mathfrak{n}}
\end{equation*}
using that $\eta^{r}$ is the map $k \mapsto k^{-m}$ modulo $\mathfrak{n}$.

Therefore every term in the sum for $I(g_{\eta^{r}})$ will be $O(\pi^{N-1})$ until the first term involving $\sum_{k=1}^{N-1} k^{m} \eta(k)^{r}$.
This term is
\begin{equation*}
(-1)^{m} \pi^{m} \frac{1}{m!} \sum_{k=1}^{N-1} k^{m} \eta(k)^{r}.
\end{equation*}
All other terms in the sum are $O(\pi^{m+1})$, so we conclude that
\begin{equation*}
I(g_{\eta}^{r}) = (-1)^{m+1} \frac{\pi^{m}}{m!} + O(\pi^{m+1}). \qedhere
\end{equation*}
\end{proof}

\begin{lemma}\label{properties_of_invariant}
The element $\mathcal{G}_{-i}$ is in $\Q(\zeta_{p})^{\times}$, and satisfies properties (a) and (b) of Lemma \ref{odd_invariant_strategy}.
Furthermore, as elements of $\Q_{N}^\times / \Q_{N}^{\times p}$, we have
\begin{equation*}
\iota(\mathcal{G}_{-i}) = N^{B_{1, \cyclo^{i}}}S_{i}^{-1}
\end{equation*}
where $B_{1,\cyclo^{i}}$ is the generalized Bernoulli number.
\end{lemma}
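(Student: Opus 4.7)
The plan is to verify the four claims in turn, using Lemma \ref{local_computation_gauss_sum} together with two key properties of $\tau$: that $\tau(a^i) \equiv a^i \pmod p$ and that $\tau$ is multiplicative mod $p^2$.

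First I would check $\mathcal{G}_{-i} \in \Q(\zeta_p)^\times$ by showing it is fixed by $\Gal(B/\Q(\zeta_p)) = (\Z/N\Z)^\times$. For $[b]$ in this group, the relation $[b]g_{\eta^a} = \eta(b^{-1})^a g_{\eta^a}$ gives
\begin{equation*}
[b]\mathcal{G}_{-i} = \eta(b^{-1})^{\sum_a a\tau(a^i)}\mathcal{G}_{-i},
\end{equation*}
and the exponent reduces mod $p$ to $\sum_{a=1}^{p-1} a^{i+1}$, which vanishes because the hypotheses ($i$ odd, $1 \leq i \leq p-4$) force $i+1 \not\equiv 0 \pmod{p-1}$. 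Property (b) is inherited directly from the corresponding property of the Gauss sums. For property (a), given $[c] \in (\Z/p\Z)^\times$ I would reindex $a \mapsto ca$ and apply $\tau((ac^{-1})^i) \equiv \tau(a^i)\tau(c^{-i}) \pmod{p^2}$: the discrepancy between $[c]\mathcal{G}_{-i}$ and $\mathcal{G}_{-i}^{\tau(c^{-i})}$ is a product of $p^2$th powers of the $g_{\eta^a}$, which become $p$th powers in $\Q(\zeta_p)^\times$ since $g_{\eta^a}^p \in \Q(\zeta_p)$. Combined with $\tau(c^{-i}) \equiv c^{-i} \pmod p$, this yields $[c]\mathcal{G}_{-i}\equiv \mathcal{G}_{-i}^{\cyclo^{-i}(c)}$ in $\Q(\zeta_p)^\times/\Q(\zeta_p)^{\times p}$.

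For the local formula, fix an embedding $I$ extending $\iota$ and use Lemma \ref{local_computation_gauss_sum} to write $I(g_{\eta^a}) = \frac{(-1)^{aM+1}\pi^{aM}}{(aM)!} w_a$ with $w_a \in 1+\mathfrak{N}$. The unit $w_a$ is a $p$th power in $\Q_N(\zeta_N)^\times$ because $1+\mathfrak{N}$ is pro-$N$, and the signs are $p$th powers since $p$ is odd, so modulo $p$th powers
\begin{equation*}
I(\mathcal{G}_{-i}) \equiv \pi^{M\sum_a a\tau(a^i)}\prod_a ((aM)!)^{-\tau(a^i)}.
\end{equation*}
Since $\tau(a^i)\equiv a^i \pmod p$ and each $(aM)!$ is an $N$-adic unit, the factorial product equals $S_i^{-1}$ in $\F_N^\times/\F_N^{\times p}$. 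Writing $\sum_a a\tau(a^i) = ps$ (valid by the vanishing above), the $\pi$-contribution becomes $\pi^{(N-1)s}$; the standard identity $\pi^{N-1} = N/\prod_{k=2}^{N-1}(1+\zeta_N+\cdots+\zeta_N^{k-1})$ with unit product $\equiv (N-1)! \equiv -1 \pmod{\mathfrak{N}}$ (Wilson), together with $-1$ and principal units being $p$th powers, gives $\pi^{(N-1)s}\equiv N^s$.

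To descend this congruence from $\Q_N(\zeta_N)^\times/\Q_N(\zeta_N)^{\times p}$ to $\Q_N^\times/\Q_N^{\times p}$, I would use that a unit in $\Z_N^\times$ is a $p$th power in $\Z_N[\zeta_N]^\times$ iff it is a $p$th power in $\Z_N^\times$, both being equivalent to its $\F_N^\times$-residue being a $p$th power. Finally, to match $s$ with $B_{1,\cyclo^i}$, recall $B_{1,\cyclo^i} = \frac{1}{p}\sum_a a\omega(a)^i$ for the Teichmüller character $\omega$; iterating $(y+pk)^p \equiv y^p \pmod{p^2}$ gives $\omega(x)\equiv x^p\pmod{p^2}$, so $\omega(a^i)$ and $\tau(a^i)$ both lift $a^{ip}$ mod $p^2$, yielding $s \equiv B_{1,\cyclo^i} \pmod p$ and hence $N^s \equiv N^{B_{1,\cyclo^i}}$ in $\Q_N^\times/\Q_N^{\times p}$. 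The main technical obstacle is the local computation: carefully identifying every unit factor (the principal units $w_a$, the signs, and the Wilson unit in $\pi^{N-1}$) as a $p$th power in the appropriate local ring, and propagating the congruence between $\Q_N(\zeta_N)$ and $\Q_N$.
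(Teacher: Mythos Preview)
Your proposal is correct and follows essentially the same approach as the paper: the verification of properties (a) and (b) via the reindexing and mod-$p^2$ multiplicativity of $\tau$ is identical, and the local computation proceeds through Lemma~\ref{local_computation_gauss_sum} and the Wilson-type identity $\pi^{N-1}/N \equiv -1 \pmod{\mathfrak{N}}$ just as in the paper. The only organizational difference is that the paper keeps track of the error term as a single unit $w \in \Z_N^\times$ with $w \equiv 1 \pmod N$ (hence a $p$th power in $\Q_N^\times$ directly), whereas you first argue modulo $p$th powers in $\Q_N(\zeta_N)^\times$ using the pro-$N$ structure of $1+\mathfrak{N}$ and then descend to $\Q_N^\times$ via the residue field; both bookkeeping schemes are valid and amount to the same thing.
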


\begin{proof}
For $b \in \Gal(B/\Q(\zeta_{p})) = (\Z/N\Z)^{\times}$, we have that
\begin{align*}
[b] \mathcal{G}_{-i}      & = \prod_{a \in (\Z/p\Z)^{\times}}[b] (g_{\eta^{a}})^{\tau(a^{i})} \\*
                & = \prod_{a \in (\Z/p\Z)^{\times}}(\eta^{a}(b^{-1}) g_{\eta^{a}})^{\tau(a^{i})} \\
                & = \mathcal{G}_{-i} \prod_{a \in (\Z/p\Z)^{\times}} \eta^{a\tau(a^i)}(b^{-1}) \\
                & = \mathcal{G}_{-i} \cdot \eta^{(\sum_{a \in (\Z/p\Z)^{\times}} a\tau(a^i))}(b^{-1}) \\*
                & = \mathcal{G}_{-i}.
\end{align*}
The last equality follows from the fact that the character $\eta$ has order $p$: This lets us work mod $p$ in the exponent, so we can use that $\tau(a^i) \equiv a^i \bmod{p}$ and that $\sum_{a \in (\Z/p\Z)^{\times}} a^{i+1} \equiv 0 \bmod{p}$ when $i \not\equiv -1 \bmod{p-1}$. This establishes that $\mathcal{G}_{-i} \in \Z[\zeta_p]$. Along with the properties of the Gauss sums $g_\eta$, we conclude that $\mathcal{G}_{-i}$ is only divisible by the primes above $N$, which is to say it satisfies property (b) of Lemma \ref{odd_invariant_strategy}.

To show that $\mathcal{G}_{-i}$ satisfies property (a) of Lemma \ref{odd_invariant_strategy}, we recall that $\tau$ satisfies $\tau(c^{-i}) \equiv \cyclo^{-i}(c) \bmod p$ and verify that for $c \in (\Z/p\Z)^{\times}$,
\begin{align*}
[c]\mathcal{G}_{-i}
& = \prod_{a \in (\Z/p\Z)^{\times}}[c]([a]g_{\eta})^{\tau(a^{i})} \\*
& = \prod_{a \in (\Z/p\Z)^{\times}}([ac]g_\eta)^{\tau(a^i)} \\
& = \prod_{a' \in (\Z/p\Z)^{\times}}([a']g_\eta)^{\tau(a'^i)\tau(c^{-i})} \\
& = \mathcal{G}_{-i}^{\tau(c^{-i})}\\*
& = \mathcal{G}_{-i}^{\cyclo^{-i}(c)}
\end{align*}
where all equalities are taken to be in $\Q(\zeta_p)^\times/\Q(\zeta_p)^{\times p}$. In the third equality, we have used that $g_{\eta}^{p} \in \Q(\zeta_{p})^{\times}$, so $g_{\eta}^{p^{2}} \in \Q(\zeta_{p})^{\times p}$ which means we can work mod $p^{2}$ in the exponent. For the final equality, we recall from above that $\mathcal{G}_{-i} \in \Q(\zeta_p^\times)$ and thus we can take the exponent mod $p$.

What remains is to show that $\iota(\mathcal{G}_{-i}) = N^{B_{1, \cyclo^{i}}} S_{i}^{-1}$ in $\Q_{N}^\times / \Q_{N}^{\times p}$.

Using Lemma \ref{local_computation_gauss_sum}, we can write
\begin{align*}
\iota(\mathcal{G}_{-i})
& = \prod_{a \in (\Z/p\Z)^\times} I(g_{\eta^a})^{\tau(a^i)} \\*
& = \prod_{r=1}^{p-1} \left((-1)^{rM+1}\frac{\pi^{rM}}{(rM)!} + O(\pi^{rM+1})\right)^{\tau(r^i)} \\
& = \left(\prod_{r=1}^{p-1}\left(\frac{(-1)^{rM+1}}{(rM)!}\right)^{\tau(r^i)} + O(\pi) \right) \pi^{\sum_{r=1}^{p-1}rM\tau(r^i)} \\*
& = \left(\prod_{r=1}^{p-1}\left(\frac{(-1)^{rM+1}}{(rM)!}\right)^{\tau(r^i)}\right)(1 + O(\pi))\pi^{\sum_{r=1}^{p-1}rM\tau(r^i)}
\end{align*}
in $\Q_N(\zeta_N)^\times$. Notice that the first term in this product lies in $\Q_N^\times$ and is equal to $S_i^{-1}$ in $\Q_N^\times / \Q_N^{\times p}$.

To understand the final term, we first write
\begin{align*}
\frac{\pi^{N-1}}{N}
& = \frac{1}{N}(1 - \zeta_N)^{N-1} \\*
& = \frac{1}{N}\Norm_{\Q_N}^{\Q_N(\zeta_N)}(1 - \zeta_N)\prod_{i=1}^{N-1}\frac{1-\zeta_N}{1-\zeta_N^{i}} \\
& = \prod_{i=1}^{N-1}(1 + \zeta_N + \cdots + \zeta_N^{i-1})^{-1} \\
& \equiv \left(\prod_{i=1}^{N-1}i\right)^{-1} \bmod{\pi} \\*
& \equiv -1 \bmod{\pi}
\end{align*}
as $\Z_N[\zeta_N]/(\pi) = \F_N$. Thus $\pi^{N-1} = N(-1 + O(\pi))$ and we can use this to write
\begin{align*}
\pi^{\sum_{r=1}^{p-1}rM\tau(r^i)}
& = \pi^{(N-1)\frac{1}{p}\sum_{r=1}^{p-1}r\tau(r^i)} \\*
& = \pm N^{\frac{1}{p}\sum_{r=1}^{p-1}r\tau(r^i)}(1 + O(\pi)).
\end{align*}

Working modulo $p$ in the exponent, we can substitute $\tau(r^i)$ with $\cyclo(r^i)$. This new exponent $\frac{1}{p}\sum_{r=1}^{p-1}r\cyclo(r^i)$ is exactly the generalized Bernoulli number $B_{1, \cyclo^{i}}$.

Combining the previous calculations, we have now shown that in $\Q_N^\times / \Q_N^{\times p}$,
\begin{equation*}
    \iota(\mathcal{G}_{-i}) = S_{i}^{-1}N^{B_{1,\cyclo^{i}}}w
\end{equation*}
where $w$ is a unit in $\Z_N$ that, considered as an element of $\Z_N[\zeta_N]$, is congruent to $1$ modulo $\pi$. The isomorphism $\Z_N[\zeta_N]/(\pi) = \Z_N/(N)$ tells us that $w \equiv 1 \bmod{N}$ and is thus a $p$th power in $\Q_N^\times$. Thus
\begin{equation*}
    \iota(\mathcal{G}_{-i}) = N^{B_{1,\cyclo^{i}}} S_{i}^{-1}
\end{equation*}
in $\Q_N^\times / \Q_N^{\times p}$, as desired.
\end{proof}

We are now ready to prove Theorem \ref{odd_invariant_theorem}. Up until this point, we have not made any regularity assumptions. From now on, we assume that $(p,-i)$ is a regular pair.

\begin{proof}[Proof of Theorem \ref{odd_invariant_theorem}]
We first check that $\ker j$ is $1$-dimensional and spanned by $\iota(\mathcal{G}_{-i})$. As $(p,-i)$ is a regular pair, we know that the generalized Bernoulli number $B_{1,\cyclo^i}$ is a $p$-adic unit by Remark \ref{regular_pair_bernoulli_number}. Therefore, in $\Q_N^\times/\Q_N^{\times p}$ we have that
\begin{equation*}
\iota(\mathcal{G}_{-i}) = N^{B_{1,\cyclo^{i}}} S_{i}^{-1}
\end{equation*}
is a nonzero element of $\ker j$.

(Equivalently, one could instead notice that $h^1_N(\F_p(-i)) = 1$ from part \ref{cohomology_theorem_odd_regular} of Theorem \ref{cohomology_theorem_conditional}. By Remark \ref{idele_dual_to_cohomology}, this gives us that the codomain of $j$ is $1$-dimensional as well. The domain of $j$ is $\Q_N^\times/\Q_N^{\times p}$ which is $2$-dimensional, which shows that $j$ has a nontrivial kernel.)

Now we need to check that $j$ is nonzero, which by Remark \ref{idele_dual_to_cohomology} is equivalent to showing that the dual map
\begin{equation*}
    \res_N: H^1_N(\F_p(-i)) \to H^1(G_{\Q_N}, \F_p)
\end{equation*}
is nonzero.

This must be the case, as the class in $H^1_N(\F_p(-i))$ is unramified away from $N$, and thus must be ramified at $N$ as $(p,-i)$ is a regular pair. In particular, it is not split at $N$.

To finish, let $c$ be a generator of $H^1_N(\F_p(-i))$. This gives a $\psi_c: C_{-i} \to \F_p$ as in the discussion after the statement of Theorem \ref{odd_invariant_theorem}. Recall also from that discussion that $c \in H^1_\Sigma(\F_p(-i))$ if and only if the kernel of $\psi_{c, N} = \psi_c \circ j$ contains the element $N \in \Q_N^\times / \Q_N^{\times p}$.

Because $\psi_c$ is an isomorphism, we have $\ker \psi_{c,N} = \ker j$ and thus the local behavior of $c$ is completely determined by $\ker j$. By the above, $\ker j$ is spanned by
\begin{equation*}
\iota(\mathcal{G}_{-i}) = N^{B_{1,\cyclo^{i}}} S_{i}^{-1}
\end{equation*}
and thus contains $N$ if and only if $S_i$ is a $p$th power in $\F_N^\times$.
\end{proof}

\subsection{Relationship between \texorpdfstring{$S_i$}{S\textunderscore i}, \texorpdfstring{$M_i$}{M\textunderscore i}, and \texorpdfstring{$C$}{C}}\label{sec_invariant_relationship}

We begin by showing that our $S_i$ is a $p$th power in $\F_N^\times$ if and only if Lecouturier's $M_{i}$ is. Recall from Section \ref{sec_results} that for odd $1 \leq i \leq p-4$, $M_i$ is defined by
\begin{equation*}
M_{i} = \prod_{k=1}^{N-1} \prod_{a=1}^{k-1} k^{a^{i}}.
\end{equation*}

\begin{lemma}\label{our_invariant_is_lecouturiers}
As elements of $\F_{N}^{\times}/\F_{N}^{\times p}$, $S_{i}^{-1} = M_{i}$.
\end{lemma}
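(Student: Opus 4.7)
The strategy is to reduce both $M_i$ and $S_i^{-1}$ to the same expression in $\F_N^\times/\F_N^{\times p}$ by grouping terms according to residue modulo $p$.

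First, I would rewrite $M_i = \prod_{k=1}^{N-1}k^{\sum_{a=1}^{k-1}a^i}$ and reduce its exponents modulo $p$. Since $\sum_{a=1}^{p-1}a^i \equiv 0 \pmod{p}$ for $1 \leq i \leq p-2$, all partial sums $\sum_{a=1}^{jp}a^i$ vanish modulo $p$, and therefore
\begin{equation*}
\sum_{a=1}^{k-1}a^i \equiv \sum_{r=1}^{(k-1) \bmod p}r^i \pmod{p}.
\end{equation*}
Grouping the factors of $M_i$ by the residue of $k$ modulo $p$ (and noting that $k \equiv 0 \pmod{p}$ contributes exponent $\sum_{r=1}^{p-1}r^i \equiv 0$) yields
\begin{equation*}
M_i \equiv \prod_{s=1}^{p-1} Q_s^{\sum_{r=1}^{s-1}r^i} \pmod{\F_N^{\times p}},
\end{equation*}
where $Q_s = \prod_{j=0}^{M-1}(jp+s)$ is the product of those integers in $\{1,\ldots,N-1\}$ that are congruent to $s$ modulo $p$.

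Next, I would expand $(Mk)! = \prod_{a=1}^{Mk}a$ inside $S_i$ and swap the order of summation in the exponent, which gives $S_i = \prod_{q=1}^{p-1}P_q^{c_q}$ with $P_q = \prod_{a=(q-1)M+1}^{qM}a$ and $c_q = \sum_{k=q}^{p-1}k^i$. Since $c_1 \equiv 0 \pmod{p}$ and $c_q = c_1 - \sum_{k=1}^{q-1}k^i$, one obtains
\begin{equation*}
S_i^{-1} \equiv \prod_{s=1}^{p-1}P_s^{\sum_{r=1}^{s-1}r^i} \pmod{\F_N^{\times p}}.
\end{equation*}

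The key step, which I expect to be the main obstacle, is to observe that the ratio $Q_s/P_s$ does not depend on $s$. The relation $pM = N-1 \equiv -1 \pmod{N}$ gives $p \equiv -M^{-1} \pmod{N}$, so $jp+s \equiv M^{-1}(sM-j) \pmod{N}$, and hence
\begin{equation*}
Q_s \equiv M^{-M}\prod_{j=0}^{M-1}(sM-j) = M^{-M}\frac{(sM)!}{((s-1)M)!} = M^{-M}P_s \pmod{N}.
\end{equation*}
Combining the three identities yields $M_iS_i \equiv (M^{-M})^{E_i} \pmod{\F_N^{\times p}}$, where $E_i = \sum_{s=1}^{p-1}\sum_{r=1}^{s-1}r^i = \sum_{r=1}^{p-2}(p-1-r)r^i$. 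A short computation using $\sum_{r=1}^{p-1}r^j \equiv 0 \pmod{p}$ for $1 \leq j \leq p-2$ shows $E_i \equiv (-1)^i + (-1)^{i+1} \equiv 0 \pmod{p}$ whenever $1 \leq i \leq p-3$, which comfortably covers the lemma's range $1 \leq i \leq p-4$. Once the identity $Q_s \equiv M^{-M}P_s$ is in hand, the remainder of the argument is pure bookkeeping with exponents modulo $p$.
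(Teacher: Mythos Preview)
Your proof is correct and takes a genuinely different, more elementary route than the paper's. The paper's argument is essentially a citation: it invokes Lecouturier's Lemma~4.3, which expresses $M_i$ via the $N$-adic Gamma function as $\prod_{k=1}^{p-1}\Gamma_N(k/p)^{k^i}$, then uses the congruence $k/p \equiv M(p-k)+1 \pmod{N}$ to replace $\Gamma_N(k/p)$ by $(M(p-k))!$ up to sign, and finishes with the change of variables $k \mapsto p-k$ (which, for odd $i$, negates the exponent modulo $p$) to land directly on $S_i^{-1}$. Your approach instead decomposes both products over residue classes modulo $p$: you reduce $M_i$ to $\prod_s Q_s^{e_s}$ and $S_i^{-1}$ to $\prod_s P_s^{e_s}$ with the same exponents $e_s = \sum_{r<s} r^i$, and then prove the key identity $Q_s \equiv M^{-M} P_s \pmod{N}$ via the congruence $p \equiv -M^{-1} \pmod{N}$. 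The final exponent check $E_i \equiv 0 \pmod{p}$ for $1 \le i \le p-3$ is correct as you compute it. What your argument buys is self-containment: it avoids both the $N$-adic Gamma function and any external reference, at the cost of a somewhat longer computation. What the paper's argument buys is brevity, since the Gamma-function identity packages exactly the relationship between the $Q_s$-type and $P_s$-type products that you establish by hand.
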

\begin{proof}
All equalities in this proof take place in $\F_{N}^{\times}/\F_{N}^{\times p}$.
In Lemma 4.3 of \cite{lecouturier}, Lecouturier proves that
\begin{align*}
M_{i}   & = \prod_{k=1}^{p-1} \Gamma_{N}(k/p)^{k^{i}}
\end{align*}
where $\Gamma_{N}$ denotes the $N$-adic Gamma function (see below for a summary of the properties of this function, and Chapter IV.2 of \cite{koblitz} for the detailed construction).
Using that $\frac{k}{p} \equiv M(p-k) + 1 \bmod{N}$, the Gamma functions can be replaced by factorials
\begin{align*}
M_{i}       & = \prod_{k=1}^{p-1} ((M(p-k))!)^{k^{i}} \\*
            & = \prod_{k=1}^{p-1} ((Mk)!)^{-k^{i}} \\*
            & = S_{i}^{-1}
\end{align*}
where the second step follows by changing variables from $k$ to $p-k$ and discarding $p$-th powers.
\end{proof}

Theorem \ref{odd_invariant_theorem} establishes that under a regularity assumption, $H^{1}_{\Sigma}(\F_{p}(-i))$ is nonzero if and only if $S_{i}$ is a $p$th power for odd $i \not\equiv -1 \bmod{p-1}$.
A similar relationship was known to Wake--Wang-Erickson in the case $i \equiv 1 \bmod{p-1}$; see Theorem 12.5.1 of \cite{wake_wang-erickson_mazur_eisenstein_ideal}.

However, these results are not stated in terms of $S_{1}$, but rather in terms of Merel's number
\begin{equation*}
C = \prod_{k=1}^{(N-1)/2} k^{k}.
\end{equation*}
Theorem 1.3, (ii) of \cite{calegari_emerton_ramification} states that if $r_{K} = 1$ then $C$ is not a $p$th power mod $N$.
Similarly, Proposition \ref{wake_wang_erickson_theorem} and Theorem \ref{odd_invariant_theorem} together imply that if $r_{K} = 1$ then $S_{1}$ is not a $p$th power mod $N$.
Thus one might expect that the quantities $C$ and $S_{1}$ can be related in $\F_{N}^{\times}/\F_{N}^{\times p}$.
The goal of this section is to prove this statement; to do so we will introduce another family of quantities related to both $C$ and the $S_{i}$.

Let
\begin{equation*}
A_{m} = \prod_{k=1}^{N-1} k^{k^{m}}.
\end{equation*}
In Proposition 1.2 of \cite{lecouturier}, Lecouturier proves that
\begin{equation*}
C = A_{2}^{-3/4} \text{ in } \F_{N}^{\times}/\F_{N}^{\times p}.
\end{equation*}

To relate the $A_{m}$ to the $S_{i}$ we will use the $N$-adic Gamma function, the relevant properties of which are:
\begin{itemize}
\item $\Gamma_{N}: \Z_{N} \to \Z_{N}^{\times}$ is a continuous function, constructed by extending the function
\begin{equation*}
\Gamma_{N}(x) = (-1)^{x} \prod_{0 < j < x, N \nmid j} j
\end{equation*}
defined for positive integers $x$ by continuity to all of $\Z_{N}$.
\item For an integer $0 < x < N$, we have $\Gamma_{N}(x) = (-1)^{x} (x-1)!$.
\item If $x \equiv y \bmod{N}$, then $\Gamma_{N}(x) \equiv \Gamma_{N}(y) \bmod{N}$.
\item If $x + r$ is not divisible by $N$ for $0 \leq r \leq M-1$ where $M = \frac{N-1}{p}$, then
\begin{equation*}
\prod_{r = 0}^{M-1} (x + r) = (-1)^{M} \frac{\Gamma_{N}(M+x)}{\Gamma_{N}(x)}.
\end{equation*}
\end{itemize}
See Chapter IV.2 of \cite{koblitz} for the construction of $\Gamma_{N}$.

\begin{proposition}\label{relate_with_higher_merels}
Suppose that $0 < m < p-1$.
Then
\begin{equation*}
A_{m} = \prod_{j=1}^{m-1} S_{j}^{(-1)^{j}\binom{m}{j}} \text{ in } \F_{N}^{\times}/\F_{N}^{\times p}.
\end{equation*}
\end{proposition}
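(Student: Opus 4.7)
The plan is to compute $A_m$ in $\F_N^\times / \F_N^{\times p}$ directly, by partitioning $\{1, \ldots, N-1\}$ into residue classes modulo $p$ and reorganizing each block into a ratio of factorials of multiples of $M$. Writing $k = jp + r$, the exponent $k^m \equiv r^m \pmod p$; since exponents in $\F_N^\times / \F_N^{\times p}$ are taken mod $p$, the factors with $r = 0$ drop out. For each $r \in \{1, \ldots, p-1\}$, the $N$-adic Gamma function identity already used in Section \ref{sec_odd_invariant}, applied to $r/p \equiv M(p-r) + 1 \pmod N$, yields
\begin{equation*}
\prod_{j=0}^{M-1}(jp+r) \equiv p^M \cdot \frac{(M(p-r+1))!}{(M(p-r))!} \pmod N.
\end{equation*}
Raising to the $r^m$ and multiplying over $r$ expresses $A_m$ as a power of $p$ times a product of factorials.

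Next I would verify that the accumulated power of $p$ is trivial in $\F_N^\times / \F_N^{\times p}$: its exponent is $M \sum_{r=1}^{p-1} r^m$, and since $\sum_{r=1}^{p-1} r^m \equiv 0 \pmod p$ for $1 \leq m \leq p-2$, this exponent is divisible by $pM = N-1$. After the substitution $s = p - r$ (which converts $r^m$ into $(-1)^m s^m$ modulo $p$), I would take a discrete logarithm, writing $u_s = \log((Ms)!)$, and apply Abel summation. The result is $(-1)^m \sum_{s=1}^{p-1}\bigl((s-1)^m - s^m\bigr) u_s$ plus a boundary contribution involving $u_p = \log((N-1)!) = \log(-1)$; this boundary contribution vanishes in the quotient because $N$ odd forces $M = (N-1)/p$ to be even, so $-1$ is a $p$-th power in $\F_N^\times$.

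The final step is to expand $(s-1)^m - s^m$ by the binomial theorem and interchange the two sums. This converts the main sum into $\sum_{l=0}^{m-1}(-1)^l \binom{m}{l} \log S_l$, and the proposition is then the statement that the $l=0$ contribution $\log S_0 = \log \prod_{k=1}^{p-1}(Mk)!$ vanishes. This last point follows by pairing each $(Mk)!$ with $(M(p-k))!$ and using Wilson's theorem together with the congruence $\binom{N-1}{Mk} \equiv (-1)^{Mk} \pmod N$, once more exploiting that $-1$ is a $p$-th power. The main obstacle I anticipate is careful sign bookkeeping through the reindexing and telescoping; no conceptual input beyond the Gamma function identity already exploited in Section \ref{sec_odd_invariant} should be required.
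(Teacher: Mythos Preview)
Your proposal is correct and follows essentially the same route as the paper: reindex by residue classes modulo $p$, reduce exponents modulo $p$, apply the $N$-adic Gamma function identity $\prod_{r=0}^{M-1}(x+r) = (-1)^M \Gamma_N(M+x)/\Gamma_N(x)$ at $x = k/p \equiv M(p-k)+1$, telescope (your Abel summation is exactly the paper's ``telescoping series argument''), and expand the resulting exponent binomially. The paper's proof in fact stops at $\prod_{j=0}^{m-1}$ and leaves the triviality of $S_0$ implicit, so your explicit handling of that term is if anything more complete; note also that $-1 = (-1)^p$ is a $p$th power simply because $p$ is odd, so the parity of $M$ is not needed.
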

\begin{proof}
All equalities in this proof are in $\F_{N}^{\times}/\F_{N}^{\times p}$.
We start by reindexing the product in the definition of $A_{m}$
\begin{equation*}
A_{m} = \prod_{k=1}^{p-1} \prod_{r=0}^{M-1} (k + pr)^{(k + pr)^{m}}.
\end{equation*}
After removing $p$th powers from the exponent and factoring out a $p$th power of $p$ we have that
\begin{align*}
A_{m}       & = \prod_{k=1}^{p-1}\prod_{r=0}^{M-1} \left(\frac{k}{p} + r\right)^{k^{m}} \\*
            & = \prod_{k=1}^{p-1} \left((-1)^{M} \frac{\Gamma_{N}(M + k/p)}{\Gamma_{N}(k/p)}\right)^{k^{m}}
\end{align*}
where the second step follows from the last listed property of the $N$-adic Gamma function.
Aligning terms using by a ``telescoping series'' argument gives that
\begin{equation*}
A_{m} = \prod_{k=1}^{p-1} \Gamma_{N}(k/p)^{(k+1)^{m} - k^{m}}.
\end{equation*}
Using that $\frac{k}{p} \equiv M(p-k) + 1 \bmod{N}$, the Gamma functions can be replaced by factorials
\begin{align*}
A_{m}       & = \prod_{k=1}^{p-1} ((M(p-k))!)^{(k+1)^{m} - k^{m}} \\*
            & = \prod_{k=1}^{p-1}((Mk)!)^{(p-k+1)^{m} - (p-k)^{m}}
\end{align*}
where the second step follows by changing variables from $k$ to $p-k$.
Simplifying the exponent and combining terms appropriately into the $S_{i}$, this yields that
\begin{equation*}
A_{m} = \prod_{j=0}^{m-1} S_{j}^{(-1)^{j}\binom{m}{j}}. \qedhere
\end{equation*}
\end{proof}

Note that this theorem implies that
\begin{equation*}
A_{2} = S_{1}^{-2} \text{ in } \F_{N}^{\times}/\F_{N}^{\times p}
\end{equation*}
so combining this with the relationship between $C$ and $A_{2}$, we see that one of $C$, $A_{2}$, $S_{1}$, and $M_1$ is a $p$th power mod $N$ if and only if all of them are.

Proposition \ref{relate_with_higher_merels} also shows that the $S_{i}$ can be recovered from the $A_{m}$, at least as elements of $\F_{N}^{\times}/\F_{N}^{\times p}$, using inductively that $S_{1} = A_{2}^{2}$ and that
\begin{equation*}
S_{i} = \left(A_{i+1} \prod_{j=1}^{i-1} S_{j}^{(-1)^{j+1}\binom{i+1}{j}}\right)^{(-1)^{i}(i+1)}
\end{equation*}
for all $i$.

\subsection{A Criterion for \texorpdfstring{$H^{1}_{\Sigma}(\mathbf{F}_{p}(-i)) \neq 0$}{H1\textunderscore Sigma(F\textunderscore p(-i)) /= 0}, \texorpdfstring{$i$}{i} Even}\label{sec_even_invariant}

So far, the focus of this section has been on odd $i$. At this point, we turn to finding invariants that will let us compute whether or not $H^{1}_{\Sigma}(\F_{p}(-i))$ is trivial for even $i \neq 0 \bmod{p-1}$.

\begin{proposition}\label{even_H1Sigma_iff_odd_and_p}
Let $p$ be an odd prime, and let $2 \leq i \leq p-3$ be even. Suppose that $(p,1+i)$ is a regular pair. Then $H^{1}_{\Sigma}(\F_{p}(-i))$ is non-trivial if and only if both of the following are satisfied:
\begin{enumerate}
    \item $H^{1}_{\Sigma}(\F_{p}(1+i)) \neq 0$
    \item $H^{1}_{p}(\F_{p}(1+i)) \subseteq H^{1}_{\Sigma^{\ast}}(\F_{p}(1+i))$
\end{enumerate}
\end{proposition}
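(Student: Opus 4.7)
The plan is to use Theorem \ref{cohomology_theorem_sigma} to translate the question about $h^{1}_{\Sigma}(\F_{p}(-i))$ into a dimension-counting question about $h^{1}_{\Sigma^{*}}(\F_{p}(1+i))$, and then to analyze the latter using the decomposition of $H^{1}_{S}(\F_{p}(1+i))$ into the Selmer subgroups $H^{1}_{N}$ and $H^{1}_{p}$.

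First, since $1+i$ is odd with $3 \leq 1+i \leq p-2$, Theorem \ref{cohomology_theorem_sigma} (applied with $j = 1+i$) gives the identity $h^{1}_{\Sigma^{*}}(\F_{p}(1+i)) = h^{1}_{\Sigma}(\F_{p}(-i)) + 1$. Regularity of $(p, 1+i)$ combined with the reflection inequality in Theorem \ref{cohomology_theorem_unconditional}(\ref{cohomology_theorem_reflection}) forces $(p, -i)$ to be a regular pair as well, so by Theorem \ref{cohomology_theorem_conditional}(\ref{cohomology_theorem_sigma_regular}) we have $h^{1}_{\Sigma}(\F_{p}(-i)) \leq 1$. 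Consequently, $H^{1}_{\Sigma}(\F_{p}(-i))$ is non-trivial precisely when $h^{1}_{\Sigma^{*}}(\F_{p}(1+i)) = 2$.

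Next, I would assemble the dimension data for $\F_{p}(1+i)$ from Theorem \ref{cohomology_theorem_conditional}: under the regularity assumption one has $h^{1}_{S}(\F_{p}(1+i)) = 2$, $h^{1}_{N}(\F_{p}(1+i)) = h^{1}_{p}(\F_{p}(1+i)) = 1$, and $h^{1}_{\emptyset}(\F_{p}(1+i)) = 0$. Since $H^{1}_{N} \cap H^{1}_{p} = H^{1}_{\emptyset}$, dimension counting yields the direct sum decomposition $H^{1}_{S}(\F_{p}(1+i)) = H^{1}_{N}(\F_{p}(1+i)) \oplus H^{1}_{p}(\F_{p}(1+i))$. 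Because $\Sigma^{*} \subseteq S$, the subgroup $H^{1}_{\Sigma^{*}}(\F_{p}(1+i))$ has dimension $2$ if and only if it contains each of these one-dimensional summands.

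The containment $H^{1}_{p}(\F_{p}(1+i)) \subseteq H^{1}_{\Sigma^{*}}(\F_{p}(1+i))$ is exactly condition (2). For the other containment, I would verify $H^{1}_{N}(\F_{p}(1+i)) \cap H^{1}_{\Sigma^{*}}(\F_{p}(1+i)) = H^{1}_{\Sigma}(\F_{p}(1+i))$ by matching local conditions place by place. The key subtlety is at $p$: the $\Sigma$ condition requires $L_{p} = 0$, while $N$ allows classes in $H^{1}_{\ur}(G_{\Q_{p}}, \F_{p}(1+i))$. Since $\cyclo^{1+i}$ acts non-trivially on inertia at $p$ (as $1+i \not\equiv 0 \bmod{p-1}$), this unramified subspace vanishes, so both local conditions at $p$ agree. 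Combined with the fact that $\Sigma$ and $\Sigma^{*}$ share the same $L_{N}$-condition at $N$, the intersection collapses to $H^{1}_{\Sigma}(\F_{p}(1+i))$. Because $H^{1}_{N}$ is one-dimensional, the containment $H^{1}_{N} \subseteq H^{1}_{\Sigma^{*}}$ is equivalent to $H^{1}_{\Sigma}(\F_{p}(1+i)) \neq 0$, i.e.\ condition (1). The main technical input is this identification at $p$; after that the proof is pure dimension counting.
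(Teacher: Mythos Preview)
Your proof is correct and follows essentially the same approach as the paper: both reduce the question to whether $H^{1}_{\Sigma^{\ast}}(\F_{p}(1+i))$ is $2$-dimensional (hence equal to $H^{1}_{S}(\F_{p}(1+i))$), and then check containment of the two spanning pieces $H^{1}_{N}$ and $H^{1}_{p}$ separately. You make explicit the key local point at $p$ (namely $H^{1}_{\ur}(G_{\Q_{p}}, \F_{p}(1+i)) = 0$, so that $H^{1}_{N} \cap H^{1}_{\Sigma^{\ast}} = H^{1}_{\Sigma}$), which the paper leaves implicit.
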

\begin{proof}
We see by Theorems \ref{cohomology_theorem_conditional} and \ref{cohomology_theorem_sigma} that $H^{1}_{\Sigma}(\F_{p}(-i))$ is non-trivial if and only if $H^{1}_{\Sigma^{\ast}}(\F_{p}(1+i))$ is $2$-dimensional and thus equal to $H^{1}_{S}(\F_{p}(1+i))$.
Since $H^{1}_{S}(\F_{p}(1+i))$ is spanned by the subspaces $H^{1}_{N}(\F_p(i+1))$ and $H^{1}_{p}(\F_p(i+1))$, this second condition happens if and only if we have both $H^{1}_{N}(\F_{p}(1+i)) = H^{1}_{\Sigma}(\F_{p}(1+i))$ and $H^{1}_{p}(\F_{p}(1+i)) \subseteq H^{1}_{\Sigma^{\ast}}(\F_{p}(1+i))$.
\end{proof}

Since we know how to test for $H^{1}_{\Sigma}(\F_{p}(1 + i))$ being non-trivial, we simply need to find a way of testing whether or not $H^{1}_{p}(\F_{p}(1+i)) \subseteq H^{1}_{\Sigma^{\ast}}(\F_{p}(1+i))$.

The class in $H^{1}_{p}(\F_{p}(1+i))$ is unramified at $N$, so it will land in $H^{1}_{\Sigma^{\ast}}(\F_{p}(1+i))$ if and only if it is split at $N$.
By using the inflation-restriction sequence and Kummer theory, we get that
\begin{align*}
H^{1}_{p}(\F_{p}(1+i))  & \cong H^{1}_{p}(G_{\Q(\zeta_{p})}, \F_{p}(1+i))^{\Gal(\Q(\zeta_{p})/\Q)} \\*
                        & \cong (H^{1}_{p}(G_{\Q(\zeta_{p})}, \F_{p}(1))(i))^{\Gal(\Q(\zeta_{p})/\Q)} \\
                        & \cong \left(\left(\frac{\Z[\zeta_{p}, p^{-1}]^{\times}}{\Z[\zeta_{p}, p^{-1}]^{\times p}}\right)(i)\right)^{\Gal(\Q(\zeta_{p})/\Q)} \\*
                        & \cong \left(\frac{\Z[\zeta_{p}, p^{-1}]^{\times}}{\Z[\zeta_{p}, p^{-1}]^{\times p}}\right)^{\cyclo^{-i}}
\end{align*}
where we have used that the restriction map is an isomorphism as the order of $\Gal(\Q(\zeta_{p})/\Q)$ is prime to $p$.
In other words, the extension of $\Q$ defined by a class in $H^{1}_{p}(\F_{p}(1+i))$ is always of the form $\Q(\zeta_{p}, a^{1/p})$, where $a \in \Z[\zeta_{p}, p^{-1}]^{\times}$ and
\begin{equation*}
\sigma(a) = a^{\cyclo^{-i}(\sigma)} \text{ modulo $p$th powers}
\end{equation*}
for all $\sigma \in \Gal(\Q(\zeta_{p})/\Q)$.
Note that given such an element, all of its Galois conjugates are also Kummer generators of the same extension.
Thus it suffices to find such a Kummer generator $a$ (which is independent of $N$), and then use that the cohomology class spanning $H^{1}_{p}(\F_{p}(1+i))$ is trivial at $N$ if and only if the Kummer generator is a $p$th power in $\Q_{N}^{\times}$, which happens if and only if the Kummer generator is a $p$th power mod $N$.

The minimal polynomials of such elements can be computed using a computer algebra system.
This was done using SageMath \cite{sagemath} for $p=5$ and $p=7$. The SageMath code is available on the second author's personal website.

\begin{theorem}\label{even_H1Sigma_theorem}
We have:
\begin{enumerate}
\item Suppose $p = 5$. Then $H^{1}_{\Sigma}(\F_{p}(-2))$ is nonzero if and only both $S_{1}$ and the roots of $x^{2} + x - 1$ are $5$th powers in $\F_{N}^{\times}$.

\item Suppose $p = 7$. Then
\begin{enumerate}
\item $H^{1}_{\Sigma}(\F_{p}(-2))$ is nonzero if and only both $S_{3}$ and the roots of
\begin{equation*}
    x^{3} + 41x^{2} + 54x + 1
\end{equation*}
are $7$th powers in $\F_{N}^{\times}$.
\item $H^{1}_{\Sigma}(\F_{p}(-4))$ is nonzero if and only if both $S_{1}$ and the roots of
\begin{equation*}
    x^{3} - 25x^{2} + 31x + 1
\end{equation*}
are $7$th powers in $\F_{N}^{\times}$.
\end{enumerate}
\end{enumerate}
\end{theorem}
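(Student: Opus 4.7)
The natural approach is to apply Proposition \ref{even_H1Sigma_iff_odd_and_p} to reduce the non-vanishing of $H^{1}_{\Sigma}(\F_{p}(-i))$ to the conjunction of the two conditions: (i) $H^{1}_{\Sigma}(\F_{p}(1+i)) \neq 0$, and (ii) $H^{1}_{p}(\F_{p}(1+i)) \subseteq H^{1}_{\Sigma^{\ast}}(\F_{p}(1+i))$. In each case of the theorem I would first verify the regularity hypothesis on $(p, 1+i)$: for $p=5$ the class group of $\Q(\zeta_{5})$ is trivial so $(5,3)$ is regular, while for $p=7$ the pairs $(7,3)$ and $(7,5)$ are regular since the Bernoulli numbers $B_{4}=-1/30$ and $B_{2}=1/6$ are $7$-adic units (using Remark \ref{regular_pair_bernoulli_number}).

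Condition (i) will be handled by Theorem \ref{odd_invariant_theorem} after reducing the exponent modulo $p-1$. For $p=5$, $i=2$ one has $\F_{5}(3)\cong\F_{5}(-1)$, so non-vanishing is equivalent to $S_{1}$ being a $5$th power mod $N$; for $p=7$, $i=2$ the module is $\F_{7}(3)\cong\F_{7}(-3)$ giving $S_{3}$, and for $p=7$, $i=4$ the module is $\F_{7}(5)\cong\F_{7}(-1)$ giving $S_{1}$. By Lemma \ref{our_invariant_is_lecouturiers} the invariants $S_{j}$ and $M_{j}$ define the same class in $\F_{N}^{\times}/\F_{N}^{\times p}$, matching the statement of the theorem.

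For condition (ii), I would use the description given immediately before the theorem, namely that the (unique up to scalar) class in $H^{1}_{p}(\F_{p}(1+i))$ corresponds to a Kummer extension $\Q(\zeta_{p}, a^{1/p})$ for some $a \in \Z[\zeta_{p}, p^{-1}]^{\times}$ satisfying the equivariance $\sigma(a) \equiv a^{\cyclo^{-i}(\sigma)}$ modulo $p$th powers, and that this class lies in $H^{1}_{\Sigma^{\ast}}$ precisely when $a$ is a $p$th power in $\Q_{N}^{\times}$, equivalently in $\F_{N}^{\times}$. The claim is then that an explicit such $a$ can be taken to be a root of the stated polynomial. For $p=5, i=2$, I would take $a=\frac{-1+\sqrt{5}}{2}$, verify that it lies in $\Z[\zeta_{5}]^{\times}$ (indeed it is a cyclotomic unit, being $-\zeta_{5}(1+\zeta_{5})$ up to sign), and check the equivariance by computing $\sigma_{2}(a)=-a^{-1}$ and observing that $-a^{-1}\cdot a^{-4}=-a^{-5}=(-a^{-1})^{5}$ is a $5$th power, so $\sigma_{2}(a)\equiv a^{\cyclo^{-2}(\sigma_{2})}=a^{4}$ modulo $5$th powers. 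Since any two roots of $x^{2}+x-1$ differ by a unit factor that is trivially a $5$th power ($-1=(-1)^{5}$), the condition does not depend on which root one chooses.

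The serious computational work is in the $p=7$ cases, where one must exhibit explicit generators of the $\cyclo^{-2}$- and $\cyclo^{-4}$-eigenspaces of $\Z[\zeta_{7},7^{-1}]^{\times}/\Z[\zeta_{7},7^{-1}]^{\times 7}$. I would compute these in SageMath by taking an explicit $\Z$-basis of the unit group $\Z[\zeta_{7}]^{\times}$ (generated by cyclotomic units together with $\pm\zeta_{7}$), adjoining the class of $7$, applying the idempotent projector $\sum_{a\in(\Z/7\Z)^{\times}}\cyclo^{-i}(a^{-1})[a]$ to the corresponding $\F_{7}$-vector space, extracting a generator of the (one-dimensional) image, and finally computing its minimal polynomial over $\Q$, which by the regularity assumption and Theorem \ref{cohomology_theorem_conditional}\eqref{cohomology_theorem_odd_regular} must be of degree $3$. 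The main obstacle is thus essentially verification rather than conceptual: confirming that the cubics $x^{3}+41x^{2}+54x+1$ and $x^{3}-25x^{2}+31x+1$ are indeed the minimal polynomials of such generators, and checking as above that the $p$th-power condition is independent of the choice of root (which follows because the Galois-conjugate roots differ by units whose $p$th-power class is already trivial mod $N$, a consequence of the equivariance relation).
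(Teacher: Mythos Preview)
Your proposal is correct and follows essentially the same approach as the paper: apply Proposition \ref{even_H1Sigma_iff_odd_and_p}, handle condition (i) via Theorem \ref{odd_invariant_theorem}, and handle condition (ii) via the Kummer-theoretic description of $H^{1}_{p}(\F_{p}(1+i))$ together with an explicit computer-algebra computation of a generator of the relevant eigenspace of $\Z[\zeta_{p},p^{-1}]^{\times}/\Z[\zeta_{p},p^{-1}]^{\times p}$. The paper likewise defers the $p=7$ polynomials to a SageMath computation and does not give a separate proof block; your additional hand verification of the $p=5$ case and the independence-of-root remark are correct elaborations (the degree-$3$ claim is better justified by noting that $\cyclo^{-i}$ is even for even $i$, so the Kummer generator lies in $\Q(\zeta_{7})^{+}$, rather than by citing Theorem \ref{cohomology_theorem_conditional}).
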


\begin{remark}
The polynomials in the theorem above are not unique.
One could use any other polynomial whose roots generate the same 1-dimensional subspace of
\begin{equation*}
\left(\frac{\Z[\zeta_{p}, p^{-1}]^{\times}}{\Z[\zeta_{p}, p^{-1}]^{\times p}}\right)^{\cyclo^{-i}}.
\end{equation*}
\end{remark}

\section{Specific Primes}\label{sec_specific_primes}

We now apply the results of the previous sections to the specific cases $p = 3$, $5$, and $7$.
For $p = 3$ the situation is quite straightforward, as the results of Section \ref{sec_big_general} imply that $r_{K} = 1$.
For $p = 5$ we show that the inequality of Theorem \ref{rank_equals_h1Sigma} is always an equality, which then determines $r_{K}$ solely in terms of the dimensions $h^{1}_{\Sigma}(\F_{p}(-1))$ and $h^{1}_{\Sigma}(\F_{p}(-2))$.
A similar argument applied to the case $p = 7$ proves the converse to Theorem \ref{intro_converse_theorem}. 

Throughout this section we will often use without reference the results of Section \ref{sec_cohomology_groups_of_characters} on the dimensions of various Selmer subgroups of $H^{1}_{S}(\F_{p}(-i))$.

\subsection{\texorpdfstring{$p = 3$}{p = 3}}

If $p = 3$, Theorem 4.5 of \cite{gerth_3-class_groups} implies that $r_{K} = 1$.
In other words, if $N \equiv 1 \bmod{3}$, the only degree 3 unramified extension of $K = \Q(N^{1/3})$ is the genus field.

The results of Section \ref{pfsetup} recover this result in the following way.
Lemmas \ref{bohica_1} and \ref{bohica_2} imply that the type $m$ of any unramified extension $E/K$ must satisfy $m \leq p - 2 = 1$.
Lemma \ref{type_1_iff_genus_field} shows that the only extension of type 1 is the genus field $K(\zeta_{N}^{(p)})$.
This proves the following theorem.

\begin{theorem}\label{3_theorem}
Let $p = 3$.
Then $r_{K} = 1$.
\end{theorem}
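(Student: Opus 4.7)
The plan is to deduce Theorem \ref{3_theorem} as an immediate application of the structural results already established in Section \ref{pfsetup}. Since $r_K$ is the maximal $r$ such that $K$ admits an unramified $(\Z/3\Z)^r$-extension, it suffices to show both that $K$ has an unramified $\F_3$-extension (giving $r_K \geq 1$) and that every such extension coincides with a single fixed one (giving $r_K \leq 1$).

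For the lower bound, I would invoke the genus field $K(\zeta_N^{(3)})$: by Lemma \ref{zeta_N_plus_localized} (or the general genus-theoretic observation cited in Section \ref{sec_introduction}) this is an unramified $\F_3$-extension of $K$, so $r_K \geq 1$.

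For the upper bound, let $E/K$ be any unramified $\F_3$-extension and let $m$ be its type in the sense of Definition \ref{def:type}. Theorem \ref{galois_closure_indecomposable}, whose proof combines Lemmas \ref{bohica_1} and \ref{bohica_2}, asserts that $1 \leq m \leq p-2$. Specializing to $p=3$, this forces $m = 1$. Then Proposition \ref{type_1_iff_genus_field} identifies the unique type-$1$ extension as the genus field $K(\zeta_N^{(3)})$, so $E = K(\zeta_N^{(3)})$.

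Combining the two directions, $K$ has exactly one unramified $\F_3$-extension, so $r_K = 1$. There is no real obstacle here: the content lies in Theorem \ref{galois_closure_indecomposable}, which is already proved, and the case $p=3$ is simply the degenerate one in which the upper bound $m \leq p-2$ collapses to the single option $m=1$ treated by Proposition \ref{type_1_iff_genus_field}.
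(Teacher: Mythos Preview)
Your proposal is correct and follows essentially the same argument as the paper: the paper also deduces $m \leq p-2 = 1$ from Lemmas \ref{bohica_1} and \ref{bohica_2} and then invokes Proposition \ref{type_1_iff_genus_field} to identify the unique type-$1$ extension with the genus field. The only cosmetic difference is that you explicitly restate the lower bound $r_K \geq 1$ via the genus field, whereas the paper takes this as already known from the introduction.
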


\subsection{\texorpdfstring{$p = 5$}{p = 5}}\label{5_subsection}

In the case $p = 5$, we prove the following refined version of Theorem \ref{rank_equals_h1Sigma}.

\begin{theorem}\label{5_theorem}
Let $p = 5$.
Then $r_{K} = 1 + h^{1}_{\Sigma}(\F_{p}(-1)) + h^{1}_{\Sigma}(\F_{p}(-2))$.
\end{theorem}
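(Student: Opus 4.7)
The plan is to combine Theorem \ref{rank_equals_h1Sigma} with the filtration and lifting results from Sections \ref{sec_big_general} and \ref{sec_lifting_selmer_classes}. Since $p-4 = 1$ when $p=5$, Theorem \ref{rank_equals_h1Sigma} gives $r_K = 1 + h^1_\Sigma(\bigrep{1}{2})$, and $\bigrep{1}{2} \cong \bigrep{1}{-2}$ because $2 \equiv -2 \bmod{p-1}$. The filtration on $\bigrep{1}{-2}$ from Section \ref{pfpart3} reduces to the single exact sequence
\begin{equation*}
0 \to H^1_\Sigma(\F_5(-1)) \to H^1_\Sigma(\bigrep{1}{-2}) \to H^1_\Sigma(\F_5(-2))
\end{equation*}
so it suffices to show that the rightmost map is surjective, which would give $h^1_\Sigma(\bigrep{1}{-2}) = h^1_\Sigma(\F_5(-1)) + h^1_\Sigma(\F_5(-2))$ and the desired formula.

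If $h^1_\Sigma(\F_5(-2)) = 0$, surjectivity is automatic, so we may assume $h^1_\Sigma(\F_5(-2)) = 1$ and fix a nonzero class $a \in H^1_\Sigma(\F_5(-2))$. Since $5$ is regular and $-2 \equiv \tfrac{p-1}{2} \bmod{p-1}$, Theorem \ref{general_middle_lift} applies: any lift of $a$ to $H^1_S(V(-2)) = H^1_S(\bigrep{1}{-2})$ (which exists by Lemma \ref{image_of_H1S_is_H1Sigmastar}) automatically lies in $H^1_{\Sigma^*}(\bigrep{1}{-2})$. This gives a $\Sigma^*$-lift of $a$, but we need a $\Sigma$-lift.

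To promote the $\Sigma^*$-lift to a $\Sigma$-lift, I would apply Lemma \ref{fix_at_p_no_change_at_N} with $i = 2$, $j = 1$ (so $j - i = -1 \not\equiv 0,1 \bmod 4$). The only nontrivial hypothesis to check is
\begin{equation*}
h^1_\Sigma(\F_5(-1)) < h^1_{\Sigma^*}(\F_5(-1)).
\end{equation*}
By Theorem \ref{cohomology_theorem_sigma} applied to $i = 3$ (noting $\F_5(3) = \F_5(-1)$ and $\F_5(1-i) = \F_5(-2)$),
\begin{equation*}
h^1_{\Sigma^*}(\F_5(-1)) = h^1_\Sigma(\F_5(-2)) + 1 = 2,
\end{equation*}
while $h^1_\Sigma(\F_5(-1)) \leq 1$ by Theorem \ref{cohomology_theorem_conditional} (since $5$ is regular). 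So the strict inequality holds, and Lemma \ref{fix_at_p_no_change_at_N} produces the desired $\Sigma$-lift, establishing surjectivity.

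The main obstacle is the passage from a $\Sigma^*$-lift to a $\Sigma$-lift: the ``middle lift'' trick of Theorem \ref{general_middle_lift} is available only because $-2$ is the self-dual weight $\tfrac{p-1}{2}$, which is a genuine coincidence specific to $p=5$; for larger $p$ the even weights $-i$ in the range $2 \le i \le p-3$ will not all coincide with $\tfrac{p-1}{2}$, which is precisely why the analogous equality fails in general (cf.\ Section \ref{7_subsection}). Once past this lifting step, the remainder is a bookkeeping application of the dimension formulas in Theorems \ref{cohomology_theorem_conditional} and \ref{cohomology_theorem_sigma}.
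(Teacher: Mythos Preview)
Your proof is correct and follows essentially the same route as the paper's: reduce to surjectivity of $H^1_\Sigma(V(-2)) \to H^1_\Sigma(\F_5(-2))$, use Theorem \ref{general_middle_lift} (exploiting $-2 \equiv \tfrac{p-1}{2}$) to get a $\Sigma^*$-lift, then apply Lemma \ref{fix_at_p_no_change_at_N} to upgrade to a $\Sigma$-lift. The one cosmetic difference is that the paper invokes Corollary \ref{even_H1Sigma_implies_odd_H1Sigma} to pin down $h^1_\Sigma(\F_5(-1)) = 1$ exactly, whereas you observe that the bound $h^1_\Sigma(\F_5(-1)) \leq 1 < 2 = h^1_{\Sigma^*}(\F_5(-1))$ already suffices for the lemma --- a slightly more economical verification of the same hypothesis.
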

\begin{proof}
We know from Theorem \ref{rank_equals_h1Sigma} that
\begin{equation*}
r_{K} = 1 + h^{1}_{\Sigma}(\bigrep{p-4}{2}) = 1 + h^{1}_{\Sigma}(V(-2)).
\end{equation*}
Thus to prove the theorem it suffices to show that
\begin{equation*}
h^{1}_{\Sigma}(V(-2)) = h^{1}_{\Sigma}(\F_{p}(-1)) + h^{1}_{\Sigma}(\F_{p}(-2)).
\end{equation*}
In light of the short exact sequence of $G_{\Q, S}$-modules
\begin{equation*}
0 \to \F_{p}(-1) \to V(-2) \to \F_{p}(-2) \to 0
\end{equation*}
and the fact that $H^{1}_{\Sigma}(\F_{p}(-1)) \subseteq H^{1}_{\Sigma}(V(-2))$ by the associated long exact sequence in $G_{\Q, S}$-cohomology, it will suffice to prove that any class in $H^{1}_{\Sigma}(\F_{p}(-2))$ lifts to $H^{1}_{\Sigma}(V(-2))$, as in the discussion at the beginning of Section \ref{sec_lifting_selmer_classes}.

Suppose $h^{1}_{\Sigma}(\F_{p}(-2)) \neq 0$, and hence also $h^{1}_{\Sigma}(\F_{p}(-1)) \neq 0$ by Corollary \ref{even_H1Sigma_implies_odd_H1Sigma}.
We satisfy the conditions of Theorem \ref{general_middle_lift}, as $\frac{p-1}{2} = 2 \equiv -2 \bmod{4}$, so we know that the class spanning $H^{1}_{\Sigma}(\F_{p}(-2))$ lifts to a class in $H^{1}_{\Sigma^{\ast}}(V(-2))$.
Since we also have
\begin{equation*}
    h^{1}_{\Sigma^{\ast}}(\F_{p}(-1)) = 2 > 1 = h^{1}_{\Sigma}(\F_{p}(-1))
\end{equation*}
in this situation by Theorem \ref{cohomology_theorem_sigma}, we may apply Lemma \ref{fix_at_p_no_change_at_N} to choose a lift which in fact is in $H^{1}_{\Sigma}(V(-2))$.
\end{proof}

Combining this theorem with the results of Section \ref{sec_effective_criteria} proves Theorem \ref{intro_5_theorem}:

\begin{proof}[Proof of Theorem \ref{intro_5_theorem}]
Since each $h^{1}_{\Sigma}(\F_{p}(-i))$ is at most 1, we obtain the bound $r_{K} \leq 3$.
We know that $r_{K} \geq 2$ if and only if $S_{1} = \prod_{k=1}^{p-1} ((Mk)!)^{k}$ is a $5$th power in $\F_{N}^{\times}$, as Theorem \ref{odd_invariant_theorem} proves that $h^{1}_{\Sigma}(\F_{p}(-1)) = 1$ if and only if $S_{1}$ is a $5$th power, and
further, $r_{K} = 3$ if and only if $h^{1}_{\Sigma}(\F_{p}(-1)) = h^{1}_{\Sigma}(\F_{p}(-2)) = 1$, which by Theorems \ref{odd_invariant_theorem} and \ref{even_H1Sigma_theorem} happens if and only if both $S_{1}$ and $\frac{\sqrt{5} - 1}{2}$ are $5$th powers in $\F_{N}^{\times}$.
\end{proof}

See Appendix \ref{sec_5_data} for data on how often each of the three possible cases $r_{K} = 1$, $2$, or $3$ occurs.

\subsection{\texorpdfstring{$p=7$}{p = 7}}\label{7_subsection}

When $p = 7$ it is not the case that $r_{K}$ can be determined completely by the dimensions $h^{1}_{\Sigma}(\F_{p}(-i))$.
Note that when $p = 7$ the possible groups $H^{1}_{\Sigma}(\F_{p}(-i))$ that may arise are those for $i \in \{1, 2, 3, 4\}$.
When discussing the possible cases we will indicate the dimensions of these $H^{1}_{\Sigma}(\F_{p}(-i))$ by a binary string of length $4$; so for example $1000$ is used to indicate $h^{1}_{\Sigma}(\F_{p}(-1)) = 1$ and $h^{1}_{\Sigma}(\F_{p}(-i)) = 0$ for $i \in \{2, 3, 4\}$.
By Corollary \ref{even_H1Sigma_implies_odd_H1Sigma}, not all binary strings of length $4$ may occur of as the dimensions of the $h^{1}_{\Sigma}(\F_{p}(-i))$; if $h^{1}_{\Sigma}(\F_{p}(-i)) = 1$ for $i = 2$ or $4$, we must have that $h^{1}_{\Sigma}(\F_{p}(-i)) = 1$ for $i = 3$ or $1$, respectively.

\begin{theorem}\label{7_theorem}
Let $p = 7$.
Then $r_{K} \geq 2$ if and only if at least one of $H^{1}_{\Sigma}(\F_{p}(-1))$ or $H^{1}_{\Sigma}(\F_{p}(-3))$ is nonzero.
\end{theorem}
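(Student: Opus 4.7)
The plan is as follows. By Theorem \ref{rank_equals_h1Sigma}, $r_K \geq 2$ if and only if $h^1_\Sigma(\bigrep{3}{-4}) \geq 1$ (using $\bigrep{p-4}{2} = \bigrep{3}{-4}$ for $p = 7$), and the filtration of Section \ref{pfpart3} gives
\begin{equation*}
0 \subseteq H^1_\Sigma(\F_p(-1)) \subseteq H^1_\Sigma(\bigrep{1}{-2}) \subseteq H^1_\Sigma(\bigrep{2}{-3}) \subseteq H^1_\Sigma(\bigrep{3}{-4})
\end{equation*}
with successive quotients embedded in $H^1_\Sigma(\F_p(-k))$ for $k = 1, 2, 3, 4$ via Proposition \ref{h1Sigma_filtration_bound}. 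For the forward direction, $r_K \geq 2$ forces $H^1_\Sigma(\F_p(-k))$ to be nonzero for some such $k$; the even cases $k = 2, 4$ reduce respectively to the odd cases $k = 3, 1$ by Corollary \ref{even_H1Sigma_implies_odd_H1Sigma}.

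For the backward direction, if $H^1_\Sigma(\F_p(-1)) \neq 0$ then $r_K \geq 2$ by Proposition \ref{wake_wang_erickson_theorem}, so I may assume $H^1_\Sigma(\F_p(-1)) = 0$ and $H^1_\Sigma(\F_p(-3)) \neq 0$ and aim to produce a nonzero class in $H^1_\Sigma(\bigrep{3}{-4})$. Since $p = 7$ is regular, Corollary \ref{even_H1Sigma_implies_odd_H1Sigma} forces $h^1_\Sigma(\F_p(-4)) = 0$, and then Theorem \ref{cohomology_theorem_sigma} yields $h^1_{\Sigma^*}(\F_p(-1)) = 1$ and $h^1_{\Sigma^*}(\F_p(-2)) = h^1_\Sigma(\F_p(-3)) = 1$. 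I would split on whether $h^1_\Sigma(\F_p(-2))$ equals $0$ or $1$.

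In the easier subcase $h^1_\Sigma(\F_p(-2)) = 1$, I take a nonzero $c \in H^1_\Sigma(\F_p(-2))$ and apply Lemma \ref{fix_at_both} with $j = 1, i = 2$; the required inequalities $h^1_\Sigma(\F_p(-1)) < h^1_{\Sigma^*}(\F_p(-1)) < h^1_S(\F_p(-1))$ read $0 < 1 < 2$ (invoking part \ref{cohomology_theorem_odd_regular} of Theorem \ref{cohomology_theorem_conditional} for the last dimension), producing a nonzero class in $H^1_\Sigma(\bigrep{1}{-2}) \subseteq H^1_\Sigma(\bigrep{3}{-4})$. In the harder subcase $h^1_\Sigma(\F_p(-2)) = 0$, I take a nonzero $a \in H^1_\Sigma(\F_p(-3))$ and climb two rungs of the ladder. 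Lemma \ref{image_of_H1S_is_H1Sigmastar} supplies a lift of $a$ to $H^1_S(\bigrep{1}{-3})$; since $\frac{p-1}{2} = 3$, Theorem \ref{general_middle_lift} forces this lift into $H^1_{\Sigma^*}(\bigrep{1}{-3})$. Lemma \ref{fix_at_p_no_change_at_N} with $j = 1, i = 3$ then converts it into $\tilde{a} \in H^1_\Sigma(\bigrep{1}{-3})$ using $0 < 1$ at the $\F_p(-2)$ layer, and Lemma \ref{fix_at_both} with $j = 2, i = 3$ and $0 < 1 < 2$ at the $\F_p(-1)$ layer lifts $\tilde{a}$ to a class in $H^1_\Sigma(\bigrep{2}{-3})$ whose image under $\bigrep{2}{-3} \twoheadrightarrow \bigrep{1}{-3} \twoheadrightarrow \F_p(-3)$ is $a \neq 0$.

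The main obstacle is this harder subcase: lifting a class from $H^1_\Sigma(\F_p(-3))$ up to $H^1_\Sigma(\bigrep{2}{-3})$ requires crossing a $\Sigma \subsetneq \Sigma^*$ gap at the $\F_p(-2)$ layer and then a $\Sigma \subsetneq \Sigma^* \subsetneq S$ configuration at the $\F_p(-1)$ layer. The first obstruction is overcome only because $\cyclo^{-3} = \cyclo^{(p-1)/2}$ for $p = 7$, activating Theorem \ref{general_middle_lift} to produce a free $\Sigma^*$-lift; for larger primes no such self-inverse character appears at the corresponding layer, which is consistent with the expected failure of the converse to Theorem \ref{intro_converse_theorem} in general.
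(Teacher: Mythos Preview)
Your proof is correct and follows essentially the same approach as the paper: the same forward direction via Proposition \ref{h1Sigma_filtration_bound} and Corollary \ref{even_H1Sigma_implies_odd_H1Sigma}, the same reduction to $h^1_\Sigma(\F_p(-1)) = 0$, $h^1_\Sigma(\F_p(-3)) = 1$, and the same two-case split on $h^1_\Sigma(\F_p(-2))$ handled via Lemma \ref{fix_at_both} in the first case and the combination of Theorem \ref{general_middle_lift}, Lemma \ref{fix_at_p_no_change_at_N}, and Lemma \ref{fix_at_both} in the second.
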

\begin{proof}
By the upper bound given in Proposition \ref{h1Sigma_filtration_bound}, if $r_{K} \geq 2$ we must have at least one of the $h^{1}_{\Sigma}(\F_{p}(-i)) \neq 0$.
Corollary \ref{even_H1Sigma_implies_odd_H1Sigma} shows that if any of the $h^{1}_{\Sigma}(\F_{p}(-i))$ is nonzero we must have that $h^{1}_{\Sigma}(\F_{p}(-i)) = 1$ for $i = 1$ or $3$.
This proves the ``only if'' direction.

We have established in Proposition \ref{wake_wang_erickson_theorem} that $h^{1}_{\Sigma}(\F_{p}(-1)) = 1 \implies r_{K} \geq 2$.
Thus it remains to show that when $h^{1}_{\Sigma}(\F_{p}(-1)) = 0$ and $h^{1}_{\Sigma}(\F_{p}(-3)) = 1$ we have $r_{K} \geq 2$.
There are two possible cases, based on whether or not $h^{1}_{\Sigma}(\F_{p}(-2)) = 0$.

Case 1: The dimensions of the $H^{1}_{\Sigma}(\F_{p}(-i))$ are $0110$.
In this situation, we have by Theorems \ref{cohomology_theorem_conditional} and \ref{cohomology_theorem_sigma} that
\begin{equation*}
2 = h^{1}_{S}(\F_{p}(-1)) > 1 = h^{1}_{\Sigma^{\ast}}(\F_{p}(-1)) > 0 = h^{1}_{\Sigma}(\F_{p}(-1)),
\end{equation*}
hence we may apply Lemma \ref{fix_at_both} to show that the class spanning $H^{1}_{\Sigma}(\F_{p}(-2))$ lifts to $H^{1}_{\Sigma}(V(-2))$.
Since $V(-2)$ is the $2$-dimensional subrepresentation of 
\begin{equation*}
\bigrep{p-4}{2} = \bigrep{3}{-4},
\end{equation*}
we have by Theorem \ref{rank_equals_h1Sigma} and the discussion at the start of Section \ref{pfpart3} that
\begin{align*}
r_{K}   & = 1 + h^{1}_{\Sigma}(\bigrep{3}{-4}) \\*
        & \geq 1 + h^{1}_{\Sigma}(V(-2)) \\*
        & \geq 1 + 1 = 2.
\end{align*}

Case 2: The dimensions of the $H^{1}_{\Sigma}(\F_{p}(-i))$ are $0010$.
The conditions of Theorem \ref{general_middle_lift} are satisfied here, so a class spanning $H^{1}_{\Sigma}(\F_{p}(-3))$ lifts to a class in $H^{1}_{\Sigma^{\ast}}(V(-3))$.
Using that
\begin{equation*}
1 = h^{1}_{\Sigma^{\ast}}(\F_{p}(-2)) > 0 = h^{1}_{\Sigma}(\F_{p}(-2))
\end{equation*}
by Theorem \ref{cohomology_theorem_sigma}, we may apply Lemma \ref{fix_at_p_no_change_at_N} to show that there is in fact a lift to $H^{1}_{\Sigma}(V(-3))$.
Now, using that again that
\begin{equation*}
2 = h^{1}_{S}(\F_{p}(-1)) > 1 = h^{1}_{\Sigma^{\ast}}(\F_{p}(-1)) > 0 = h^{1}_{\Sigma}(\F_{p}(-1)),
\end{equation*}
we apply Lemma \ref{fix_at_both} to show that the class in $H^{1}_{\Sigma}(V(-3))$ lifts to a class in $H^{1}_{\Sigma}(\bigrep{2}{-3})$.
Since $\bigrep{2}{-3}$ is the $3$-dimensional subrepresentation of $\bigrep{3}{-4}$, we have again by Theorem \ref{rank_equals_h1Sigma} and the discussion in Section \ref{pfpart3} that
\begin{align*}
r_{K}   & = 1 + h^{1}_{\Sigma}(\bigrep{3}{-4}) \\*
        & \geq 1 + h^{1}_{\Sigma}(\bigrep{2}{-3}) \\*
        & \geq 1 + 1 = 2. \qedhere
\end{align*}
\end{proof}

Theorem \ref{intro_7_theorem} follows by combining this result and Theorem \ref{odd_invariant_theorem}: the dimensions $h^{1}_{\Sigma}(\F_{p}(-1))$ and $h^{1}_{\Sigma}(\F_{p}(-3))$ are nonzero if and only if, respectively, $S_{1}$ and $S_{3}$ are $7$th powers in $\F_{N}^{\times}$.

We have upper and lower bounds on $r_{K}$ by Theorem \ref{rank_equals_h1Sigma}, and we may interpret Theorem \ref{7_theorem} as improving the lower bound to 
\begin{equation*}
1 + \text{max}\{h^{1}_{\Sigma}(\F_{p}(-1)), h^{1}_{\Sigma}(\F_{p}(-3))\} \leq r_{K} \leq 1 + \sum_{i=1}^{4} h^{1}_{\Sigma}(\F_{p}(-i)).
\end{equation*}
These bounds are optimal, in the sense that for a given binary string of dimensions $h^{1}_{\Sigma}(\F_{p}(-i))$ there exist $N \equiv 1 \bmod{7}$ for which the corresponding $r_{K}$ witness all possible values between the upper and lower bounds.
See Appendix \ref{sec_7_data} for data on the distribution of N among values for the $h^{1}_{\Sigma}(\F_{p}(-i))$ and $r_{K}$.

We turn now to a study of the possibilities that may occur when $r_{K}$ does not achieve the upper bound of Theorem \ref{rank_equals_h1Sigma}.
We say that a class $a_{i} \in H^{1}_{\Sigma}(\F_{p}(-i))$ ``contributes to $r_{K}$'' if $a_{i}$ lifts all the way to $H^{1}_{\Sigma}(\bigrep{i-1}{-i})$, which is a subset of $H^{1}_{\Sigma}(\bigrep{3}{-4})$.

\begin{remark}
If $r_{K} < 1 + \sum_{i=1}^{4} h^{1}_{\Sigma}(\F_{p}(-i))$, it is not always possible to determine using the dimensions $h^{1}_{\Sigma}(\F_{p}(-i))$ which class $a_{i} \in H^{1}_{\Sigma}(\F_{p}(-i))$ is failing to contribute to $r_{K}$.

For example, suppose that $r_{K} = 3$ and the dimensions $h^{1}_{\Sigma}(\F_{p}(-i))$ are $1011$.
It must be the case that one of $a_{3} \in H^{1}_{\Sigma}(\F_{p}(-3))$ and $a_{4} \in H^{1}_{\Sigma}(\F_{p}(-4))$ is contributing to $r_{K}$ and the other is failing to.
However, the conditions of Lemma \ref{fix_at_both} are not satisfied in this situation as $H^{1}_{S}(\F_{p}(-1)) = H^{1}_{\Sigma^{\ast}}(\F_{p}(-1))$, so the results of Section \ref{sec_lifting_selmer_classes} are not strong enough to show that either class always contributes to $r_{K}$.
\end{remark}

When a failure to contribute to $r_{K}$ can be tracked down to a specific class $a_{i} \in H^{1}_{\Sigma}(\F_{p}(-i))$ there are two aspects of its failure to contribute which may be considered.
First, there is the stage of lifting at which the failure occurs: there is a $k \geq 1$ such that $a_{i}$ lifts to $H^{1}_{\Sigma}(\bigrep{k-1}{-i})$ but not one step further to $H^{1}_{\Sigma}(\bigrep{k}{-i})$.
Second, there is the type of failure which occurs at this $k$th stage.
The class $a_{i}$ always lifts to $H^{1}_{S}(\bigrep{k}{-i})$ but it could be the case that:
\begin{enumerate}
\item\label{fail_at_p} No lift to $H^{1}_{S}(\bigrep{k}{-i})$ is split at $p$;
\item\label{fail_at_N} No lift to $H^{1}_{S}(\bigrep{k}{-i})$ vanishes when restricted to $K_{N}$;
\item\label{fail_at_both} There are lifts that satisfy the local condition at $p$ or at $N$, but no lift satisfies both local conditions simultaneously.
\end{enumerate}

In some cases it is possible to determine at which stage and which type of failure to lift is occurring, by an analysis of the dimensions of the subgroups of the $H^{1}_{S}(\F_{p}(-i))$ using the results of Section \ref{sec_cohomology_groups_of_characters}.
Examples of situations witnessing each of the above types of local failure are collected below.
In each example, the class $a_{3} \in H^{1}_{\Sigma}(\F_{p}(-3))$ fails to contribute to $r_{K}$.
Note that by Theorem \ref{general_middle_lift} there is a lift of $a_{3}$ to $H^{1}_{\Sigma^{\ast}}(V(-3))$, and since the set of all lifts is a coset of $H^{1}_{S}(\F_{p}(-2)) = H^{1}_{\Sigma^{\ast}}(\F_{p}(-2))$, we in fact have that every lift of $a_{3}$ is in $H^{1}_{\Sigma^{\ast}}(V(-3))$.

\begin{example}
Suppose that the dimensions $h^{1}_{\Sigma}(\F_{p}(-i))$ are $0110$ and $r_{K} = 2$.
The proof of Theorem \ref{7_theorem} showed that the class in $H^{1}_{\Sigma}(\F_{p}(-2))$ contributes to $r_{K}$, so it must be the case that $a_{3} \in H^{1}_{\Sigma}(\F_{p}(-3))$ does not lift to $H^{1}_{\Sigma}(\bigrep{2}{-3})$.

Suppose that $a_{3}$ lifts to $H^{1}_{\Sigma}(V(-3))$.
Then Lemma \ref{fix_at_both} would apply as 
\begin{align*}
h^{1}_{S}(\F_{p}(-1))   & = 2 \\*
h^{1}_{\Sigma^{\ast}}(\F_{p}(-1)) & = 1 + h^{1}_{\Sigma}(\F_{p}(-4)) = 1 \\*
h^{1}_{\Sigma}(\F_{p}(-1)) & = 0,
\end{align*} so there would exist a lift of $a_{3}$ to $H^{1}_{\Sigma}(\bigrep{2}{-3})$.
Since our assumption that $r_{K} = 2$ implies that $a_{3}$ does not lift to $H^{1}_{\Sigma}(\bigrep{2}{-3})$, it must be the case that $a_{3}$ does not lift to $H^{1}_{\Sigma}(V(-3))$.

We know that every lift of $a_{3}$ to $H^{1}_{S}(V(-3))$ is in $H^{1}_{\Sigma^{\ast}}(V(-3))$, thus it must be the case that no lift is split at $p$.
\end{example}

\begin{example}
Suppose that the dimensions $h^{1}_{\Sigma}(\F_{p}(-i))$ are $1011$ and $r_{K} = 2$.
As in the proof of Theorem \ref{7_theorem}, Theorem \ref{general_middle_lift} shows that $a_{3}$ lifts to $H^{1}_{\Sigma^{\ast}}(V(-3))$, and then Lemma \ref{fix_at_p_no_change_at_N} shows that there is a modification of this lift which is in $H^{1}_{\Sigma}(V(-3))$.

Suppose that there is a lift of this class to $H^{1}_{\Sigma^{\ast}}(\bigrep{2}{-3})$.
Then Lemma \ref{fix_at_p_no_change_at_N} would apply to show that there is a lift to $H^{1}_{\Sigma}(\bigrep{2}{-3})$, as
\begin{align*}
h^{1}_{\Sigma^{\ast}}(\F_{p}(-1))   & = 1 + h^{1}_{\Sigma}(\F_{p}(-4)) = 2 \\*
h^{1}_{\Sigma}(\F_{p}(-1))          & = 1.
\end{align*}
Our assumption that $r_{K} = 2$ means that $a_{3}$ does not contribute to $r_{K}$, hence there cannot be a lift of $a_{3}$ to $H^{1}_{\Sigma^{\ast}}(\bigrep{2}{-3})$.
\end{example}

\begin{example}
Suppose that the dimensions $h^{1}_{\Sigma}(\F_{p}(-i))$ are $1010$ and $r_{K} = 2$.
As in the previous example, $a_{3}$ lifts to $H^{1}_{\Sigma}(V(-3))$.

We have that
\begin{equation*}
2 = h^{1}_{S}(\F_{p}(-1)) > 1 = h^{1}_{\Sigma^{\ast}}(\F_{p}(-1)) = h^{1}_{N}(\F_{p}(-1)) = h^{1}_{\Sigma}(\F_{p}(-1)),
\end{equation*}
hence we may apply Lemmas \ref{fix_at_N} and \ref{fix_at_p} to show that there are lifts of $a_{3}$ to both $H^{1}_{\Sigma^{\ast}}(\bigrep{2}{-3})$ and $H^{1}_{N}(\bigrep{2}{-3})$, respectively.

However, we know that $a_{3}$ fails to contribute to $r_{K}$, so it must be the case that no lift of $a_{3}$ is in 
\begin{equation*}
H^{1}_{\Sigma}(\bigrep{2}{-3}) = H^{1}_{\Sigma^{\ast}}(\bigrep{2}{-3}) \cap H^{1}_{N}(\bigrep{2}{-3}).
\end{equation*}
In other words there is no lift of $a_{3}$ which satisfies the conditions at $p$ and $N$ simultaneously, despite there being lifts which satisfy each condition individually.
\end{example}

\appendix
\renewcommand{\sectionname}{Appendix}
\section{Data for \texorpdfstring{$p = 5, 7$}{p = 5, 7}}\label{sec_data}

All computations in this section were performed using PARI/GP \cite{pari} and SageMath \cite{sagemath}.
The computation of ranks of class groups when $p = 7$ used PARI/GP's built-in algorithms for computing class groups of number fields, which assume GRH to optimize computation.
Thus the ranks computed when $p = 7$ in all cases other than those where the rank is determined by the numbers $h^1_\Sigma(\F_p(-i))$ as in Section \ref{7_subsection} are conditional on GRH.

The SageMath code for computing the numbers $h^1_\Sigma(\F_p(-i))$ for $p=7$ via the methods in Section \ref{sec_effective_criteria} is available on the second author's website. The data in Table \ref{7_dims_table} took approximately 10 hours to gather using a low-range commercial processor.

\subsection{\texorpdfstring{$p = 5$}{p = 5}}\label{sec_5_data}

For primes $N \equiv 1 \bmod{5}$, $N \leq 20{,}000{,}000$ we computed the dimensions $h^{1}_{\Sigma}(\F_{p}(-1))$ and $h^{1}_{\Sigma}(\F_{p}(-2))$ using the results of Section \ref{sec_effective_criteria}.
For each $N$ there are three possible sets of dimensions: both are $0$, $h^{1}_{\Sigma}(\F_{p}(-1)) = 1$ and $h^{1}_{\Sigma}(\F_{p}(-2)) = 0$, and both are $1$; as in Section \ref{7_subsection} these are notated by a binary string of length $2$ ($00$, $10$, and $11$).
Note that by Theorem \ref{5_theorem} the dimensions $h^{1}_{\Sigma}(\F_{p}(-i))$ completely determine the rank $r_{K}$.
There are 317,587 such primes $N$, and their distribution among the three possible cases is given in Table \ref{5_data_table}.

\begin{table}[htb]
\centering
\begin{tabular}{|c|c|d{0}|} \hline
Dimensions & $r_{K}$ & \text{Number of $N$} \\ \hline
00 & 1 & 253,234 \\ \hline
10 & 2 & 51,613 \\ \hline
11 & 3 & 12,740 \\ \hline
Total & & 317,587 \\ \hline
\end{tabular}
\caption{Data for $p = 5$.}\label{5_data_table}
\end{table}

From this we see that $20.26\%$ of $N$ in this range have $r_K \geq 2$, and of those $N$, $19.80\%$ of $N$ have $r_K \geq 3$. We expect that the quantities $M_1$ and $\frac{\sqrt{5}-1}{2}$ should be ``uniformly distributed'' in $\Z/5\Z \cong \F_N^\times/\F_N^{\times 5}$, meaning that they are $5$th powers for a set of primes of density $\frac{1}{5}$ in the primes $N \equiv 1 \bmod{5}$. This would imply that $r_K \geq 2$ for $\frac15$ of those primes and that $r_K = 3$ for $\frac{1}{25}$ of primes $N \equiv 1 \bmod{5}$, which is suggested by the data.

\subsection{\texorpdfstring{$p = 7$}{p = 7}}\label{sec_7_data}

For primes $N \equiv 1 \bmod{7}$, $N \leq 100{,}000{,}000$, we computed the dimensions $h^{1}_{\Sigma}(\F_{p}(-i))$ for $i = 1, 2, 3, 4$ using the results of Section \ref{sec_effective_criteria}.
There are 960,023 such primes $N$, and their distribution among the possible cases is given in Table \ref{7_dims_table}.

\begin{table}[htb]
\centering
\begin{tabular}{|c|d{0}|} \hline
Dimensions & \text{Number of $N$} \\ \hline
0000 & 705,575 \\ \hline
1000 & 99,649 \\ \hline
0010 & 101,126 \\ \hline
1010 & 15,057 \\ \hline
1001 & 16,610 \\ \hline
0110 & 16,580 \\ \hline
1011 & 2,249 \\ \hline
1110 & 2,546 \\ \hline
1111 & 631 \\ \hline
Total & 960,023 \\ \hline
\end{tabular}
\caption{Dimensions of the $H^{1}_{\Sigma}(\F_{p}(-i))$, $p = 7$ and $N \leq 100{,}000{,}000$.}\label{7_dims_table}
\end{table}

For primes $N \equiv 1 \bmod{7}$ and $N \leq 20{,}000{,}000$, we computed the rank $r_{K}$ (which is not determined completely by the $h^{1}_{\Sigma}(\F_{p}(-i))$ in this case).
There are 211,766 such primes $N$, and their distribution between possible ranks $1 \leq r_{K} \leq 5$ and dimensions $h^{1}_{\Sigma}(\F_{p}(-i))$ are given in Table \ref{7_ranks_table}.
The empty cells in Table \ref{7_ranks_table} are cases that are shown to never occur in Section \ref{7_subsection}; in particular every case not ruled out in Section \ref{7_subsection} does occur.

\begin{table}[htb]
\centering
\begin{tabular}{|c|d{0}|d{0}|d{0}|d{0}|d{0}|d{0}|} \hline
Dimensions & r_{K} = 1 & r_{K} = 2 & r_{K} = 3 & r_{K} = 4 & r_{K} = 5 & \text{Total} \\ \hline
0000 & 155,691 & & & & & 155,691 \\ \hline
1000 & & 21,975 & & & & 21,975 \\ \hline
0010 & & 22,201 & & & & 22,201 \\ \hline
1010 & & 2,925 & 478 & & & 3,403 \\ \hline
1001 & & 3,110 & 487 & & & 3,597 \\ \hline
0110 & & 3,133 & 499 & & & 3,632 \\ \hline
1011 & & 444 & 50 & 10 & & 504 \\ \hline
1110 & & 407 & 170 & 2 & & 579 \\ \hline
1111 & & 130 & 46 & 6 & 2 & 184 \\ \hline
Total & 155,691 & 54,325 & 1,730 & 18 & 2 & 211,766 \\ \hline
\end{tabular}
\caption{Ranks $r_{K}$ and dimensions of the $H^{1}_{\Sigma}(\F_{p}(-i))$, $p = 7$ and $N \leq 20{,}000{,}000$.}\label{7_ranks_table}
\end{table}

As in the case $p = 5$, one might expect that $H^{1}_{\Sigma}(\F_{p}(-1))$ and $H^{1}_{\Sigma}(\F_{p}(-3))$ are each nonzero for $\frac{1}{7}$ of primes $N \equiv 1 \bmod{7}$.
Indeed, the data supports this guess, with $14.24 \%$ of the $N$ tested having $H^{1}_{\Sigma}(\F_{p}(-1))$ nonzero, and $14.39 \%$ of the $N$ tested having $H^{1}_{\Sigma}(\F_{p}(-3))$ nonzero.

One might also expect that $\frac{1}{7}$ of primes with  $H^{1}_{\Sigma}(\F_{p}(-1))$ nonzero also have $H^{1}_{\Sigma}(\F_{p}(-4))$ nonzero, as this just rests on whether or not the roots of a fixed polynomial are $7$th powers mod $N$; this holds for $14.25 \%$ of the $N$ tested.
Similarly, $H^{1}_{\Sigma}(\F_{p}(-2))$ is nonzero for $14.30 \%$ of the primes tested for which $H^{1}_{\Sigma}(\F_{p}(-3))$ is nonzero.

\bibliographystyle{amsplain}

\providecommand{\bysame}{\leavevmode\hbox to3em{\hrulefill}\thinspace}
\providecommand{\MR}{\relax\ifhmode\unskip\space\fi MR }
\providecommand{\MRhref}[2]{%
  \href{http://www.ams.org/mathscinet-getitem?mr=#1}{#2}
}
\providecommand{\href}[2]{#2}

\end{document}